\title{Topological expanders, coarse geometry and thick embeddings of complexes}
\author{David Hume, University of Birmingham}
\date{\today}
\newtheorem{proposition}{Proposition}[section]
\newtheorem{theorem}[proposition]{Theorem}
\newtheorem{corollary}[proposition]{Corollary}
\newtheorem{lemma}[proposition]{Lemma}
\newtheorem{fact}[proposition]{Fact}
\newtheorem*{theorem*}{Theorem}
\newtheorem*{corollary*}{Corollary}
\theoremstyle{definition}
\newtheorem{definition}[proposition]{Definition}
\newtheorem{remark}[proposition]{Remark}
\newtheorem{question}[proposition]{Question}
\newtheorem{guess}[proposition]{Guess}
\newcommand{\setcon}[2]{\left\{#1\ \left|\ #2\right.\right\}}
\newcommand{\varsetcon}[2]{\left\{#1\ :\ #2\right\}}
\newcommand{\abs}[1]{\left\lvert#1\right\rvert}
\newcommand{\R}{\mathbb{R}}
\newcommand{\Z}{\mathbb{Z}}
\newcommand{\N}{\mathbb{N}}
\newcommand{\HH}{\mathbb{H}}
\newcommand{\SOL}{\tu{SOL}}
\newcommand{\SL}{\tu{SL}}
\newcommand{\tu}{\textup}
\newcommand{\sep}{\tu{sep}}
\newcommand{\supp}{\tu{supp}}
\newcommand{\asdim}{\tu{asdim}}
\newcommand{\card}{\tu{card}}
\newcommand{\im}{\tu{im}}
\newcommand{\cut}{\tu{cut}}
\newcommand{\diam}{\tu{diam}}
\newcommand{\vol}{\tu{vol}}
\newcommand{\con}{\tu{con}}
\newcommand{\Cov}{\tu{Cov}}
\newcommand{\sOv}{{}_s\tu{Ov}}
\newcommand{\Ov}{\tu{Ov}}
\newcommand{\sTO}{{}_s\tu{TO}}
\newcommand{\TO}{\tu{TO}}
\newcommand{\cOv}{{}_c\tu{Ov}}
\newcommand{\cTO}{{}_c\tu{TO}}
\newcommand{\tS}{\overline{S}}
\newcommand{\bS}{\mathbb{S}_b}
\newcommand{\fS}{\mathbb{S}_f}
\newcommand{\CX}{\mathcal{C}X}
\newcommand{\CY}{\mathcal{C}Y}
\newcommand{\CZ}{\mathcal{C}Z}
\newcommand{\cw}{\tu{cw}}
\begin{document}
\maketitle
\begin{abstract} We quantify the topological expansion properties of bounded degree simplicial complexes in terms of a family of sublinear functions, in analogy with the separation profile of Benjamini-Schramm-Tim\'ar for classical expansion of bounded degree graphs. We prove that, like the separation profile, these new invariants are monotone under regular maps between complexes satisfying appropriate higher connectivity assumptions. In the dimension $1$ case, we recover the cutwidth profile of Huang-Hume-Kelly-Lam. We also prove the seemingly new result that any $1$-dimensional topological expander necessarily contains a graphical expander. 

In higher dimensions, we give full calculations of these new invariants for Euclidean spaces, which are natural analogues of waist and width-volume inequalities due to Gromov and Guth respectively. We present several other methods of obtaining upper bounds including na\"ive (yet useful) direct product and fibring theorems, and show how lower bounds can be obtained via thick embeddings of complexes, in analogy with previous work of Barrett-Hume. Using this, we find lower bounds for $k$-expansion of $(k+1)$-fold horocyclic products of trees, and for rank $k$ symmetric spaces of non-compact type. As a further application, we prove that for every $k\geq 2$ there is no coarse embedding (and more generally, no regular map) from the $k$-fold horocyclic product of $3$-regular trees to either any product $(\HH^2)^{k-2}\times H \times D$ where $\HH^2$ is the real hyperbolic plane, $H$ is a bounded degree hyperbolic graph and $D$ is a doubling metric space, or to any symmetric space whose non-compact factor has corank (dimension minus rank) is strictly less than $k$.
\end{abstract}

\section{Introduction}
During the last 50 years, the study of expander graphs has become an important part of both theoretical computer science and several areas of pure mathematics, linking them together in fruitful and unexpected ways, see for instance \cite{HLW-exp,Lub-exp} and the references therein. More recently, there has been intensive work on generalising the study of graph expansion to the expansion of higher-dimensional simplicial complexes. For graphs, there are several definitions of expansion coming naturally from different mathematical perspectives, and for bounded-degree graphs, many of these turn out to be equivalent up to constant factors. However, the natural generalisations of these notions to simplicial complexes are not equivalent; they lead to genuinely different notions of expansion, each with their own importance and applications, for example in property testing, locally testable codes, quantum computation, probabilistically checkable proofs, and the study of Kazhdan's Property $(T)$ for broad classes of infinite groups \cite{Breuckmann-Eberhardt,DELLM,Lub-HDE}.

Classical graphical expansion has been quantified in coarse geometric invariants such as the separation and Poincar\'e profiles. These profiles exhibit tantalising connections to other important invariants in geometric group theory (e.g., growth, isoperimetry and conformal dimension), and can be used to generalise some classical dichotomy results for connected unimodular Lie groups \cite{BenSchTim-12-separation-graphs,LeCozGour,HumeMackTess-Pprof,HumeMackTess-PprofLie}. The goal of this paper is to extend this theory and its connections to high dimension expanders.

The notion of high dimension expansion we will focus on here is topological expansion, due to Gromov \cite{Gro-10-exp-top}. Roughly speaking, an infinite family of finite $d$-dimensional simplicial complexes $(S_n)_{n\in\N}$ is a \textbf{$d$-dimensional topological expander} if there exists a constant $c>0$, such that for every $n$ and every continuous map $f:S_n\to\R^d$ there is a point whose preimage intersects at least a $c$-proportion of all the $d$-simplices in $S_n$. Gromov proved that, for each $d$, the family $(S_n)$ of $d$-skeleta of $n$-simplices is a $d$-dimensional topological expander, and asked whether there was an example with uniformly bounded degree. Evra-Kaufman proved that there are using the $d$-skeleta of a family of $(d+1)$-dimensional (bounded degree) Ramanujan complexes, extending previous work of Kaufman-Kazhdan-Lubotzky \cite{Gro-10-exp-top,KaufKazLub,EvraKaufman}.

In this paper we construct invariants which quantify for a given (infinite) bounded degree simplicial complex $X$ how close to $q$-dimensional topological expanders the finite subcomplexes of $X$ can be. Specifically, $\sTO^q_X(r)\leq k$ if every subcomplex $Z$ of $X$ with $r$ $0$-simplices admits a continuous map to $\R^q$ such that the preimage of every point intersects at most $k$ simplices of $Z$. The resulting invariants $\sTO^q_X$ are a family of sublinear functions which satisfy the following properties:

\begin{theorem}\label{thm:TOprops} Let $X$ and $Y$ be bounded degree simplicial complexes of dimension at most $d$.
\begin{enumerate}
 \item[(i)] $\liminf_{r\to \infty}\frac1r\sTO^q_X(r) \neq 0$ if and only if $X$ contains a $q$-dimensional topological expander;
 \item[(ii)] for every $q\geq 1$, $\sTO^{q+1}_X(r) \leq \sTO^q_X(r)$;
 \item[(iii)] for every $q>d$, $\sTO^q_X(r)$ is bounded.
\end{enumerate}
\end{theorem}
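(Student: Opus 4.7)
The plan is to handle (ii) immediately, to reduce (iii) to a single general-position argument using (ii), and then to deduce (i) from the definitions together with (iii).

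Part (ii) follows by composition with the standard inclusion $\R^{q}\into\R^{q+1}$: for any finite subcomplex $Z\subseteq X$ and any continuous $f\colon Z\to\R^{q}$, the composite has the same nonempty preimages, so every map witnessing $\sTO^{q}_X(r)\leq k$ also witnesses $\sTO^{q+1}_X(r)\leq k$.

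For part (iii), by (ii) it suffices to bound $\sTO^{d+1}_X$ by a constant. Given a finite subcomplex $Z\subseteq X$, I would assign its vertices to $\R^{d+1}$ in sufficiently general position (only finitely many open dense conditions are needed, since $Z$ is finite) and extend linearly on each simplex to obtain a map $f\colon Z\to\R^{d+1}$. The key estimate is a codimension count: each simplex of $Z$ has dimension at most $d$, so its image has codimension at least one in $\R^{d+1}$; hence under general position the transverse intersection of $m$ distinct simplex images has codimension at least $m$ and so is empty once $m\geq d+2$. Consequently, for every $y\in\R^{d+1}$ the preimage $f^{-1}(y)$ consists of at most $d+1$ points, each lying in the relative interior of a single simplex $\sigma_i$ of $Z$. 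The simplices of $Z$ meeting $f^{-1}(y)$ are then exactly the cofaces of these $\sigma_i$, and by the degree bound $D$ of $X$ each simplex has at most some fixed $C(D)$ cofaces, giving $\sTO^{d+1}_X(r)\leq (d+1)C(D)$ independently of $r$.

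Part (i) has two directions. The forward implication is essentially definitional: if $(S_n)\subseteq X$ is a $q$-dimensional topological expander with $|S_n^{(0)}|=r_n$, then every continuous map $S_n\to\R^{q}$ sends some point to a preimage meeting at least $c|S_n^{(q)}|$ top-dimensional simplices, and the bounded degree of $X$ forces $|S_n^{(q)}|\geq c' r_n$, giving $\sTO^{q}_X(r_n)\geq cc' r_n$. The converse is the main obstacle, since $\sTO^{q}_X$ counts all simplices met by a preimage while the definition of a topological expander counts only top-dimensional ones. Given $(Z_n)$ realising $\sTO^{q}_X(r_n)\geq cr_n$, part (iii) forces $\dim Z_n\geq q$; to promote this to a genuine $q$-dimensional topological expander, I would argue that $Z_n$ must in fact contain $\gtrsim r_n$ many $q$-simplices, because otherwise one could apply (iii) to the $(q-1)$-skeleton to produce a bounded-multiplicity map and extend across the few $q$-simplices with only $o(r_n)$ additional multiplicity, contradicting $\sTO^{q}_X(r_n)\geq cr_n$. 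A similar argument should then show that a positive proportion of the forced multiplicity is carried by $q$-simplices of $Z_n$, so the family of pure $q$-dimensional parts of the $Z_n$ yields the desired topological expander.
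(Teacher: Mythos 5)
Part (ii) of your proposal is exactly the paper's (unwritten) argument: postcomposing with the inclusion $\R^q\into\R^{q+1}$ is the intended one-liner, and the paper simply records this monotonicity as obvious.

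Part (iii) is where you take a genuinely different route. The paper first observes that every finite graph maps to $\R^2$ with only double points, giving $\sTO^2_{X^{\leq 1}}(r)\leq 2$, and then climbs the skeleta using Proposition \ref{prop:monotoneincdim} (a coning/extension argument with an injectivity trick), losing a degree-dependent factor at each step. Your generic affine-on-simplices map to $\R^{d+1}$ is more direct and gives the bound in one stroke. However, the sentence ``the transverse intersection of $m$ distinct simplex images has codimension at least $m$'' is not literally correct when simplices share faces: their images always meet along the image of the common face, and no genericity removes that. The statement you actually need is the one you state next — that for generic vertex images every point of $\R^{d+1}$ lies in the images of at most $d+1$ \emph{open} simplices — and this should be proved by a dimension count of the incidence variety for each combinatorial type of $(d+2)$-tuple of simplices, handling shared vertices separately (coincidences forced by shared faces occur only on the images of those faces, which are themselves among the open simplices being counted). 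With that repair, your coface count via the degree bound is fine, and this is a legitimate, arguably more elementary, alternative to the paper's proof.

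Part (i) contains the genuine gap, and it stems from a definitional mismatch. The paper's Definition \ref{defn:topexp} declares $(Z_n)$ a $q$-dimensional $(\Delta,\varepsilon)$-topological expander when $\sTO^q(Z_n)\geq\varepsilon|Z_n|$, i.e.\ the overlap already counts \emph{all} closed simplices and is normalised by the number of $0$-simplices. With that definition the forward implication is the one-line chain $\sTO^q_X(|Z_n|)\geq\sTO^q(Z_n)\geq\varepsilon|Z_n|$, and the converse needs only that subcomplexes $Z_r$ with $\sTO^q(Z_r)\geq\varepsilon r$ cannot have bounded size, which follows since a complex on $N$ vertices of degree $\Delta$ has at most $2^\Delta N$ simplices (the paper also tacitly assumes $\dim X\leq q$ here, so that the $Z_r$ are eligible to be $q$-dimensional expanders). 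You instead work with the Gromov-style definition counting only top-dimensional simplices, and this creates two problems. In the forward direction, your claim that bounded degree forces $|S_n^{(q)}|\geq c'r_n$ is false: bounded degree gives only the upper bound $|S_n^{(q)}|\leq 2^\Delta r_n$, and a $q$-dimensional complex may have far fewer $q$-simplices than vertices, so linear overlap in $r_n$ does not follow. In the converse, the decisive step — that every continuous map on the pure $q$-dimensional part of $Z_n$ has a fibre meeting a definite proportion of the $q$-simplices — is only gestured at: your plan requires a \emph{relative} version of (iii), extending a prescribed map from the pure part across the non-pure part with bounded extra overlap, and nothing in your sketch (or in the paper) establishes this; ``a similar argument should then show'' is precisely where the content would lie under your reading. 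If you adopt the paper's definition, part (i) closes in a few lines as above.
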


Theorem \ref{thm:TOprops}(i) is an exact analogue of \cite[Theorem $1.3$]{HumSepExp} for usual graph expanders and the separation profile, while Theorem \ref{thm:TOprops}(iii) shows that bounded $\sTO^q_X$ is a much weaker condition than the existence of a topological embedding $X\to\R^q$. \medskip

Our goal is to use topological overlap profiles as coarse geometric tools. This is accomplished via the following monotonicity result. Throughout the paper, given two functions $f,g:\N\to\N$ we write $f\lesssim g$ if there is a constant $C$ such that $f(r)\leq Cg(Cr)+C$ for all $r\in\N$ and we write $f\simeq g$ if $f\lesssim g$ and $g\lesssim f$.

\begin{theorem}\label{thm:TOmono} Let $X,Y$ be bounded degree simplicial complexes, where $X$ has dimension $d$ and $Y$ is \textbf{uniformly $k$-connected} for all $k\leq d$. If there is a regular map $X^0\to Y^0$, then for all $q\geq 1$
\[
 \sTO^q_X(r)\lesssim \sTO^q_Y(r).
\]
\end{theorem}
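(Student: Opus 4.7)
The plan is to pull back a good overlap map from $Y$ to $X$. Given a finite subcomplex $Z\subseteq X$ with $|Z^0|=r$, I would extend the regular map $\phi:X^0\to Y^0$ to a continuous map $\Phi:Z\to Y$ using the higher connectivity of $Y$, identify the image as a subcomplex $W\subseteq Y$ of size $\lesssim r$, apply the defining overlap bound for $W$ to produce $\psi:W\to\R^q$ with small fibers, and finally control the fiber size of $\psi\circ\Phi:Z\to\R^q$.

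To construct $\Phi$, I would work skeleton by skeleton. Setting $\Phi|_{Z^0}=\phi$, on each edge of $Z$ the two endpoints lie at distance at most the Lipschitz constant of $\phi$ in $Y$, so uniform $0$-connectedness fills them by a bounded-diameter path in $Y^{(1)}$. Inductively, uniform $k$-connectedness of $Y$ provides a function $\rho_k$ such that any continuous map from a sphere of diameter $\leq D$ into $Y^{(k)}$ extends to its filling disc with image of diameter $\leq\rho_k(D)$; extending simplex by simplex gives a continuous $\Phi$ whose restriction to any simplex of $Z$ lies in a subcomplex of $Y$ of uniformly bounded diameter $D_0$. Let $W\subseteq Y$ be the union of all simplices met by $\Phi(Z)$. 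Since $|\phi(Z^0)|\leq Mr$, where $M$ bounds the multiplicity of $\phi$, bounded geometry of $Y$ gives $|W^0|\leq C_0 r$ for a constant $C_0$ depending only on $Y$, $\phi$, and $D_0$.

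Now apply the definition of $\sTO^q_Y$ to $W$: there exists $\psi:W\to\R^q$ whose point-preimages meet at most $\sTO^q_Y(C_0 r)$ simplices of $W$. For $y\in\R^q$, a simplex $\sigma$ of $Z$ with $(\psi\circ\Phi)^{-1}(y)\cap\sigma\neq\emptyset$ satisfies $\Phi(\sigma)\cap\tau\neq\emptyset$ for some simplex $\tau$ of $W$ with $y\in\psi(\tau)$, and there are at most $\sTO^q_Y(C_0 r)$ such $\tau$. The main step, and the main obstacle, is the key estimate that each $\tau$ is met by the $\Phi$-images of only boundedly many simplices of $Z$: if $\Phi(\sigma)\cap\tau\neq\emptyset$ then some vertex $v$ of $\sigma$ satisfies $d_Y(\phi(v),\tau)\leq D_0$, and the regularity of $\phi$ together with the bounded degree of $X$ and bounded geometry of $Y$ forces the number of such $\sigma$ to be at most a constant $N$ depending only on the data of $\phi$, $X$, $Y$, and $D_0$. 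Combining these bounds yields $\sTO^q_X(r)\leq N\cdot\sTO^q_Y(C_0 r)$, which is exactly $\sTO^q_X\lesssim\sTO^q_Y$.
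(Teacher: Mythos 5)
Your proposal is correct and follows essentially the same route as the paper: extend the regular map skeleton-by-skeleton via uniform $k$-connectedness (the paper cites a connect-the-dots result of Dru\c{t}u--Kapovich) so that each simplex of $Z$ has image of uniformly bounded diameter, then bound both intersection multiplicities (image of each simplex of $X$ meets boundedly many simplices of $Y$, and preimage of each simplex of $Y$ meets boundedly many simplices of $X$) and pull back an optimal overlap map, which the paper packages as Proposition \ref{prop:TOmonotone}. The only cosmetic differences are your unnecessary claim that the fillings land in the skeleta $Y^{(k)}$ (not given by the definition of uniform connectedness, and not needed) and the slightly odd bound $|\phi(Z^0)|\leq Mr$ where $r$ suffices; neither affects the argument.
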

We denote the set of $0$-simplices in $X$ by $X^0$, and equip this set with the shortest path metric in the $1$-skeleton; a map $X^0\to Y^0$ is \textbf{regular} if it is Lipschitz and pre-images of points have uniformly bounded cardinality\footnote{For maps between bounded degree graphs, this definition of regular map is equivalent to the definition presented in \cite{BenSchTim-12-separation-graphs} and elsewhere.}; and a metric space is \textbf{uniformly $k$-connected} if, for every $s$ there is some $S$ such that whenever there is a continuous map $f:S^{k-1}\to X$ whose image has diameter at most $s$, there is a continuous extension $g:D^k\to X$ of $f$ where the image of $g$ has diameter at most $S$\footnote{Simplicial complexes admit a natural metric which extends the shortest path metric on the $1$-skeleton (cf.\ \ref{defn:topreal})}. This uniformity property is required to ensure that there is a continuous extension $X\to Y$ of the regular map $X^0\to Y^0$ such that the image of each simplex is contained in a uniformly bounded neighbourhood of the image of its $0$-skeleton.

\subsection{Topological overlap of arbitrary metric spaces}
We introduce a second definition of topological overlap profiles which is defined for metric spaces with \textbf{bounded geometry} (that is, for every $R\geq r>0$, there is some constant $C(R,r)$ such that every ball of radius $R$ is contained in a union of $C$ balls of radius $r$). In what follows we will assume the spaces under discussion have bounded geometry. We write $\TO^q_X(r)\leq k$ if, for every subset $Z\subset X$ which is contained in a union of $r$ balls of radius $1$ in $X$, there is a continuous map $f:Z\to\R^q$ such that the preimage of every point is contained in a union of $k$ balls of radius $1$ in $X$. This definition agrees with the simplicial definition on simplicial complexes equipped with a natural metric (Theorem \ref{thm:defnequiv}) and has its own monotonicity result (Proposition \ref{prop:cTOmonotone}). We will use both versions throughout the paper.

\subsection{Topological overlap of groups}
One motivation for studying obstructions to regular maps is that given two Cayley graphs $X,Y$ of finitely generated groups $G,H$ with respect to finite symmetric generating sets, there is a regular map $VX\to VY$ whenever some finite index subgroup $G'$ of $G$ is isomorphic to a subgroup of $H$. We therefore view regular maps as a coarse generalisation of both subcomplex and subgroup inclusion.

To use higher topological overlap profiles as obstructions to subgroups, it is no longer sufficient to work with Cayley graphs. For a group $G$ of type F (those that admit a finite classifying space), a natural choice of simplicial complex to associate to $G$ is the universal cover of some iterated barycentric subdivision of the classifying space. For groups which are not of type $F$, the following proposition shows that $\TO^1$ and $\TO^2$ depend only on the $1$- and $2$- skeleton of the complex.

\begin{proposition}\label{prop:12skeleton} Let $X$ be a bounded degree simplicial complex. For $q=1,2$, we have 
\[
	\sTO^q_{X^{\leq q}}(r)\simeq \sTO^q_{X}(r).
\] 
\end{proposition}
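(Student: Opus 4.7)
The inequality $\sTO^q_{X^{\leq q}}(r)\leq\sTO^q_X(r)$ is immediate: every subcomplex $Z\subseteq X^{\leq q}$ is in particular a subcomplex of $X$ on the same $0$-simplices, and any witness map for the bound on the right-hand side also witnesses the bound on the left-hand side. The content of the proposition is the reverse direction $\sTO^q_X(r)\lesssim\sTO^q_{X^{\leq q}}(r)$, and my plan is to prove it by extending a good map on the $q$-skeleton to a good map on all of $Z$.

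Concretely, given a subcomplex $Z\subseteq X$ on $r$ $0$-simplices, apply the hypothesis to $Z^{\leq q}\subseteq X^{\leq q}$ to obtain a continuous map $f:Z^{\leq q}\to\R^q$ with point-preimages meeting at most $k:=\sTO^q_{X^{\leq q}}(r)$ simplices, and extend $f$ inductively over skeleta to a continuous $\tilde{f}:Z\to\R^q$ satisfying the containment invariant
\[
\tilde{f}(\sigma)\subseteq f(\sigma^{\leq q})\quad\text{for every simplex }\sigma\text{ of }Z.
\]
Under this invariant the overlap bound is immediate: a point $p\in\tilde{f}(\sigma)$ must lie in $f(\tau)$ for some face $\tau\subseteq\sigma$ with $\dim\tau\leq q$; the overlap bound on $f$ gives at most $k$ such $\tau$ in $Z^{\leq q}$, and bounded degree of $X$ ensures each $\tau$ is a face of at most some constant number $D$ of simplices of $Z$, so $\tilde{f}$ has overlap at most $(1+D)k$, yielding $\sTO^q_X(r)\leq (1+D)\sTO^q_{X^{\leq q}}(r)$.

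The inductive extension over an $n$-simplex $\sigma$ with $n>q$ reduces to nullhomotoping the boundary map $\tilde{f}|_{\partial\sigma}:S^{n-1}\to Y_\sigma$ in the compact set $Y_\sigma:=f(\sigma^{\leq q})\subseteq\R^q$. For $q=1$ this is trivial, since $Y_\sigma$ is a compact connected subset of $\R$, hence an interval, which is contractible. For $q=2$ it requires $\pi_{n-1}(Y_\sigma)=0$ for all $n\geq 3$, equivalently that every compact subset of $\R^2$ has trivial higher homotopy. This is the main obstacle. My approach would be to first PL-approximate $f$ at the cost of a small perturbation (absorbable into the constants by a bounded-degree argument), so that each $Y_\sigma$ becomes a finite planar $2$-complex; such a complex has $H_2=0$ by Alexander duality and is homotopy-equivalent to a wedge of circles, giving vanishing higher homotopy. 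Alternatively, one can invoke the classical fact directly, writing $Y_\sigma$ as a decreasing intersection of open planar neighborhoods, each of which is a $K(\pi,1)$. Once this topological input is in hand the inductive extension proceeds without difficulty, and the overlap estimate above completes the proof.
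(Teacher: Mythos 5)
Your overall strategy coincides with the paper's: the inequality $\sTO^q_{X^{\leq q}}(r)\leq\sTO^q_X(r)$ is immediate, and the reverse direction is obtained by extending a low-overlap map on $Z^{\leq q}$ over the higher skeleta so that the image of every simplex stays inside the image of its $q$-skeleton, after which bounded degree gives the constant (the paper's bound $\sOv_{g}(z)\leq \Delta_X\sOv_f(z)$ is your $(1+D)k$). The divergence is in how the key topological input for $q=2$ is justified, and this is where there is a genuine gap. The paper simply cites Cannon--Conner--Zastrow \cite{CCZ}: compact planar sets are aspherical, so the boundary map $S^{n-1}\to Y_\sigma\subseteq\R^2$ is nullhomotopic \emph{inside} $Y_\sigma$. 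Your second suggestion --- writing $Y_\sigma$ as a decreasing intersection of open planar neighbourhoods, each a $K(\pi,1)$ --- does not prove this: nullhomotopies inside shrinking open neighbourhoods need not converge and need not stay anywhere near $Y_\sigma$, so they give no nullhomotopy in the intersection. Asphericity of arbitrary compact planar sets is precisely the nontrivial content of \cite{CCZ}; it cannot be recovered by this limiting argument, so as written that route is invalid unless you simply cite the theorem, which is what the paper does.

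Your first suggestion (PL-approximate $f$ so that each $Y_\sigma$ becomes a compact planar polyhedron, which is homotopy equivalent to a wedge of circles --- most cleanly seen via a regular neighbourhood, a compact planar surface with boundary --- hence aspherical) genuinely avoids \cite{CCZ}, but the claim that the perturbation is ``absorbable into the constants by a bounded-degree argument'' is not correct as stated. If $f'$ is an $\epsilon$-approximation of $f$ with $\epsilon$ chosen uniformly, then $z\in f'(\sigma)$ only tells you that $f(\sigma)$ meets the $\epsilon$-ball about $z$, and the number of simplices whose $f$-images meet a small ball is controlled neither by $\sOv(f)$ nor by the degree: many pairwise disjoint edges can be mapped to disjoint tiny arcs inside a single $\epsilon$-ball while $\sOv(f)$ stays small. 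The repair is to let $\epsilon$ depend on $f$: the sets $f(\tau)$, $\tau\in\CZ^{\leq 2}$, are finitely many compacta with every point lying in at most $\sOv(f)$ of them, so a compactness argument yields $\epsilon>0$ such that every $\epsilon$-ball meets at most $\sOv(f)$ of them; any PL map $f'$ with $\|f-f'\|_\infty<\epsilon/2$ then satisfies $\sOv(f')\leq\sOv(f)$, and since the profile only requires some good map for each finite subcomplex, the dependence on $f$ is harmless. With that fix your PL route gives a correct and more elementary proof than citing \cite{CCZ}; without it, and with the limiting argument in place of the citation, the proposal fails exactly at the point where the proposition is nontrivial.
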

As a result, for a finitely generated group $G$ we may always calculate $\sTO^1_G$ using a Cayley graph, and for a finitely presented group $H$ we may calculate $\sTO^2_H$ using a barycentric subdivision of a Cayley $2$-complex. We do not know whether Proposition \ref{prop:12skeleton} holds for $q\geq 3$.

\subsection{Calculations in dimension $1$: $\TO^1$, cutwidth and separation}

We can quickly deduce many facts about $\sTO^1$ by proving it is equal (up to $\simeq$) to the cutwidth profile defined in \cite{HHKL23}. We recall that the cutwidth profile of a bounded degree graph $Y$ is the function
\[
 \cw_Y(r) = \max\setcon{\cw(\Gamma)}{\Gamma\leq Y,\ |\Gamma|\leq r}
\]
where $\cw$ denotes the cutwidth (cf.\ Definition \ref{defn:cutwidth}), $\Gamma\leq Y$ indicates that $\Gamma$ is a subcomplex (in this case subgraph) of $Y$, and $|\Gamma|$ is the number of $0$-simplices in $\Gamma$.

\begin{theorem}\label{thm:compTO1cutwidth} Let $X$ be a bounded degree simplicial complex, and let $X^{\leq 1}$ denote the $1$-skeleton of $X$. We have
\[    
    \sTO^1_X(r)\simeq \cw_{X^{\leq 1}}(r).
\]
\end{theorem}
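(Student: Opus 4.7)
The plan is first to reduce the statement to the case of a graph. By Proposition \ref{prop:12skeleton}, $\sTO^1_X(r) \simeq \sTO^1_{X^{\leq 1}}(r)$, so it suffices to prove $\sTO^1_Y(r) \simeq \cw_Y(r)$ when $Y$ is a bounded degree graph with maximum degree $d$. Both inequalities will be proved directly on any subgraph $\Gamma \leq Y$ with $|\Gamma| \leq r$, by passing between continuous maps $\Gamma \to \R$ and linear orderings of $V(\Gamma)$.

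For the easy direction $\sTO^1_Y(r) \lesssim \cw_Y(r)$, given a vertex ordering $\pi\colon V(\Gamma) \to \{1,\dots,|\Gamma|\}$ witnessing $\cw(\Gamma) \leq k$, define $f\colon \Gamma \to \R$ by $f(v) = \pi(v)$ on vertices and extend affinely on each edge. For a non-integer $t \in (i, i+1)$, the preimage intersects only the edges crossing the cut at position $i$, of which there are at most $k$. At an integer $t = i$ we also pick up the vertex $\pi^{-1}(i)$ itself and the $\leq d$ edges incident to it, giving at most $k + d + 1$ simplices, so $\sTO^1_Y(r) \leq \cw_Y(r) + d + 1$.

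For the converse $\cw_Y(r) \lesssim \sTO^1_Y(r)$, suppose $f\colon \Gamma \to \R$ witnesses $\sTO^1(\Gamma) \leq k$. First replace $f$ by its linear interpolation $f'$ (affine on each edge, same values at vertices); since on an edge $e = \{v,w\}$ the image $f'(e) = [\min(f(v),f(w)), \max(f(v),f(w))]$ is contained in $f(e)$, the simplex overlap only decreases. Now linearly order $V(\Gamma)$ as $v_1, \dots, v_n$ so that $f(v_1) \leq \dots \leq f(v_n)$, breaking ties arbitrarily, and consider the cut at position $i$. If $f(v_i) < f(v_{i+1})$, then any $t$ between them lies in $f'(e)$ for every edge $e = \{v_j, v_k\}$ with $j \leq i < k$, so at most $k$ edges cross the cut. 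The main obstacle is the degenerate case $f(v_i) = f(v_{i+1}) = t_0$: letting $G = \{j : f(v_j) = t_0\} = [a,b]$, the preimage $f'^{-1}(t_0)$ already contains the $b - a + 1$ vertex simplices in $G$, so $b - a + 1 \leq k$; edges in the cut then split into those with both endpoints outside $G$ (which straddle $t_0$ and so lie in $f'^{-1}(t_0)$, contributing at most $k$) and those incident to some vertex of $G$ (at most $d(b-a+1) \leq dk$ by bounded degree). This yields $\cw(\Gamma) \leq (d+1)k$, hence $\cw_Y(r) \leq (d+1)\sTO^1_Y(r)$, and combining the two inequalities gives the claimed $\simeq$.
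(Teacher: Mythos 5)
Your proposal is correct and follows essentially the same route as the paper: reduce to the $1$-skeleton via Proposition \ref{prop:12skeleton}, then compare $\sTO^1$ of a finite graph with its cutwidth in both directions exactly as in Lemma \ref{lem:TO1cw}, using the same two constructions (ordering $\to$ piecewise-linear map, continuous map $\to$ ordering by values). The only cosmetic difference is in the converse direction, where the paper applies the intermediate value theorem directly to the original map $g$ at the values $g(v_i)$ (every crossing edge's image already contains that value), so it gets $\cw(\Gamma)\leq \sTO^1(\Gamma)$ with no loss and without your interpolation step or the case analysis at ties; your bound $\cw(\Gamma)\leq (d+1)\sTO^1(\Gamma)$ is of course equally sufficient up to $\simeq$.
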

Cutwidth profiles are calculated for many interesting classes of bounded degree graphs by comparing the cutwidth profile to the separation profile \cite[Theorem 1.4]{HHKL23} (cf.\ \ref{sec:TO1conseq}).

Using the same family of ideas, we prove that 1-dimensional topological expanders necessarily contain graphical expanders. The full statement with explicit constants is given in Corollary \ref{cor:topexp-graphexp2}.

\begin{corollary}\label{cor:topexp-graphexp}
Let $Z=\bigsqcup_n Z_n$, where $(Z_n)_{n\in\N}$ is a $1$-dimensional bounded degree topological expander. For each $n$ there is a subcomplex $\Gamma_n\leq Z_n$ such that $(\Gamma_n)_{n\in\N}$ is a graphical expander.
\end{corollary}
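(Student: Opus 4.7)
The plan is to route the problem through the identification $\sTO^1 \simeq \cw$ from Theorem~\ref{thm:compTO1cutwidth}, and then extract graphical expanders from a family of linear cutwidth via an iterated bisection argument. First, I would unpack the topological expander hypothesis: there is a constant $c > 0$ such that every continuous $f : Z_n \to \R$ has a point whose preimage meets at least $c \cdot e(Z_n)$ edges of $Z_n$, where $e(Z_n)$ is the number of edges. Because the degrees are uniformly bounded, $e(Z_n) \simeq |V(Z_n)|$, so taking $Z = \bigsqcup_m Z_m$ and using $Z_n \leq Z$ as the witnessing subcomplex in the definition of $\sTO^1$ yields $\sTO^1_Z(|V(Z_n)|) \gtrsim |V(Z_n)|$ uniformly in $n$.

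Next, applying Theorem~\ref{thm:compTO1cutwidth} to $Z^{\leq 1}$ converts this into $\cw_{Z^{\leq 1}}(|V(Z_n)|) \gtrsim |V(Z_n)|$. By definition of the cutwidth profile, this produces, for each $n$, a subgraph $\Gamma'_n \leq Z_n$ with $|V(\Gamma'_n)| \to \infty$ and $\cw(\Gamma'_n) \geq c_1 |V(\Gamma'_n)|$ for some constant $c_1 > 0$ independent of $n$.

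The main combinatorial content then lies in passing from each $\Gamma'_n$ to a genuine expander subgraph $\Gamma_n \leq \Gamma'_n$. I would proceed by iterated bisection. Either $\Gamma'_n$ already has edge bisection width at least $c_2 |V(\Gamma'_n)|$ for an appropriate $c_2 = c_2(c_1,\Delta)$, in which case the combination of bounded degree and linear bisection yields a positive edge Cheeger constant, so $\Gamma'_n$ itself (or a constant-fraction modification of it) is already an expander; or there is a balanced partition $V(\Gamma'_n) = A \sqcup B$ with $|E(A,B)| = o(|V(\Gamma'_n)|)$. In the latter case, since cutwidth is sub-additive under small edge cuts and bounded degree equates edge and vertex counts up to the factor $\Delta$, at least one of the induced subgraphs $\Gamma'_n[A]$, $\Gamma'_n[B]$ retains cutwidth linear in its own vertex count. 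Iterating on the retaining side terminates in boundedly many steps, producing the desired subgraph $\Gamma_n \leq \Gamma'_n \leq Z_n$.

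The main obstacle is quantitative: ensuring the Cheeger constant obtained at the end of the iteration is uniform in $n$ rather than degenerating as $n\to\infty$. This requires carefully tracking the loss in the cutwidth lower bound at each bisection step, and bounding the total number of iterations in terms of $c$ and $\Delta$ alone. This explicit bookkeeping is what the full statement Corollary~\ref{cor:topexp-graphexp2} delivers; once it is in place, the resulting family $(\Gamma_n)_{n\in\N}$ is a graphical expander of uniformly bounded degree.
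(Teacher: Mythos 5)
Your reduction to linear cutwidth is fine, and matches the paper's first step via Lemma \ref{lem:TO1cw} (note you should apply that lemma directly to $Z_n$: the maximizing subgraph in the cutwidth profile of $Z=\bigsqcup_m Z_m$ need not lie in the component $Z_n$). The genuine gap is in your final bisection step. The claim that bounded degree together with linear edge bisection width yields a positive (vertex or edge) Cheeger constant is false: attach to an expander on $9n/10$ vertices a pendant path on $n/10$ vertices; any balanced partition still cuts linearly many edges of the expander part, but the path is a linear-size set with boundary of size one, so $h\to 0$. What is true, and what the corollary actually needs, is that a graph whose cut (or separation) is proportional to its size \emph{contains} an expander subgraph of proportional size; this extraction requires its own iterative pruning argument (repeatedly deleting subsets with small vertex boundary), which is exactly \cite[Proposition 2.4]{HumSepExp}, the ingredient the paper invokes and your proposal omits. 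Your parenthetical ``constant-fraction modification'' is precisely this missing content, not a routine adjustment.

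There is a second, related problem with the recursion itself. Your dichotomy is not one: the negation of ``bisection width $\geq c_2|V|$'' gives a balanced cut of size $<c_2|V|$, not $o(|V|)$. Tracking the correct quantities, a bisection step can halve the vertex count while exactly preserving the cutwidth-to-size ratio (take $c_2=c_1/2$: one half has $\cw\geq c_1m-c_1m/2=c_1(m/2)$), so nothing in your argument bounds the number of iterations in terms of $c$ and $\Delta$ alone, and the recursion can a priori bottom out at pieces of bounded size, where ``linear bisection width'' holds vacuously and yields no expander family with $|\Gamma_n|\to\infty$. The paper avoids both issues by arguing globally across scales: iterating $\cw_X(r)\leq\cw_X(r/2)+\Delta\sep_X(r)$ from Lemma \ref{lem:cwsep} and splitting the resulting geometric sum shows $\sep_{Z_n}(r_n)\gtrsim r_n$ with explicit constants, i.e.\ some subgraph of size proportional to $|Z_n|$ has linear cut, after which \cite[Proposition 2.4]{HumSepExp} produces the expander subgraph. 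So the two missing ingredients are the extraction lemma replacing your false ``linear bisection $\Rightarrow$ expander'' implication, and a scale-summation (or equivalent) argument guaranteeing that the dense piece has size comparable to $|Z_n|$; these are the substance of Corollary \ref{cor:topexp-graphexp2}, not bookkeeping.
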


\subsection{Topologically thin and topologically thick spaces}
Inspired by the thick-thin dichotomy for Poincar\'e profiles introduced in \cite{HumeMackTess-PprofLie}, we make the following definition.

\begin{definition}
    Let $X$ be a metric space and let $q\in\N$. We say $X$ is $q$-\textbf{topologically thin} if $\TO^q_X(r)\lesssim r^a$ for some $a<1$ and we say $X$ is $q$-\textbf{topologically thick} if $\TO^q_X(r)\gtrsim r/\ln(1+r)^b$ for some $b\geq 0$.
\end{definition}
We note that all analytically thick (respectively thin) bounded degree graphs are $1$-topologically thick (resp.\ thin). Also, if $X$ is $q$-topologically thin, then it is $q'$-topologically thin for all $q'\geq q$; and if $X$ is $q$-topologically thick, then it is $q'$-topologically thick for all $1\leq q'\leq q$. Both these observations follow from Theorem \ref{thm:TOprops}(ii). Our next family of results will focus on upper and then lower bounds, with a particular emphasis on sufficient conditions to deduce topological thickness or thinness.

Combining Theorem \ref{thm:compTO1cutwidth} with results from \cite{HHKL23,HumeMackTess-Pprof,HumeMackTess-PprofLie} we deduce that 
\begin{itemize}
    \item $H\times D$ is $1$-topologically thin whenever $H$ is a hyperbolic, and $D$ satisfies the doubling property (for instance, a product of a rank one simple Lie group with a connected nilpotent Lie group),
    \item $X$ is $1$-topologically thick whenever there is a regular map of the horocyclic product of 2 $3$-regular trees into $X$ which has uniformly discrete image (this happens for unimodular solvable Lie groups of exponential growth, semisimple Lie groups of rank at least 2 and any Cayley graph of a group containing a direct product of two free groups as a subgroup).
\end{itemize}

We now move to finding bounds in higher dimensions. Results are presented below, opinions and guesses as to how sharp these bounds are appear in \S\ref{sec:guess}.

\subsection{Euclidean spaces}

Our first calculation in higher dimensions is a complete description of the topological overlap of Euclidean spaces.

\begin{theorem}\label{thm:TOEuc} For each $1\leq q< n$ we have
\[
 \TO^q_{\R^n}(r)\simeq r^{1-q/n}.
\]
For every $q\geq n$, $\TO^q_{\R^n}(r)\simeq 1$.
\end{theorem}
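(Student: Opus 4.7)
The plan is to establish matching upper and lower bounds, treating the regimes $q\geq n$ and $1\leq q<n$ separately. For $q\geq n$ the identity inclusion $\R^n\hookrightarrow\R^q$ is a continuous map with singleton fibres, so $\TO^q_{\R^n}(r)\simeq 1$ is immediate. I therefore focus on the main case $1\leq q<n$.

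For the lower bound, I would exhibit the solid cube as the hard case. Take $Z=[0,L]^n$ with $L=\lceil r^{1/n}\rceil$, which is contained in a union of $O(r)$ balls of radius $1$ in $\R^n$. For any continuous $f:Z\to\R^q$, the plan is to invoke Gromov's $(n-q)$-waist/topological overlap inequality for the cube: there exists $y\in\R^q$ whose fibre $f^{-1}(y)$ meets at least $cL^{n-q}$ pairwise-disjoint unit-size cells of a fixed triangulation of $Z$. Since each unit ball of $\R^n$ intersects only $O(1)$ such cells, covering the fibre requires at least $\gtrsim L^{n-q}=r^{1-q/n}$ unit balls. Equivalently, one can deduce the matching bound first for the simplicial profile $\sTO^q$ of the cube complex and then transport it to $\TO^q$ via Theorem \ref{thm:defnequiv}.

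For the upper bound, the main tool is Guth's width-volume inequality: every bounded open $U\subset\R^n$ admits a continuous map to a $q$-dimensional polyhedron $K$ whose fibres have $(n-q)$-Hausdorff measure $\lesssim\vol(U)^{(n-q)/n}$. Given $Z\subset\R^n$ covered by $r$ unit balls, I would apply this to the $1$-neighbourhood of $Z$, whose $n$-volume is $\lesssim r$, yielding fibres of $(n-q)$-measure $\lesssim r^{1-q/n}$. With the construction taken piecewise linear and of bounded local geometry, each fibre is covered by $\lesssim r^{1-q/n}$ unit balls of $\R^n$. To land the target in $\R^q$ rather than an abstract $K$, I would compose with a simplicial embedding $K\hookrightarrow\R^{2q+1}$ and a generic orthogonal projection to $\R^q$, absorbing the bounded point-multiplicity into a dimensional constant.

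The hard part will be the translation between the measure-theoretic outputs of Gromov's and Guth's theorems and the unit-ball covering number used to define $\TO^q$. On the lower bound side one must show that ``many cells in the fibre'' forces ``many unit balls in every cover'', which reduces to a PL-approximation argument and the bounded local geometry of the triangulation. On the upper bound side one must ensure the Guth map is piecewise linear with enough regularity for $(n-q)$-volume to control cover number, and that the final projection to $\R^q$ preserves this control; neither step is subtle, but both must be performed carefully.
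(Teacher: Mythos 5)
Your treatment of the case $q\geq n$ and your lower bound are essentially the paper's route: the paper also deduces the lower bound from Gromov's waist inequality, quoted in covering-number form for maps $S^k\to\R^q$ (Guth's exposition, Theorem 5), and then transfers it to Euclidean balls by the two-hemisphere projection $S^k\to D^k$, which is biLipschitz on each hemisphere, before rescaling; since the statement used is already about covering by $r$-balls, no measure-to-covering or PL-approximation step is needed, and your cell-counting variant is a harmless repackaging of the same idea.

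The upper bound, however, has a genuine gap, and it sits exactly where the paper's real work is. First, Guth's width--volume inequality controls the $(n-q)$-dimensional Hausdorff measure of the fibres of a map to a $q$-dimensional polyhedron $K$; this does not control the number of unit balls needed to cover a fibre (a fibre of tiny $(n-q)$-measure can be scattered), so you cannot quote the theorem as a black box -- you would have to re-run the construction keeping track of covering numbers, which is essentially what the paper does. Second, and more fatally, your passage from $K$ to $\R^q$ is wrong: a generic projection of a $q$-dimensional polyhedron $K\subset\R^{2q+1}$ to $\R^q$ does \emph{not} have point-multiplicity bounded by a dimensional constant. For $q=1$ this multiplicity is precisely the overlap realised by a map of a graph to $\R$, i.e.\ (up to constants) its cutwidth, which is unbounded; and the polyhedra arising in Guth's construction are skeleton-like complexes of a coarse cubulation, for which any map to $\R^q$ has multiplicity growing polynomially in $r$. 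Consequently the preimage in $Z$ of a point $z\in\R^q$ under your composite is a union of \emph{many} Guth fibres, and the bound $\lesssim r^{1-q/n}$ is lost. The paper avoids this by never passing through an abstract polyhedron: after an averaging argument that translates the coarse cubulation so that $Z$ meets the unit neighbourhood $Y^{n-q}$ of its $(n-q)$-skeleton in at most $\binom{n}{q}r^{n-q}$ unit cubes, it constructs an explicit continuous map $f=\bigoplus_{i=1}^q f_i:\R^n\to\R^q$ whose fibres are \emph{globally} contained in $Y^{n-q}$ together with a uniformly bounded number of additional $(n-q)$-dimensional pieces confined to a single $r$-cube, so that $\Cov^c(f^{-1}(z))\leq \Cov^c(Z\cap Y^{n-q})+O(r^{n-q})$. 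Arranging this global alignment of the fibres, rather than a fibrewise bound composed with an uncontrolled map $K\to\R^q$, is the technical heart of the proof and is missing from your proposal.
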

The lower bound (for $q<n$) can be deduced from one of Gromov's several proofs of the waist inequality (specifically, \cite[p134]{Gromov83}), while the proof for the upper bound is much more technical, and is similar in spirit to Guth's proof of the width-volume inequality \cite{Guth-widthvol}.

\subsection{Upper bounds in dimensions $2$ and higher}

We present two general tools for upper bounds: a product theorem and a fibring theorem. All metric spaces considered are assumed to have bounded geometry.

\begin{theorem}\label{thm:productub}
    Let $X,Y$ be metric spaces. For every $q,q'\geq 1$
    \[
        \TO^{q+q'}_{X\times Y}(r) \lesssim \TO^q_X(r)\TO^{q'}_Y(r)
    \]
\end{theorem}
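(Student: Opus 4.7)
The plan is to build a map to $\R^{q+q'}$ coordinatewise, using the topological overlap maps guaranteed on each factor. Suppose $Z \subset X \times Y$ is contained in a union of $r$ unit balls with centres $(x_i, y_i)$ for $1 \leq i \leq r$. Fix any standard product metric (changes between $\ell^1$, $\ell^2$, $\ell^\infty$ only affect radii by uniform constants, which are absorbed by bounded geometry and $\lesssim$). The projections $\pi_X(Z) \subset X$ and $\pi_Y(Z) \subset Y$ are each covered by $r$ unit balls, namely $\{B_X(x_i,1)\}_{i=1}^{r}$ and $\{B_Y(y_i,1)\}_{i=1}^{r}$ respectively.

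Applying the definition of $\TO$ on each factor yields continuous maps $f : \pi_X(Z) \to \R^{q}$ and $g : \pi_Y(Z) \to \R^{q'}$ such that every point preimage of $f$ is covered by $k := \TO^q_X(r)$ unit balls in $X$ and every point preimage of $g$ is covered by $k' := \TO^{q'}_Y(r)$ unit balls in $Y$. Define the continuous map $F : Z \to \R^{q+q'}$ by $F(x,y) := (f(x), g(y))$. For any $(u, v) \in \R^{q+q'}$, if $f^{-1}(u) \subset \bigcup_{i=1}^{k} B_X(a_i,1)$ and $g^{-1}(v) \subset \bigcup_{j=1}^{k'} B_Y(b_j,1)$, then
\[
F^{-1}(u,v) \;=\; Z \cap \bigl( f^{-1}(u) \times g^{-1}(v)\bigr) \;\subset\; \bigcup_{i=1}^{k}\bigcup_{j=1}^{k'} B_X(a_i,1) \times B_Y(b_j,1).
\]
Each product set $B_X(a_i,1) \times B_Y(b_j,1)$ is contained in a ball of uniformly bounded radius in $X \times Y$, and by bounded geometry each such ball is covered by $O(1)$ unit balls. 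Hence $F^{-1}(u,v)$ is covered by at most $C \cdot \TO^q_X(r) \cdot \TO^{q'}_Y(r)$ unit balls in $X \times Y$, where $C$ depends only on the doubling data of the product metric. Absorbing $C$ into $\lesssim$ gives the stated inequality.

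The only real obstacle is bookkeeping with the product metric, namely ensuring that projections of unit balls in $X \times Y$ sit inside (a bounded number of) unit balls in each factor, and, conversely, that each $B_X(a_i,1) \times B_Y(b_j,1)$ sits inside a bounded number of unit balls in the product. Both steps are trivial for the $\ell^\infty$ product and reduce to an application of bounded geometry otherwise; I do not expect any conceptual difficulty beyond this.
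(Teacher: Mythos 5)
Your proposal is correct and follows essentially the same route as the paper: project $Z$ to each factor, apply the definition of $\TO^q_X$ and $\TO^{q'}_Y$ to get $f$ and $g$, take $F=(f,g)$, and cover each point preimage by products of unit balls. The only cosmetic difference is that the paper works directly with the $\ell^\infty$ product metric (so products of unit balls are unit balls and no doubling constant is needed), handling other product metrics via the monotonicity result, whereas you absorb the change of metric into a bounded-geometry constant; both are fine.
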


We record two simple corollaries of this and known results for $\TO^1$.

\begin{corollary}\label{cor:prodH2Tub}
    Let $X=(\HH^2)^k\times (T_3)^l$, where $T_3$ is the infinite $3$-regular tree. We have
    \[
        \TO^{k+l}_X(r) \lesssim \left(\TO^1_{\HH^2}(r)\right)^k\left(\TO^1_{T_3}(r)\right)^l \lesssim \ln(1+r)^{2k+l}
    \]
\end{corollary}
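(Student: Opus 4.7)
The plan is to apply Theorem~\ref{thm:productub} iteratively to break $X$ into single factors, and then substitute standard bounds on $\TO^1$ for the hyperbolic plane and the $3$-regular tree.

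To obtain the first inequality, I would proceed by induction on the number of factors. Writing $X=X_1\times\cdots\times X_{k+l}$ where $X_i=\HH^2$ for $i\leq k$ and $X_i=T_3$ otherwise, the inductive claim is
\[
    \TO^m_{X_1\times\cdots\times X_m}(r)\lesssim \prod_{i=1}^m \TO^1_{X_i}(r)
\]
for all $1\leq m\leq k+l$. The case $m=1$ is vacuous, and the inductive step applies Theorem~\ref{thm:productub} with $q=m-1$, $q'=1$ to the decomposition $(X_1\times\cdots\times X_{m-1})\times X_m$. Specialising to $m=k+l$ yields the first inequality. Note that both $\HH^2$ and $T_3$ have bounded geometry, so the metric-space version of $\TO^q$ applies.

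For the second inequality, it suffices to verify $\TO^1_{\HH^2}(r)\lesssim \ln(1+r)^2$ and $\TO^1_{T_3}(r)\lesssim \ln(1+r)$. Replacing $\HH^2$ and $T_3$ by quasi-isometric bounded degree simplicial complexes, Theorem~\ref{thm:compTO1cutwidth} reduces these two estimates to the corresponding cutwidth-profile bounds. Both are standard consequences of the separation/cutwidth comparison in \cite{HHKL23}: for any bounded degree tree the cutwidth profile is $\simeq \ln(1+r)$, while for $\HH^2$ (or a bounded degree graph quasi-isometric to it) the separation profile is $\simeq \ln(1+r)$ by \cite{HumeMackTess-Pprof}, which translates to a cutwidth profile $\simeq \ln(1+r)^2$ via \cite[Theorem~1.4]{HHKL23}. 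Substituting and collecting exponents gives $2k+l$.

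The primary effort is bookkeeping: the induction is forced by Theorem~\ref{thm:productub}, and the analytic content lives entirely in the cited $\TO^1$ bounds, so there is no genuine obstacle beyond invoking these results in the correct form. The only step where care is needed is checking that the metric-space $\TO^1$ of $\HH^2$ and $T_3$ (as opposed to the simplicial $\sTO^1$ of chosen bounded degree approximations) satisfies the cited polylogarithmic bounds, which follows from the equivalence of the two definitions recorded in Theorem~\ref{thm:defnequiv}.
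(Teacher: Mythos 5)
Your proposal is correct and follows essentially the same route as the paper: iterate Theorem~\ref{thm:productub} over the $k+l$ factors and substitute the known bounds $\TO^1_{\HH^2}(r)\lesssim \ln(1+r)^2$ and $\TO^1_{T_3}(r)\lesssim \ln(1+r)$, obtained via Theorem~\ref{thm:compTO1cutwidth} and the cutwidth/separation results of \cite{HHKL23,HumeMackTess-Pprof}. The only difference is cosmetic: the paper quotes Corollary~\ref{cor:fgnilpTO1} and the tree-cutwidth computation of \cite{CFST-treecutwidth} directly, whereas you rederive these from the separation-to-cutwidth comparison, which is equally valid.
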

In particular, $(\HH^2)^k\times (T_3)^l$ is $(k+l)$-topologically thin. Later we will prove that these spaces are $(k+l-1)$-topologically thick.
\begin{corollary}\label{cor:prodXR}
    Let $X$ be a metric space. For every $q,k\geq 1$
    \[
        \TO^{q+k}_{X\times\R^k}(r) \lesssim \TO^q_X(r)\TO^k_{\R^k}(r) \simeq \TO^q_X(r)
    \]
\end{corollary}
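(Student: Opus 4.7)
The plan is to combine the two preceding results directly: apply Theorem \ref{thm:productub} (the product upper bound) with the second factor taken to be $Y=\R^k$ and with the Euclidean dimension $q'=k$, then invoke Theorem \ref{thm:TOEuc} at the boundary case $q=n=k$ to collapse the Euclidean factor. There is essentially no new geometric content to produce; the corollary is a packaging of the two ingredients.

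More concretely, the first step is to verify the hypotheses of Theorem \ref{thm:productub}: $X$ is assumed to have bounded geometry, and $\R^k$ has bounded geometry, so their $\ell^\infty$-product $X\times\R^k$ does too, and the theorem applies to give
\[
\TO^{q+k}_{X\times\R^k}(r) \;\lesssim\; \TO^q_X(r)\,\TO^k_{\R^k}(r).
\]
The second step is to observe that the case $q\geq n$ of Theorem \ref{thm:TOEuc} applied with $q=n=k$ yields $\TO^k_{\R^k}(r)\simeq 1$, i.e.\ there is a constant $C$ with $\TO^k_{\R^k}(r)\leq C$ for all $r$. Multiplying through,
\[
\TO^q_X(r)\,\TO^k_{\R^k}(r) \;\leq\; C\cdot \TO^q_X(r),
\]
which, together with the trivial lower bound $\TO^q_X(r)\leq \TO^q_X(r)\,\TO^k_{\R^k}(r)$ (using $\TO^k_{\R^k}(r)\geq 1$, which follows from the definition since covering by $0$ balls is impossible for nonempty subsets), gives $\TO^q_X(r)\,\TO^k_{\R^k}(r)\simeq \TO^q_X(r)$. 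Chaining the two steps produces the stated inequality.

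There is no real obstacle; the only thing worth checking carefully is the boundary case $q=n$ of Theorem \ref{thm:TOEuc}, since intuitively a $k$-dimensional subset of $\R^k$ is rather large yet must still admit a continuous map to $\R^k$ with uniformly small preimages. This is exactly the content of the upper bound half of Theorem \ref{thm:TOEuc}, which has already been established (in the spirit of Guth's width-volume inequality), so we are entitled to quote it as a black box.
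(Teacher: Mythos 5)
Your argument is correct and is exactly the route the paper intends: Corollary \ref{cor:prodXR} is stated as an immediate consequence of Theorem \ref{thm:productub} applied with $Y=\R^k$, $q'=k$, combined with the boundedness $\TO^k_{\R^k}(r)\simeq 1$ from Theorem \ref{thm:TOEuc}. Your care about the boundary case $q=n=k$ is well placed, and quoting the upper-bound half of Theorem \ref{thm:TOEuc} as a black box is precisely what the paper does.
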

In Corollary \ref{cor:prodXR}, we can replace $\R^k$ by any metric space admitting a continuous injection into $\R^k$, or more generally any metric space $Y$ where $\TO^k_Y$ is bounded.

Note that for $H=Y=\HH^3$ and $q=q'=1$, Theorem \ref{thm:productub} gives the trivial upper bound of $\TO^2_{(\HH^3)^2}(r)\lesssim r$, which is worse than the (currently best) bound of $\TO^2_{(\HH^3)^2}(r)\lesssim \TO^1_{(\HH^3)^2}(r) \lesssim r/\ln(1+r)$. The best known lower bound is $\TO^2_{(\HH^3)^2}(r)\gtrsim \TO^2_{\R^4}(r)\simeq r^\frac12$, which follows from Theorem \ref{thm:TOEuc} and the monotonicity of $\TO$.

Next we present a fibring theorem, which takes further advantage of the metric definition of $\TO$.

\begin{proposition}\label{prop:semidirprod} Let $(A,d_A)$ be a metric space and let $(B,d_B)$ be a metric space which admits a continuous injective map $g:B\to\R^k$ for some $k$. Let $d$ be a metric on the set $A\times B$ which satisfies
\begin{equation*}
 d((a,b),(a',b))=d_A(a,a')
\end{equation*}
for all $a,a'\in A$ and all $b\in B$.

If there is some constant $C$ such that $\Cov^1(\pi_A(Z))\leq C\Cov^1(Z)$ for all $Z\subseteq A\times B$ with $\Cov^1(Z)<+\infty$, where $\pi_A$ denotes projection onto the $A$-factor,
 then for all $q\in\N$
\[
\TO^{k+q}_{(A\times B,d)}(r) \leq \TO^q_{(A,d_A)}(Cr).
\]
\end{proposition}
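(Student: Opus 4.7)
The plan is to start with any $Z\subseteq A\times B$ satisfying $\Cov^1(Z)\leq r$ and construct an explicit continuous map $f:Z\to\R^{k+q}$ whose point-preimages are each covered by at most $\TO^q_{(A,d_A)}(Cr)$ unit balls in $(A\times B,d)$; by the definition of $\TO^{k+q}$, this proves the stated inequality.

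The first step is to apply the projection hypothesis to obtain $\Cov^1(\pi_A(Z))\leq Cr$, which by the definition of $\TO^q_{(A,d_A)}$ yields a continuous map $h:\pi_A(Z)\to\R^q$ such that every fibre $h^{-1}(z)$ is covered by at most $N:=\TO^q_{(A,d_A)}(Cr)$ unit balls in $(A,d_A)$. Using the continuous injection $g:B\to\R^k$, I would then define
\[
 f:Z\to\R^k\times\R^q=\R^{k+q},\qquad f(a,b):=\bigl(g(b),\,h(a)\bigr),
\]
which is continuous as a composite of $g$, $h$ and the coordinate projections.

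The critical step is a fibre analysis. Because $g$ is injective, for any $(y,z)\in\R^k\times\R^q$ there is at most one $b_0\in B$ with $g(b_0)=y$, so
\[
 f^{-1}(y,z)\subseteq\bigl(h^{-1}(z)\times\{b_0\}\bigr)\cap Z,
\]
i.e.\ every fibre of $f$ sits inside a single horizontal slice parallel to $A$. The hypothesis $d((a,b_0),(a',b_0))=d_A(a,a')$ says exactly that the inclusion $A\times\{b_0\}\hookrightarrow(A\times B,d)$ is an isometric embedding of $(A,d_A)$, so each $d_A$-unit ball $B_A(a_0,1)$ lifts to a subset of the $d$-unit ball centred at $(a_0,b_0)$. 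Applying this to a minimal cover of $h^{-1}(z)$ by $N$ unit balls in $(A,d_A)$ produces the desired cover of $f^{-1}(y,z)$ by $N$ unit balls in $(A\times B,d)$.

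There is no real obstacle here beyond careful bookkeeping; the argument is made possible by two clean structural ingredients: injectivity of $g$, which collapses every fibre of $f$ onto an $A$-slice, and the slice condition on $d$, which guarantees that covers in $(A,d_A)$ transfer faithfully to covers in $(A\times B,d)$. The role of the projection-covering hypothesis is solely to convert the bound $\Cov^1(Z)\leq r$ into the input $\Cov^1(\pi_A(Z))\leq Cr$ required by the definition of $\TO^q_{(A,d_A)}$.
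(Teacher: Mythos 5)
Your argument is essentially identical to the paper's proof: the paper also takes an optimal map $f$ on $\pi_A(Z)$ (using the projection hypothesis to get $\Cov^1(\pi_A(Z))\leq Cr$), forms the product map $(a,b)\mapsto(f(a),g(b))$, and bounds each fibre via injectivity of $g$ together with the fact that the slice condition on $d$ makes $A\times\{b_0\}$ isometric to $(A,d_A)$. Your extra remark that each $d_A$-unit ball lifts into the corresponding $d$-unit ball centred on the slice is just a slightly more explicit rendering of the same covering step, so the proposal is correct and takes the same route.
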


One application of this result is to obtain the following upper bounds for real hyperbolic spaces

\begin{theorem}\label{thm:TOhyp} We have
\[
    \TO^q_{\HH^d}(r) \lesssim  \left\{
                                    \begin{array}{cc}
                                        r^{1-1/(d-q)} & 2 \leq q \leq d-2 \\
                                        \ln(1+r)^2 & q=d-1
                                    \end{array}
                            \right.
\] 
\end{theorem}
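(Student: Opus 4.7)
The plan is to apply the fibring theorem (Proposition \ref{prop:semidirprod}) with a decomposition of $\HH^d$ as a totally geodesic $\HH^{d-q+1}$-factor crossed with a Euclidean $\R^{q-1}$-factor, reducing the computation to $\TO^1_{\HH^{d-q+1}}$; the latter is then controlled via Theorem \ref{thm:compTO1cutwidth} together with the cutwidth profile estimates for real hyperbolic space recalled in the introduction.

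I would work in upper half-space coordinates $\HH^d = \{(x_1,\ldots,x_{d-1},y) : y>0\}$ with metric $y^{-2}(dx_1^2+\cdots+dx_{d-1}^2+dy^2)$, and partition the horizontal coordinates into the first $d-q$ and the last $q-1$. Set $A = \HH^{d-q+1}$ with coordinates $(x_1,\ldots,x_{d-q},y)$ and $B = \R^{q-1}$ with coordinates $(x_{d-q+1},\ldots,x_{d-1})$, and identify $\HH^d$ with $A\times B$ accordingly. For any fixed $b\in B$ the slice $A\times\{b\}$ is a horizontal translate of the slice at $b=0$, so it is totally geodesic and isometric to $\HH^{d-q+1}$, which gives the required identity $d((a,b),(a',b)) = d_A(a,a')$. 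Take $g$ to be the identity $B\to\R^{q-1}$, so $k = q-1$. A direct tangent-vector calculation shows that the projection $\pi_A : \HH^d\to\HH^{d-q+1}$ forgetting the $B$-coordinates is $1$-Lipschitz: at height $y$ a tangent vector $(v^A,v^B,w)$ has squared norm $(|v^A|^2+|v^B|^2+w^2)/y^2$, while its projection $(v^A,w)$ has squared norm $(|v^A|^2+w^2)/y^2$. Hence $\Cov^1(\pi_A(Z))\leq\Cov^1(Z)$ with $C=1$, and Proposition \ref{prop:semidirprod} applied with $k=q-1$ and inner parameter $1$ yields
\[
\TO^q_{\HH^d}(r) \;=\; \TO^{(q-1)+1}_{\HH^d}(r) \;\leq\; \TO^1_{\HH^{d-q+1}}(r).
\]

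To finish, I would invoke Theorem \ref{thm:compTO1cutwidth} (combined with Theorem \ref{thm:defnequiv} applied to a $1$-net in $\HH^{d-q+1}$) to identify $\TO^1_{\HH^{d-q+1}}$ with the cutwidth profile of a bounded-degree graph quasi-isometric to $\HH^{d-q+1}$, and then use the known cutwidth bounds for real hyperbolic spaces: $\cw_{\HH^2}(r)\lesssim\ln(1+r)^2$, which yields the $q=d-1$ case, and $\cw_{\HH^n}(r)\lesssim r^{1-1/(n-1)}$ for $n\geq 3$, applied with $n=d-q+1$, which yields the $2\leq q\leq d-2$ case. The only genuinely delicate point in the argument is the verification of the slice hypothesis of Proposition \ref{prop:semidirprod}: it relies specifically on the upper half-space description of $\HH^d$ and the fact that horizontal translations are isometries, and would fail for an arbitrary negatively curved space.
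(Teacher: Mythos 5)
Your proposal is correct and follows essentially the same route as the paper: the paper also works in the upper half-space model and applies the fibring result (Proposition \ref{prop:semidirprod2}) to reduce to $\TO^1_{\HH^{d-q+1}}$, which it then bounds via Theorems \ref{thm:defnequiv} and \ref{thm:compTO1cutwidth} and the known cutwidth estimates for hyperbolic spaces. The only difference is cosmetic: the paper splits off one $\R$-factor at a time (Proposition \ref{prop:hypdimdrop}, iterated), whereas you split off $\R^{q-1}$ in a single application, with the same verification of the totally geodesic slice condition and the $1$-Lipschitz projection.
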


Proposition \ref{prop:semidirprod} can be applied to short exact sequences of topological groups where the normal subgroup admits a continuous injective map into some $\R^k$. 

\begin{corollary}\label{cor:ses} Let $1\to N \to G \to_{\pi} H \to 1$ be a (continuous) short exact sequence of compactly generated locally compact groups, and let $d_G$ be a proper left-invariant metric on $G$. If there is a continuous injective map $N\to \R^k$ for some $k$, then for all $q\in\N$, we have
\[
\TO^{k+q}_{(G,d_G)}(r) \lesssim \TO^q_{(H,d_H)}(r)
\]
where $d_H$ is the proper left-invariant metric $d_H(h,h')=d_G(\pi^{-1}(h),\pi^{-1}(h'))$.
\end{corollary}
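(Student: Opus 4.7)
The plan is to apply Proposition \ref{prop:semidirprod} with $A=H$ and $B=N$ after realising $(G,d_G)$ coarsely as a product set $H\times N$. First I would fix a Borel section $s:H\to G$ of $\pi$, which exists by standard measurable selection arguments for continuous surjections of locally compact second countable groups. This yields a bijection $\Phi:H\times N\to G$, $\Phi(h,n)=n\cdot s(h)$. Since $N$ is a closed compactly generated locally compact subgroup, it admits a proper left-invariant metric $d_N$ (Struble's theorem) inducing its original topology, so the hypothesised continuous injection $N\to\R^k$ is automatically continuous with respect to $d_N$.

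Next I would equip $H\times N$ with the auxiliary metric $\tilde d((h,n),(h',n'))=d_H(h,h')+d_N(n,n')$, which satisfies the fibre condition $\tilde d((h,n),(h',n))=d_H(h,h')$ of Proposition \ref{prop:semidirprod} by construction. The projection $\pi_H:(H\times N,\tilde d)\to(H,d_H)$ is $1$-Lipschitz, so any covering of $Z\subseteq H\times N$ by $r$ unit balls pushes forward to a covering of $\pi_H(Z)$ by $r$ unit balls; together with bounded geometry of $(H,d_H)$ this yields the required covering bound $\Cov^1(\pi_H(Z))\leq C\Cov^1(Z)$.

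The main step is to verify that $\Phi:(H\times N,\tilde d)\to(G,d_G)$ is a coarse equivalence. For one direction, left-invariance of $d_G$ and the triangle inequality give
\[
d_G(ns(h),n's(h'))\leq d_G(s(h),s(h'))+d_G(ns(h'),n's(h'));
\]
the first summand is controlled by $d_H(h,h')$ after choosing $s$ to be coarsely Lipschitz, which is possible because $d_H$ is by definition the quotient pseudo-metric of $d_G$ modulo $N$, while the second summand is controlled by $d_N(n,n')$ using the coarse uniqueness of proper left-invariant metrics on the compactly generated locally compact group $N$. The reverse inequality follows from $d_H(\pi(g),\pi(g'))\leq d_G(g,g')$ combined with the same equivalence of left-invariant metrics applied on a single $N$-coset. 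Granted this, the coarse invariance of $\TO^q$ (obtained by applying Theorem \ref{thm:TOmono}, or more directly the metric definition, to $\Phi$ and its coarse inverse) gives $\TO^{k+q}_{(G,d_G)}(r)\simeq\TO^{k+q}_{(H\times N,\tilde d)}(r)$, and Proposition \ref{prop:semidirprod} then furnishes the desired bound $\TO^{k+q}_{(H\times N,\tilde d)}(r)\leq\TO^q_{(H,d_H)}(Cr)\lesssim\TO^q_{(H,d_H)}(r)$.

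The principal obstacle is the coarse-equivalence verification: since $d_G$ is only left-invariant, the right-translation $x\mapsto xs(h')$ is not a $d_G$-isometry, so matching the $d_N$ factor of $\tilde d$ with the $d_G$-distance between points in a common coset $N\cdot s(h')$ is the delicate point, and it is here that the coarse uniqueness of proper left-invariant metrics on $N$ enters essentially. All remaining ingredients — the choice of section, the product metric, the covering condition, and the invoked proposition — are then routine.
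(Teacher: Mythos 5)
Your reduction breaks at the step you yourself flag as delicate: the claim that $\Phi:(H\times N,\tilde d)\to (G,d_G)$, $\Phi(h,n)=n\,s(h)$, is a coarse equivalence is false in general, and "coarse uniqueness of proper left-invariant metrics on $N$" cannot repair it. By left-invariance, $d_G(n s(h'),n' s(h'))=d_G\bigl(e,\,s(h')^{-1}(n^{-1}n')s(h')\bigr)$, so the quantity you need to control is the $d_G$-size of a \emph{conjugate} of $n^{-1}n'$ by $s(h')^{-1}$; uniqueness of adapted metrics on $N$ only compares $d_N$ with $d_G$ restricted to $N$ near the identity coset, and says nothing uniform in $h'$. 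Concretely, for $\SOL=\R^2\rtimes\R$ with $N=\R^2$, $H=\R$ and the obvious (even homomorphic, hence coarsely Lipschitz) section, conjugation by $s(h')$ expands $N$ exponentially in $|h'|$, so pairs with $\tilde d=1$ are sent to pairs at arbitrarily large $d_G$-distance: there is no upper control function, and $\Phi$ is not a coarse equivalence. The inverse direction fails too (unit $d_G$-balls inside a far-out coset $Ns(h')$ have unbounded $\Cov^1$ in $\tilde d$), so no regular/continuous comparison map in the direction you need exists along these lines. Indeed your intermediate claim $\TO^{k+q}_{(G,d_G)}\simeq\TO^{k+q}_{(H\times N,\tilde d)}$ would, applied at $q=1$ and $k+q$ replaced by $1$ via the same (claimed) coarse equivalence, force $\TO^1_{\SOL}(r)\simeq\TO^1_{\R^3}(r)\simeq r^{2/3}$, contradicting $\TO^1_{\SOL}(r)\simeq r/\ln r$ established in the paper. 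A secondary issue is that a coarsely Lipschitz (or even coarsely uniform) section $s$ need not exist for a general continuous extension, but the conjugation problem is already fatal in the split case.

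The intended argument avoids metrizing $N$ altogether and never passes through a product metric: identify $G$ with $H\times N$ only as a set via a section, keep the metric $d=d_G$ on this set, take $A=(H,d_H)$ and $B=N$ equipped solely with its continuous injection into $\R^k$, observe that $\pi$ is $1$-Lipschitz for the quotient metric $d_H$ so that $\Cov^1(\pi(Z))\leq\Cov^1(Z)$ gives the covering hypothesis with $C=1$, and apply Proposition \ref{prop:semidirprod} directly. The whole point of that proposition is to handle the twisted metric $d_G$; replacing $d_G$ by the product metric $d_H+d_N$, as you do, both trivializes the statement (it would follow from the product bound of Corollary \ref{cor:prodXR}) and requires a comparison between $(G,d_G)$ and the metric product that is simply not available.
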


Two interesting consequences of this are

\begin{corollary} Let $1\to N\to G \to \R^l\to 1$ be a short exact sequence where $N$ is a connected simply-connected nilpotent Lie group of manifold dimension $k$. For every $q\geq 1$
\[
    \TO^{k+q}_G(r) \lesssim \TO^q_{\R^l}(r)\simeq r^{1- \frac{q}{l}}
\]
\end{corollary}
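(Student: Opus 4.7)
The plan is to obtain this as a direct application of Corollary \ref{cor:ses} together with the Euclidean calculation from Theorem \ref{thm:TOEuc}. Essentially all that must be checked is that the hypothesis of Corollary \ref{cor:ses} -- namely a continuous injective map from the normal subgroup $N$ to some $\R^k$ -- is satisfied in the nilpotent setting.

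First I would invoke the classical theorem that for a connected simply-connected nilpotent Lie group $N$ the exponential map $\exp\colon\mathfrak{n}\to N$ from the Lie algebra is a global diffeomorphism. Since $N$ has manifold dimension $k$, its Lie algebra $\mathfrak{n}$ is linearly isomorphic to $\R^k$, so composing with $\exp^{-1}$ yields a smooth (and therefore continuous) bijection $N\to\R^k$. This verifies the hypothesis of Corollary \ref{cor:ses} with the target dimension equal to $k$, the same $k$ appearing in the present statement.

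Next I would apply Corollary \ref{cor:ses} to the given short exact sequence with quotient $H=\R^l$, which produces the bound
\[
\TO^{k+q}_G(r)\lesssim \TO^q_{\R^l}(r)
\]
for every $q\geq 1$. Theorem \ref{thm:TOEuc} then supplies the equivalence $\TO^q_{\R^l}(r)\simeq r^{1-q/l}$ when $1\leq q<l$, and $\TO^q_{\R^l}(r)\simeq 1$ when $q\geq l$ (which is compatible with reading $r^{1-q/l}$ as a bounded function once the exponent becomes non-positive).

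The only substantive point here is the identification of $N$ with Euclidean space via the exponential map; beyond that the statement is essentially a direct concatenation of Corollary \ref{cor:ses} and Theorem \ref{thm:TOEuc}. I do not anticipate any real obstacle, although one should double-check that the proper left-invariant metric $d_H$ on $\R^l$ inherited from $G$ via the construction in Corollary \ref{cor:ses} is quasi-isometric to the standard Euclidean metric, so that $\TO^q_{(\R^l,d_H)}\simeq \TO^q_{\R^l}$; this is a routine consequence of the fact that any two proper left-invariant metrics on a compactly generated locally compact group are quasi-isometric, together with the fact that $\TO^q$ of bounded geometry spaces is invariant (up to $\simeq$) under quasi-isometries, or equivalently under regular maps with quasi-surjective image.
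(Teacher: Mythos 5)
Your proposal is correct and matches the paper's (implicit) argument: the corollary is stated as an immediate consequence of Corollary \ref{cor:ses}, using exactly the fact that a connected simply-connected nilpotent Lie group of dimension $k$ is diffeomorphic to $\R^k$ (via the exponential map), and then the Euclidean computation of Theorem \ref{thm:TOEuc}. Your added remark about comparing the induced metric $d_H$ on $\R^l$ with the Euclidean one is a reasonable piece of care that the paper itself treats as routine.
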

In particular, for the real Heisenberg group $H_m$ of dimension $2m+1$ and $q\geq 2$, $\TO^q_{H_m}(r)\lesssim r^{1-\frac{q-1}{2m}}$. The embedded copy of $\R^{m+1}$ in $H_m$ gives a lower bound $\TO^q_{H_m}(r)\gtrsim r^{1-\frac{q}{m+1}}$.

\begin{corollary}\label{cor:ubsymspace} Let $X$ be a Riemannian symmetric space with no compact factor of dimension $n$ and rank $d$. For every $q\geq 1$ we have
    \[
        \TO^{n-d+q}_X(r)\lesssim \TO^q_{\R^d}(r)\simeq r^{1-\frac{q}{d}}.
    \]
\end{corollary}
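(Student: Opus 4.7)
The plan is to realise $X$ as a simply-connected solvable Lie group via the Iwasawa decomposition and then apply Corollary \ref{cor:ses}. Write $X = G/K$, where $G$ is a connected Lie group chosen so that its semisimple part has no compact factor and its radical absorbs any Euclidean factor of $X$, and $K$ is a maximal compact isotropy subgroup. The Iwasawa-type decomposition $G = KAN$ realises $X$ as a diffeomorphic copy of the simply-connected solvable group $S = AN$, where $A \cong \R^d$ is a maximal flat of dimension equal to the rank $d$ and $N$ is a simply-connected nilpotent group of manifold dimension $n-d$. Pulling back the Riemannian metric along $S \to X$, $s \mapsto s\cdot o$, gives a proper left-invariant metric $d_S$ on $S$ under which $(S, d_S)$ is isometric to $X$.

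The group $S$ sits in a short exact sequence $1 \to N \to S \to A \to 1$ of compactly generated locally compact groups. Since $N$ is simply-connected nilpotent, the Lie-algebra exponential $\exp:\mathfrak{n}\to N$ is a diffeomorphism and provides the required continuous injection $N \to \R^{n-d}$. Corollary \ref{cor:ses} then yields
\[
    \TO^{(n-d)+q}_X(r) \;=\; \TO^{(n-d)+q}_{(S,d_S)}(r) \;\lesssim\; \TO^{q}_{(A,d_H)}(r),
\]
where $d_H$ is the quotient metric on $A \cong \R^d$ induced from $d_S$.

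What remains, and this is the main point I would treat carefully, is to show that $(A,d_H)$ is quasi-isometric to the standard Euclidean $\R^d$; once this is established, Theorem \ref{thm:TOEuc} gives $\TO^{q}_{(A,d_H)}(r) \simeq r^{1 - q/d}$ and the corollary follows. This is a classical feature of Iwasawa/horocyclic geometry: under the identification $S \leftrightarrow X$, the subgroup $A$ corresponds to a maximal flat, i.e.\ a totally geodesic isometric copy of $\R^d$, and the cosets $aN$ correspond to the leaves of the horocyclic foliation. The distance between two such leaves is realised along the flat (straightforwardly on products of rank-one symmetric spaces, and in general via the linear structure of the Busemann functions restricted to $A$), so that $d_H$ is biLipschitz equivalent to the Euclidean metric on $\R^d$, closing the argument.
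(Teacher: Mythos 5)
Your argument is correct and follows essentially the same route as the paper: Iwasawa decomposition $G=KAN$, the exponential diffeomorphism giving a continuous injection $N\to\R^{n-d}$, the short exact sequence $1\to N\to AN\to A\to 1$ fed into Corollary \ref{cor:ses}, and then Theorem \ref{thm:TOEuc} for $A\cong\R^d$. The only difference is that you explicitly flag and justify the identification of $(A,d_H)$ (the quotient metric) with Euclidean $\R^d$ up to quasi-isometry, a point the paper treats as immediate from $A$ being a totally geodesic flat; your Busemann-function argument for this is sound.
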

In particular, $X$ is $(n-d+1)$-topologically thin.

The final result in this section is an upper bound for all metric spaces with finite Assouad-Nagata dimension, which just follows from the equivalent result for $\TO^1$ and Theorem \ref{thm:TOprops}(ii).

\begin{corollary}\label{cor:TO1finANdim} Let $X$ have finite Assouad-Nagata dimension. Then, for every $q\geq 1$,
\[
 \TO^q_X(r) \lesssim \frac{r}{\ln(1+r)}.
\]
\end{corollary}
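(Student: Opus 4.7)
The plan is to deduce the bound by chaining the $q=1$ case with the monotonicity statement in Theorem \ref{thm:TOprops}(ii). Specifically, I would first record that for any bounded geometry space $X$ of finite Assouad-Nagata dimension one has
\[
 \TO^1_X(r) \lesssim \frac{r}{\ln(1+r)}.
\]
In the simplicial setting this is immediate from Theorem \ref{thm:compTO1cutwidth}, which identifies $\sTO^1_X$ with the cutwidth profile $\cw_{X^{\leq 1}}$, combined with the known bound $\cw_Y(r) \lesssim r/\ln(1+r)$ for bounded degree graphs of finite Assouad-Nagata dimension from \cite{HHKL23} (which itself rests on the analogous bound for the separation profile due to Hume-Mackay-Tessera). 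For an arbitrary bounded geometry metric space one passes to a discretization (for instance a maximal $1$-separated net, together with its nerve graph at a suitable scale); finite Assouad-Nagata dimension is inherited by this net, and the equivalence between the simplicial and metric formulations (Theorem \ref{thm:defnequiv}) transports the bound back to $\TO^1_X$.

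Second, I would invoke monotonicity in $q$. Theorem \ref{thm:TOprops}(ii) gives $\sTO^{q+1}_X(r) \leq \sTO^q_X(r)$, and iterating yields $\sTO^q_X(r) \leq \sTO^1_X(r)$ for every $q\geq 1$. The same monotonicity holds for the metric version $\TO^q$, either by the equivalence of definitions or by repeating the elementary argument behind (ii): a continuous map $Z\to\R^q$ realising the optimal preimage bound may be post-composed with the inclusion $\R^q\hookrightarrow\R^{q+1}$ without increasing the size of point preimages. Combining the two estimates gives the desired conclusion
\[
 \TO^q_X(r) \leq \TO^1_X(r) \lesssim \frac{r}{\ln(1+r)}.
\]

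I do not anticipate any real obstacle here, since this corollary is essentially bookkeeping on top of results already stated or cited in the paper. The only mild care required is in moving between the simplicial $\sTO$ and the metric $\TO$ when appealing to Theorem \ref{thm:TOprops}(ii), and in citing the correct version of the $\TO^1$ bound; both transitions are handled by Theorem \ref{thm:defnequiv} and Theorem \ref{thm:compTO1cutwidth} respectively.
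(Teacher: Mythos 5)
Your argument is correct and follows essentially the same route as the paper: establish the $q=1$ bound via the cutwidth/separation results of \cite{HHKL23} (the separation-profile input being \cite[Theorem 1.5]{HumSepExp} rather than Hume--Mackay--Tessera) and then iterate the monotonicity in $q$ from Theorem \ref{thm:TOprops}(ii). The paper simply proves the statement in its simplicial form for $X\in\bS$, so your additional discretization step for general bounded-geometry spaces is not present there; if you do include it, note that the transfer is really handled by Propositions \ref{prop:cTOtoTO} and \ref{prop:TOtocTO} (applied to a Lipschitz map to the nerve of a cover) rather than by Theorem \ref{thm:defnequiv} alone, which only compares the two definitions on a simplicial complex with its own metric.
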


\subsection{Lower bounds in dimensions $2$ and higher}
As $\TO$ behaves monotonically with respect to regular maps, one obvious source of lower bounds are embedded Euclidean spaces. In addition to the lower bounds on Heisenberg groups listed above, we have the following result for real hyperbolic spaces

\begin{proposition}
\[
    \TO^q_{\HH^d}(r) \gtrsim  \left\{
                                    \begin{array}{cc}
                                        r^{1-\frac{q}{d-1}} & 1 \leq q \leq d-2 \\
                                        \ln(1+r) & q=d-1
                                    \end{array}
                            \right.
\]
\end{proposition}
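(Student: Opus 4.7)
The proof splits by $q$. For $1 \leq q \leq d-2$, my plan is to exhibit a regular map $\R^{d-1} \to \HH^d$ via a horosphere and then apply monotonicity together with Theorem~\ref{thm:TOEuc}. In the upper half-space model, the horosphere $\{(x,1) : x \in \R^{d-1}\}$ carries the Euclidean metric as its intrinsic Riemannian metric, while its metric induced from $\HH^d$ agrees with the Euclidean one up to a uniform constant on the unit scale (and is logarithmically smaller globally). Consequently a unit net of $\R^{d-1}$ maps into $\HH^d$ via a $1$-Lipschitz map of bounded multiplicity, i.e.\ a regular map. As $\HH^d$ is uniformly contractible, Theorem~\ref{thm:TOmono} together with Theorem~\ref{thm:TOEuc} yields
\[
    \TO^q_{\HH^d}(r) \;\gtrsim\; \TO^q_{\R^{d-1}}(r) \;\simeq\; r^{1-q/(d-1)}
\]
for all $1 \leq q \leq d-2$.

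For $q = d-1$ the horosphere approach gives only a constant, so a direct argument is required. I would take as witness the hyperbolic ball $B_R$ of radius $R$, which is covered by $r \simeq e^{(d-1)R}$ unit balls of $\HH^d$, so $R \simeq \ln(1+r)$. The claim is that every continuous $f \colon B_R \to \R^{d-1}$ has a fiber $f^{-1}(y)$ whose covering number by unit balls of $\HH^d$ is $\gtrsim R$. This would follow from a $1$-dimensional waist-type inequality for hyperbolic balls in the spirit of Gromov: some fiber $F$ should satisfy $\mathrm{vol}_{\HH^d}(N_1(F)) \gtrsim R$, which forces $F$ to require $\gtrsim R$ unit balls of $\HH^d$ to cover. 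Morally, the extremal fiber contains or parallels a diametral geodesic chord of $B_R$, whose length is $2R$.

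The main difficulty lies in this $q=d-1$ case: the waist inequality must be established in the \emph{hyperbolic} setting, with the Hausdorff measure lower bound taken in the $\HH^d$-metric rather than inherited from a Euclidean chart, so Gromov's original proof does not transfer verbatim. My approach would be to adapt his min-max argument over the level sets of $f$, replacing Euclidean isoperimetry by the hyperbolic isoperimetric inequality in order to control how quickly the family of fibers can sweep out $B_R$. A cleaner fallback, should the min-max argument prove awkward, is to first apply a Borsuk--Ulam-type result to $f|_{\partial B_R} \colon S^{d-1} \to \R^{d-1}$ to obtain antipodal points of $\HH^d$-distance $2R$ lying in a common fiber, and then upgrade this diameter bound to a covering bound by a continuity/sweep-out argument exploiting the fact that a continuous family of fibers of uniformly bounded covering number cannot span a set of diameter $R$ in $\HH^d$.
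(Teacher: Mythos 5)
For $1 \leq q \leq d-2$ your horosphere argument is correct and genuinely different from the paper's proof. The paper works with the concentric metric spheres $S_t\subset\HH^d$, which at unit scale have covering numbers comparable to Euclidean spheres of radius $\sinh(t)$, and applies Gromov's spherical waist inequality (Theorem \ref{thm:Gromov_waist}) directly; you instead push the flat horosphere into $\HH^d$ (the inclusion is $1$-Lipschitz since $2\sinh^{-1}(D/2)\leq D$, and preimages of unit balls meet the horosphere in sets of bounded intrinsic diameter) and quote the full Euclidean computation of Theorem \ref{thm:TOEuc}. A small simplification: you do not need uniform connectivity or the passage through triangulations and Theorem \ref{thm:TOmono} at all, since Proposition \ref{prop:cTOmonotone} applies directly to the continuous inclusion $\R^{d-1}\into\HH^d$ and gives $\TO^q_{\HH^d}(r)\gtrsim\TO^q_{\R^{d-1}}(r)\simeq r^{1-q/(d-1)}$. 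Both routes rest on the same Euclidean waist input, so this part is fine.

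The genuine gap is in the case $q=d-1$. You have the right witness ($B_R$ with $r\simeq e^{(d-1)R}$) and the right target bound ($\gtrsim R$), but neither proposed argument is carried out, and the Borsuk--Ulam fallback fails as described: a common fiber of $f|_{\partial B_R}$ through antipodal points only has \emph{diameter} $2R$, and fibers of continuous maps can be totally disconnected, so this gives covering number $2$, not $\gtrsim R$; moreover the ``fact'' you invoke to upgrade diameter to covering number (that a continuous family of fibers of uniformly bounded covering number cannot sweep out $B_R$) is essentially the waist inequality you are trying to prove, so the argument is circular. More importantly, the difficulty you flag --- needing a genuinely hyperbolic waist inequality --- is illusory, and this is exactly how the paper closes the case. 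In polar coordinates about the centre, the comparison map $\Phi:\HH^d\to\R^d$ sending hyperbolic radius and angle to Euclidean radius and angle (equivalently $(r,\theta)\mapsto(2\tanh^{-1}r,\theta)$ in the disc model, \cite[Lemma I.1.13]{BH99}) is $1$-Lipschitz because $\sinh t\geq t$. Given continuous $f:B_R^{\HH^d}\to\R^{d-1}$, apply Corollary \ref{cor:Gromov_waist} with $k=d$, $q=d-1$ to $g=f\circ\Phi^{-1}$ on the Euclidean $R$-ball to find $z$ with $\Cov^1(g^{-1}(z))\gtrsim R$ in the Euclidean metric; since $\Phi$ sends hyperbolic unit balls to sets of bounded Euclidean size, any cover of $f^{-1}(z)=\Phi^{-1}(g^{-1}(z))$ by hyperbolic unit balls must have cardinality $\gtrsim R$. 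In other words, the hyperbolic metric dominates the pulled-back Euclidean metric in this chart, so the bound ``inherited from a Euclidean chart'' is precisely what is needed; no hyperbolic min-max or isoperimetric argument is required.
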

We note that these results are sharp when $d\geq 3$ and $q=1$.

A much more substantial result is the following sufficient condition to be $q$-topologically thick.
\begin{theorem}\label{thm:TOhorocycle}
    Let $H^{d+1}$ be the horocyclic product of $(d+1)$ $3$-regular trees (which is a $d$-dimensional simplicial complex). For every $1\leq q \leq d$, we have
    \[
        \sTO^q_{H^{d+1}}(r) \gtrsim r/ln(1+r)^q.
    \]
\end{theorem}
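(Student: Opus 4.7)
The plan is to combine thick embeddings of bounded degree topological expanders with the tree-branching structure of horocyclic products, following the Barrett-Hume strategy. A first reduction: via the natural subcomplex inclusion $H^{q+1} \hookrightarrow H^{d+1}$, obtained by fixing any $d-q$ of the factor trees at height-$0$ vertices (which respects the horocyclic constraint $\sum h_i = 0$), it suffices to prove the top-dimensional case $\sTO^q_{H^{q+1}}(r) \gtrsim r/\ln(1+r)^q$ for each $q \geq 1$.

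I would then take a sequence $(E_n)_{n \in \N}$ of bounded degree $q$-dimensional topological expanders with $|E_n^0| = n$, whose existence is guaranteed by Evra-Kaufman: there is $c > 0$ such that every continuous $f : E_n \to \R^q$ has a point whose preimage meets at least $cn$ top-dimensional simplices of $E_n$. The heart of the proof is to construct, for each $n$, a subcomplex $Z_n \leq H^{q+1}$ with $|Z_n^0| \lesssim n \ln(1+n)^q$ together with a thick embedding $\phi_n : E_n \to Z_n$ — a continuous map which is bi-Lipschitz on each closed simplex and sends disjoint simplices of $E_n$ to subsets of $H^{q+1}$ at uniform distance exceeding the simplex mesh size.

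Granting such a thick embedding, the conclusion is formal. For any continuous $g : Z_n \to \R^q$, the composition $g \circ \phi_n : E_n \to \R^q$ has a point $y \in \R^q$ whose preimage in $E_n$ meets $\geq cn$ top-dimensional simplices. Since disjoint simplices of $E_n$ map to uniformly separated subsets of $H^{q+1}$, the preimage $g^{-1}(y)$ meets at least a uniform constant fraction of this many distinct simplices of $Z_n$. Hence $\sTO^q_{H^{q+1}}(|Z_n^0|) \gtrsim n$, and setting $r \asymp n \ln(1+n)^q$ yields the required bound.

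The main obstacle is constructing the thick embedding with only $\ln^q$ volume overhead. The case $q = 1$ is analogous to the snake-type embedding of graphical expanders into Diestel-Leader graphs that underlies the $r/\ln(1+r)$ cutwidth lower bound of \cite{HHKL23}: a single tree factor's branching supplies the logarithmic room needed to spread out a one-dimensional expander. For $q \geq 2$ I would iterate this idea, using each of the $q+1$ tree factors' branching to host one independent direction of the $q$-expander, so that each $q$-simplex is routed through a $\ln(n)^q$-sized cube of horocyclic coordinates, yielding total volume $\asymp n \ln(n)^q$. The delicate step is to verify uniform thickness across all pairs of disjoint simplices of $E_n$ simultaneously while maintaining the horocyclic constraint across all $q+1$ factors; this is the combinatorial-geometric core that would need to be executed carefully, likely by induction on $q$.
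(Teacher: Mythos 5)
Your reduction to the top-dimensional case via the subcomplex inclusion $H^{q+1}\leq H^{d+1}$ is fine, the use of the Evra--Kaufman expanders is exactly what the paper does, and your ``formal'' transfer step (compose a candidate map $g$ with the embedding of the expander, invoke the overlap property, pass to a disjoint subfamily of simplices using bounded degree) is essentially the paper's Theorem \ref{thm:TOcon} combined with Theorem \ref{thm:TOlbcon}. The problem is the object you feed into that transfer step.

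The genuine gap is the assumed existence, for every Evra--Kaufman expander $E_n$, of a \emph{thick embedding} $\phi_n:E_n\to H^{q+1}$ (bi-Lipschitz on simplices, disjoint simplices uniformly separated) with volume $\lesssim n\ln(1+n)^q$. This is not a ``delicate step to be executed carefully''; it is a much stronger requirement than what the argument needs, and there is no evidence it can be met --- indeed it very likely cannot. The target $H^{q+1}$ is itself only $q$-dimensional, and separation of disjoint simplices forbids any two disjoint simplices of $E_n$ from passing through a common simplex of the target, so your map is essentially a congestion-one (topological-minor-type) routing of an arbitrary bounded-degree $q$-complex inside a fixed bounded-degree $q$-complex. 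Locally this forces links of vertices of $E_n$ to map into neighbourhoods of points of $H^{q+1}$ whose local structure is a fixed graph on at most $2q(q+1)$ vertices, while the Evra--Kaufman complexes have links that are (skeleta of) spherical buildings with more branch points than that --- already for $q=2$ this rules out genuine embeddings. This is precisely why the paper's Theorem \ref{thm:thickemb}, which converts coarse data into $T$-thick embeddings, demands ambient dimension at least $2q+1$: one cannot ask for thickness inside the $q$-dimensional space whose profile is being bounded. Your cited precedent for $q=1$ also does not exist in the form you need: the known input (the result this paper generalises, \cite[Theorem 1.18]{BarrettHume}, and the cutwidth bounds of \cite{HHKL23}) concerns coarse \emph{wirings} with bounded multiplicity $k$, not vertex-disjoint snake embeddings of expanders into Diestel--Leader graphs. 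The paper's actual route is to relax exactly at this point: it defines $k$-coarse constructions, where the preimage of each simplex of $H^{q+1}$ may meet up to $k$ simplices of $E_n$; Theorem \ref{thm:TOcon} shows this bounded overlap costs only a factor $k$ in the topological overlap transfer, and the real content is then Theorem \ref{thm:conhorocycle}, an explicit combinatorial construction (a subdivision of $Z$ of size $\simeq |Z|\log^q|Z|$ followed by a simplicial map into $H^{q+1}$ defined by binary encodings of vertices distributed across the $q+1$ tree coordinates) achieving volume $k r\ln(1+r)^q$ with multiplicity $k$. If you replace your thick embedding by such a bounded-multiplicity construction, your outline becomes the paper's proof; as it stands, the central object of your argument is unconstructed and its existence is doubtful.
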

As all rank $d$ symmetric spaces of non-compact type and all thick Euclidean buildings of rank $d$ with co-compact affine Weyl group admit biLipschitz embeddings of $T_3^d$ \cite{BensaidNguyen}, we deduce that all these spaces are topologically $(d-1)$-thick.

Combining these calculations, we can prove the following obstruction to regular maps, which we believe is new even in the case of quasi-isometric embeddings.

\begin{theorem}\label{thm:regobst} There is no regular map from the $0$-skeleton of $H^{d+1}$ to
\begin{itemize}
    \item any product $H\times (\HH^2)^{d-1}\times D$ where $H$ is hyperbolic and $D$ satisfies the doubling property;
    \item any Riemannian symmetric space with corank (dimension minus rank) less than $d$.
\end{itemize}
\end{theorem}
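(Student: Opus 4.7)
The plan is to derive a contradiction by combining the $d$-topological thickness of $H^{d+1}$ with matching sublinear upper bounds on $\TO^d$ of both targets, and applying monotonicity of the topological overlap profile under regular maps (Theorem \ref{thm:TOmono}, or its metric counterpart Proposition \ref{prop:cTOmonotone}). Setting $q=d$ in Theorem \ref{thm:TOhorocycle} gives $\sTO^d_{H^{d+1}}(r)\gtrsim r/\ln(1+r)^d$, so it suffices to prove that $\TO^d$ of each target grows strictly sublinearly.

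For the first family, let $Y=H\times (\HH^2)^{d-1}\times D$. Grouping the factors as $(H\times D)\times(\HH^2)^{d-1}$ and applying Theorem \ref{thm:productub} yields
\[
\TO^d_Y(r)\;\lesssim\;\TO^1_{H\times D}(r)\cdot\TO^{d-1}_{(\HH^2)^{d-1}}(r).
\]
The first factor is $\lesssim r^a$ for some $a<1$ by the $1$-topological thinness of $H\times D$ recorded in the first bullet following the thin/thick definition, and the second factor is $\lesssim \ln(1+r)^{2(d-1)}$ by Corollary \ref{cor:prodH2Tub}, so the product is $\lesssim r^{a'}$ for some $a'<1$. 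For the second family, let $X$ be a Riemannian symmetric space whose non-compact factor has dimension $n$, rank $\rho$, and corank $c=n-\rho<d$. Compact factors are bounded and contribute nothing asymptotically, so I may assume $X$ has no compact factor. When $\rho\geq 2$, Corollary \ref{cor:ubsymspace} gives $\TO^{c+1}_X(r)\lesssim r^{1-1/\rho}$; when $\rho=1$, $X$ is a real hyperbolic $\HH^n$ with $n=c+1\leq d$, so Theorem \ref{thm:TOhyp} (together with the triviality of $\TO^n_{\HH^n}$) supplies a polylogarithmic bound on $\TO^{c+1}_X$. Since $d\geq c+1$, Theorem \ref{thm:TOprops}(ii) then yields a sublinear bound on $\TO^d_X$.

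Putting the pieces together, a regular map $(H^{d+1})^0\to Y$ or $(H^{d+1})^0\to X$ would, by monotonicity, force
\[
r/\ln(1+r)^d\;\lesssim\;\sTO^d_{H^{d+1}}(r)\;\lesssim\;\TO^d_{\text{target}}(r)\;\lesssim\;r^{1-\varepsilon}
\]
for some $\varepsilon>0$, which is absurd for large $r$. The step most prone to technicalities is verifying the uniform $k$-connectedness hypothesis for all $k\leq d$ needed by Theorem \ref{thm:TOmono}; this is automatic for Riemannian symmetric spaces and Riemannian products (they are CAT(0) and hence uniformly contractible), and for the mixed product in the first family one combines uniform contractibility of the manifold factors with the standard uniform filling properties of bounded-degree hyperbolic graphs. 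If that becomes cumbersome one can work entirely within the metric $\TO$ framework and invoke Proposition \ref{prop:cTOmonotone}, which sidesteps the connectivity hypothesis altogether.
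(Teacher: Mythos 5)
Your overall skeleton (lower bound from Theorem \ref{thm:TOhorocycle} with $q=d$, sublinear upper bound for the target via Theorem \ref{thm:productub} and $1$-topological thinness of hyperbolic $\times$ doubling, contradiction via monotonicity) is the same as the paper's, but there is a genuine gap at the monotonicity step, and it is exactly the point your proposal waves away. Theorem \ref{thm:TOmono} requires the target to be uniformly $k$-connected for all $k\leq d$, and the stated target $H\times(\HH^2)^{d-1}\times D$ simply is not: a bounded degree hyperbolic graph $H$ is a $1$-dimensional complex, so spheres $S^{k-1}$ with $k\geq 2$ (even loops of bounded diameter that are essential in $H$) cannot be filled in it at all, and a doubling space $D$ need not even be path-connected. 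Your claimed repair via ``standard uniform filling properties of bounded-degree hyperbolic graphs'' is therefore false for $k\geq 2$, and your fallback --- ``work entirely within the metric $\TO$ framework and invoke Proposition \ref{prop:cTOmonotone}, which sidesteps the connectivity hypothesis'' --- does not work either: Proposition \ref{prop:cTOmonotone} takes as input a \emph{continuous} map defined on the whole space, whereas a regular map is only defined on the $0$-skeleton of $H^{d+1}$. The entire role of the uniform $k$-connectedness hypothesis in Theorem \ref{thm:TOmono} is to ``connect the dots'' and extend the vertex-level map to a continuous map of complexes; without it there is nothing to feed into either monotonicity statement (and note that $\TO^q$ of the bare discrete $0$-skeleton is bounded, so no contradiction can be extracted at the level of the vertex set alone).

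The paper closes this gap by first upgrading the target: apply the Bonk--Schramm embedding theorem to $H$ and Assouad's embedding theorem to $D$ to obtain a regular map $(H^{d+1})^0\to \HH^k\times(\HH^2)^{d-1}\times\R^l$, whose target \emph{is} uniformly $m$-connected for every $m$; then Theorem \ref{thm:TOmono}, the grouping $(\HH^k\times\R^l)\times(\HH^2)^{d-1}$ in Theorem \ref{thm:productub}, and $1$-topological thinness of $\HH^k\times\R^l$ give $\TO^d_Y(r)\lesssim r^a\ln(1+r)^{d-1}$ with $a<1$, contradicting the lower bound, exactly as in your final display. Your treatment of the symmetric space case is essentially the intended one (Corollary \ref{cor:ubsymspace} plus monotonicity in $q$), but two details deserve care: rank one symmetric spaces of non-compact type are not all real hyperbolic (complex and quaternionic hyperbolic spaces and the Cayley plane also occur), though Corollary \ref{cor:ubsymspace} already covers $\rho=1$ so the separate case is unnecessary; and if compact factors are present the space is neither CAT(0) nor uniformly $k$-connected in all degrees (e.g.\ $\pi_2$ of a sphere factor is nontrivial), so discarding them requires checking that the composition with the projection to the non-compact factor is still regular rather than dismissing them as ``contributing nothing asymptotically''.
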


\subsection{Coarse constructions and thick embeddings of simplicial complexes}
The strategy behind the proof of Theorem \ref{thm:TOhorocycle} is to prove high dimension analogues of two of the main technical results of \cite{BarrettHume}. We say that a continuous map between simplicial complexes $f: X\to Y$ is a \textbf{coarse construction} if, for every $\ell$, the image of $X^{\leq \ell}$ is contained in $Y^{\leq \ell}$. A coarse construction is called a \textbf{$k$-coarse construction} if the preimage of every simplex in $Y$ is contained in a union of at most $k$ simplices in $X$. The \textbf{volume} of a coarse construction is the minimal number of $0$-simplices in a subcomplex of $Y$ which contains the image of $f$.  

Natural analogues of \cite[Proposition 1.10]{BarrettHume} and \cite[Theorem 1.11]{BarrettHume} hold for coarse constructions and higher dimensional simplicial complexes, under additional connectedness hypotheses (as in Theorem \ref{thm:TOmono}). We will state but not prove these here as they are not our main focus. A topological embedding of a simplicial complex is said to be \textbf{$T$-thick} if the images of disjoint simplices are at distance at least $T$.

\begin{proposition}
Let $M$ be a Riemannian manifold with controlled growth,\footnote{For each $r>0$ the infimal and supremal volumes of $r$-balls in $M$ are positive and finite.} let $Y$ be a simplicial complex quasi-isometric to $M$ which is uniformly $q'$-connected for all $1\leq q'\leq q$, let $d\in\N$ and let $T>0$. There exist constants $C$ and $k$ such that for every finite simplicial complex $Z$ with dimension at most $q$ and maximal
    degree $d$ the following holds:

    If there is a $T$-thick topological embedding $Z\to M$ with volume $V$
    then there is a $k$-coarse construction of $Z$ into $Y$ with volume at most $CV$.
\end{proposition}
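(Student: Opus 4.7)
The plan is to adapt the Barrett--Hume approach for thick embeddings of graphs to the simplicial setting, constructing the coarse construction $F: Z \to Y$ one skeleton at a time and using the uniform connectedness hypotheses on $Y$ to fill in simplices of increasing dimension.

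First, fix quasi-isometries $\phi: M \to Y$ and $\psi: Y \to M$ with uniform constants and let $f: Z \to M$ be the given $T$-thick topological embedding of volume $V$. On $Z^0$, define $F(v)$ to be a vertex of $Y^0$ within uniformly bounded distance of $\phi(f(v))$. Since $f$ is $T$-thick and $M$ has controlled growth, only boundedly many vertices of $Z$ can be mapped into any ball of bounded radius in $M$, which yields both the dimension-zero part of the $k$-coarse bound and the estimate $|Z^0| \lesssim V$.

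Next, extend $F$ inductively from $Z^{\leq \ell-1}$ to $Z^{\leq \ell}$ for $\ell = 1, \dots, q$. Having arranged that $F(\partial\sigma) \subseteq Y^{\leq \ell-1}$ has uniformly bounded diameter for every $\ell$-simplex $\sigma$ of $Z$, the uniform $\ell$-connectedness of $Y$ yields a continuous extension $F|_\sigma: \sigma \to Y$ whose image has diameter bounded by a constant $S_\ell$ depending only on $\ell$ and the connectedness modulus. A cellular approximation, keeping the map fixed on $\partial\sigma$, then pushes the image into $Y^{\leq \ell}$ with only bounded additional diameter, preserving the coarse construction property.

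The volume and coarseness bounds follow from $T$-thickness and controlled growth: each $F|_\sigma$ lands in a uniformly bounded number of simplices of $Y$, so the volume of the image subcomplex is at most $CV$; and for any simplex $\tau$ of $Y$, $\psi$ identifies $\tau$ with a bounded-diameter subset of $M$, so by $T$-thickness only boundedly many simplices of $Z$ have $f$-images meeting it, giving the $k$-coarse bound. The hardest step is the inductive filling: uniform $\ell$-connectedness produces only continuous fillings, and converting these into maps into $Y^{\leq \ell}$ via cellular approximation while keeping both diameters and multiplicities uniform requires careful tracking; the $k$-coarse constant in particular must not blow up as $\ell$ increases, and for this one must ensure that each simplex's image spreads over only a bounded neighbourhood in $Y$ and that these neighbourhoods do not accumulate too many preimages on any simplex of $Y$ --- which again reduces to $T$-thickness together with the controlled growth of $M$.
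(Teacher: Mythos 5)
The paper itself states this proposition without proof (it is explicitly left unproved, as an analogue of Proposition 1.10 of Barrett--Hume), so your attempt can only be measured against the intended Barrett--Hume-style argument; against that, there is a genuine gap at the central step. You define $F$ on $Z^0$ by sending each vertex $v$ to a vertex of $Y$ near $\phi(f(v))$ and then claim to have ``arranged that $F(\partial\sigma)$ has uniformly bounded diameter'' so that uniform connectedness applies. Nothing in your construction arranges this, and it is false in general: a $T$-thick topological embedding only bounds distances between \emph{disjoint} simplices from below; it places no upper bound on $\diam_M f(\sigma)$ for a single simplex, so already for a $1$-simplex $\sigma=\{v,w\}$ the distance $d_Y(F(v),F(w))\approx d_M(f(v),f(w))$ is unbounded. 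Consequently uniform $\ell$-connectedness cannot be invoked with uniform constants, and, worse, the abstract fillings it produces bear no relation to the sets $f(\sigma)$. This breaks your final multiplicity argument: you bound, for a simplex $\tau$ of $Y$, the number of simplices of $Z$ whose $f$-images meet a bounded neighbourhood of $\psi(\tau)$, but the definition of a $k$-coarse construction requires a bound on the number of simplices whose $F$-images meet $\tau$, and your $F(\sigma)$ is controlled only near the images of the vertices of $\sigma$, not near $f(\sigma)$. For instance, take $Z$ a disjoint union of edges embedded in $\R^3$ as long, pairwise $T$-separated arcs running around a large sphere, with the two endpoints of each arc roughly antipodal: fillings chosen merely with diameter control can all be routed through a common unit ball, so no uniform $k$ results, and the volume bound $CV$ fails for the same reason.

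The missing idea is to make the map track $f$ at unit scale before any filling is done. Since $Z$ is compact, $f$ is uniformly continuous, so one may pass to a subdivision $Z'$ of $Z$ (depending on $f$, which is harmless because volume and the coarseness constant are measured in $Y$ against the simplices of the original $Z$) all of whose closed simplices have $f$-image of diameter at most $1$. Defining the map on $(Z')^0$ through the quasi-isometry and then filling skeleton by skeleton, with a small cellular approximation into $Y^{\leq\ell}$ at each stage, keeps $F(\sigma)$ in a uniformly bounded neighbourhood of $\phi(f(\sigma))$ for every simplex $\sigma$ of $Z$. With that containment in hand, your two closing counts do work: the image of $F$ lies in a bounded neighbourhood of $\phi(f(Z))$, which by controlled growth and the quasi-isometry contains at most $CV$ vertices of $Y$; and if $F(\sigma)$ meets $\tau$ then $f(\sigma)$ meets a uniformly bounded neighbourhood of $\psi(\tau)$, so a maximal pairwise-disjoint subfamily of such $\sigma$ is bounded via $T$-separation and controlled growth, and the degree bound $d$ together with $\dim Z\leq q$ converts this into the uniform constant $k$.
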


With stronger hypotheses we are able to convert coarse $k$-constructions into thick topological embeddings.

\begin{theorem}\label{thm:thickemb}
Let $M$ be a compact Riemannian manifold of dimension $n\geq 2q+1$, let $Y$ be a simplicial complex quasi-isometric to the universal cover $\widetilde{M}$ of $M$ which is uniformly $q'$-connected for all $1\leq q'\leq q$. Let $k,d\in\N$. There exist constants
    $C$ and $T>0$ such that the following holds:

    If there is a $k$-coarse construction of a finite simplicial complex $Z$ with dimension at most $q$ and maximal degree $d$ into $Y$ with volume $V$ then there is a $T$-thick embedding
    of $Z$ into $\widetilde{M}$ such that the volume of the $T$-neighbourhood of its image is at most $CV$.
\end{theorem}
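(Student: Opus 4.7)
The plan is to transport the $k$-coarse construction $f\colon Z\to Y$ to a continuous map into $\widetilde M$, perturb it into a smooth embedding via general position, then uniformly thicken it, and finally extract the volume bound from the coarseness hypothesis. Using a quasi-isometry $\phi\colon Y\to\widetilde M$ (provided by the assumption that $Y$ is quasi-isometric to $\widetilde M$), I would first set $\tilde f|_{Z^0}=\phi\circ f|_{Z^0}$ and extend $\tilde f$ skeletally: each $1$-simplex of $Z$ goes to a short path joining the images of its endpoints, and each $q'$-simplex with $q'\leq q$ is filled using the uniform $q'$-connectedness of $Y$ transported to $\widetilde M$ via $\phi$ (legitimate thanks to the bounded geometry of $\widetilde M$ coming from compactness of $M$). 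Each simplex of $Z$ then maps to a set of uniformly bounded diameter in $\widetilde M$, and the $k$-coarseness of $f$ transfers to $\tilde f$: the preimage in $Z$ of any unit ball of $\widetilde M$ meets only a uniformly bounded number of simplices, with the bound depending on $k$, $d$, $q$ and the quasi-isometry constants.

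Next I would smooth $\tilde f$ and perturb it into a topological embedding by general position. The hypothesis $n\geq 2q+1$ is precisely the classical codimension condition ensuring that smooth maps from a $q$-dimensional complex to an $n$-manifold are generically embeddings: for any two disjoint closed simplices $\sigma,\sigma'\subseteq Z$ of dimensions $p,p'\leq q$, the count $p+p'\leq 2q<n$ means that a generic perturbation eliminates intersections. Working in charts of $\widetilde M$ whose size is uniformly controlled by the bounded geometry of $\widetilde M$, I can carry out these perturbations with arbitrarily small $C^0$-size, so that all coarse bounds from the first step are preserved and each simplex of $Z$ still maps into a set of uniformly bounded diameter.

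The main obstacle is choosing a single thickness $T>0$ independent of $Z$, and then extracting the volume estimate. By bounded degree of $Z$, only a constant number of simplices of $Z$ are incident to any given $0$-simplex, and by cocompactness of the deck action of $\pi_1(M)$ on $\widetilde M$ only finitely many local combinatorial configurations of the perturbed map arise up to isometry. For each such finite model a small transverse perturbation separates the disjoint incident simplices by some positive distance; taking $T$ to be the minimum of these separations provides a global thickness. The volume bound is then essentially automatic: the $k$-coarseness of $f$ together with the bounded degree of $Y$ forces $Z$ to have at most a constant times $V$ simplices in total, and each maps to a set of uniformly bounded diameter in $\widetilde M$, so its $T$-neighbourhood is covered by a uniformly bounded number of unit balls; summing over simplices yields $\vol(N_T(\tilde f(Z)))\leq CV$ as required.
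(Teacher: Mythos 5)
You should first note that the paper does not actually prove this statement: it is presented as a higher-dimensional analogue of Theorem 1.11 of Barrett--Hume and the author explicitly writes that these analogues ``will be stated but not proved here'', so your proposal can only be measured against the cited strategy and on its own merits. On its own merits it has a genuine gap, and it is exactly at the point you yourself flag as ``the main obstacle'': the choice of a single thickness $T>0$. General position in codimension $n\geq 2q+1$ gives you an \emph{embedding} after an arbitrarily $C^0$-small generic perturbation, but it gives no lower bound whatsoever on the distance between images of disjoint simplices; as $|Z|$ and $V$ grow, boundedly many strands may pass through a fixed unit ball of $\widetilde M$, and a generic perturbation separates them by an amount that degenerates with the configuration. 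Your fix --- ``by cocompactness only finitely many local combinatorial configurations of the perturbed map arise up to isometry, take $T$ to be the minimum separation over these models'' --- does not hold: cocompactness gives finitely many isometry types of unit balls of $\widetilde M$, not finitely many configurations of a smooth map inside them (these form a continuum), and the relevant pairs to separate are all disjoint simplices of $Z$ whose images happen to pass through the same region, not merely simplices incident to a common $0$-simplex, so bounded degree of $Z$ is beside the point. Moreover, separations achieved by a perturbation supported in one ball can be destroyed by the perturbation needed in an overlapping ball, so one cannot just localise and take a minimum. The actual content of the Barrett--Hume-type argument is a \emph{quantitative} separation scheme: the $k$-coarseness bounds the multiplicity of strands through each bounded region, and the codimension hypothesis is used to route these at most boundedly many strands into disjoint ``channels'' at a definite mutual distance, coherently across overlapping regions (e.g.\ by an induction over a net with a bounded colouring). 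That construction, which is where $n\geq 2q+1$ and the multiplicity bound really enter, is missing from your argument.

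A secondary problem is the first step. You extend $\phi\circ f|_{Z^0}$ over higher simplices ``using the uniform $q'$-connectedness of $Y$ transported to $\widetilde M$ via $\phi$''. Uniform $q'$-connectedness is not a quasi-isometry invariant: $S^2\times\R$ is quasi-isometric to $\R$, which is uniformly $k$-connected for all $k$, yet a bounded-diameter sphere generating $\pi_2(S^2\times\R)$ admits no filling at all. So the hypothesis on $Y$ does not hand you controlled fillings in $\widetilde M$, and the skeletal extension into $\widetilde M$ needs a different justification (this is presumably why the intended argument works with the continuous map $f$ into $Y$ and a carefully chosen continuous comparison map adapted to a cocompact triangulation of $\widetilde M$, rather than re-doing connect-the-dots in $\widetilde M$). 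The final volume estimate, by contrast, is essentially fine once the earlier steps are in place: the image lies in a bounded neighbourhood of $\phi$ applied to a subcomplex of $Y$ with $V$ vertices, and bounded geometry of $\widetilde M$ converts this into $\vol\bigl(N_T(\im)\bigr)\leq CV$.
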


Instead we focus on the relationship between the volume of $k$-coarse constructions and topological overlap profiles. The main result here is an analogue of \cite[Proposition 5.3]{BarrettHume}. Given two bounded degree simplicial complexes $X$ and $Y$, we write $\con^k_{X\to Y}(r)\leq m$ if every subcomplex $Z$ of $X$ with $r$ $0$-simplices admits a $k$-coarse construction into $Y$ with volume at most $m$.

\begin{theorem}\label{thm:TOcon} Let $X$ and $Y$ be bounded degree simplicial complexes and suppose there is some $k$ such that $\con^k_{X\to Y}(r)<+\infty$ for every $r$. Then for each $q\geq 1$,
\[
 \sTO^q_X(r)\leq k\sTO^q_Y(\con^k_{X\to Y}(r)).
\]
\end{theorem}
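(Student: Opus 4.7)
My plan is a direct composition argument: take a $k$-coarse construction $\phi:Z\to Y$ produced by the hypothesis and post-compose it with a continuous map into $\R^q$ witnessing $\sTO^q_Y$ on its image, then verify that the point-preimage counts multiply.

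Concretely, fix $r$, set $m:=\con^k_{X\to Y}(r)$ and $\ell:=\sTO^q_Y(m)$, and let $Z\leq X$ be an arbitrary subcomplex with $r$ $0$-simplices. By definition of $\con^k_{X\to Y}$ there is a $k$-coarse construction $\phi:Z\to Y$ whose image lies in a subcomplex $W\leq Y$ with at most $m$ $0$-simplices; enlarging $W$ inside $Y$ if necessary, I may assume exactly $m$ $0$-simplices. Applying the definition of $\sTO^q_Y$ to $W$ provides a continuous map $g:W\to\R^q$ such that for every $p\in\R^q$ the preimage $g^{-1}(p)$ meets at most $\ell$ simplices of $W$. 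The candidate map is $f:=g\circ\phi:Z\to\R^q$.

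It remains to bound how many simplices of $Z$ the preimage $f^{-1}(p)$ meets, for an arbitrary $p\in\R^q$. Let $\sigma_1,\ldots,\sigma_{\ell'}$, with $\ell'\leq \ell$, be the simplices of $W$ met by $g^{-1}(p)$; since every point of $W$ lies in the interior of a unique closed simplex, $g^{-1}(p)\subseteq \sigma_1\cup\cdots\cup\sigma_{\ell'}$. Consequently
\[
    f^{-1}(p)\;=\;\phi^{-1}(g^{-1}(p))\;\subseteq\;\bigcup_{i=1}^{\ell'}\phi^{-1}(\sigma_i),
\]
and by the $k$-coarse construction property each $\phi^{-1}(\sigma_i)$ is contained in a union of at most $k$ simplices of $X$, which are automatically simplices of $Z$ since $\phi$ is defined on $Z$. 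Hence $f^{-1}(p)$ is covered by at most $k\ell$ simplices of $Z$, giving $\sTO^q_X(r)\leq k\sTO^q_Y(\con^k_{X\to Y}(r))$.

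There is no substantial obstacle: the argument is essentially a one-step composition. The only piece of bookkeeping worth flagging is the transition between \textit{the preimage of each simplex is contained in at most $k$ simplices} (the coarse-construction property) and \textit{the preimage of each point meets at most $\ell$ simplices} (the $\sTO^q_Y$ condition), which is precisely what the inclusion $g^{-1}(p)\subseteq\bigcup_i\sigma_i$ above effects.
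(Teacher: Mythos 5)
Your proposal is essentially the paper's proof: compose the $k$-coarse construction $\phi$ with a map $g$ realising $\sTO^q_Y$ on the image subcomplex and show the point-preimage counts multiply, giving $\sOv(g\circ\phi)\leq k\,\sOv(g)$. One point of wording deserves care: $\sOv(f)$ counts \emph{all} closed simplices $\sigma$ of $Z$ with $p\in f(\sigma)$, i.e.\ all simplices meeting $f^{-1}(p)$, which is not in general the same as the number of simplices needed to \emph{cover} $f^{-1}(p)$ (a fibre consisting of a single vertex is covered by one simplex yet may lie in many). So your final inference ``$f^{-1}(p)$ is covered by at most $k\ell$ simplices, hence the bound'' is not literally valid as stated; however, the ingredients you already have close this immediately: if $p\in f(\sigma)$ then $\phi(\sigma)$ meets some $\sigma_i$ with $p\in g(\sigma_i)$, there are at most $\ell$ such $\sigma_i$, and the coarse-construction property bounds by $k$ the number of simplices $\sigma$ of $Z$ with $\phi(\sigma)\cap\sigma_i\neq\emptyset$ --- this is exactly the family you used to cover $\phi^{-1}(\sigma_i)$, and counting membership in it rather than covering by it yields $\sOv(f)\leq k\ell$, which is the paper's argument.
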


Combining this with the Evra-Kaufman construction of bounded degree topological expanders in every dimension, we obtain the follow variant of \cite[Theorem 1.17]{BarrettHume}

\begin{theorem}\label{thm:TOlbcon} Let $Y$ be a finite-dimensional bounded degree simplicial complex, and fix $q$. If, for every $d\in\N$ there is a constant $k=k(d,q)$ such that every finite simplicial complex $Z$ with dimension at most $q$ and degree at most $d$ admits a $k$-coarse construction into $Y$ with volume at most $k|Z|^a\ln(1+|Z|)^b$, then
\[
 \sTO^q_Y(r)\gtrsim r^{1/a}\ln(1+r)^{-b/a}.
\]
\end{theorem}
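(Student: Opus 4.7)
The plan is to combine Theorem \ref{thm:TOcon} with the Evra--Kaufman construction of bounded degree $q$-dimensional topological expanders. Fix such a family $(Z_n)_{n\in\N}$, with uniformly bounded degree $d_0$, $|Z_n|\to\infty$, and with $|Z_n|$ comparable to the number of $q$-simplices in $Z_n$. By the very definition of topological expansion, there is a constant $c>0$ such that for every continuous map $f:Z_n\to\R^q$ some point has preimage hitting at least $c|Z_n|$ of the $q$-simplices of $Z_n$; unpacking the definition of $\sTO^q$ this yields
\[
    \sTO^q_{Z_n}(|Z_n|)\geq c|Z_n|.
\]

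Next apply the hypothesis with $d=d_0$. Since each $Z_n$ is itself a finite simplicial complex of dimension at most $q$ and degree at most $d_0$, it admits a $k$-coarse construction into $Y$ of volume at most $k|Z_n|^a\ln(1+|Z_n|)^b$, where $k=k(d_0,q)$ is independent of $n$. Viewing $Z_n$ as a bounded degree simplicial complex in its own right, this means $\con^k_{Z_n\to Y}(|Z_n|)\leq k|Z_n|^a\ln(1+|Z_n|)^b$, so Theorem \ref{thm:TOcon} applied to the pair $(Z_n,Y)$ gives
\[
    c|Z_n|\leq \sTO^q_{Z_n}(|Z_n|)\leq k\,\sTO^q_Y\bigl(k|Z_n|^a\ln(1+|Z_n|)^b\bigr).
\]

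Set $r_n:=k|Z_n|^a\ln(1+|Z_n|)^b$. Taking logarithms gives $\ln r_n\simeq a\ln|Z_n|$, and back-substitution inverts the relation to
\[
    |Z_n|\simeq r_n^{1/a}\ln(1+r_n)^{-b/a},
\]
so that $\sTO^q_Y(r_n)\gtrsim r_n^{1/a}\ln(1+r_n)^{-b/a}$ along the discrete sequence $(r_n)$. Finally, using monotonicity of $\sTO^q_Y$, for arbitrary $r$ one takes the largest $n$ with $r_n\leq r$ to conclude $\sTO^q_Y(r)\geq \sTO^q_Y(r_n)\gtrsim r^{1/a}\ln(1+r)^{-b/a}$.

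The genuinely delicate ingredient is this last interpolation step: it requires the sequence $(r_n)$ (equivalently $(|Z_n|)$) to be dense, in the sense that $r_{n+1}/r_n$ is uniformly bounded, so that the asymptotic bound at $r_n$ transfers to nearby $r$ with only a constant loss absorbed by $\gtrsim$. The Evra--Kaufman construction is flexible enough to produce expanders at a geometric sequence of sizes, which suffices. A tempting alternative, namely applying Theorem \ref{thm:TOcon} to the disjoint union $X:=\bigsqcup_n Z_n$, requires verifying that concatenating the individual coarse constructions produces an overall $k$-coarse construction whose volume on any subcomplex of $X$ with $r$ vertices is still $\lesssim r^a\ln(1+r)^b$; this is automatic when $a\geq 1$ by subadditivity of $x\mapsto x^a(\ln(1+x))^b$, but must be handled more carefully when $a<1$, so I prefer to apply Theorem \ref{thm:TOcon} to each $Z_n$ individually. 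Everything else reduces to the routine asymptotic inversion of $r=kN^a\ln(1+N)^b$ indicated above.
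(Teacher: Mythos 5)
Your proof is correct and rests on the same two ingredients as the paper's: Theorem \ref{thm:TOcon} applied against the Evra--Kaufman bounded degree topological expanders, followed by the asymptotic inversion of $r=kN^a\ln(1+N)^b$. The only real difference is bookkeeping: the paper applies Theorem \ref{thm:TOcon} once, to a single infinite complex $X\in\bS$ with $\sTO^q_X(r)\simeq r$ (in effect a disjoint union of the expanders), whereas you apply it to each $Z_n$ separately and then interpolate along the sequence $(r_n)$. Your stated reason for avoiding the disjoint-union route is not a genuine obstacle: the hypothesis of the theorem quantifies over \emph{all} finite complexes of dimension at most $q$ and degree at most $d$, so an arbitrary subcomplex $Z$ of $\bigsqcup_n Z_n$ with $r$ vertices receives its own $k$-coarse construction of volume at most $kr^a\ln(1+r)^b$ directly from the hypothesis --- no concatenation of the constructions of the individual $Z_n$, and hence no subadditivity of $x\mapsto x^a\ln(1+x)^b$, is ever needed, which is exactly why the paper's single-$X$ argument is clean. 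That route also bypasses the one genuinely delicate point in your write-up, namely the requirement that $r_{n+1}/r_n$ be uniformly bounded: you assert that Evra--Kaufman can be arranged at a geometric sequence of sizes but do not justify it, and it is easier to sidestep this either by the paper's route or by padding with disjoint copies of smaller expanders (since $\sTO^q$ of a disjoint union is at least that of each piece, the linear lower bound survives up to a constant). With either fix your interpolation step goes through, so the argument stands as essentially the paper's proof in a slightly more granular form.
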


The proof of Theorem \ref{thm:TOhorocycle} is now an immediate consequence of Theorem \ref{thm:TOlbcon} and the following difficult generalisation of \cite[Theorem 1.18]{BarrettHume}:

\begin{theorem}\label{thm:conhorocycle} For every $q,d\in\N$ there is a $k=k(q,d)$ such that the following holds for every $r$. For every simplicial complex $Z$ with $r$ $0$-simplices, dimension at most $q$ and maximal degree at most $d$, there is a $k$-coarse construction of $Z$ into $H^{q+1}$ with volume at most $kr\ln(1+r)^q$.
\end{theorem}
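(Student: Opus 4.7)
My plan is to induct on $q$, with base case $q=1$ given by \cite[Theorem 1.18]{BarrettHume}, which furnishes, for any bounded-degree graph on $r$ vertices, a $k$-coarse construction into $H^2$ of volume $O(r\ln(1+r))$.

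For the inductive step, I would exploit the fibred structure of $H^{q+1}$: forgetting the last tree coordinate defines a projection $H^{q+1}\to H^q$ whose fibres are copies of $T_3$, while each level set of the last Busemann function is an isometric copy of $H^q$. Given a $q$-dimensional bounded-degree complex $Z$ on $r$ vertices, I would first choose a rooted BFS filtration of $Z^{\leq 1}$ into $M=O(\ln r)$ layers $L_1,\ldots,L_M$ with $\sum |L_i|=r$, and embed the associated BFS spanning tree into the new tree factor of $H^{q+1}$ at uniform per-vertex cost. The resulting placement of vertices controls the placement of every higher simplex: each $k$-simplex $\sigma$ of $Z$ is routed through a ``bridge'' in $H^{q+1}$ consisting of a bounded union of $k$-simplices of total volume $O(\ln(1+r)^k)$, with the horizontal component of the bridge filled using the inductive hypothesis applied inside each horizontal slice (a copy of $H^q$) to the $(q-1)$-dimensional trace left by the $q$-simplices incident to the slice. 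Summing contributions from vertices ($O(r)$), tree edges ($O(r)$), non-tree edges and higher simplices (each dimension contributing an extra logarithmic factor), and balancing these against the inductive bound, yields the claimed $O(r\ln(1+r)^q)$ total volume.

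The main obstacle is establishing that the construction is genuinely $k$-coarse, i.e.\ that every simplex of $H^{q+1}$ is the image of a uniformly bounded number of input simplices, with $k$ depending only on $q$ and $d$ and not on $r$. Many bridges pass simultaneously through the tree factors, and without care they will concentrate in the same target simplices. One needs a ``dispersal'' rule that spreads bridges uniformly across branches at each depth, guided perhaps by a hashing on vertex-labels of $Z$, and then one must verify that the dispersal remains bounded even for adversarial $Z$ — in particular for expanders, for which no small separators exist, so the argument cannot rely on divide-and-conquer. The associated combinatorial bookkeeping — matching the exponential branching available in $H^{q+1}$ against the degree and dimension constraints on $Z$ to guarantee a uniform multiplicity bound — is the genuine higher-dimensional analogue of the routing argument of Barrett-Hume, and will be the technical heart of the proof.
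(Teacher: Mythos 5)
You have written a plan, not a proof, and the step you yourself call ``the technical heart'' --- the uniform multiplicity bound that makes the construction $k$-coarse with $k=k(q,d)$ independent of $r$ --- is exactly what is missing. The ``dispersal rule'' or ``hashing on vertex-labels'' is never specified, and it is not a routine verification: the target subcomplex has only about $r\ln(1+r)^q$ cells available while $Z$ can have on the order of $r$ simplices in every dimension and no small separators, so any scheme must be checked against expander-like inputs, which you acknowledge but do not do. A second gap: a $k$-coarse construction is a \emph{continuous} map with $f(Z^{\leq \ell})\subseteq Y^{\leq \ell}$ for every $\ell$; your routing language says where vertices and ``bridges'' go, but never explains how the map is extended over the interiors of higher simplices while remaining inside the corresponding skeleton of $H^{q+1}$, which cannot be waved away in dimension at least $2$. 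Finally, the fibred structure you invoke is not correct as stated: forgetting the last tree coordinate does not have fibres isomorphic to $T_3$ (the Busemann height of the omitted coordinate is determined by the others, so the fibre is a horosphere), and the image of that projection is larger than $H^q$; and the volume bookkeeping (applying the inductive hypothesis inside $O(\ln r)$ slices) is never balanced to yield $r\ln(1+r)^q$ rather than something worse, nor is it shown that multiplicities do not compound across slices and across the induction.

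For comparison, the paper's proof does not induct on $q$ and uses no fibring. It passes to the barycentric subdivision of $Z$ and then to a further combinatorial subdivision with parameter $\ell\approx\log_2|Z|$, whose $0$-simplices are nonnegative integer weightings $f$ supported on a chain of simplices of $Z$ with total weight $(q+1)\ell$; each simplex of $Z$ is thereby cut into roughly $\ell^{\dim}$ pieces, giving total size $O(|Z|\ell^q)$. It then defines an explicit \emph{simplicial} map into the subcomplex of $H^{q+1}$ whose vertices are $(q+1)$-tuples of binary words with lengths summing to $(q+1)\ell$: the coordinate attached to the cardinality-$j$ face $A_j$ in the support of $f$ is the length-$f(A_j)$ prefix of the repeated binary encoding of $\min A_j$. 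Continuity and skeleton-preservation are then automatic, and the multiplicity bound comes essentially for free: any vertex in the image has some coordinate of length at least $\ell$, whose $\ell$-prefix recovers a vertex of the source simplex, so at most $2^{\deg}$ simplices of $Z$ can meet a given simplex of the target. This ``a long coordinate remembers a vertex of the source simplex'' mechanism --- a concrete realisation of your hashing intuition, built into a simplicial model --- is the missing idea your outline needs.
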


\subsubsection{Coarse constructions of graphs into $\Z^2$ and applications to VLSI}
Minimizing the area of a circuit is an important problem in the domain of Very Large Scale Integration (VLSI). In the language of this paper, one of the key problems tackled in \cite{Le82} is to find upper bounds on $\con^k_{X\to \Z^2}(r)$ for interesting graphs $X$\footnote{Leiserson's results are stronger than this: they require the image to be contained within a rectangle of controlled eccentricity and are actually measuring the volume of the rectangle rather than the image as defined here}. One consequence of Leiserson's results is

\begin{theorem} Let $X$ be a bounded degree graph such that $\sep_X(r) \lesssim r^a\log(r)^b$. For all sufficiently large $k=k(\deg(X))$, we have
\begin{align*}
 \con^k_{X\to \Z^2}(n) & \lesssim 
 	\left\{ 
 		\begin{array}{lll}
 			n & \textup{if} & a<\frac12, \\
 			n\log(n)^{2b+2} & \textup{if} & a=\frac12, b\geq 0 \\
 			n^{2a}\log(n)^{2b} & \textup{if} & a>\frac12.
 		\end{array}
 	\right.
\end{align*}
\end{theorem}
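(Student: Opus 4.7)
The plan is to translate Leiserson's area-efficient layout algorithm from \cite{Le82} directly into a $k$-coarse construction. Fix a subgraph $\Gamma \leq X$ with $n$ vertices. The hypothesis $\sep_X(r) \lesssim r^a\log(r)^b$ passes to arbitrary subgraphs of $\Gamma$, which is exactly what is needed to drive Leiserson's divide-and-conquer procedure: recursively peel off a balanced separator, lay out the two sides in disjoint subrectangles of $\Z^2$, and route the separator-crossing edges through a channel between them. The resulting layout places the vertices of $\Gamma$ at distinct grid points and routes each edge of $\Gamma$ along a path of grid edges inside a bounding rectangle whose area satisfies the three-case bound claimed. The three regimes for $a$ correspond to whether, in the layout recurrence, the quadratic-in-separator channel-width term is dominated by, balanced against, or dominates the sum of the subproblem areas, and the powers of $\log$ arise from compounding the $\log^b$ factor through the recursion (with the additional $\log^2$ at $a = \tfrac12$ being the standard feature of Leiserson's construction at the critical exponent).

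The second ingredient is a \emph{bounded congestion} property of the layout: there is a constant $k_0=k_0(\deg(X))$ such that no more than $k_0$ routing paths pass through any fixed grid edge or grid vertex of $\Z^2$. This is implicit in Leiserson's wiring step, where at every level of the recursion edges are packed into channels whose width is chosen to accommodate the worst-case bundle crossing the separator, and the number of wires per unit channel width is controlled purely by the maximum degree of $\Gamma$. Given this, define $f\colon\Gamma\to\Z^2$ by sending each vertex of $\Gamma$ to its assigned grid point and each edge of $\Gamma$ homeomorphically onto its routing path. This is continuous and maps the $\ell$-skeleton to the $\ell$-skeleton, so it is a coarse construction. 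For any simplex $\sigma$ of $\Z^2$, the preimage $f^{-1}(\sigma)$ is contained in the at most one vertex of $\Gamma$ mapped to $\sigma$ together with the edges of $\Gamma$ whose routing paths traverse $\sigma$; by the congestion bound this is contained in a union of at most $k_0+1$ simplices. Taking $k=k_0+1$ gives the required $k$-coarse construction, and the volume of $f$ is bounded by the number of grid vertices of $\Z^2$ inside Leiserson's bounding rectangle, which is exactly the area bound.

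The main obstacle is not the recursion, which is entirely standard, but reconciling Leiserson's VLSI conventions — which measure the area of a bounding rectangle and allow wire crossings or knock-knees at grid vertices — with the coarse construction convention used here, which requires bounded preimages of individual grid simplices. A short bookkeeping lemma extracting a simplex-congestion bound of $k_0(\deg(X))$ from Leiserson's wiring rules, together with the observation that the number of grid vertices in a rectangle is comparable to its area, turns Leiserson's three-case area bound into the stated three-case bound for $\con^k_{X\to\Z^2}(n)$.
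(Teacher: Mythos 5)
Your proposal is correct and follows essentially the same route as the paper, which states this theorem as a direct consequence of Leiserson's area-efficient layout results (noting in a footnote that Leiserson actually proves something stronger, bounding the area of a rectangle of controlled eccentricity containing the image). Your extra bookkeeping — turning the layout into a bounded-congestion map with $k=k(\deg(X))$ and identifying the volume with the grid-vertex count of the bounding rectangle — is exactly the translation the paper has in mind and supplies it in slightly more detail than the paper does.
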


As $\con^k_{X\to \Z^2}(n)\gtrsim n$ for any infinite connected graph $X$, the first of these bounds is optimal. Moreover, if $\sep_X(r) \simeq r^a\log(r)^b$ with $a>\frac12$, then $\cw_X(r) \simeq r^a\log(r)^b$ by \cite{HHKL23}, so by Theorem \ref{thm:TOcon} we have 
\[
    \con^k_{X\to \Z^2}(n) \simeq n^{2a}\log(n)^{2b}.
\]
The second bound is not optimal in all cases, this is obvious for $X=\Z^2$ and it is conjectured that $\con^k_{X\to \Z^2}(n)\simeq n$ holds for all planar graphs $X$ (with suitably large $k$ depending on $X$). A candidate graph with $\sep_X(r)\simeq r^{\frac12}$ which one may hope does not satisfy $\con^k_{X\to \Z^2}(n)\simeq n$ for any $k$ is when $X$ is an appropriate simplicial triangulation of real hyperbolic space of dimension 3.

\subsection{Questions and guesses}\label{sec:guess}
Here we collect some very natural open questions raised by the above discussion. The first question is whether $\sTO_X^q$ is determined by the $q$-skeleton of $X$.

\begin{question} Is there a bounded degree finite dimensional simplicial complex $X$ and a $q\in\N$ such that
\[
	\sTO^q_X(r)\not\simeq \sTO^q_{X^{\leq q}}?
\]
\end{question}
In proposition \ref{prop:12skeleton} we prove that no such complex exists when $q=1,2$. 

We also raise two natural questions concerning the relationship between topological thin/thickness and direct products.

\begin{question} If $X,Y$ are $q$- and $q'$-topologically thin respectively, is $X\times Y$ $(q+q')$-topologically thin? Conversely, if  $X,Y$ are $q$- and $q'$-topologically thick respectively, is $X\times Y$ $(q+q')$-topologically thick?
\end{question}
Note that the first question is still open for $X=Y=\HH^3$ and $q=q'=1$. Another potentially interesting direction is to consider whether the following variant of Corollary \ref{cor:ses} holds.

\begin{question}
    Let $1\to N \to G \to_{\pi} H \to 1$ be a (continuous) short exact sequence of compactly generated locally compact groups, and let $d_G$ be a proper left-invariant metric on $G$. Set $d_H(h,h')=d_G(\pi^{-1}(h),\pi^{-1}(h'))$. If there is a continuous injective map $(H,d_H)\to \R^k$ for some $k$, does it follow that, for all $q\in\N$, we have
\[
\TO^{k+q}_{(G,d_G)}(r) \lesssim \TO^q_{(N,d_G)}(r)?
\]
\end{question}
A positive resolution to this question would yield significant improvements to Theorem \ref{thm:TOhyp} and Corollary \ref{cor:ubsymspace}. In particular, we may make the following guess.

\begin{guess}
    Let $X$ be a Riemannian symmetric space of non-compact type or thick Euclidean building with co-compact affine Weyl group of rank $d$. Then $X$ is topologically $d$-thin.
\end{guess}
We recall that these spaces are topologically $(d-1)$-thick by Theorem \ref{thm:TOhorocycle} and \cite{BensaidNguyen}.
\medskip

Next, we list some consequences of the heuristic that the subsets which maximise $\TO^1$ may also maximise higher topological overlap profiles, and that the ``least overlapping'' continuous maps are ``obvious'' ones.

\begin{guess} We have
\[
\begin{array}{cc}
    \TO^q_{\HH^d}(r) \simeq \left\{ \begin{array}{cc}
        r^{1-q/(d-1)} & 1\leq q < d-1 \\
        \ln(1+r) & q=d-1
    \end{array}\right. 
\end{array}
\]
\end{guess}
Here we would be projecting metric balls to a totally geodesically embedded copy of $\HH^q$ inside $\HH^n$ meeting the centre of the ball (where $\HH^1$ would just be a biinfinite geodesic line). At the moment, we have no sensible guess for other symmetric spaces, however, since rank 1 symmetric spaces are topologically thin above the rank, and rank 2 symmetric spaces of non-compact type are $1$-topologically thick, we may hope for something like the following.

\begin{guess}\label{guess:symspace} Let $X$ be a symmetric space whose noncompact factor has rank $n$. For $1\leq q < n$, we have $\TO^q_X(r)\gtrsim r/\ln(1+r)^q$ and there is some $a=a(X)<1$ such that $\TO^n_X(r)\lesssim r^a$.
\end{guess}
We know that this holds when $n=1$, and that the lower bound for $\TO^1$ holds when $n\geq 2$. Since $\SL(n+1,\R)$ (which has rank $n$) contains a quasi-isometrically embedded copy of the direct product of $n$ copies of $\HH^2$ \cite{FisherWhyte} - and therefore contains the horocyclic product $H^{n}(T)$ - the lower bound below the rank also holds here.
\medskip

For the Heisenberg group, projecting a ball to the $y,z$-plane has geodesic fibres, so we make the following prediction.

\begin{guess} For the real Heisenberg group 
\[
H= \setcon{\left(\begin{array}{ccc}
    1 & x & z \\
    0 & 1 & y \\
    0 & 0 & 1
\end{array}\right)}{x,y,z\in\R}
\]
we have
\[
    \TO^2_H(r)\simeq r^{\frac14}.
\]    
\end{guess}

Finally, we make the following prediction for $\SOL$.

\begin{guess} We have $\TO^2_{\SOL}(r)\simeq \ln(1+r)$.
\end{guess}
We already know from Theorems \ref{thm:TOhyp} and \ref{thm:productub} that $\TO^2_{\SOL}(r)\lesssim \ln(1+r)^4$ since $\SOL$ quasi-isometrically embeds into $\HH^2\times \HH^2$. At the moment, we do not have a proof that either $\TO^2_H(r)$ or $\TO^2_{\SOL}(r)$ are unbounded.

\subsection{Plan of the paper}
In $\S\ref{sec:simp}$ we recall basic notation concerning simplicial complexes. The goal of $\S\ref{sec:defn}$ is to give the two definitions of topological overlap, show they are naturally equivalent and prove the key monotonicity result (Theorem \ref{thm:TOmono}). The dimension 1 case is considered in $\S\ref{sec:dim1}$, we prove that $\TO^1$ is equivalent to the cutwidth profile, and show that $1$-dimensional topological expanders necessarily contain graphical expanders. We move on to calculations in higher dimensions in $\S\ref{sec:highdim}$, firstly proving the product and fibring theorems and their consequences, then giving sharp bounds for Euclidean spaces. In $\S6$ we introduce coarse constructions, and prove Theorems \ref{thm:TOcon}, \ref{thm:TOlbcon} and \ref{thm:conhorocycle}.

\subsection{Acknowledgements}
The author would like to thank Goulnara Arzhantseva, David Ellis, Shai Evra, John Mackay, Romain Tessera and Federico Vigolo for enlightening conversations on a variety of topics in and around the area of high dimension expanders. This paper would not have been possible without the Focussed Research Workshop ``Higher-dimensional expanders and applications to coarse geometry'', funded by the Heilbronn Institute for Mathematical Research. The author was supported by the EPSRC Fellowship EP/V027360/1 ``Coarse geometry of groups and spaces''.

\section{Simplicial complexes}\label{sec:simp}

\begin{definition}\label{defn:simp} An \textbf{abstract simplicial complex} is a pair $(S,\mathcal S)$ where $S$ is a set and $\mathcal S$ is a collection of subsets of $S$ which contains all singletons and is closed under taking subsets. The \textbf{degree} of $(S,\mathcal S)$ is 
\[
	\deg(S,\mathcal S)=\max_{s\in S}\left|\setcon{\{s,t\}\in\mathcal S}{s\neq t}\right|.
\] 
The \textbf{dimension} of $(S,\mathcal S)$ is 
\[
	\dim(S,\mathcal S)=\max_{U\in\mathcal S}|U|-1.
\]
\end{definition}

Given two abstract simplicial complexes $(S,\mathcal S)$ and $(T,\mathcal T)$ we write $(S,\mathcal S)\leq (T,\mathcal T)$ if there is an injection $\iota:S\to T$ such that $\iota(U)\in\mathcal T$ for all $U\in\mathcal S$. We call $(S,\mathcal S)$ a \textbf{subcomplex} of $(T,\mathcal T)$.

We collect two useful facts.
\begin{fact}\label{fact:simplicebound}
\begin{itemize}
    \item If $|S|<+\infty$, then $|\mathcal S|\leq |S|2^{\deg(S,\mathcal S)}$.
    \item If $\deg(S,\mathcal S)<+\infty$, then
    \[
     \Delta_{(S,\mathcal S)} := \max_{\sigma\in \mathcal S}\setcon{\sigma'\in \mathcal S}{\sigma\cap\sigma'\neq\emptyset}<+\infty.
    \]
\end{itemize}
\end{fact}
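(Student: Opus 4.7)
The plan is to prove both bullets by the same local argument: in a complex of degree $D := \deg(S,\mathcal{S})$, every simplex containing a fixed vertex $s$ is determined by a subset of the ``link'' of $s$, which has at most $D$ vertices.

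For the first bullet, I would fix $s \in S$ and count the simplices $\sigma \in \mathcal{S}$ with $s \in \sigma$. Since $\mathcal{S}$ is closed under taking subsets, for every $t \in \sigma \setminus \{s\}$ the edge $\{s,t\}$ lies in $\mathcal{S}$; hence $\sigma \setminus \{s\}$ is a subset of $L(s) := \{t \in S \setminus \{s\} : \{s,t\} \in \mathcal{S}\}$, which has cardinality at most $D$. Therefore at most $2^{D}$ simplices contain $s$, and using that each simplex contains at least one vertex we obtain
\[
|\mathcal{S}| \;\leq\; \sum_{s \in S} |\{\sigma \in \mathcal{S} : s \in \sigma\}| \;\leq\; |S|\cdot 2^{D}.
\]

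For the second bullet, the same argument first shows that every simplex $\sigma \in \mathcal{S}$ has at most $D+1$ vertices: fixing any $s \in \sigma$, the set $\sigma \setminus \{s\}$ is contained in $L(s)$. Now any simplex $\sigma' \in \mathcal{S}$ intersecting $\sigma$ must contain some vertex $s \in \sigma$, and once $s$ is fixed there are at most $2^{D}$ choices for $\sigma'$ by the previous paragraph; summing over $s \in \sigma$ gives
\[
\bigl|\{\sigma' \in \mathcal{S} : \sigma \cap \sigma' \neq \emptyset\}\bigr| \;\leq\; (D+1)\cdot 2^{D},
\]
which is independent of $\sigma$. Both estimates are elementary counting arguments, and no step presents any genuine obstacle.
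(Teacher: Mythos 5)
Your argument is correct and is precisely the elementary counting the paper leaves implicit (the Fact is stated there without proof): since $\mathcal S$ is closed under taking subsets, every simplex through a fixed vertex $s$ is of the form $\{s\}\cup A$ with $A$ a subset of the link $\setcon{t\neq s}{\{s,t\}\in\mathcal S}$, which has at most $\deg(S,\mathcal S)$ elements, giving at most $2^{\deg(S,\mathcal S)}$ simplices per vertex and hence both bounds. You also correctly observe that the second bullet needs only finiteness of the degree (not of $S$), so there is nothing to add.
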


\begin{definition}
An abstract simplicial complex $(S,\mathcal S)$ is called a \textbf{flag complex} if $U\in \mathcal S$ whenever every $2$ element subset of $U$ is in $\mathcal S$.
\end{definition}

A natural example of a flag complex is the barycentric subdivision of any abstract simplicial complex.

\begin{definition}\label{barycentre} Let $(S,\mathcal S)$ be an abstract simplicial complex. The \textbf{barycentric subdivision} of $(S,\mathcal S)$ is the pair $(\mathcal S,\mathbb S)$ where $\mathbb S$ is the set of increasing chains of elements of $\mathcal S$.
\end{definition}

We collect the following useful facts concerning barycentric subdivisions for later use.

\begin{fact}\label{fact:subdivprop} If $\dim(S,\mathcal S)<+\infty$, then $\dim(\mathcal S,\mathbb S)=\dim(S,\mathcal S)$. If, in addition, $\deg(S,\mathcal S)<+\infty$, then $\deg(\mathcal S,\mathbb S)\leq 2^{\deg(S,\mathcal S)}$.
\end{fact}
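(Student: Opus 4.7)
The plan is to unpack both claims directly from the definition of the barycentric subdivision, where the vertices of $(\mathcal{S}, \mathbb{S})$ are the elements of $\mathcal{S}$ and its simplices are the strictly increasing chains in $(\mathcal{S}, \subseteq)$.

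First I would verify the dimension equality. Any simplex of $\mathbb{S}$ is a chain $\sigma_0 \subsetneq \sigma_1 \subsetneq \cdots \subsetneq \sigma_k$, and strict inclusion forces $|\sigma_i| \geq i + 1$, so $\dim(S, \mathcal{S}) + 1 \geq |\sigma_k| \geq k+1$, giving $k \leq \dim(S, \mathcal{S})$. Conversely, taking any maximal $U \in \mathcal{S}$ with $|U| = \dim(S, \mathcal{S}) + 1$ and any enumeration $v_0, v_1, \ldots, v_{\dim(S, \mathcal{S})}$ of $U$ produces the chain $\{v_0\} \subsetneq \{v_0, v_1\} \subsetneq \cdots \subsetneq U$, which is a simplex of $\mathbb{S}$ of dimension $\dim(S, \mathcal{S})$. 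Hence equality holds.

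For the degree bound, fix $\sigma \in \mathcal{S}$ viewed as a vertex of $(\mathcal{S}, \mathbb{S})$ and count the $\sigma' \neq \sigma$ in $\mathcal{S}$ for which $\{\sigma, \sigma'\}$ is a strict chain, i.e. those $\sigma'$ strictly comparable to $\sigma$ under inclusion. The key observation is that for any chosen $v \in \sigma$, every $\sigma' \supsetneq \sigma$ contains $v$ and so is a subset of $\{v\} \cup N(v)$, where $N(v) \subseteq S$ is the set of neighbours of $v$ in the $1$-skeleton. Moreover, since any two vertices of $\sigma$ span an edge, every vertex of $\sigma$ has at least $|\sigma| - 1$ neighbours, giving $|\sigma| - 1 \leq \deg(S, \mathcal{S})$; so $\sigma$ itself lies inside $\{v\} \cup N(v)$, and hence so do all its subsets. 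Thus every $\sigma'$ strictly comparable to $\sigma$ lies in the power set of $\{v\} \cup N(v)$, a set of cardinality at most $\deg(S, \mathcal{S}) + 1$, yielding an exponential bound in $\deg(S, \mathcal{S})$.

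The only genuine subtlety is pinning down the exact constant quoted in the fact. The naive version of the above count gives a bound of the form $2^{\deg(S,\mathcal{S})+1} - 2$, and matching the stated $2^{\deg(S,\mathcal{S})}$ requires either a slight refinement of the combinatorics or a small adjustment of the constant. Since the fact is only ever used in the paper to guarantee that finite-degree complexes have finite-degree barycentric subdivisions, the qualitative content is what matters, and no topological or higher-connectivity input is needed; I do not anticipate any genuinely conceptual obstacle.
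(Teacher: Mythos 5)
Your argument is correct, and there is no proof in the paper to compare it against: the statement is recorded as a Fact without justification, and what you wrote (bounding chain lengths by cardinalities for the dimension equality, and trapping every simplex comparable with $\sigma$ inside $\{v\}\cup N(v)$ for a fixed $v\in\sigma$ for the degree bound) is the natural argument one would supply.

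The one point you left open deserves to be settled, and your suspicion about the constant is justified: the stated bound $2^{\deg(S,\mathcal S)}$ cannot be recovered by refining the combinatorics, because it is literally too small. Take $(S,\mathcal S)$ to be the full simplex on $\Delta+1$ vertices, so $\deg(S,\mathcal S)=\Delta$; in the barycentric subdivision the vertex corresponding to the top simplex is joined to every proper nonempty face, i.e.\ to $2^{\Delta+1}-2$ vertices, which exceeds $2^{\Delta}$ already for $\Delta=2$ (the subdivided triangle has a vertex of degree $6>4$). Your count is in fact sharp: every simplex strictly comparable with $\sigma$ is a nonempty subset of $\{v\}\cup N(v)$, a set of at most $\deg(S,\mathcal S)+1$ elements, and distinct from $\sigma$, so $\deg(\mathcal S,\mathbb S)\leq 2^{\deg(S,\mathcal S)+1}-2$, with equality for the full simplex. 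So the Fact should read $2^{\deg(S,\mathcal S)+1}$ (say) rather than $2^{\deg(S,\mathcal S)}$. As you note, this is harmless for the paper: the Fact is only used to produce a degree bound for the subdivision depending solely on $\deg(S,\mathcal S)$ (e.g.\ in the proof of Lemma \ref{lem:boundZ'}), and replacing $2^{\Delta}$ by $2^{\Delta+1}$ there only changes the constants $C(\Delta)$ and $K(d,\Delta)$.
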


\begin{definition}\label{defn:topreal}
The \textbf{topological realisation} of a simplicial complex $(S,\mathcal S)$ is the set
\[
 \tS=\setcon{f:S\to [0,1]}{\supp(f)\in\mathcal S,\ \sum_{s\in S}f(s)=1}
\]
equipped with the subspace topology from $[0,1]^S$. We may also consider $\tS$ as a metric space, where the distance between functions $f$ and $f'$ is given by
\begin{equation}\label{eq:scmetric}
	\min_k\varsetcon{\sum_{i=1}^k \|f_{i-1}-f_{i}\|_2}{f_i\in \tS,\ f_0=f,\ f_k=f',\ \supp(f_{i-1})\cup\supp(f_i)\in\mathcal S}.
\end{equation}
\end{definition}
The support function defines a natural surjective map $\supp:Z\to\mathcal S$.

\begin{remark}
    For the remainder of a paper we will use the phrase ``simplicial complex'' to mean the topological realisation of an abstract simplicial complex.
\end{remark}

\begin{definition} A \textbf{$k$-simplex} is an element of $\mathcal S$ with cardinality $k+1$. In the topological realisation an \textbf{open $k$-simplex} is the preimage of a $k$-simplex under $\supp$ and a \textbf{closed $k$-simplex} is the closure of an open $k$-simplex, or equivalently, the preimage under $\supp$ of all the subsets of an element of $\mathcal S$ with cardinality $k+1$.
\end{definition}
Given a simplicial complex $Z$, we denote the set of closed $k$-simplices by $\CZ^k$ and write $\CZ^{\leq k}=\bigcup_{i=1}^k\CZ^i$ and $\CZ=\bigcup_k \CZ^k$.

Given a simplicial complex $Z\subseteq \R^I$, we define
\[
 S_Z=\bigcup_{f\in Z} \supp(f) \quad \textup{and} \quad \mathcal S_Z = \setcon{\supp(f)}{f\in Z}.
\]
It is immediate that $Z$ is the topological realisation of the abstract simplicial complex $(S_Z,\mathcal S_Z)$. We write $\dim(Z)=\dim(S_Z,\mathcal S_Z)$, $\deg(Z)=\deg(S_Z,\mathcal S_Z)$ and $\Delta_Z=\Delta_{(S_Z,\mathcal S_Z)}$. We write $|Z|=|S_Z|$.

Given simplicial complexes $Y$ and $Z$, we say $Y$ is a subcomplex of $Z$ (and write $Y\leq Z$) if $(S_Y,\mathcal S_Y)$ is a subcomplex of $(S_Z,\mathcal S_Z)$.

\begin{remark} We denote by $\bS$ the set of all simplicial complexes $Z$ where $S_Z$ is countable and $\deg(Z)<+\infty$ (note that this immediately implies that $\dim(Z)<+\infty$), and by $\fS$ the set of all simplicial complexes $Z$ where $S_Z$ is finite.
\end{remark}

\begin{definition}\label{defn:kskel} Given a simplicial complex $(S,\mathcal S)$, we denote by $\mathcal S^k$ and $\mathcal S^{\leq k}$ the set of all elements of $\mathcal S$ with cardinality equal to/at most $k+1$ respectively. $(S,\mathcal S^{\leq k})$ is called the \textbf{$k$-skeleton} of $(S,\mathcal S)$.

The $k$-skeleton of $Z\in\bS$ is the set of all points in $Z$ whose support contains at most $k+1$ elements. Equivalently, if $Z$ is the topological realisation of $(S,\mathcal S)$ then $Z^{\leq k}$ is the topological realisation of $(S,\mathcal S^{\leq k})$.
\end{definition}

\begin{remark}\label{rem:scgraphs} We will sometimes use graph-theoretic terminology when discussing simplicial complexes of dimension $1$, referring to $0$-simplices as vertices and $1$-simplices as edges.
\end{remark}

\section{Topological overlap}\label{sec:defn}
The goal of this section is to give two definitions of topological overlap, one for simplicial complexes ($\S\ref{sec:defnsTO}$) and one for general metric spaces ($\S\ref{sec:covdim}$). We will then prove that they are equivalent on simplicial complexes equipped with the metric \eqref{eq:scmetric}. The main goal of the section is to prove the monotonicity of topological overlap for simplicial complexes and metric spaces with appropriate higher connectivity conditions ($\S\ref{sec:mono}$).

\subsection{Definition for complexes}\label{sec:defnsTO}

\begin{definition}\label{defn:TO} Let $Z\in\fS$. Given a continuous map $f\in C(Z,\R^q)$ we define the \textbf{topological overlap constant} of $f$ to be
\[
 \sOv(f) = \max_{z\in\R^q}\abs{\setcon{\sigma\in\CZ}{z\in f(\sigma)}}
\]
For each $q\geq 1$, the \textbf{$q$-dimensional topological overlap} constant of $Z$ is
\[
 \sTO^q(Z)=\min\setcon{\sOv(f)}{f\in C(Z,\R^q)}.
\]
\end{definition}

We then take a maximum over subcomplexes of a given size, in analogy with the definition of the separation profile.

\begin{definition}\label{defn:TOprof} Let $X\in\bS$. The \textbf{$q$-dimensional topological overlap profile} of $X$ is
\[
 \sTO^q_X(r)=\max\setcon{\sTO^q(Z)}{Z\leq X,\ |Z|\leq r}.
\]
\end{definition}

\begin{remark} It is completely natural to extend the above definitions to finite dimensional CW-complexes, counting cells rather than simplices. A seemingly natural class to consider is complexes with the property that some iterated barycentric subdivision is a simplicial complex (a class which includes all cube complexes, regular CW complexes and all almost-regular CW complexes in the sense of Dru\c{t}u-Kapovich \cite{DrutuKap}). The resulting simplicial complex has bounded degree whenever the cardinality of the set of all closed cells intersecting a given cell can be bounded independent of the chosen cell - this excludes, for example, Cayley $2$-complexes of presentations of groups with infinitely many relators. Under this assumption, taking a barycentric subdivision does not change $\sTO^q_X$ up to $\simeq$.
\end{remark}

In this paper we will use the following definition of topological expander, which assumes bounded degree.

\begin{definition}\label{defn:topexp} Given $\Delta\in\N$ and $\varepsilon>0$, a family of finite $q$-dimensional simplicial complexes $(Z_n)_n$ is called a \textbf{$q$-dimensional $(\Delta,\varepsilon)$-topological expander} if
\begin{itemize}
 \item $|Z_n|\to \infty$ as $n\to\infty$,
 \item $\sup_n\deg(Z_n)\leq \Delta$, for every $n$, and
 \item $\sTO^q(Z_n) \geq \varepsilon|Z_n|$ for every $n$.
\end{itemize}
\end{definition}

We can now give the easy proof of Theorem \ref{thm:TOprops}(i).

\begin{proof}[Proof of Theorem \ref{thm:TOprops}(i)]
    Suppose $X$ is a bounded degree simplicial complex of dimension at most $q$. If there is a $(\Delta,\varepsilon)$-topological expander $(Z_n)$ such that, for each $n$, $Z_n\leq X$, then, for every $n$
    \[
        \sTO^q_X(|Z_n|) \geq \sTO^q(Z_n) \geq \varepsilon|Z_n|
    \]
As $|Z_n|\to\infty$, we have $\liminf_{r\to \infty}\frac1r\sTO^q_X(r) \geq \varepsilon > 0$.

Now suppose $\liminf_{r\to \infty}\frac1r\sTO^q_X(r) \geq \varepsilon > 0$, so there exists an infinite subset $I\subseteq \N$ such that $\sTO^q_X(r) \geq \varepsilon r$ for all $r\in I$. Unpacking Definition \ref{defn:TO}, this means that for every $r\in I$, there is some $Z_r\leq I$ such that 
\[
    \sTO^q(Z_r) \geq \varepsilon r \geq \varepsilon|Z_r|.
\]
It remains to prove that $\{|Z_r|\}$ is not bounded. Suppose for a contradiction that $|Z_r|\leq N$ for all $r\in I$. Any simplicial complex with $N$ $0$-simplices and degree $\Delta$ contains at most $2^\Delta N$ simplices, thus for all $r\in I$, we have
\[
    2^\Delta N  \geq \sTO^q(Z_r) \geq \varepsilon r.
\]
which contradicts the fact that $I\subset \N$ is infinite.
\end{proof}

\subsection{Covering definition}\label{sec:covdim}
In this section we give a second definition of the topological overlap profile which can be defined for metric spaces with bounded geometry, the two definitions will be equivalent for simplicial complexes equipped with the metric \eqref{eq:scmetric} (cf.\ \S\ref{sec:equivdefn}).

\begin{definition} Let $(X,d)$ be a metric space and let $Z\subseteq X$. The $1$\textbf{-covering volume} of $Z$ ($\Cov^1(Z)$) is the minimal $n$ such that $Z$ is contained in a union of $n$ closed balls of radius $1$. If no such $n$ exists we write $\Cov^1(Z)=+\infty$. We say $X$ has \textbf{bounded geometry} if, for every $r>0$,
\[
 \sup_{x\in X} \Cov^1(B(x;r))<+\infty
\]
where $B(x;r)=\setcon{y\in X}{d(x,y)\leq r}$ is the closed ball of radius $r$ centred at $x$.
\end{definition}

The idea behind the second definition is to measure size in terms of covering volume instead of number of simplices.

\begin{definition}\label{defn:cTO} Let $Z$ be a metric space with finite $1$-covering volume. Given a function $f\in C(Z,\R^q)$ and $z\in\R^q$ we define the \textbf{metric overlap of $f$ at $z$} to be $\Ov_f(z)=\Cov^1(f^{-1}(z))$.

The \textbf{metric overlap of $f$} is $\Ov(f)=\max_{z\in\R^q}\Ov_f(z)$.

The \textbf{$q$-dimensional metric topological overlap} of $Z$ is 
\[
	\TO^q(Z)=\min\setcon{\Ov(f)}{f\in C(Z,\R^q)}.
\]
\end{definition}

\begin{definition}\label{defn:cTOprof} Let $X$ be a metric space. The \textbf{$q$-dimensional metric topological overlap} profile of $X$ is 
\[
\TO^q_X(r)=\max\setcon{\TO^q(Z)}{Z\subseteq X,\ \Cov^1(Z)\leq r}.
\]
\end{definition}

\subsection{Equivalence of definitions}\label{sec:equivdefn}

The main result of this section is to prove the equivalence of the two definitions of topological overlap profile for simplicial complexes.

\begin{theorem}\label{thm:defnequiv} Let $X\in\bS$ be equipped with the metric \eqref{eq:scmetric}. For every $q$,
\[
 \sTO^q_X(r)\simeq \TO^q_X(r).
\]
\end{theorem}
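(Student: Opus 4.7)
The plan is to prove both inequalities $\sTO^q_X(r)\lesssim \TO^q_X(r)$ and $\TO^q_X(r)\lesssim \sTO^q_X(r)$ separately, by exploiting the fact that for $X\in\bS$ equipped with the metric \eqref{eq:scmetric} the combinatorial unit ``one closed simplex'' and the metric unit ``one closed ball of radius $1$'' are comparable up to a multiplicative constant depending only on $\deg(X)$.

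The key geometric input is a short lemma producing constants $K_1,K_2$ depending only on $\deg(X)$ such that (a) every closed simplex of $X$ is contained in a union of $K_1$ closed balls of radius $1$, and (b) every closed ball of radius $1$ in $X$ meets at most $K_2$ closed simplices of $X$. Assertion (a) is essentially trivial: every closed simplex consists of probability measures on a set of at most $\deg(X)+1$ vertices, so by inspection of \eqref{eq:scmetric} it has diameter at most $\sqrt{2}$. For (b), given any chain $f_0,\dots,f_k$ of total length at most $1$ realising the metric, the simplices supporting the $f_i$ form a walk along shared faces; bounded degree forces $\Delta_X<\infty$ by Fact \ref{fact:simplicebound}, which lets us bound the number of simplices reachable by such a walk.

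Given the lemma, the first inequality is almost immediate. If $Z\leq X$ has $|Z|\leq r$, then by Fact \ref{fact:simplicebound} together with (a) the realisation of $Z$ satisfies $\Cov^1(Z)\leq K_1 2^{\deg(X)} r$. Apply the definition of $\TO^q_X$ to obtain $f\colon Z\to\R^q$ with $\Ov(f)\leq \TO^q_X(K_1 2^{\deg(X)} r)$; for every $z\in\R^q$ the preimage $f^{-1}(z)$ is covered by at most $\Ov(f)$ unit balls, so by (b) meets at most $K_2\Ov(f)$ closed simplices of $Z$, giving the required bound on $\sOv(f)$. Conversely, given $Z\subseteq X$ with $\Cov^1(Z)\leq r$ covered by unit balls $B_1,\dots,B_r$, let $Z'$ be the subcomplex of $X$ formed by all closed simplices meeting $\bigcup_i B_i$, so that $|Z'|\leq K_2 r$ by (b). Apply $\sTO^q_X$ to obtain $g\colon Z'\to\R^q$ with $\sOv(g)\leq \sTO^q_X(K_2 r)$ and restrict $f=g|_Z$; for each $z\in\R^q$, $f^{-1}(z)\subseteq g^{-1}(z)$ lies in a union of at most $\sTO^q_X(K_2 r)$ closed simplices, each covered by $K_1$ unit balls via (a), yielding $\Ov(f)\leq K_1\sTO^q_X(K_2 r)$.

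The main obstacle is assertion (b): although heuristically obvious, it requires genuine care because the metric \eqref{eq:scmetric} is an infimum over chains whose consecutive points must share a simplex but may otherwise hop freely, and one must verify that such chains of bounded total length cannot visit too many distinct simplices. This is precisely where bounded degree is needed, by combining the finiteness of $\Delta_X$ with a combinatorial counting argument on walks of bounded length in the adjacency graph of closed simplices; once this is in place, the rest of the proof is essentially a bookkeeping exercise translating between the two measures of ``size''.
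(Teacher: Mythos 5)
Your proposal is correct and takes essentially the same route as the paper: the paper packages your two bookkeeping arguments as Propositions \ref{prop:cTOtoTO} and \ref{prop:TOtocTO} (stated for general continuous maps and applied here to the identity), and its proof of Theorem \ref{thm:defnequiv} rests on exactly your facts (a) and (b) --- each closed simplex lies in a single unit ball, and each unit ball meets a number of closed simplices bounded in terms of degree and dimension --- which the paper likewise invokes without detailed verification. The only difference is presentational (you inline the covering estimates rather than citing the general propositions), so there is nothing substantive to add.
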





We will prove Theorem \ref{thm:defnequiv} as an immediate consequence of the following two propositions, which we prove in a more general setting.

\begin{proposition}\label{prop:cTOtoTO} Let $X$ be a metric space and let $Z\in\bS$. If there is a continuous map $\phi:X\to Z$ and a constant $M$ such that 
\begin{itemize}
\item[\textrm{(a)}] for every ball $B$ of radius $1$ in $X$, $\phi(B)$ intersects at most $M$ closed simplices in $Z$, and
\item[\textrm{(b)}] for every $\sigma \in \CZ$ we have $\Cov^1(\phi^{-1}(\sigma))\leq M$,
\end{itemize} 
then for all $q$,
\[
 \TO^q_X(r) \lesssim \sTO^q_Z(r).
\]
\end{proposition}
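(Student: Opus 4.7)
The plan is to pull a near-optimal simplicial map on a subcomplex of $Z$ back via $\phi$. Given $W\subseteq X$ with $\Cov^1(W)\leq r$, cover $W$ by $r$ closed unit balls. By hypothesis (a), $\phi$ applied to this cover meets at most $Mr$ closed simplices of $Z$; let $\Sigma$ denote this collection. Form the subcomplex $Z'\leq Z$ obtained as the topological union $\bigcup_{\sigma\in\Sigma}\sigma$, equivalently the subcomplex generated by $\Sigma$. Since $Z\in\bS$ has bounded degree and hence bounded dimension $d$, each element of $\Sigma$ has at most $d+1$ $0$-faces, so $|Z'|\leq (d+1)Mr$. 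By construction $\phi(W)\subseteq Z'$.

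Next, invoke the definition of $\sTO^q_Z$ to obtain a continuous map $g:Z'\to\R^q$ with $\sOv(g)\leq \sTO^q_Z((d+1)Mr)$, and set $f:=g\circ\phi|_W:W\to\R^q$. For each $z\in\R^q$, the preimage $g^{-1}(z)$ is contained in the union of those closed simplices $\sigma\in\mathcal{C}Z'$ with $z\in g(\sigma)$, and there are at most $\sOv(g)$ such simplices. Consequently
\[
f^{-1}(z)\subseteq \phi^{-1}(g^{-1}(z))\subseteq \bigcup_{\sigma:\,z\in g(\sigma)}\phi^{-1}(\sigma).
\]
Using hypothesis (b) together with the subadditivity of $\Cov^1$ under finite unions gives $\Cov^1(f^{-1}(z))\leq M\sOv(g)\leq M\sTO^q_Z((d+1)Mr)$. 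Taking the maximum over $z$ bounds $\Ov(f)$, and then taking the maximum over all admissible $W$ yields $\TO^q_X(r)\leq M\sTO^q_Z((d+1)Mr)$, which is precisely $\TO^q_X(r)\lesssim \sTO^q_Z(r)$.

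The argument is essentially mechanical once the bookkeeping is set up, with no serious obstacle: hypothesis (a) is designed precisely so that the pushforward of a $\Cov^1$-bounded set lives in a subcomplex of linearly bounded size, and hypothesis (b) is designed precisely so that pulling preimages back through $\phi$ inflates overlap by at most a multiplicative constant. The two points to verify are that $Z\in\bS$ has bounded dimension (justifying the factor $d+1$), and that $\Sigma$ genuinely generates a subcomplex closed under taking faces, which holds because the topological realisation of a closed simplex automatically contains all of its faces.
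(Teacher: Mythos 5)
Your proposal is correct and follows essentially the same route as the paper: push a $\Cov^1$-bounded set forward into a subcomplex of linearly bounded size using (a), take a near-optimal map $g$ on that subcomplex, and pull back via $\phi$, inflating the overlap by at most $M$ using (b). Your bookkeeping is in fact slightly more careful than the paper's (the factor $d+1$ converting a count of simplices into a count of $0$-simplices), but this only changes the constants, which $\lesssim$ absorbs.
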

\begin{proof}
Let $X'\subseteq X$ satisfy $\Cov^1(X')\leq r$ and let $Z'$ be the subcomplex of $Z$ consisting of all closed simplices which intersect $\phi(X')$. By assumption (a) $|Z'|\leq Mr$. Choose $f\in C(Z',\R^q)$ with $\sOv(f)\leq\TO^q(Mr)$. Using assumption (b), we have
\[
    \Cov^1(\phi^{-1}f^{-1}(z))\leq M\abs{\setcon{\sigma\in \CX'}{z\in f(\sigma)}} \leq M\sOv(f).
\]
As this holds for all $z\in\R^q$, we deduce that
\[
\Ov(\phi\circ f) \leq M\sOv(f) \leq M\sTO^q(Mr).
\]
As $X'$ was chosen arbitrarily, $\TO^q_X(r) \leq M\sTO^q_Z(Mr)$.
\end{proof}

\begin{proposition}\label{prop:TOtocTO}  Let $X$ be a metric space and let $Z\in\bS$. If there is a continuous map $\psi:Z\to X$ and a constant $N$ such that 
\begin{itemize}
\item[\textrm{(c)}] for every $\sigma \in \CZ$, $\Cov^1(\psi(\sigma))\leq N$, and
\item[\textrm{(d)}] for every ball $B$ of radius $1$ in $X$, $\psi^{-1}(B)$  intersects at most $N$ closed simplices in $Z$,
\end{itemize} then for all $q$,
\[
 \sTO^q_Z(r) \lesssim \TO^q_X(r).
\]
\end{proposition}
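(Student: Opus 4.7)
The plan is to mirror the proof of Proposition \ref{prop:cTOtoTO}, now pushing a subcomplex of $Z$ forward into $X$ via $\psi$, applying the metric definition of topological overlap there, and pulling back through $\psi$.

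First, fix a subcomplex $Z'\leq Z$ with $|Z'|\leq r$, and set $X':=\psi(Z')\subseteq X$. By Fact \ref{fact:simplicebound}, $Z'$ contains at most $|Z'|\,2^{\deg(Z)}$ closed simplices, so hypothesis (c) (applied simplex by simplex) yields
\[
\Cov^1(X') \leq N\cdot 2^{\deg(Z)}\cdot |Z'| \leq Cr,
\]
where $C=N2^{\deg(Z)}$ depends only on the data $(Z,N)$. Hence by Definition \ref{defn:cTOprof} there exists a continuous map $g\colon X'\to\R^q$ with
\[
\Ov(g)\leq \TO^q_X(Cr).
\]

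Now define $f:=g\circ\psi|_{Z'}\colon Z'\to\R^q$; this is continuous by continuity of $\psi$ and $g$. For any $z\in\R^q$ the fibre $g^{-1}(z)\subseteq X'$ has $\Cov^1(g^{-1}(z))\leq \Ov(g)$, so it is covered by at most $\Ov(g)$ closed balls of radius $1$ in $X$. Applying hypothesis (d) to each such ball, $\psi^{-1}(g^{-1}(z))$ meets at most $N\cdot\Ov(g)$ closed simplices of $Z$, and consequently so does $f^{-1}(z)$ inside $\CZ'$. Since $\sOv(f)$ counts exactly the maximum number of closed simplices whose image under $f$ contains a common point, we obtain
\[
\sOv(f) \leq N\cdot \Ov(g) \leq N\cdot \TO^q_X(Cr).
\]
Taking the minimum over $f$ gives $\sTO^q(Z')\leq N\TO^q_X(Cr)$, and then the maximum over subcomplexes $Z'\leq Z$ with $|Z'|\leq r$ yields
\[
\sTO^q_Z(r)\leq N\cdot \TO^q_X(Cr),
\]
which is the asserted inequality up to $\lesssim$.

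There is no real obstacle: the hypotheses (c) and (d) are exactly the mirror images of (a) and (b) in Proposition \ref{prop:cTOtoTO}, so the same bookkeeping works. The only point requiring a moment's care is the step from ``$|Z'|\leq r$'' to ``$\Cov^1(\psi(Z'))\lesssim r$'', which uses the bounded-degree Fact \ref{fact:simplicebound} to turn a bound on the number of $0$-simplices into a bound on the total number of closed simplices before hypothesis (c) can be invoked.
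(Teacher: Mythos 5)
Your proof is correct and follows essentially the same route as the paper: push $Z'$ forward via $\psi$, bound $\Cov^1(\psi(Z'))$ by a constant times $|Z'|$ using Fact \ref{fact:simplicebound} together with hypothesis (c), choose a nearly optimal $g$ on the image, and pull back through $\psi$ using hypothesis (d) to control $\sOv(g\circ\psi)$. The only differences are cosmetic (you use the constant $2^{\deg(Z)}$ where the paper uses $\Delta_Z$, and you spell out the fibre-counting step that the paper states tersely).
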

\begin{proof}
Let $Z'\leq Z$ with $|Z'|\leq r$ and let $X'=\psi(Z')$. By assumption (c) and Fact \ref{fact:simplicebound} we have $\Cov^1(X')\leq N|\CZ'|\leq \Delta_ZN|Z'|\leq \Delta_Z Nr$. Set $N'=\Delta_Z N$.

Choose $g\in C(X',\R^q)$ with $\Ov(g)\leq \TO^q_{X}(N'r)$. By assumption (d), we have
\[
\sOv(\psi\circ g) \leq N\Ov(g) \leq N\TO^q_X(N'r).
\]
As $Z$ was chosen arbitrarily, $\sTO^q_X(r) \leq N\TO^q_Z(N'r)$.
\end{proof}

\begin{proof}[Proof of Theorem \ref{thm:defnequiv}]
    Let $Z\in \bS$ and let $X$ be $Z$ equipped with the metric $\eqref{eq:scmetric}$. Each simplex is contained in a ball of radius $1$ and each ball of radius $1$ intersects a uniformly bounded number of simplices depending only on the degree and dimension of the complex. Therefore the hypotheses of Propositions \ref{prop:cTOtoTO} and \ref{prop:TOtocTO} are satisfied and the result follows immediately.
\end{proof}

\subsection{Monotonicity under regular maps}\label{sec:mono}
The goal of this section is to prove the key monotonicity result, Theorem \ref{thm:TOmono}. We start with monotonicity results for $\sTO$ and $\TO$ under continuous functions.

\begin{proposition}\label{prop:TOmonotone} Let $X,Y\in\bS$ and let $\phi:X\to Y$ be continuous. If there is some $M$ such that for every $\sigma\in\CX$ and $\sigma'\in\CY$, $\phi(\sigma)$ and $\phi^{-1}(\sigma')$ intersect at most $M$ closed simplices in $Y$ and $X$ respectively, then, for every $q$
\[
 \sTO^q_X(r) \lesssim \sTO^q_Y(r).
\]
\end{proposition}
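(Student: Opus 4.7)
The plan is to show the inequality at the level of a fixed subcomplex $X'\leq X$ with $|X'|\leq r$, construct an appropriate subcomplex $Y'\leq Y$ with $|Y'|\leq Cr$ containing $\phi(X')$, then pull back an almost-optimal continuous map $Y'\to\R^q$ along $\phi$ and check that the overlap constant is inflated by at most $M$.

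First I would bound the size of the ``simplicial image'' of $X'$. Let $\mathcal T$ denote the collection of closed simplices $\sigma'\in\CY$ which meet $\phi(X')$; equivalently, $\mathcal T = \bigcup_{\sigma\in\CX'} \{\sigma'\in\CY : \sigma'\cap\phi(\sigma)\neq\emptyset\}$. By the hypothesis on $\phi(\sigma)$ each of these inner sets has cardinality at most $M$, so $|\mathcal T|\leq M|\CX'|$. Fact \ref{fact:simplicebound} gives $|\CX'|\leq |X'|2^{\deg(X)}$, and the union $Y'=\bigcup_{\sigma'\in\mathcal T}\sigma'$ is a subcomplex of $Y$ with $|Y'|\leq (\deg(Y)+1)|\mathcal T|\leq C|X'|\leq Cr$ where $C = (\deg(Y)+1)M2^{\deg(X)}$. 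Since every point of $Y$ lies in some closed simplex of $Y$, we also have $\phi(X')\subseteq Y'$.

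Now pick, via the definition of $\sTO^q_Y$, a continuous map $g\colon Y'\to\R^q$ with $\sOv(g)\leq \sTO^q_Y(Cr)$, and set $f=g\circ\phi|_{X'}$. The remaining task is to control $\sOv(f)$. Fix $z\in\R^q$ and suppose $\sigma\in\CX'$ satisfies $z\in f(\sigma)$; then there exists $x\in\sigma$ with $\phi(x)\in g^{-1}(z)$, and the closed simplex $\sigma'\in\CY'$ whose open interior contains $\phi(x)$ satisfies $z\in g(\sigma')$ and $\sigma\cap\phi^{-1}(\sigma')\neq\emptyset$. The second condition, by the hypothesis on $\phi^{-1}(\sigma')$, is satisfied by at most $M$ simplices $\sigma$ for each fixed $\sigma'$, and the first condition is satisfied by at most $\sOv(g)$ simplices $\sigma'$. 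Thus $\sOv(f)\leq M\sOv(g)\leq M\sTO^q_Y(Cr)$.

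Taking the maximum over $X'\leq X$ with $|X'|\leq r$ yields $\sTO^q_X(r)\leq M\sTO^q_Y(Cr)$, which is precisely $\sTO^q_X\lesssim \sTO^q_Y$. The only mildly delicate point is the combinatorial double-counting in the third paragraph (keeping track of how a single simplex of $X'$ may carry multiple preimages of $z$ through different simplices $\sigma'$), but this is controlled cleanly by choosing $\sigma'$ to be the closed simplex whose interior contains $\phi(x)$, so that $\sigma'$ is uniquely determined by $\phi(x)$.
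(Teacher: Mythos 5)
Your proposal is correct and follows essentially the same route as the paper: pass to a subcomplex $X'$ of size at most $r$, take the subcomplex $Y'$ of $Y$ spanned by the closed simplices meeting $\phi(X')$ (of size $\lesssim r$ by the first hypothesis and Fact \ref{fact:simplicebound}), pull back a near-optimal map $g\in C(Y',\R^q)$, and bound $\sOv(g\circ\phi|_{X'})\leq M\sOv(g)$ using the second hypothesis. Your explicit double-counting of the pairs $(\sigma,\sigma')$ is just a more carefully written version of the paper's one-line estimate, so nothing is gained or lost.
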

\begin{proof}
Let $Z\leq X$ satisfy $|Z|\leq r$, so $|\CZ|\leq \Delta_X|Z|$ and let $Z'$ be the subcomplex of $Y$ given by the union of all closed simplices which intersect $\phi(Z)$. By assumption and by Fact \ref{fact:simplicebound}, $|Z'|\leq M\Delta_X\Delta_Y|Z|$. Set $M'=M\Delta_X\Delta_Y$. Choose $f\in C(Z',\R^q)$ with $\sOv(f)\leq \sTO^q_Y(M'r)$. Now $(f\circ \phi)|_Z\in C(Z,\R^q)$ and
\[
 \sOv((f\circ \phi)|_Z) \leq M\sOv(g).
\]
As $Z$ was chosen arbitrarily, $\sTO^q_X(r) \leq M\sTO^q_Y(M'r)$.
\end{proof}

\begin{proposition}\label{prop:cTOmonotone} Let $X,Y$ be metric spaces and let $\psi:X\to Y$ be continuous. If there is some $N$ such that
\[
 \Cov^1(\psi(B_X))\leq N \quad \textup{and} \quad \Cov^1(\psi^{-1}(B_Y))\leq N
\]
whenever $B_X$ (respectively $B_Y$) is a closed ball of radius $1$ in $X$ (respectively $Y$), then, for every $q$
\[
 \TO^q_X(r) \leq N\TO^q_Y(Nr).
\]
\end{proposition}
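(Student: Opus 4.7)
The plan is to mirror the argument used for Proposition \ref{prop:TOtocTO}, replacing the simplicial pieces with their metric counterparts so that the two covering bounds in the hypothesis take the role of assumptions (c) and (d).

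First I would fix a subset $Z\subseteq X$ with $\Cov^1(Z)\leq r$, and choose a cover of $Z$ by $r$ closed unit balls $B_X^1,\ldots,B_X^r$ in $X$. Applying the first hypothesis to each $B_X^i$ gives $\Cov^1(\psi(B_X^i))\leq N$, so $\psi(Z)\subseteq \bigcup_i \psi(B_X^i)$ is covered by at most $Nr$ closed unit balls of $Y$; hence $\Cov^1(\psi(Z))\leq Nr$.

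Next I would invoke the definition of $\TO^q_Y$ to select a continuous map $f:\psi(Z)\to\R^q$ with $\Ov(f)\leq \TO^q_Y(Nr)$. The composition $g=f\circ\psi|_Z:Z\to\R^q$ is continuous. For an arbitrary $z\in\R^q$, write $g^{-1}(z)=\psi|_Z^{-1}(f^{-1}(z))$; since $f^{-1}(z)\subseteq Y$ is covered by at most $\TO^q_Y(Nr)$ closed unit balls $B_Y^1,\ldots,B_Y^m$ of $Y$, the second hypothesis bounds $\Cov^1(\psi^{-1}(B_Y^j))\leq N$, so
\[
\Cov^1(g^{-1}(z)) \;\leq\; \sum_j \Cov^1(\psi^{-1}(B_Y^j)) \;\leq\; N\,\TO^q_Y(Nr).
\]
Taking the maximum over $z$ gives $\Ov(g)\leq N\TO^q_Y(Nr)$, hence $\TO^q(Z)\leq N\TO^q_Y(Nr)$, and taking the maximum over admissible $Z$ yields the stated bound.

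There is no serious obstacle here: once Definition \ref{defn:cTO} is unfolded the result is a straightforward two-step push/pull of covers through $\psi$, and the only thing to be careful about is that preimages under $\psi$ of a cover give a cover (using continuity and the set-theoretic identity $\psi^{-1}(\bigcup_j A_j)=\bigcup_j \psi^{-1}(A_j)$), which is automatic.
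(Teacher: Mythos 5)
Your argument is correct and is essentially the paper's own proof, just with the two covering steps written out explicitly: push the cover of $Z$ forward through $\psi$ to get $\Cov^1(\psi(Z))\leq Nr$, pick $f$ with $\Ov(f)\leq\TO^q_Y(Nr)$, and pull the cover of each fibre $f^{-1}(z)$ back through $\psi$ using the second hypothesis to get $\Ov(f\circ\psi|_Z)\leq N\TO^q_Y(Nr)$. No gaps.
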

\begin{proof}
Let $Z\subset X$ satisfy $\Cov^1(Z)\leq r$. By definition $\Cov^1(\psi(Z))\leq Nr$. Choose $g\in C(\psi(Z),\R^q)$ with $\Ov(g)\leq\TO^q(Nr)$. Now, $(g\circ \psi)|_Z\in C(Z,\R^q)$ , and
\[
\Ov(g\circ \psi) \leq N\Ov(g) \leq N\TO^q(f(Z)).
\]
As $Z$ was chosen arbitrarily, $\TO^q_X(r) \leq N\TO^q_Y(Nr)$.
\end{proof}

We are now ready to prove Theorem \ref{thm:TOmono}, which follows from Proposition \ref{prop:TOmonotone} and a typical ``connect-the-dots'' argument

\begin{proof}[Proof of Theorem \ref{thm:TOmono}] 
Let $X,Y\in\bS$ and suppose $Y$ is uniformly $k$-connected for every $1\leq k \leq \dim(X)$. Let $\psi:X^0\to Y^0$ be a regular map. Following \cite[Proposition 9.48]{DrutuKap}, there is a constant $C$ and a continuous extension $\phi:X\to Y$ of $\psi$ such that the image of each $k$-simplex is contained in a ball of radius at most $C$ in $Y$. Since $Y\in\bS$, it follows that the image of any simplex under $\phi$ intersects a uniformly controlled number of simplices in $Y$.

Conversely, given $\sigma'\in \CY$ choose a $0$-simplex $y_0$ contained in $\sigma'$. Now suppose $\sigma\in\CX$ satisfies $\phi(\sigma)\cap\sigma'\neq\emptyset$ and let $x_0$ be any $0$-simplex in $\sigma$. It follows that
\begin{equation}\label{eq:simpdist}
    d_Y(y_0,\phi(x_0))=d_Y(y_0,\psi(x_0))\leq \diam(\sigma')+C 
= \sqrt{2}+C.
\end{equation}
As $Y\in\bS$ and $\psi$ is regular, the number of $x_0\in X^0$ which satisfy inequality $\eqref{eq:simpdist}$ can be bounded from above independent of $y_0$. As $X\in bS$ this gives a uniform upper bound on the number of closed simplices in $X$ which intersect $\phi^{-1}(\sigma')$.

Thus, $\phi$ satisfies the hypotheses of Proposition \ref{prop:TOmonotone}.
\end{proof}

\subsection{Monotonicity in $q$}

We record the following obvious monotonicity.

\begin{proposition} Let $X\in \bS$. For every $1\leq q \leq q'$, and every $r$
\[
 \sTO^{q'}_X(r)\leq \sTO^{q}_X(r) \quad \textup{and} \quad \TO^{q'}_X(r)\leq \TO^{q}_X(r).
\]
\end{proposition}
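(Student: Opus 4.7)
The plan is to push any continuous map $Z \to \R^q$ forward to a continuous map $Z \to \R^{q'}$ with no larger overlap, using the standard isometric embedding $\iota: \R^q \hookrightarrow \R^{q'}$ defined by $\iota(x_1, \dots, x_q) = (x_1, \dots, x_q, 0, \dots, 0)$. Higher ambient dimension gives strictly more room to spread preimages, so the expected direction of the inequalities is immediate and no essential obstacle is anticipated.

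Concretely, I would fix any subcomplex $Z \leq X$ with $|Z| \leq r$ (respectively any subset $Z \subseteq X$ with $\Cov^1(Z) \leq r$) and any $f \in C(Z, \R^q)$, and then set $g = \iota \circ f \in C(Z, \R^{q'})$. For each $w \in \R^{q'}$ one of two things happens: either $w \in \iota(\R^q)$, in which case $g^{-1}(w) = f^{-1}(\iota^{-1}(w))$, or $w \notin \iota(\R^q)$, in which case $g^{-1}(w) = \emptyset$. In the simplicial case, the set $\{\sigma \in \CZ : w \in g(\sigma)\}$ is either contained in $\{\sigma \in \CZ : \iota^{-1}(w) \in f(\sigma)\}$ or is empty, so $\sOv(g) \leq \sOv(f)$. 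In the metric case, using the convention $\Cov^1(\emptyset) = 0$, the same dichotomy yields $\Cov^1(g^{-1}(w)) \leq \Cov^1(f^{-1}(\iota^{-1}(w)))$ pointwise, hence $\Ov(g) \leq \Ov(f)$.

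Taking infima over continuous $f$ then yields $\sTO^{q'}(Z) \leq \sTO^q(Z)$ and $\TO^{q'}(Z) \leq \TO^q(Z)$ for every admissible $Z$. Finally, I would take the maximum of both sides over all subcomplexes $Z \leq X$ with $|Z| \leq r$ (respectively over all $Z \subseteq X$ with $\Cov^1(Z) \leq r$) to recover the two profile inequalities. The argument is purely formal and requires no further input beyond the inclusion $\R^q \subseteq \R^{q'}$ for $q \leq q'$.
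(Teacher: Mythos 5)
Your argument is correct and is precisely the intended one: the paper records this proposition as ``obvious monotonicity'' without a written proof, and composing any $f\in C(Z,\R^q)$ with the coordinate inclusion $\R^q\hookrightarrow\R^{q'}$, noting that preimages of points off the image of the inclusion are empty, is exactly the justification. Both the simplicial and metric cases are handled correctly, so there is nothing to add.
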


The following na\"ive bound is actually a key part of the proof of Theorem \ref{thm:TOprops}(iii).

\begin{proposition}\label{prop:monotoneincdim} Let $X\in\bS$. For all $q,k\geq 1$ we have 
\[
\sTO^{q+1}_{X^{\leq k+1}}(r)\lesssim \sTO^q_{X^{\leq k}}(r).
\]
\end{proposition}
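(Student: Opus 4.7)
The plan is to extend a near-optimal $\R^q$-valued map on the $k$-skeleton $W := Z^{\leq k}$ to an $\R^{q+1}$-valued map on all of $Z$, using the extra coordinate as a ``height'' in which to cone off the $(k+1)$-simplices.

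Given $Z \leq X^{\leq k+1}$ with $|Z| \leq r$, we have $W \leq X^{\leq k}$ and $|W| \leq r$, so we may pick $f : W \to \R^q$ realising the infimum defining $\sTO^q_{X^{\leq k}}(r)$, hence $\sOv(f) \leq \sTO^q_{X^{\leq k}}(r) =: M$. Let $\sigma_1, \ldots, \sigma_N$ be the $(k+1)$-simplices of $Z$ with barycentres $b_i$, parameterise each as $\sigma_i = \{(1-t)x + tb_i : x \in \partial\sigma_i,\ t \in [0,1]\}$, and fix distinct apex points $a_1, \ldots, a_N \in \R^q$. Define
\[
 \tilde f(x) = (f(x), 0) \text{ for } x \in W, \qquad \tilde f\left((1-t)x + tb_i\right) = \left((1-t)f(x) + ta_i,\ t\right).
\]
This is continuous and extends $(f, 0)$ across $\partial\sigma_i \times \{0\}$.

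One then analyses the overlap of $\tilde f$ by cases in the second coordinate $s$. At $s = 0$ the preimage equals $f^{-1}(z) \subseteq W$, hitting at most $M$ closed simplices of $W$ and, since each $k$-simplex is a face of at most $\Delta_X$ $(k+1)$-simplices, at most $\Delta_X M$ additional $(k+1)$-simplices of $Z$, so overlap at most $(1+\Delta_X)M$. At $s = 1$ only the unique $b_i$ with $a_i = z$ can lie in the preimage, giving overlap at most one. For $s \in (0,1)$ a closed $(k+1)$-simplex $\sigma_i$ contributes if and only if $u_i := (z - sa_i)/(1-s)$ lies in $f(\partial\sigma_i)$, i.e.\ some $k$-face $\tau$ of $\sigma_i$ has $u_i \in f(\tau)$.

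The main obstacle is bounding this last count by $O(M)$ uniformly in $(z,s)$. My plan is to choose $\{a_i\}$ carefully after first replacing $f$ by a simplicial approximation (which preserves $\sOv$ up to $\simeq$): an $R$-separated spread of the $a_i$'s ensures that for $s$ close to $1$ only the apex nearest $z$ yields a bounded $u_i$, so at most one simplex contributes; while for $s$ bounded away from $1$ the perturbation $sa_i/(1-s)$ is small enough (after calibrating $R$ against $f$'s finite simplicial modulus) that $u_i \in f(\partial\sigma_i)$ already forces $z/(1-s)$ to lie on the $f$-image of a $k$-face of $\sigma_i$, bounding the count by $\Delta_X M$ via the $f$-overlap and the degree. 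The delicate balance between spreading and clustering is perhaps most cleanly handled by passing to the equivalent metric formulation of Theorem \ref{thm:defnequiv}: one shows that, for such a calibrated choice of $\{a_i\}$, all contributing points of $\tilde f^{-1}(z,s)$ lie in a fixed-radius neighbourhood in $Z$ of $f^{-1}(z/(1-s))$, whose $\Cov^1$ is controlled by $M$ and $\deg(X)$. This gives $\Ov(\tilde f) \lesssim M$, hence $\sOv(\tilde f) \lesssim M$, yielding the proposition.
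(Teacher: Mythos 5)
Your construction has the right general shape (extend $f$ over the $(k+1)$-simplices using one new coordinate), but the step you yourself flag as ``the main obstacle'' is a genuine gap, and the calibration you sketch cannot close it. The trouble comes from using the \emph{same} cone parameter $t$ as the last coordinate for every $(k+1)$-simplex: at an intermediate level $s\in(0,1)$ you must bound $\#\{i:\ (z-sa_i)/(1-s)\in f(\partial\sigma_i)\}$, and the points $u_i=(z-sa_i)/(1-s)$ vary with $i$, so the bound ``at most $\deg(Z)\sOv(f)$ simplices have a \emph{fixed} point in the image of their boundary'' does not apply. Your two regimes do not meet: if the $a_i$ are $R$-separated then $\max_i|a_i|\gtrsim RN^{1/q}$ (with $N$ the number of $(k+1)$-simplices), so on the complement of the near-$1$ regime $s>s_0(R)$ the ``perturbation'' $sa_i/(1-s)$ is of size about $(s_0/(1-s_0))\max_i|a_i|\gtrsim N^{1/q}\cdot\mathrm{const}$, which is not small, and grows with $Z$; conversely, taking the $a_i$ clustered (or equal) to tame the intermediate levels produces overlap comparable to $N$ at the apex level $s=1$. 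Moreover, even where the perturbation is genuinely small, ``$u_i$ lies within $\delta$ of $f(\partial\sigma_i)$'' does not force $z/(1-s)\in f(\partial\sigma_i)$, and the fallback metric claim --- that all contributing points lie in a fixed-radius neighbourhood of $f^{-1}(z/(1-s))$ --- is exactly the unproved statement: the contributing points lie near $f^{-1}(u_i)$ for the varying points $u_i$, which need not be anywhere near $f^{-1}(z/(1-s))$. So as written the argument does not yield $\sOv(\tilde f)\lesssim M$.

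The paper's proof avoids this issue by changing the role of the new coordinate: fix an injection $\iota:\CZ^{k+1}\to(0,1)$ and, on each $(k+1)$-simplex (in polar coordinates), let the last coordinate interpolate from $0$ on the boundary to the \emph{constant label} $\iota(\sigma)$ on the inner half, while on the outer annulus the first $q$ coordinates are left equal to $f$ of the radial projection to $\partial\sigma$ (no interpolation towards an apex); on the inner half one uses an arbitrary continuous extension $g_\sigma:D^{k+1}\to\R^q$ of $f|_{\partial\sigma}$, which exists simply because $\R^q$ is contractible. Then a point of $g^{-1}(z_1,\ldots,z_{q+1})$ in the annulus of $\sigma$ forces $\partial\sigma$ to meet $f^{-1}(z_1,\ldots,z_q)$, contributing at most $\deg(Z)\sOv(f)$ simplices, while a point in the inner half forces $z_{q+1}=\iota(\sigma)$, contributing at most one simplex by injectivity; this gives $\sOv(g)\leq(1+\deg(Z))\sOv(f)+1$ with no case analysis over levels. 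In short: put the simplex-distinguishing data into the new coordinate as a locally constant label rather than into displaced apexes in $\R^q$, and never perturb the first $q$ coordinates in the transition region; that is the missing idea your proposal needs.
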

\begin{proof}
Let $Z\leq X$ with $|Z|\leq r$, and let $f\in C(Z,\R^q)$ satisfy $\sTO^q(Z^{\leq k})=\sOv(f)$.

Each $\sigma\in \CZ^{k+1}$ is homeomorphic to a closed unit $(k+1)$-disc via a homeomorphism which maps the $k$-skeleton of $\sigma$ to the boundary of the disc. For each $\sigma$, choose a continuous extension $g_\sigma:D^{k+1}\to\R^q$ of $f|_{\sigma^{\leq k}}:S^{k+1}\to\R^q$, and fix an injection $\iota: \CZ^{k+1}\to (0,1)$.

We identify $D^{k+1}$ with the Euclidean ball of radius 1 in polar coordinates:
\[
 D^{k+1}=\setcon{(R,\theta,\psi_1,\ldots,\psi_{k-1})}{R\in [0,1],\ \theta\in[0,2\pi),\ \psi_i\in[0,\pi)}
\]

We now define $g: Z^{\leq k+1}\to \R^{q+1}$ as follows. For $x\in Z^{\leq k}$, we set $g(x)=(f(x)_1,\ldots,f(x)_q,0)$. Then for each $\sigma\in \CZ^{k+1}$ we set $g((R,\theta,\psi_1,\ldots,\psi_{k-1}))$ to be
\[
 \left\{\begin{array}{lll}
 (f|_{\partial \sigma}(1,\theta,\psi_1,\ldots,\psi_{k-1}),2(1-R)\iota(\sigma)) & \textup{if} & R\in[\frac12, 1], \\
 (g_\sigma(2R,\theta,\psi_1,\ldots,\psi_{k-1}),\iota(\sigma))  & \textup{if} & R\in[0, \frac12].
 \end{array}\right.
\]

Note that $g$ is well-defined, and is a continuous extension of $f$. Let $\underline{z}=(z_1,\ldots,z_{q+1})\in\R^{q+1}$. Suppose the image of $\sigma\in\CZ^{\leq k+1}$ contains a point in $g^{-1}(\underline{z})$. 
By construction, $g^{-1}(\underline{z})$ is contained in the union of all simplices which contain a point in $f^{-1}(z_1,\ldots,z_q)\cup\iota^{-1}(z_{q+1})$. By assumption there are at most $\sOv(f)$ simplices of dimension at most $k$ which contain a point in $f^{-1}(z_1,\ldots,z_q)$. Since there are at most $\deg(Z)$ $(k+1)$-simplices containing any given $k$-simplex and $\iota$ is injective, we have
\[
 \sOv(g) \leq \sOv(f) + \deg(Z)\sOv(f)+1 = (1+\deg(Z))\sOv(f)+1.
\]
As this holds for all $Z$ and $X\in\bS$, we have
\[
 \sTO^{q+1}_{X^{\leq k+1}}(r)\leq (1+\deg(X))\sTO^q_{X^{\leq k}}(r)+1.
\]
Thus, $\sTO^{q+1}_{X^{\leq k+1}}(r)\lesssim\sTO^q_{X^{\leq k}}(r)$.
\end{proof}

We record the following simple corollary of the above result and Theorem \ref{thm:TOprops}(i).

\begin{corollary} Let $X\in \bS$. If $X^{\leq q}$ contains a $q$-dimensional topological expander, then for all $1\leq q'< q$, $X^{\leq q'}$ contains a $q'$-dimensional topological expander.
\end{corollary}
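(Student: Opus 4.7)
The plan is to derive this corollary as a direct consequence of Theorem~\ref{thm:TOprops}(i) applied at dimensions $q$ and $q'$, bridged by an iterated application of Proposition~\ref{prop:monotoneincdim}. By hypothesis, $X^{\leq q}$ contains a $q$-dimensional topological expander, and since $X^{\leq q}\in\bS$ has dimension at most $q$, Theorem~\ref{thm:TOprops}(i) yields
\[
\liminf_{r\to\infty}\tfrac{1}{r}\sTO^q_{X^{\leq q}}(r) > 0.
\]

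Next I would iterate Proposition~\ref{prop:monotoneincdim} a total of $q-q'$ times, applying it successively with the parameter choices $(q-1,q-1)$, $(q-2,q-2)$, \ldots, $(q',q')$. This produces the chain
\[
\sTO^q_{X^{\leq q}}(r) \lesssim \sTO^{q-1}_{X^{\leq q-1}}(r) \lesssim \cdots \lesssim \sTO^{q'+1}_{X^{\leq q'+1}}(r) \lesssim \sTO^{q'}_{X^{\leq q'}}(r).
\]
Since $\lesssim$ is transitive (composition of finitely many $\lesssim$ relations is again $\lesssim$), we conclude $\sTO^q_{X^{\leq q}}(r) \lesssim \sTO^{q'}_{X^{\leq q'}}(r)$.

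The next step is the routine observation that $\lesssim$ preserves linear lower bounds: if $f(r)\le Cg(Cr)+C$, then $\tfrac{f(r)}{r} \leq C^2 \cdot \tfrac{g(Cr)}{Cr} + \tfrac{C}{r}$, so $\liminf_{r\to\infty}\tfrac{f(r)}{r} > 0$ forces $\liminf_{r\to\infty}\tfrac{g(r)}{r} > 0$. Applied to our chain, this gives $\liminf_{r\to\infty}\tfrac{1}{r}\sTO^{q'}_{X^{\leq q'}}(r) > 0$. Since $X^{\leq q'}\in\bS$ has dimension at most $q'$, the reverse direction of Theorem~\ref{thm:TOprops}(i) now supplies a family of finite subcomplexes of $X^{\leq q'}$ forming a $q'$-dimensional topological expander, as required.

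There is no real obstacle here; the corollary is essentially a bookkeeping consequence of the two cited results. The only minor checks are (a) that each skeleton $X^{\leq k}$ inherits bounded degree and countable vertex set from $X$, so lies in $\bS$ with dimension at most $k$, and (b) that the constants accumulated by iterating Proposition~\ref{prop:monotoneincdim} finitely many times remain finite (which is automatic, as $q-q'$ is a fixed integer).
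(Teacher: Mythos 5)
Your argument is correct and is exactly the route the paper intends: the corollary is recorded there as an immediate consequence of Proposition \ref{prop:monotoneincdim} iterated $q-q'$ times together with the two directions of Theorem \ref{thm:TOprops}(i), which is precisely what you do. The only small point worth noting, namely that transporting the linear lower bound through $\lesssim$ (from values at $Cr$ to values at all large $r$) uses the monotonicity of the profile in $r$, is the routine check you already flag.
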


\subsection{Passing to lower skeleta}

For small $q$, we may compute $\sTO^q_X$ using only the $q$-skeleton of $X$.

\begin{proposition}\label{prop:12skeleton2} Let $X\in\bS$. For $q=1,2$, we have 
\[
	\sTO^q_{X^{\leq q}}(r)\simeq \sTO^q_{X}(r).
\] 
\end{proposition}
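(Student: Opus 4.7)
The plan is to handle the two cases separately. For $q=1$, I would apply Theorem \ref{thm:compTO1cutwidth} to both $X$ and $X^{\leq 1}$: since $(X^{\leq 1})^{\leq 1}=X^{\leq 1}$, each of $\sTO^1_X(r)$ and $\sTO^1_{X^{\leq 1}}(r)$ is $\simeq$-equivalent to $\cw_{X^{\leq 1}}(r)$, so the claimed equivalence is immediate.

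For $q=2$, the direction $\sTO^2_{X^{\leq 2}}(r)\leq \sTO^2_X(r)$ is trivial because every subcomplex of $X^{\leq 2}$ is itself a subcomplex of $X$ with the same number of $0$-simplices. For the reverse inequality, I would fix a subcomplex $Z\leq X$ with $|Z|\leq r$, choose $f\in C(Z^{\leq 2},\R^2)$ with $\sOv(f)=\sTO^2(Z^{\leq 2})\leq \sTO^2_{X^{\leq 2}}(r)$, and extend $f$ to a continuous $\tilde f:Z\to\R^2$ satisfying $\sOv(\tilde f)\leq (1+\Delta_X)\sOv(f)$.

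The extension is built by induction on dimension, maintaining the invariant that for every closed simplex $\sigma$ of $Z$ of dimension at least $3$, the image $\tilde f(\sigma)$ lies in $K_\sigma:=f(\sigma^{\leq 2})$. Having extended $\tilde f$ to $\partial\sigma$ with $\tilde f(\partial\sigma)\subseteq K_\sigma$, the task is to extend across $\sigma=D^d$ without leaving $K_\sigma$. Since $\sigma^{\leq 2}$ is a compact, connected, locally connected finite complex, $K_\sigma$ is a Peano continuum in $\R^2$. The key technical fact I would invoke is the classical result that every planar Peano continuum is aspherical, so $\pi_{d-1}(K_\sigma)=0$ and the required extension $\tilde f|_\sigma:D^d\to K_\sigma$ exists. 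This step is the main obstacle of the proof: it is exactly where the restriction $q\leq 2$ becomes essential, as compact subcontinua of $\R^q$ for $q\geq 3$ can support non-trivial higher homotopy.

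With the extension in hand, the overlap bound is immediate. Fix $z\in\R^2$. The simplices of $Z$ of dimension at most $2$ whose image under $f$ contains $z$ number at most $\sOv(f)$. Any simplex $\sigma$ of $Z$ of dimension at least $3$ with $z\in \tilde f(\sigma)\subseteq K_\sigma$ contains some $2$-face $\tau$ with $z\in f(\tau)$; as every $2$-simplex of $X$ is a face of at most $\Delta_X$ closed simplices of $X$, the number of such $\sigma$ is at most $\Delta_X\sOv(f)$. Summing yields $\sOv(\tilde f)\leq (1+\Delta_X)\sOv(f)$, which, after taking suprema over admissible $Z$ and $f$, gives $\sTO^2_X(r)\leq (1+\Delta_X)\sTO^2_{X^{\leq 2}}(r)$.
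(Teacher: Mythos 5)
Your $q=2$ argument is essentially the paper's proof: the paper likewise extends $f$ from $Z^{\leq 2}$ over the higher-dimensional simplices so that the image of every simplex of dimension greater than $2$ is contained in the image of its $2$-skeleton, citing exactly the Cannon--Conner--Zastrow asphericity of planar sets, and then bounds the overlap of the extension by a multiple of $\Delta_X$ (finite by Fact \ref{fact:simplicebound}). One pedantic point in your counting step: if $z$ lies in the image of a vertex or edge of $\sigma$ rather than of a $2$-face, pass to a closed $2$-face of $\sigma$ containing that face (possible since $\dim\sigma\geq 3$ and closed simplices contain their faces); the bound $\Delta_X\sOv(f)$ is unaffected.

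The genuine issue is your $q=1$ case. You deduce it from Theorem \ref{thm:compTO1cutwidth}, but in the paper that theorem is proved by first invoking this very proposition to replace $\sTO^1_X$ by $\sTO^1_{X^{\leq 1}}$, and only then applying Lemma \ref{lem:TO1cw}, which compares $\sTO^1$ with cutwidth for graphs only. So, within the paper's logical structure, your $q=1$ argument is circular: Lemma \ref{lem:TO1cw} alone says nothing about the higher-dimensional simplices of $X$, which is precisely what the proposition must handle. The repair is immediate and is what the paper does: run your extension argument with $q=1$, observing that the image of $\tilde f|_{\partial\sigma}$ is a compact interval (intermediate value theorem), so an extension over $\sigma$ with image in that interval exists trivially (for instance by coning off to a point of the interval); the overlap count then goes through verbatim.
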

The key tool required for this proposition is the following. For $q=1,2$ and $n> q$, the image of a continuous map $f:S^{n-1}\to\R^q$ is null-homotopic, i.e.\ there is a continuous extension $\overline f:D^n\to\R^q$ of $f$ such that $\textrm{im} \overline{f}=\textrm{im} f$. This is the intermediate value theorem for $q=1$, for $q=2$ was proved by Cannon-Conner-Zastrow \cite{CCZ} and for $q\geq 3$ it is false, since there is a continuous surjective map (the Hopf map) $f:S^3\to S^2 \subset\R^3$, which cannot be extended to a continuous map $\overline f:D^3\to S^2$.

\begin{proof}
It is immediate that $\sTO^q_{X^{\leq q}}(r) \leq \sTO^q_{X}(r)$.

Choose $Z\leq X$ with $|Z|\leq r$ and let $f\in C(Z^{\leq q},\R^q)$. By the above there is a continuous extension $g\in C(Z,\R^q)$ such that the image of every $n$-simplex with $n>q$ is contained in the image of its $q$-skeleton.

Let $z\in\R^q$. Every simplex in $\CZ\setminus\CZ^{\leq q}$ whose image contains $z$ must intersect some simplex in $\CZ^{\leq q}$ whose image contains $z$. Since $X\in\bS$ we have $\sOv_{g}(z)\leq \Delta_X\Ov_f(z)$. As $z$ and $Z$ were chosen arbitrarily, we have
\[
 \sTO^q_{X}(r) \leq \Delta_X \sTO^q_{X^{\leq q}}(r).
\]
($\Delta_X$ was defined in Fact \ref{fact:simplicebound} as an upper bound for the maximal number of closed simplices which intersect a given closed simplex). 
\end{proof}
Although the above proof fails, it remains open whether $\sTO^3_X(r)\simeq \sTO^3_{X^{\leq 3}}(r)$ for every $X\in \bS$.

\subsection{Bounded topological overlap}

We give two sufficient criteria for bounded topological overlap. The first of these is Theorem \ref{thm:TOprops}(iii). We recall the statement for convenience.

\begin{theorem*} Let $X$ be any finite dimensional simplicial complex. For all $q>\dim(X)$ we have $\sTO^q_X(r)\simeq 1$.
\end{theorem*}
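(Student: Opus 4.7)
The plan is to show that $\sTO^{d+1}_X(r)$ is bounded, where $d=\dim(X)$, by induction on the skeletal dimension, using Proposition \ref{prop:monotoneincdim} as the inductive step, and then appeal to monotonicity in $q$ to cover all $q>d$.

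For the base case I would argue directly that $\sTO^2_{X^{\leq 1}}(r)\lesssim 1$. Given any subcomplex $Z\leq X^{\leq 1}$ (a graph of maximum degree at most $\deg(X)$), choose an injection of the $0$-simplices of $Z$ into $\R^2$ whose image is in general position, and map each edge to the straight line segment between its endpoints. In general position the only multiply-covered points are the vertex images, each meeting at most $\deg(X)+1$ closed simplices, and finitely many transverse crossings of pairs of edges, each meeting exactly two closed simplices. This yields a continuous map $Z\to\R^2$ of overlap at most $\deg(X)+O(1)$, independent of $|Z|$.

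Iterating Proposition \ref{prop:monotoneincdim} for $k=1,2,\ldots,d-1$ then chains as
\[
\sTO^{d+1}_{X^{\leq d}}(r) \lesssim \sTO^{d}_{X^{\leq d-1}}(r)\lesssim \cdots \lesssim \sTO^{2}_{X^{\leq 1}}(r)\lesssim 1.
\]
Since $X=X^{\leq d}$ this gives $\sTO^{d+1}_X(r)\lesssim 1$, and for $q>d$ the monotonicity of $\sTO^q_X(r)$ in $q$ (the proposition immediately preceding Proposition \ref{prop:monotoneincdim}) yields $\sTO^q_X(r)\leq \sTO^{d+1}_X(r)\lesssim 1$. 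The matching lower bound $\sTO^q_X(r)\gtrsim 1$ is immediate for $r\geq 1$, since any nonempty subcomplex contributes at least one simplex to the image. The only substantive step is the base case, a classical general-position argument for graphs in the plane; the rest is a short induction supplied by the already-proved monotonicity result, so no serious obstacle arises.
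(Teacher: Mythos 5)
Your proposal is correct and follows essentially the same route as the paper: a bounded-overlap map of graphs into $\R^2$ as the base case, iterated applications of Proposition \ref{prop:monotoneincdim} to climb the skeleta, and monotonicity in $q$ to cover all $q>\dim(X)$ (the paper merely applies the two monotonicity steps in the opposite order). Your general-position accounting of the base case, giving overlap at most $\deg(X)+1$ because every closed edge incident to a vertex contains that vertex's image, is in fact slightly more careful than the paper's stated bound of $2$, and the discrepancy is harmless up to $\simeq$.
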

\begin{proof}
Firstly, $\sTO^2_{X^{\leq 1}}(r)\leq 2$ as every finite graph can be continuously mapped to $\R^2$ so that the preimage of every point where the map is not injective is a union of two points contained in the interior of two edges. We then apply Proposition $\ref{prop:TOmonotone}$ to see that
\[
 \sTO^{q-(\dim(X)-1)}_{X^{\leq 1}}\simeq 1
\]
Applying Proposition $\ref{prop:monotoneincdim}$ $(\dim(X)-1)$ times, we see that $\sTO^q_X(r)\simeq 1$.
\end{proof}

The second is a simple consequence of the above result and the definition of asymptotic dimension.

\begin{corollary}\label{cor:bddtovasdim} Let $X$ be a geodesic metric space with bounded geometry. If $\asdim(X)\leq n$ then $\TO^q_X(r)\simeq 1$ for all $q>n$.
\end{corollary}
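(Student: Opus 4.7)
The plan is to reduce to Theorem \ref{thm:TOprops}(iii) by approximating $X$ by the nerve of a well-chosen cover, then transfer the bounded topological overlap back to $X$ via the metric-to-simplicial comparison in Proposition \ref{prop:cTOtoTO}.

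First I would use the definition of asymptotic dimension to produce, for some suitably large scale $R$, a cover $\mathcal{U}=\{U_i\}_{i\in I}$ of $X$ by uniformly bounded sets with Lebesgue number at least $R$ and multiplicity at most $n+1$. Bounded geometry of $X$ together with the uniform bound on diameters of the $U_i$ ensures that only a uniformly bounded number of the $U_j$ can meet any given $U_i$; hence the nerve $N$ of $\mathcal{U}$ is a bounded degree simplicial complex of dimension at most $n$. I would then build a continuous map $\phi:X\to N$ in the standard way using a partition of unity subordinate to $\mathcal{U}$: set $\rho_i(x)=d(x,X\setminus U_i)$, normalise to $\lambda_i(x)=\rho_i(x)/\sum_j\rho_j(x)$, and define $\phi(x)=\sum_i\lambda_i(x)e_i$ where $e_i$ denotes the $0$-simplex of $N$ corresponding to $U_i$. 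The support of $\phi(x)$ is $\{i:x\in U_i\}$, which is a simplex of $N$ by construction.

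Next I would verify the two hypotheses of Proposition \ref{prop:cTOtoTO} for $\phi$. For (a), any unit ball $B\subset X$ meets only uniformly boundedly many $U_i$ (by bounded geometry and uniformly bounded diameters of the $U_i$), so $\phi(B)$ is contained in a uniformly bounded number of closed simplices of $N$. For (b), for any closed simplex $\sigma\in\mathcal{C}N$ the preimage $\phi^{-1}(\sigma)$ lies in $\bigcap_{i\in\sigma}U_i$, which has uniformly bounded diameter and hence uniformly bounded $\Cov^1$. Proposition \ref{prop:cTOtoTO} then yields
\[
 \TO^q_X(r)\lesssim \sTO^q_N(r).
\]
Since $\dim(N)\leq n<q$, Theorem \ref{thm:TOprops}(iii) gives $\sTO^q_N(r)\simeq 1$, so $\TO^q_X(r)\lesssim 1$. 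The reverse inequality $\TO^q_X(r)\gtrsim 1$ is trivial, so $\TO^q_X(r)\simeq 1$.

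The main technical point to handle carefully is arranging the cover so that the nerve is genuinely a bounded degree simplicial complex and that the associated partition of unity map is continuous with the required compatibility between unit balls in $X$ and simplices of $N$. Everything else is a routine application of the nerve construction and a single invocation of Proposition \ref{prop:cTOtoTO} combined with Theorem \ref{thm:TOprops}(iii); geodesicity of $X$ is only used to guarantee that the standard asymptotic dimension machinery (e.g.\ existence of such uniformly bounded covers with controlled multiplicity) behaves well together with bounded geometry.
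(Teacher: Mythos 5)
Your route is essentially the paper's: the paper invokes Gromov's construction directly (a $1$-Lipschitz map $\phi:X\to K$ onto a bounded-degree complex of dimension at most $n$ with preimages of simplices of uniformly bounded diameter -- which is exactly the nerve/partition-of-unity map you spell out), then applies Theorem \ref{thm:TOprops}(iii) and pulls the bounded-overlap map back by hand, which is precisely the content of Proposition \ref{prop:cTOtoTO} that you invoke instead. Two small points in your write-up need repair, though neither derails the argument. First, $\phi^{-1}(\sigma)$ is \emph{not} contained in $\bigcap_{i\in\sigma}U_i$: a closed simplex contains its faces, so a point lying in $U_{i_0}$ only still lands in the closed simplex $\sigma\supseteq\{i_0\}$. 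The correct containment is $\phi^{-1}(\sigma)\subseteq\bigcup_{i\in\sigma}U_i$, which still has uniformly bounded diameter (the sets indexed by a simplex of the nerve share a common point), so hypothesis (b) survives. Second, ``bounded geometry plus uniformly bounded diameters'' does not by itself bound the number of $U_j$ meeting a given unit ball or a given $U_i$ (think of a chain of very small sets crossing a region); you need to take the cover in the $R$-disjoint-families (equivalently, $r$-multiplicity) form of asymptotic dimension, with $R$ large compared to the diameters, which gives at most one set per family meeting any unit ball or any $U_i$ and hence both the bounded degree of the nerve and hypothesis (a). You flag this as the technical point to handle, and it is exactly what the Gromov construction cited in the paper provides; note also that the paper needs geodesicity only to make $\phi$ Lipschitz for the simplicial metric, whereas your argument only uses continuity of $\phi$ together with (a) and (b).
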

\begin{proof}
Since $X$ has asymptotic dimension at most $n$ there is a $1$-Lipschitz (and hence continuous) map $\phi:X\to K$, where $K$ is a simplicial complex with $\dim(K)\leq n$, such that preimages of simplices have diameter bounded by some $M$ (cf.\ \cite[\S$1.E_1$, p21--22]{Gromov93})\footnote{the assumption that $X$ is geodesic is necessary to guarantee that $\phi$ is Lipschitz when $K$ is equipped with the metric \eqref{eq:scmetric}}. As $X$ has bounded geometry, the $K$ constructed in this proof necessarily has bounded degree, hence $K\in\bS$.

Now let $Z\subset X$ with $\Cov^1(Z)\leq r$. As $\phi$ is $1$-Lipschitz, there is a finite subcomplex $K_Z$ of $K$ which contains $\phi(Z)$. By Theorem \ref{thm:TOprops}(iii), there is a continuous map $f:K_Z\to\R^q$ with $\sOv(f)\leq M'$. It follows that for every $z\in\R^q$, $\phi^{-1}(f^{-1}(z))$ is contained a union of at most $M'$ sets of diameter at most $M$. Since $X$ has bounded geometry, we get a uniform upper bound on $\Cov^1(\phi^{-1}(f^{-1}(z)))$ which is independent of $z$, $Z$ and $r$. 
\end{proof}

\section{The dimension 1 case}\label{sec:dim1}

The goal of this section is to prove Theorem \ref{thm:compTO1cutwidth}. As this section primarily concerns graphs, we will refer to $0$- and $1$- simplices as vertices and edges respectively, we also write $V\Gamma=\Gamma^0$ and $E\Gamma=\Gamma^1$.

\subsection{$\sTO^1$ and cutwidth}

Let us recall the definition of the cutwidth profile from \cite{HHKL23}.

\begin{definition}\label{defn:cutwidth} Let $\Gamma$ be a finite graph with $r$ vertices. The \textbf{cutwidth} of $\Gamma$ (denoted $\cw(\Gamma)$) is the minimum value of 
\[
\max_{i}|\{vw \in E\Gamma:\ \sigma(v) < i \leq \sigma(w)\}|
\] 
over all bijections $\sigma:V\Gamma\to\{1,\ldots,r\}$.

We define the \textbf{cutwidth profile} of a graph $X$ by
\[
 \cw_X(r)=\max\setcon{\cw(\Gamma)}{\Gamma\leq X,\ |\Gamma|\leq r}.
\]
\end{definition}

Our next target is Theorem \ref{thm:compTO1cutwidth}.

\begin{lemma}\label{lem:TO1cw} For every finite graph $\Gamma$, 
\[
	\cw(\Gamma) \leq \sTO^1(\Gamma) \leq \cw(\Gamma) + \deg(\Gamma)+1.
\]
\end{lemma}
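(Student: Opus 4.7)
The plan is to prove the two inequalities separately by explicit constructions. For the upper bound, let $\sigma:V\Gamma\to\{1,\ldots,r\}$ attain $\cw(\Gamma)=c$ and define $f:\Gamma\to\R$ by $f(v)=\sigma(v)$ on vertices, extended affinely on each edge. For a non-integer $z\in(i,i+1)$, the simplices whose image contains $z$ are precisely the edges $vw$ with $\sigma(v)\leq i<\sigma(w)$, so $\sOv_f(z)\leq c$. For an integer $z=i$, the vertex $v_i:=\sigma^{-1}(i)$ contributes once, and the edges whose image contains $i$ decompose as those in the cut at $i+1$ of $\sigma$ (namely $\sigma(v)\leq i<\sigma(w)$, at most $c$) plus the edges at $v_i$ whose other endpoint has $\sigma$-position strictly below $i$ (at most $\deg(v_i)\leq\deg(\Gamma)$). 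Hence $\sOv(f)\leq c+\deg(\Gamma)+1$, proving $\sTO^1(\Gamma)\leq\cw(\Gamma)+\deg(\Gamma)+1$.

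For the lower bound I first reduce to edge-linear maps. Given continuous $f:\Gamma\to\R$, define $g:\Gamma\to\R$ by $g|_{V\Gamma}=f|_{V\Gamma}$ and affine on each edge. The $g$-image of edge $vw$ is the interval $[\min\{f(v),f(w)\},\max\{f(v),f(w)\}]$, which lies inside the (connected) $f$-image of the same edge, so $\sOv_g(z)\leq\sOv_f(z)$ for all $z\in\R$. Order $V\Gamma$ by $g$-value with arbitrary tie-breaking to obtain $\sigma$, and fix any cut index $i$, writing $v_i=\sigma^{-1}(i)$, $w_i=\sigma^{-1}(i+1)$. If $g(v_i)<g(w_i)$, choose $z$ strictly between these values: every cut edge $vw$ satisfies $g(v)<z<g(w)$ hence its $g$-image contains $z$, and no vertex maps to $z$, so the cut count is at most $\sOv_g(z)\leq\sOv(f)$. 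If $g(v_i)=g(w_i)=z_0$, take $z=z_0$: every cut edge still has $g$-image containing $z_0$ (in each of the four sub-cases the endpoint values straddle $z_0$ weakly), and since $v_i$ and $w_i$ both map to $z_0$ the vertex contribution to $\sOv_g(z_0)$ is at least $2$, bounding the cut count by $\sOv_g(z_0)-2\leq\sOv(f)-2$.

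The principal subtlety is the tie-breaking step: when a block of tied vertices is split by the chosen ordering, no value of $z$ lies strictly between consecutive vertex heights, so one cannot separate the vertex contribution from the edge contribution in the naive way. The reduction $f\to g$ is what makes the argument possible, since it eliminates any interior extrema on edges and reduces $\sOv$ to combinatorial data of vertex heights. The tie is then resolved by observing that tied vertices themselves boost $\sOv_g(z_0)$, supplying precisely the slack needed to absorb the cut. Combining both directions yields $\cw(\Gamma)\leq\sTO^1(\Gamma)\leq\cw(\Gamma)+\deg(\Gamma)+1$.
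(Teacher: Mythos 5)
Your proof is correct and takes essentially the same route as the paper: the upper bound extends an optimal cutwidth ordering affinely over the edges, and the lower bound orders the vertices by $f$-value and shows each cut is realised inside a single fibre. The only difference is cosmetic: the paper applies the intermediate value theorem directly to the given continuous map at the value $f(v_i)$, so your affine-interpolation step and the tie-breaking case analysis are not needed (weak inequalities suffice, and only edge simplices need be counted for the lower bound).
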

\begin{proof} Every bijection $\sigma:V\Gamma\to \{1,\ldots,r\}$ naturally extends to a continuous map $\overline\sigma:\Gamma\to [1,r]$ so that the image of the edge $vw$ is the interval $[\sigma(v),\sigma(w)]$. For each $z\in \R$, $\overline\sigma^{-1}(z)$ contains a single point in each edge $vw$ with $\sigma(v) \leq z \leq \sigma(w)$ and (if $z\in\{1,\ldots,r\}$) the vertex $v'=\sigma^{-1}(z)$ and all edges with end vertex $v'$.

Thus $\sTO^1(\Gamma) \leq \sOv(\overline\sigma)\leq \cw(\Gamma) + 1 + \deg(\Gamma)$.

Conversely, let $g\in C(\Gamma,\R)$. Enumerate the vertices of $\Gamma$ by $v_1,\ldots,v_r$ so that $g(v_i)\leq g(v_j)$ whenever $i\leq j$. Define $\sigma: V\Gamma\to\{1,\ldots,r\}$ by $\sigma(v_j)=j$. By the intermediate value theorem $g^{-1}(i)$ contains a point in every edge $v_jv_k$ with $j<i\leq k$. Thus
\[
\sOv(g) \geq \max_i|\{vw \in E\Gamma:\ \sigma(v) < i \leq \sigma(w)\}|\geq \cw(\Gamma).
\]
As $g$ was chosen arbitrarily, we have $\sTO^1(\Gamma)\geq \cw(\Gamma)$.
\end{proof}

We can now prove Theorem \ref{thm:compTO1cutwidth}, which we restate for convenience.

\begin{theorem*} Let $X$ be a bounded degree simplicial complex, and let $X^{\leq 1}$ denote the $1$-skeleton of $X$. We have
\[    
    \sTO^1_X(r)\simeq \cw_{X^{\leq 1}}(r).
\]
\end{theorem*}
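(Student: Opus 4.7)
The plan is to deduce this result directly from two tools already available in the excerpt: Lemma \ref{lem:TO1cw}, which gives the graph-level comparison $\cw(\Gamma) \leq \sTO^1(\Gamma) \leq \cw(\Gamma) + \deg(\Gamma) + 1$ for any finite graph $\Gamma$, and Proposition \ref{prop:12skeleton2} (with $q = 1$), which states $\sTO^1_X(r) \simeq \sTO^1_{X^{\leq 1}}(r)$. The strategy is to prove the two directions of $\simeq$ separately, with Proposition \ref{prop:12skeleton2} handling all the passage between $X$ and its $1$-skeleton so that Lemma \ref{lem:TO1cw} does the real work on graphs.

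For the inequality $\cw_{X^{\leq 1}}(r) \lesssim \sTO^1_X(r)$, I would take an arbitrary subgraph $\Gamma \leq X^{\leq 1}$ with $|\Gamma| \leq r$ and observe that $\Gamma$ is also a subcomplex of $X$. Applying the left-hand bound of Lemma \ref{lem:TO1cw} gives $\cw(\Gamma) \leq \sTO^1(\Gamma) \leq \sTO^1_X(r)$. Taking the maximum over all such $\Gamma$ yields $\cw_{X^{\leq 1}}(r) \leq \sTO^1_X(r)$, which is even stronger than $\lesssim$.

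For the opposite inequality $\sTO^1_X(r) \lesssim \cw_{X^{\leq 1}}(r)$, I would first reduce to the $1$-skeleton by invoking Proposition \ref{prop:12skeleton2}, so it suffices to show $\sTO^1_{X^{\leq 1}}(r) \lesssim \cw_{X^{\leq 1}}(r)$. Any subcomplex $Z \leq X^{\leq 1}$ with $|Z| \leq r$ is a finite graph of degree at most $\deg(X)$, so by the right-hand bound of Lemma \ref{lem:TO1cw},
\[
\sTO^1(Z) \leq \cw(Z) + \deg(Z) + 1 \leq \cw_{X^{\leq 1}}(r) + \deg(X) + 1.
\]
Taking the maximum over $Z$ and absorbing the additive constant $\deg(X) + 1$ into the $\lesssim$ relation completes this direction.

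There is no substantive obstacle here — the statement is essentially a bookkeeping consequence of the two results mentioned. The only point that might deserve explicit comment is verifying that subcomplexes of $X^{\leq 1}$ (in the sense of Definition \ref{defn:simp}, viewed as subcomplexes of the flag/simplicial complex) coincide with subgraphs in the graph-theoretic sense used in Definition \ref{defn:cutwidth}, so that the maxima defining $\cw_{X^{\leq 1}}(r)$ and $\sTO^1_{X^{\leq 1}}(r)$ are taken over the same family. This is immediate from Remark \ref{rem:scgraphs}, and with it in hand the chain of inequalities closes up cleanly.
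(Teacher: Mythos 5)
Your proposal is correct and follows essentially the same route as the paper: the author likewise combines Proposition \ref{prop:12skeleton2} (for $q=1$) with Lemma \ref{lem:TO1cw} applied to subgraphs of the bounded degree graph $X^{\leq 1}$, absorbing the additive constant $\deg(X)+1$ into $\simeq$. The only cosmetic difference is that you prove the easy direction $\cw_{X^{\leq 1}}(r)\leq \sTO^1_X(r)$ directly rather than routing it through the skeleton proposition, which changes nothing of substance.
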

\begin{proof}
    Let $X\in \bS$. By Proposition \ref{prop:12skeleton},
    \[
        \sTO^1_X(r)\simeq \sTO^1_{X^{\leq 1}}(r).
    \]
    As $X^{\leq 1}$ is a bounded degree graph, Lemma \ref{lem:TO1cw} implies that
    \[
        \cw_{X^{\leq 1}}(r) \leq \sTO^1_{X^{\leq 1}}(r)\leq \cw_{X^{\leq 1}}(r)+1+\deg(X).
    \]
    Thus, $\sTO^1_X(r)\simeq \cw_{X^{\leq 1}}(r)$, as required.
\end{proof}

\subsection{Topological expanders contain graph expanders}

In this section we prove that every bounded degree $1$-dimensional topological expander (cf.\ Definition \ref{defn:topexp}) necessarily contains a bounded degree graphical expander. We recall the definition of graphical expanders for convenience.

\begin{definition} Let $\Gamma$ be a graph with $\abs{\Gamma}=n$. The vertex-boundary of a subset $A\subseteq V(\Gamma)$ - denoted $\partial A$ - is the set of all vertices in $V(\Gamma)\setminus A$ which are neighbours of some vertex of $A$. The (vertex) \textit{Cheeger constant} of $\Gamma$ is given by
\[
h(\Gamma)=\min\varsetcon{\frac{\abs{\partial A}}{\abs{A}}}{0<\abs{A}\leq n/2}.
\]

Given $\Delta'\in\N$ and $\varepsilon'>0$, a family of finite graphs $(\Gamma_n)_n$ is called a \textbf{$(\Delta',\varepsilon')$-graphical expander} if
\begin{itemize}
 \item $|V\Gamma_n|\to \infty$ as $n\to\infty$,
 \item $\sup_n\deg(\Gamma_n)\leq \Delta'$, and
 \item $\inf_n h(\Gamma_n)\geq \varepsilon'$.
\end{itemize}
\end{definition}

We will prove that every topological expander contains a graphical expander using the separation profile. For convenience we also recall this definition from \cite{BenSchTim-12-separation-graphs}.

\begin{definition}
    Let $\Gamma$ be a finite graph. The cutsize of $\Gamma$, $\cut(\Gamma)$, is the minimal cardinality of a subset $S\subseteq V\Gamma$ such that every connected component of $\Gamma-S$ contains at most $\frac12|V\Gamma|$ vertices.

    Let $X$ be a graph. We define the separation profile of $X$ to be the function
    \[
        \sep_X(r) = \max\setcon{\cut(\Gamma)}{\Gamma\leq X,\ |V\Gamma|\leq r}
    \]
\end{definition}

Note that for every graph $X$, $\sep_X(r)\leq \lceil \frac{r}{2}\rceil$. The separation and cutwidth profiles satisfy the following relation \cite[Lemma 4.1]{HHKL23}.

\begin{lemma}\label{lem:cwsep} Let $X$ be a graph with maximal degree $\Delta$. For every $r$,
\[
    \cw_X(r) \leq \cw_X(r/2) + \Delta\sep_X(r)
\]
\end{lemma}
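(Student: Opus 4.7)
The plan is to prove the bound by exhibiting, for every subgraph $\Gamma\leq X$ with $|V\Gamma|\leq r$, a linear ordering whose cutwidth is at most $\cw_X(r/2)+\Delta\sep_X(r)$. Fix such a $\Gamma$ and set $n=|V\Gamma|\leq r$. By definition of $\sep_X$ there exists $S\subseteq V\Gamma$ with $|S|\leq\sep_X(r)$ such that every connected component $C_1,\ldots,C_k$ of $\Gamma-S$ satisfies $|C_j|\leq n/2\leq r/2$. In particular, for each $j$ the induced subgraph $\Gamma[C_j]$ admits an ordering $\sigma_j\colon C_j\to\{1,\ldots,|C_j|\}$ whose cutwidth is at most $\cw_X(r/2)$.

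The conceptual point (and the step I expect to be the only non-routine one) is that one does not need to split the separator between two balanced halves: it suffices to place all of $S$ at one end of the ordering. Concretely, I would concatenate the internal orderings of the components in an arbitrary order and then append an arbitrary ordering $\sigma_S$ of $S$ at the very end, producing $\sigma=\sigma_1\sigma_2\cdots\sigma_k\sigma_S$. The virtue of this choice is that no edge of $\Gamma$ runs between two distinct components of $\Gamma-S$, so no ``inter-component'' edges contribute to any cut, regardless of how the $C_j$'s are interleaved.

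To bound the cutwidth of $\sigma$ I split on where the cut index $i$ lies. If $i$ falls inside the block corresponding to $C_j$, the crossing edges are either internal to $\Gamma[C_j]$ (at most $\cw(\Gamma[C_j])\leq\cw_X(r/2)$ of them, by monotonicity of $\cw_X$ in $r$) or incident to $S$ (hence at most $\Delta|S|$, since $S$ has at most $|S|$ vertices of degree at most $\Delta$); there are no other crossings because the components are pairwise edge-disjoint. If instead $i$ falls inside the final $S$-block, every crossing edge is incident to $S$, so again at most $\Delta|S|$ crossings. Summing and using $|S|\leq\sep_X(r)$ gives $\cw(\Gamma)\leq\cw_X(r/2)+\Delta\sep_X(r)$.

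Taking the maximum over all $\Gamma\leq X$ with $|V\Gamma|\leq r$ yields the lemma. The argument involves only elementary counting once the ordering is chosen; the only real idea is to place $S$ at an extreme of the ordering rather than between the components, which neatly sidesteps any balanced-partitioning problem for the $C_j$'s.
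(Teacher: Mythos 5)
Your argument is correct: placing the whole separator $S$ at one end of the concatenated optimal orderings of the components of $\Gamma-S$, and bounding each cut by the internal contribution (at most $\cw_X(r/2)$, since every component has at most $\lfloor r/2\rfloor$ vertices and is a subgraph of $X$) plus the at most $\Delta\lvert S\rvert\leq\Delta\sep_X(r)$ edges meeting $S$, gives exactly the stated bound. Note that this paper does not prove the lemma but cites \cite[Lemma 4.1]{HHKL23}; your proof is the standard argument used there, so there is nothing further to flag.
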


Our next goal is to prove Corollary \ref{cor:topexp-graphexp}, which we now restate with explicit bounds on constants.

\begin{corollary}\label{cor:topexp-graphexp2}
Let $Z=\bigsqcup_n Z_n\in \bS$, where $(Z_n)_{n\in\N}$ is a $1$-dimensional $(\Delta,\varepsilon)$-topological expander. For all sufficiently large $n$ there is a subgraph $\Gamma_n\leq Z_n$ such that 
\begin{itemize}
\item $|\Gamma_n| \geq \frac{\varepsilon}{8\log_2(12\Delta/\varepsilon)}|Z_n|$, and
\item $(\Gamma_n)_{n\in\N}$ is a $(\Delta,\frac{\varepsilon}{8\Delta\log_2(12\Delta/\varepsilon)})$-graphical expander.
\end{itemize}
\end{corollary}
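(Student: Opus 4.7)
The plan is a three-step reduction passing through the cutwidth and separation profiles, culminating in a standard extraction of a Cheeger expander.

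\textit{Step 1: Linear cutwidth.} Apply Lemma \ref{lem:TO1cw} to each $Z_n$:
\[
 \cw(Z_n) \geq \sTO^1(Z_n) - \deg(Z_n) - 1 \geq \varepsilon|Z_n| - \Delta - 1.
\]
For $|Z_n|$ sufficiently large (depending on $\varepsilon$ and $\Delta$) this is at least $(7\varepsilon/8)|Z_n|$.

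\textit{Step 2: Finding a scale of linear separation.} Set $L := \lceil \log_2(12\Delta/\varepsilon)\rceil$ and iterate Lemma \ref{lem:cwsep} exactly $L$ times to obtain
\[
 \cw(Z_n) \leq \cw_{Z_n}(|Z_n|/2^L) + \Delta \sum_{i=0}^{L-1} \sep_{Z_n}(|Z_n|/2^i).
\]
The trivial bound $\cw_{Z_n}(r)\leq \Delta r/2$ combined with $2^L \geq 12\Delta/\varepsilon$ forces the first term to be a small fraction of $\varepsilon|Z_n|$, so the sum carries a linear lower bound of order $\varepsilon|Z_n|/\Delta$. A pigeonhole argument over the $L$ scales then produces an index $i^\ast$ and a subgraph $\tilde\Gamma_n\leq Z_n$ with $|\tilde\Gamma_n|\leq |Z_n|/2^{i^\ast}$ and $\cut(\tilde\Gamma_n)\geq \alpha|\tilde\Gamma_n|$, where $\alpha$ is a positive explicit multiple of $\varepsilon/(\Delta L)$.

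\textit{Step 3: Extracting a Cheeger expander.} Starting from $\tilde\Gamma_n$, iteratively strip any subset $A\subset V\Gamma'$ with $|A|\leq |V\Gamma'|/2$ and $|\partial A| < (\alpha/2)|A|$, replacing $\Gamma'$ by $\Gamma'\setminus A$. The bookkeeping identity
\[
 \cut(\Gamma'\setminus A) \geq \cut(\Gamma') - |\partial A|
\]
shows that the cutsize drops by at most $(\alpha/2)|A|$ at each step, so the total loss is at most $(\alpha/2)$ times the number of vertices removed. Combining this with the trivial bound $\cut(\Gamma_n)\leq |V\Gamma_n|/2$ at termination yields $|V\Gamma_n|\geq \bigl(\alpha/(1-\alpha)\bigr)|\tilde\Gamma_n|$, and by construction $h(\Gamma_n)\geq \alpha/2$. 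The resulting subgraph inherits the degree bound $\Delta$ from $Z_n$. Carefully optimising the parameter choices in Steps 2 and 3 yields the explicit constants $\varepsilon/(8L)$ for the relative size and $\varepsilon/(8\Delta L)$ for the Cheeger constant stated in the Corollary.

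\textit{Main obstacle.} Step 3 is the delicate one. The stripping procedure is classical, but one has to choose the threshold $\alpha/2$ strictly less than the cut-density $\alpha$ of $\tilde\Gamma_n$ so that the surviving subgraph has size comparable to $|\tilde\Gamma_n|$ rather than being pared down to nothing; the coupled optimisation between the number of iterations $L$ in Step 2 and the stripping threshold in Step 3 is what pins down the precise explicit bounds in the statement.
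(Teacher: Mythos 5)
Your argument is correct and, for the reduction, runs along the same lines as the paper: both pass from $\sTO^1(Z_n)\geq\varepsilon|Z_n|$ to linear cutwidth via Lemma \ref{lem:TO1cw} and then to linear separation by iterating Lemma \ref{lem:cwsep}. The differences are in the details. In Step 2 you stop the iteration after $L=\lceil\log_2(12\Delta/\varepsilon)\rceil$ steps, kill the residual term with the trivial bound $\cw_{Z_n}(r)\leq\Delta r/2$, and pigeonhole over the $L$ scales; the paper instead iterates all the way down and splits the resulting sum, bounding the small scales by $\sep_X(r)\leq\lceil r/2\rceil$ and the large scales by $\sep_{Z_n}(r_n)$ so as to conclude directly that $\sep_{Z_n}(r_n)\gtrsim\frac{\varepsilon}{\Delta L}r_n$. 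Both devices work and give constants of the same order. In Step 3 the paper simply cites \cite[Proposition 2.4]{HumSepExp}, whereas you re-prove that extraction by the greedy stripping argument; your bookkeeping inequality $\cut(\Gamma'\setminus A)\geq\cut(\Gamma')-|\partial A|$ is valid (adding $\partial A$ to a cut set of $\Gamma'\setminus A$ cuts $\Gamma'$, since the pieces inside $A$ have size $\leq|A|\leq\tfrac12|V\Gamma'|$ and the remaining pieces are pieces of $(\Gamma'\setminus A)-S$), and termination gives $h(\Gamma_n)\geq\alpha/2$ together with a linear-in-$|Z_n|$ size bound via $\cut(\Gamma_n)\leq\lceil|V\Gamma_n|/2\rceil$. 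The one soft spot is the final sentence: the precise constants $\varepsilon/(8L)$ and $\varepsilon/(8\Delta L)$ are asserted rather than derived, and your natural bookkeeping (take the stripping threshold to be half the cut-density of $\tilde\Gamma_n$) yields a size bound of the form $c\,\frac{\varepsilon}{\Delta L}|Z_n|$, i.e.\ with a factor $\Delta^{-1}$; note that the paper's own proof likewise produces $\frac{\varepsilon}{8\Delta\log_2(12\Delta/\varepsilon)}|Z_n|$ for the size, so your route matches what the proof in the paper actually delivers, even though the statement's first bullet omits the $\Delta$. If you want the write-up to stand alone, replace the ``carefully optimising'' sentence with the explicit chain of inequalities fixing your threshold and the resulting constants.
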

\begin{proof}[Proof of Corollary \ref{cor:topexp-graphexp2}]
Set $r_n=|Z_n|$ and choose $k$ so that $2^{k-1}<r_n \leq 2^k$. Applying Lemma {lem:TO1cw} and repeatedly using Lemma \ref{lem:cwsep}, we have
\begin{align*}
	\varepsilon r_n & \leq \sTO^1(Z_n) \\ & \leq 1+\Delta+\cw_{Z_n}(r_n) \\ & \leq 1+\Delta+\Delta\sum_{i=0}^{k} \sep_{Z_n}(2^{i-k}r_n)
	\\ & \leq 1+\Delta+\Delta\sum_{i=0}^{\left\lfloor \log_2\left(\frac{\varepsilon r_n}{3\Delta}\right)\right\rfloor} \sep_{Z_n}(2^{i-k}r_n)+\Delta\sum_{i=\left\lceil \log_2\left(\frac{\varepsilon r_n}{3\Delta}\right)\right\rceil}^{\lceil \log_2(r_n)\rceil} \sep_{Z_n}(2^{i-k}r_n)
        \\ & \leq 1+\Delta+\Delta\sum_{i=0}^{\left\lfloor \log_2\left(\frac{\varepsilon r_n}{3\Delta}\right)\right\rfloor} 2^{i-1}+\Delta\left(\lceil \log_2(r_n)\rceil-\left\lceil \log_2\left(\frac{\varepsilon r_n}{3\Delta}\right)\right\rceil+1\right) \sep_{Z_n}(r_n)
    \\ & \leq 1 +\Delta + \Delta 2^{\left\lfloor \log_2\left(\frac{\varepsilon r_n}{3\Delta}\right)\right\rfloor} + \Delta\left(\log_2\left(\frac{3\Delta}{\varepsilon}\right)+2\right)\sep_{Z_n}(r_n)
    \\ & \leq 1+\Delta+\frac23\varepsilon r_n+\Delta(2+\log_2(3\Delta/\varepsilon)) \sep_{Z_n}(r_n).
\end{align*}
Here, the fifth inequality uses the fact that $\sep_X(r)\leq \lceil \frac{r}{2}\rceil$ for every graph $X$ and every $r$, so $\sep_{Z_n}(2^{i-k}r_n)\leq \sep_{Z_n}(2^i)\leq 2^{i-1}$. Thus, for all $n$ such that $r_n \geq 12(\Delta +1)\varepsilon^{-1}$,
\[
    \frac{\varepsilon}{4}r_n \leq \frac{\varepsilon}{3}r_n - 1 - \Delta \leq \Delta(2+\log_2(3\Delta/\varepsilon)) \sep_{Z_n}(r_n)
\]
Rearranging, we see that
\[
 \sep_{Z_n}(r_n) \geq \frac{\varepsilon}{4\Delta\log_2(12\Delta/\varepsilon)}r_n.
\]
Therefore, there is a subcomplex $\Gamma'_n\leq Z_n$ with
\begin{align*}
 |\Gamma'_n| \geq \cut(\Gamma'_n) \geq \frac{\varepsilon}{4\Delta\log_2(12\Delta/\varepsilon)}r_n
\end{align*}
so by \cite[Proposition 2.4]{HumSepExp}, there is another subcomplex $\Gamma_n\leq Z_n$ with
\begin{align*}
 |\Gamma_n| \geq \frac{\varepsilon}{8\Delta\log_2(12\Delta/\varepsilon)}r_n \\
 h(\Gamma_n) \geq \frac{\varepsilon}{8\Delta\log_2(12\Delta/\varepsilon)}.
\end{align*}
Thus $(\Gamma_n)_n$ is a $\left(\Delta,\frac{\varepsilon}{8\Delta\log_2(12\Delta/\varepsilon)}\right)$-graphical expander.
\end{proof}

\subsection{Bounds on $\TO^1$ from cutwidth profiles}\label{sec:TO1conseq}

Finally in this section, we quickly review known bounds on cutwidth profiles and the consequences for topological overlap.

\begin{corollary}\label{cor:planarTO1} Let $H$ be a finite graph and suppose $X$ is a bounded degree graph with no $H$-minor. Then $\sTO^1_X(r)\lesssim r^\frac12$.
\end{corollary}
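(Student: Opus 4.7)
The plan is to combine Theorem \ref{thm:compTO1cutwidth} with known separation bounds for $H$-minor-free graphs and iterate Lemma \ref{lem:cwsep}. By Theorem \ref{thm:compTO1cutwidth}, $\sTO^1_X(r)\simeq \cw_{X^{\leq 1}}(r)$, so it suffices to prove $\cw_{X^{\leq 1}}(r)\lesssim r^{1/2}$. Since $X$ has bounded degree and excludes $H$ as a minor, so does every subgraph of $X$, and hence by the Alon--Seymour--Thomas separator theorem (the natural extension of the Lipton--Tarjan planar separator theorem to $H$-minor-free graphs), there is a constant $C=C(H,\deg(X))$ such that $\sep_X(r)\leq Cr^{1/2}$ for all $r$.

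Next I would iterate Lemma \ref{lem:cwsep}: for $r\geq 1$, writing $\Delta=\deg(X)$, we have
\[
    \cw_X(r) \leq \cw_X(r/2) + \Delta\sep_X(r) \leq \Delta\sum_{i=0}^{\lceil\log_2 r\rceil}\sep_X(r/2^i) + \cw_X(1).
\]
Substituting the bound $\sep_X(r/2^i)\leq C(r/2^i)^{1/2}$ yields a geometric series in $2^{-i/2}$, which sums to at most $C'r^{1/2}$ for some constant $C'=C'(H,\Delta)$. Hence $\cw_X(r)\lesssim r^{1/2}$, and applying Theorem \ref{thm:compTO1cutwidth} to the $1$-skeleton gives $\sTO^1_X(r)\lesssim r^{1/2}$.

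There is no real obstacle; the entire argument is a routine assembly. The only point where one must be careful is ensuring the separation bound is applied to subgraphs of $X$, not just to $X$ itself, which is immediate because the class of $H$-minor-free graphs is closed under taking subgraphs, and the separator constant depends only on $H$ and $\Delta$. Note that when $H$ is planar one can cite Lipton--Tarjan directly; for general $H$ one needs the more difficult Alon--Seymour--Thomas theorem, but this is cited in \cite{HHKL23} already for exactly this purpose.
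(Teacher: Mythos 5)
Your argument is correct, but it takes a different route from the paper. The paper's proof is a two-citation affair: it combines \cite[Theorem 1.1]{WoodTelle} (linear bounds on crossing numbers of bounded degree graphs with an excluded minor) with \cite[Theorem 3]{Djidjev2003CrossingNA} (cutwidth bounded in terms of crossing number) to conclude directly that $\cw_X(r)\lesssim r^{1/2}$, and then applies Theorem \ref{thm:compTO1cutwidth} exactly as you do. You instead derive the cutwidth bound from the Alon--Seymour--Thomas separator theorem, i.e.\ from $\sep_X(r)\lesssim r^{1/2}$, by iterating Lemma \ref{lem:cwsep} and summing the resulting geometric series; this is in the spirit of the general cutwidth-versus-separation machinery of \cite{HHKL23} that the paper invokes elsewhere, and it has the advantage of using only tools already quoted in the paper rather than the crossing-number literature. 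Two small points to tidy up: the Alon--Seymour--Thomas theorem produces separators leaving components of size at most $\tfrac23 n$, whereas the cutsize in the separation profile requires components of size at most $\tfrac12 n$, so you should either iterate the separator on the large component or simply cite \cite{BenSchTim-12-separation-graphs}, where the bound $\sep_X(r)\lesssim r^{1/2}$ for minor-excluded graphs is already recorded; and the iteration of Lemma \ref{lem:cwsep} should terminate with the observation that $\cw_X(r)$ is bounded for bounded $r$ (e.g.\ $\cw_X(1)=0$), which you implicitly use. Neither issue is a genuine gap.
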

\begin{proof}
Combining \cite[Theorem 1.1]{WoodTelle} and \cite[Theorem 3]{Djidjev2003CrossingNA} we see that $\cw_X(r)\lesssim r^{1/2}$. Thus, by Theorem \ref{thm:compTO1cutwidth},
\[
 \sTO^1_X(r) \lesssim r^{1/2}. \qedhere
\]
\end{proof}

\begin{corollary}\label{cor:fgnilpTO1} Let $X=H\times N$ where $H$ is either a point or a rank 1 symmetric space whose boundary has conformal dimension $Q$ and $N$ is a compactly generated locally compact group with polynomial growth of degree $d$. Then
\begin{align*}
 \TO^1_X(r) \simeq r^{1-1/d} & \ \textup{if}\ |H|=1,
 \\
 \ln(1+r) \lesssim \TO^1_X(r) \lesssim \ln(1+r)^2  & \ \textup{if}\ H=\HH^2,\ d=0,
 \\
 \TO^1_X(r) \simeq r^{1-1/(d+1)}\ln(1+r)^{1/(d+1)} & \ \textup{if}\ H=\HH^2,\ d\geq 1,
 \\
 \TO^1_X(r)\simeq r^{1-1/(Q+d)} &\  \textup{otherwise.}
\end{align*}
\end{corollary}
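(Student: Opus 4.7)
The plan is to convert the problem into a question about cutwidth profiles using Theorem \ref{thm:compTO1cutwidth}, and then invoke existing separation and Poincar\'e profile calculations for these spaces together with Lemma \ref{lem:cwsep}. More precisely, in each case $X$ is quasi-isometric to a bounded degree graph (for instance a Cayley graph of a cocompact lattice), and by Theorem \ref{thm:compTO1cutwidth}
\[
 \TO^1_X(r)\simeq \sTO^1_{X^{\leq 1}}(r) \simeq \cw_{X^{\leq 1}}(r).
\]
So it is enough to establish matching upper and lower bounds on the cutwidth profile.

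For the lower bounds, I would use the trivial estimate $\cw_Y(r) \geq \sep_Y(r)$ (or the slightly sharper estimate of \cite{HHKL23}), together with the computations of separation profiles from \cite{HumeMackTess-Pprof,HumeMackTess-PprofLie}: for $N$ of polynomial growth of degree $d$ one has $\sep_N(r)\simeq r^{1-1/d}$; for $\HH^2$ one has $\sep_{\HH^2}(r)\simeq \ln(1+r)$; for $\HH^2\times N$ with $d\geq 1$ one has $\sep(r)\simeq r^{1-1/(d+1)}\ln(1+r)^{1/(d+1)}$; and in the remaining rank one case $\sep_X(r)\simeq r^{1-1/(Q+d)}$. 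These give the claimed lower bounds in all four cases (in the second case only $\ln(1+r)$).

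For the upper bounds, I would iterate Lemma \ref{lem:cwsep}:
\[
 \cw_Y(r) \leq \cw_Y(r/2) + \Delta\sep_Y(r) \lesssim \sum_{i=0}^{\lceil\log_2 r\rceil} \sep_Y(r/2^i).
\]
When $\sep_Y(r)\simeq r^a$ with $a\in(0,1)$ the geometric series sums to $\simeq r^a$, giving the bounds $r^{1-1/d}$ (case $|H|=1$) and $r^{1-1/(Q+d)}$ (rank one case with $Q+d>1$). When $\sep_Y(r)\simeq r^a\ln(1+r)^b$ with $a\in(0,1)$ the sum is still $\simeq r^a\ln(1+r)^b$ up to a bounded multiplicative factor, giving the $\HH^2\times N$ case with $d\geq 1$. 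Finally, when $\sep_{\HH^2}(r)\simeq \ln(1+r)$ the sum produces $\sum_{i=0}^{\lceil\log_2 r\rceil}\ln(1+r/2^i)\simeq \ln(1+r)^2$, yielding the upper bound $\ln(1+r)^2$ in the logarithmic case.

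The only real subtlety is bookkeeping: I would have to check that the separation-profile bounds for $X$ itself (not merely $H$ or $N$ alone) are indeed available in the cited papers in the mixed case $H=\HH^2$ with $d\geq 1$ and the general rank one case twisted by a polynomially growing nilpotent factor. The $\TO^1_X(r)\simeq r^{1-1/(Q+d)}$ statement assumes the product form of the Poincar\'e profile from \cite{HumeMackTess-PprofLie}, which is the main non-trivial input; once granted, the cutwidth summation is routine and the lower bound comes for free from $\cw\geq \sep$.
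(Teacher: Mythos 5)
Your proposal is correct and follows essentially the same route as the paper: reduce $\TO^1_X$ to the cutwidth profile via Theorem \ref{thm:compTO1cutwidth} (after passing to a quasi-isometric bounded degree graph) and feed in the separation/Poincar\'e profile computations of \cite{HumeMackTess-Pprof,HumeMackTess-PprofLie}. The only difference is that you re-derive the passage from separation to cutwidth by hand (iterating Lemma \ref{lem:cwsep} and using $\cw\gtrsim\sep$), whereas the paper simply cites \cite[Corollary 4.3]{HHKL23}, which packages exactly that computation.
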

\begin{proof}
This follows immediately from Theorem \ref{thm:compTO1cutwidth}, \cite[Corollary 4.3]{HHKL23}, \cite[Theorem 7]{HumeMackTess-Pprof} and \cite[Theorem 1.12]{HumeMackTess-PprofLie}. 
\end{proof}
Note that the result $\sep_X(r)\simeq r^{1-1/d}$ holds for a more general class of polynomially-growing graphs \cite[Theorem 1.2]{LeCozGour}.

\begin{corollary} Let $X$ be a horocyclic product of two $3$-regular trees, a solvable Lie group of exponential growth or a Riemannian symmetric space whose noncompact factor has rank at least $2$. Then
\[
\TO^1_X(r)\simeq r/\log(r).
\]
\end{corollary}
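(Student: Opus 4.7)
The plan is to establish matching upper and lower bounds, using the general machinery already developed in the paper together with the key calculation for the horocyclic product of two trees.

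For the upper bound, I would observe that each of the three classes of spaces has finite Assouad--Nagata dimension. Any connected Lie group equipped with a proper left-invariant metric does, which covers both solvable Lie groups of exponential growth and Riemannian symmetric spaces. The horocyclic product $H^2$ of two $3$-regular trees embeds as a uniformly discrete subset of the direct product $T_3\times T_3$, which is a finite-dimensional CAT(0) cube complex and so also has finite Assouad--Nagata dimension; $\TO^1$ is monotone under such an inclusion by Proposition~\ref{prop:cTOmonotone}. Applying Corollary~\ref{cor:TO1finANdim} in each case yields $\TO^1_X(r)\lesssim r/\ln(1+r)$.

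For the lower bound, I would first apply Theorem~\ref{thm:TOhorocycle} with $d=q=1$, which gives $\sTO^1_{H^2}(r)\gtrsim r/\ln(1+r)$, and then convert this into a metric statement $\TO^1_{H^2}(r)\gtrsim r/\ln(1+r)$ using Theorem~\ref{thm:defnequiv}. This settles the case $X=H^2$. For the other two cases, I would invoke the regular maps described in the bullet-pointed discussion immediately following the definition of topological thickness: a unimodular solvable Lie group of exponential growth admits a regular map from $H^2$ with uniformly discrete image (coming from the presence of a closed subgroup quasi-isometric to an appropriate horocyclic product of trees, a structural fact one reduces to the $\SOL$-like case), and a rank-$\geq 2$ symmetric space admits one via the Bensaid--Nguyen biLipschitz embedding of $T_3^2$ into a maximal flat combined with the inclusion $H^2\hookrightarrow T_3\times T_3$. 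A standard connect-the-dots extension (as in the proof of Theorem~\ref{thm:TOmono}) upgrades each of these to a continuous map realising uniformly bounded $\Cov^1$ bounds in both directions, so that Proposition~\ref{prop:cTOmonotone} transfers the lower bound from $H^2$ to $X$.

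The main obstacle is not present in the proof of this corollary itself: all the heavy lifting is done by Theorem~\ref{thm:TOhorocycle}, whose proof relies in turn on the difficult generalisation Theorem~\ref{thm:conhorocycle}. The only step requiring genuine care is verifying that the cited regular maps from $H^2$ into solvable Lie groups of exponential growth have uniformly discrete image in the required sense, and this is where one would have to be most careful about unimodularity versus the slightly stronger statement suggested by the corollary.
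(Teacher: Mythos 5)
Your proposal is correct in substance, but it takes a genuinely different and much heavier route than the paper. The paper's proof is a short citation chain: by Theorem~\ref{thm:defnequiv} and Theorem~\ref{thm:compTO1cutwidth}, $\TO^1_X(r)\simeq \sTO^1_Y(r)\simeq \cw_{Y^{\leq 1}}(r)$ for a bounded degree complex $Y$ quasi-isometric to $X$, and then \cite[Corollary 4.3]{HHKL23} together with \cite[Theorem 1.11]{HumeMackTess-PprofLie} identify this cutwidth profile with the known separation profile $\simeq r/\log r$ of exactly these three classes of spaces; both the upper and the lower bound are imported at once, and none of the machinery of \S\ref{sec:coarsecon} is used. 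Your upper bound via finite Assouad--Nagata dimension and Corollary~\ref{cor:TO1finANdim} is fine and is in effect the same comparison (that corollary is itself proved through \cite{HHKL23}), though it quietly imports the external facts that connected solvable Lie groups, symmetric spaces and $H^2\subset T_3\times T_3$ have finite AN dimension. Your lower bound instead routes through Theorem~\ref{thm:TOhorocycle} with $d=q=1$ --- hence through Theorem~\ref{thm:conhorocycle}, the hardest result of the paper --- and then through regular maps $H^2\to X$ and monotonicity (Theorem~\ref{thm:TOmono} or Proposition~\ref{prop:cTOmonotone} after a connect-the-dots extension). This works, and has the virtue of staying inside the paper's own machinery, but it is far more than is needed: the $1$-dimensional lower bound is already contained in the separation-profile computations of \cite{HumeMackTess-PprofLie} that the paper quotes.

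Two points deserve care. First, a slip: Bensaid--Nguyen do not embed $T_3\times T_3$ into a maximal flat --- a maximal flat of a rank $2$ space is a Euclidean plane and cannot contain a biLipschitz copy of a product of trees; their theorem embeds $T_3^d$ biLipschitzly into the rank $d$ symmetric space itself, which is what you actually need (combined, as you say, with $H^2\hookrightarrow T_3\times T_3$) and is how the paper uses \cite{BensaidNguyen} after Theorem~\ref{thm:TOhorocycle}. Second, your unimodularity worry is well placed: the regular-map statement in the introduction is for \emph{unimodular} solvable Lie groups of exponential growth, and the corollary cannot hold for arbitrary ones --- the affine group $\R\rtimes\R_{>0}$ is solvable of exponential growth but quasi-isometric to $\HH^2$, whose $\TO^1$ is at most $\ln(1+r)^2$ by Corollary~\ref{cor:fgnilpTO1}. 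So your argument (like the paper's citation of \cite[Theorem 1.11]{HumeMackTess-PprofLie}) proves the statement under the unimodularity hypothesis, which is evidently the intended reading of the corollary; this is a defect of the statement rather than of your proof.
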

\begin{proof} 
This follows immediately from Theorem \ref{thm:compTO1cutwidth}, \cite[Corollary 4.3]{HHKL23} and \cite[Theorem 1.11]{HumeMackTess-PprofLie}.
\end{proof}

\section{Higher dimensions}\label{sec:highdim}
We now move to calculating topological overlap of familiar spaces in higher dimensions.

\subsection{Euclidean spaces}\label{sec:Eucbds}

The goal of this section is to calculate the topological overlap of Euclidean spaces. Let us recall the statement of Theorem \ref{thm:TOEuc} from the introduction:

\begin{theorem*} For each $q\leq n$ we have
\[
 \TO^q_{\R^n}(r)\simeq r^{1-q/n}.
\]
\end{theorem*}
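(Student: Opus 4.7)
The plan is to handle the cases $q=n$ and $1 \leq q < n$ separately.

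The case $q = n$ is immediate: the identity $\R^n \to \R^n$ has singleton fibers, so $\TO^n_{\R^n}(r) = 1 \simeq r^{1-n/n}$.

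Fix $1 \leq q < n$. For the lower bound I would take $Z = B^n(R)$ with $R = \lceil r^{1/n}\rceil$, so $\Cov^1(Z) \simeq R^n \simeq r$. Given any continuous $f \colon Z \to \R^q$, Gromov's waist-of-the-ball inequality \cite[p.~134]{Gromov83} produces a point $y \in \R^q$ whose preimage $F = f^{-1}(y)$ has a $1$-tubular neighborhood in $\R^n$ of $n$-volume at least $c_n R^{n-q}$. Since any set covered by $k$ closed unit balls has $1$-tubular neighborhood contained in $k$ balls of radius $2$, hence of total $n$-volume at most $2^n \omega_n k$, this yields $\Cov^1(F) \gtrsim R^{n-q} \simeq r^{1-q/n}$. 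Supremising over admissible $f$ and $Z$ gives $\TO^q_{\R^n}(r) \gtrsim r^{1-q/n}$.

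The upper bound $\TO^q_{\R^n}(r) \lesssim r^{1-q/n}$ is the substantive half, modelled on Guth's proof of the width--volume inequality \cite{Guth-widthvol}. First I would reduce to the case $Z = \bigcup_{i=1}^{r} B(x_i, 1)$, set $L = \lceil r^{1/n}\rceil$, and tile $\R^n$ by closed axis-aligned cubes of side length $L$. Within a single $L$-cube the coordinate projection $\pi \colon \R^n \to \R^q$ already has fibers contained in $(n-q)$-subcubes of side $L$, with $\Cov^1 \leq C_n L^{n-q} \simeq r^{1-q/n}$. The plan is to globalise $\pi$ by modifying it on each cube $C_I$ by a continuously interpolated translation $\phi(I) \in \R^q$, chosen so that only $O(1)$ cubes of the tiling contribute to each fiber $f^{-1}(y)$; the global bound then follows from the per-cube bound.

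The main obstacle is the construction of the translation data $\phi$. A naive ``staircase'' choice depending only on the $\R^{n-q}$-coordinates works when $n - q \leq q$, via a coarse biLipschitz injection of an $L$-separated $(n-q)$-lattice into $\R^q$, but fails when $n - q > q$, because no continuous $\R^{n-q} \to \R^q$ can be coarsely injective on an $L$-separated lattice in that range. I would circumvent this via an iterated Federer--Fleming-type deformation over the skeleta of the $L$-cubulation, inductively pushing the image onto the $q$-skeleton of an auxiliary cubulation of $\R^q$ while maintaining cube-by-cube $(n-q)$-covering control. The analytic crux is the bookkeeping showing this deformation simultaneously preserves the per-cube bound $C_n L^{n-q}$ and confines each fiber of the final map to $O(1)$ cubes of the $L$-tiling.
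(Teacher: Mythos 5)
Your handling of $q=n$ and of the lower bound is essentially the paper's: the paper also derives the lower bound from Gromov's waist inequality, transferring the spherical covering version (Theorem \ref{thm:Gromov_waist}, from \cite{Guth-waist}) to balls $RD^n$ by a two-pole radial projection, which is the same idea as your tubular-neighbourhood formulation of \cite[p.~134]{Gromov83} followed by the volume-to-covering comparison; that half is fine.

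The genuine gap is in the upper bound, and it sits exactly where you stop. You defer ``the bookkeeping showing this deformation simultaneously preserves the per-cube bound and confines each fiber to $O(1)$ cubes of the $L$-tiling'' --- but that is the theorem; everything before it is the trivial part. Moreover the localization property you are aiming for is not what Guth-style arguments deliver, and there is a concrete reason to doubt it can be arranged in the only range at issue, $n-q>q$: to keep the images of the (up to $\sim r$) relevant $L$-cubes boundedly overlapping you must let the offsets $\phi(I)$ of nearby cubes differ by much more than $L$, and continuity then forces each transition region between adjacent cubes to sweep through all intermediate values, so fibers over most points of $\R^q$ meet many transition regions, a single one of which can already contribute a fiber piece of covering number comparable to the whole budget $L^{n-q}$; conversely, keeping offsets within $O(L)$ of one another reinstates precisely the coarse-injectivity obstruction you yourself identify. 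The paper's proof (like Guth's width--volume argument \cite{Guth-widthvol}) never confines fibers to $O(1)$ big cubes. Instead it first runs an averaging argument over translates of the coarse cubulation so that $Z$ meets the unit neighbourhood $Y^{n-q}$ of its $(n-q)$-skeleton in at most $\binom{n}{q}L^{n-q}$ unit cubes, and then constructs explicit sweep functions $f_1,\ldots,f_q$ whose common fibers lie in $Y^{n-q}$ up to one controlled piece inside a single pixellated big cube, so that $\Cov^c(f^{-1}(z))\leq \Cov^c(Z\cap Y^{n-q})+O(L^{n-q})$: the smallness comes from where $Z$ sits relative to the translated skeleton, not from fiber localization. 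Your outline contains no analogue of the averaging step, and your proposed ``iterated Federer--Fleming deformation'' is exactly the several-page construction (definition of the $f_i$, well-definedness, continuity, and the fiber analysis) that the paper has to carry out; as written, the upper bound is not established.
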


\subsubsection{Lower bound}

Our first goal is the lower bound, which we prove using the covering definition of topological overlap and one of several proofs of the waist inequality due to Gromov (cf.\ \cite[Theorem 5]{Guth-waist}).

\begin{theorem}\label{thm:Gromov_waist} For each $k>q$ there exists a constant $\beta'_{k,q}>0$ such that the following holds. Suppose $F:S^k\to \R^q$ is a continuous map. For every $r>0$ there is some $y\in\R^q$ such that $F^{-1}(y)$ cannot be covered by fewer than $\beta'_{k,q}r^{-(k-q)}$ balls of radius $r$ in $S^k$.
\end{theorem}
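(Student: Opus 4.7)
The plan is to deduce the stated covering estimate from Gromov's classical waist inequality, which asserts that for any continuous $F:S^k\to\R^q$ there exists $y\in\R^q$ such that for every $t>0$
\[
\vol\bigl(N_t(F^{-1}(y))\bigr)\ \geq\ \vol\bigl(N_t(S^{k-q})\bigr),
\]
where $S^{k-q}\subset S^k$ is an equatorial subsphere and $N_t(\cdot)$ denotes the closed $t$-neighbourhood in $S^k$ (the Gromov-Memarian theorem, which I take as a black box and which is precisely the form referenced in \cite{Guth-waist}). The passage from neighbourhood volume to covering number is then elementary.

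First I would fix $r>0$ below the injectivity radius of $S^k$; the case of larger $r$ is handled separately by noting that a covering of any non-empty subset uses at least one ball, while $\beta'_{k,q}r^{-(k-q)}$ can be forced to be less than $1$ by shrinking the constant $\beta'_{k,q}$. Applying the waist inequality at scale $t=r$ produces $y$ satisfying $\vol(N_r F^{-1}(y))\geq c_{k,q}\, r^q$, using the standard fact that the $r$-tube around an equatorial $(k-q)$-subsphere of $S^k$ has volume at least a dimensional constant times $r^q$ for all such $r$ (the normal bundle has fibre dimension $q$, so this is just the usual tubular-neighbourhood volume expansion).

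Next I would convert the covering count into a volume upper bound. Suppose $F^{-1}(y)$ is covered by $N$ closed balls of radius $r$ centred at points $x_1,\dots,x_N\in S^k$. Then $N_r(F^{-1}(y))\subseteq \bigcup_{i=1}^N B(x_i,2r)$, so
\[
c_{k,q}\, r^q \ \leq\ \vol\bigl(N_r F^{-1}(y)\bigr)\ \leq\ \sum_{i=1}^{N}\vol(B(x_i,2r))\ \leq\ N\,C_k\, r^k,
\]
where the last inequality uses that below the injectivity radius a ball in $S^k$ of radius $2r$ has volume at most $C_k r^k$. Rearranging gives $N\geq (c_{k,q}/C_k)\, r^{q-k}$; setting $\beta'_{k,q}=c_{k,q}/C_k$, after possibly shrinking it to handle the large-$r$ regime, yields the claimed bound.

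The main obstacle here is essentially nil beyond invoking the waist inequality itself, whose own proof (whether via Gromov's original argument or Memarian's refinement) is the deep input. The remaining work is purely the volumetric bookkeeping described above, with the only subtleties being the split into the small-$r$ and large-$r$ regimes and the choice of the dimensional constant $\beta'_{k,q}$ to absorb all implicit tubular-neighbourhood and ball-volume constants.
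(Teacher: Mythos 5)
Your argument is correct. Note that the paper does not actually prove this statement: it is quoted directly from Guth's survey of the waist inequality (and attributed to an argument on p.~134 of Gromov's \emph{Filling Riemannian manifolds}), so the comparison here is with a citation rather than with a written proof. Your route --- taking the Gromov--Memarian tube-volume form $\vol(N_t(F^{-1}(y)))\geq \vol(N_t(S^{k-q}))$ as the black box and converting it into a covering bound by playing the lower bound $\vol(N_r(F^{-1}(y)))\geq c_{k,q}r^{q}$ off against the Bishop--Gromov upper bound $\vol(B(x,2r))\leq C_k r^{k}$ --- is a standard and valid reduction, and the bookkeeping (enlarging the radii so that the $r$-neighbourhood of the covered set lies in the union of the $2r$-balls, then rearranging) is sound. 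Two very minor points: in the large-$r$ regime you should say explicitly that $y$ is chosen in the image of $F$, so that $F^{-1}(y)\neq\emptyset$ and any cover uses at least one ball; and you are invoking a sharper tool than necessary, since only the non-sharp estimate $\vol(N_r(F^{-1}(y)))\geq c_{k,q}r^{q}$ is needed, which is exactly what Gromov's more elementary covering argument in the Filling paper delivers without the full Gromov--Memarian machinery.
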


Our goal is to derive a comparable result for continuous maps from the ball of radius $R$ in $\R^k$ to $\R^q$.

For each $R\geq 1$, we define $RS^k=\setcon{(x_0,\ldots,x_k)\in\R^{k+1}}{\sum x_i^2=R^2}$ and $RD^k=\setcon{(x_0,\ldots,x_{k-1},0)\in\R^{k+1}}{\sum x_i^2\leq R^2}$. We write $S^k$ and $D^k$ for $1S^k$ and $1D^k$ respectively.

Define $\pi:S^k\to D^k$ so that $\pi(x_0,\ldots,x_k)$ is the unique point on $D^k$ lying on the straight line connecting $(x_0,\ldots,x_k)$ to $(0,\ldots,0,-1)$ if $x_k\geq 0$ and $(0,\ldots,0,1)$ if $x_k\leq 0$. This is well-defined as for $x_k=0$, $\pi(x_0,\ldots,x_k)=(x_0,\ldots,x_k)$ using either rule.

Since $\pi$ is biLipschitz when restricted to either half-sphere and $S^k$ is a doubling metric space, there is a constant $C$ such that for every $r>0$, the primage under $\pi$ of a ball of radius $r$ in $D^k$ is contained in a union of at most $C$ balls of radius $r$ in $S^k$.

\begin{center}
\begin{tikzpicture}
    \shade[ball color=black!10!white, opacity=0.6] (0,0) circle (2cm);
    \shade[inner color=black!25!white, outer color=black!40!white, opacity=0.6] (0,0) ellipse (2cm and 0.3cm);
	\foreach \i in {0,...,18}{
		\draw[black!50, very thin]  	(10*\i:2cm) -- (0,-2)
							(-10*\i:2cm) -- (0,2);
	}
\end{tikzpicture}
\end{center}

Rescaling this by a parameter $R\geq 1$ we get maps $\pi_R: RD_k\to RS_k$ such that the preimage of every ball of radius $1$ in $RD^k$ is contained in at most $C$ balls of radius $1$ in $RS^k$.

We now record the following simple corollary to the waist inequality above. 

\begin{corollary}\label{cor:Gromov_waist} For each $k>q$ there exists a constant $\beta''_{k,q}>0$ such that the following holds for all $R\geq 1$. Suppose $G:RD^k\to \R^q$ is a continuous map. There is some $y\in\R^q$ such that $G^{-1}(y)$ cannot be covered by fewer than $\beta''_{k,q}R^{(k-q)}$ balls of radius $1$ in $RD^k$.
\end{corollary}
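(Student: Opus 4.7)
The plan is to reduce Corollary \ref{cor:Gromov_waist} to Theorem \ref{thm:Gromov_waist} by composing with the flattening map $\pi_R$. Given a continuous $G:RD^k\to\R^q$, I form the composition $\tilde F:=G\circ\pi_R:RS^k\to\R^q$. Let $\rho_R:S^k\to RS^k$ denote the radial rescaling by factor $R$, which multiplies distances by $R$. Applying Theorem \ref{thm:Gromov_waist} to $\tilde F\circ\rho_R:S^k\to\R^q$ with parameter $r=1/R$ produces a point $y\in\R^q$ such that $(\tilde F\circ\rho_R)^{-1}(y)$ cannot be covered by fewer than $\beta'_{k,q}R^{k-q}$ balls of radius $1/R$ in $S^k$. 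Pushing forward through $\rho_R$, the fibre
\[
\tilde F^{-1}(y)=\pi_R^{-1}(G^{-1}(y))
\]
cannot be covered by fewer than $\beta'_{k,q}R^{k-q}$ unit balls in $RS^k$.

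The second step is to transfer this lower bound from $RS^k$ down to $RD^k$. Here I invoke the covering statement displayed immediately before the corollary: the preimage under $\pi_R$ of any unit ball in $RD^k$ is contained in a union of at most $C$ unit balls in $RS^k$, with $C$ depending only on $k$. Hence if $G^{-1}(y)$ could be covered by $N$ unit balls in $RD^k$, then $\pi_R^{-1}(G^{-1}(y))$ would admit a cover by $CN$ unit balls in $RS^k$. Comparing with the first step forces $N\geq (\beta'_{k,q}/C)R^{k-q}$, so we may take $\beta''_{k,q}:=\beta'_{k,q}/C$, concluding the proof.

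There is no serious obstacle in this argument, which is really just a rescaling followed by a pigeonhole on the cover. The one subtlety worth isolating is that the covering constant $C$ in the preliminary statement is genuinely uniform in $R\geq 1$; this is built into the setup, since the bi-Lipschitz distortion of $\pi$ on each closed hemisphere is a scale-invariant quantity and $S^k$ is doubling, so rescaling both the domain and the target of $\pi$ by $R$ preserves the local covering bound.
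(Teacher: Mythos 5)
Your proof is correct and follows essentially the same route as the paper: rescale Theorem \ref{thm:Gromov_waist} to $RS^k$, compose $G$ with the flattening map $\pi_R$, and transfer the covering lower bound to $RD^k$ via the uniform constant $C$, yielding $\beta''_{k,q}=\beta'_{k,q}/C$. The only differences are cosmetic (you argue directly by pigeonhole rather than by contradiction, and you write the composition in the correct order $G\circ\pi_R$, whereas the paper's text has it reversed).
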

\begin{proof}
Rescaling Theorem \ref{thm:Gromov_waist} by $R=1/r$, we see that for every continuous map $F:RS^k\to \R^q$, there is some $y\in\R^q$ such that $F^{-1}(y)$ cannot be covered by fewer than $\beta'_{k,q}R^{(k-q)}$ balls of radius $1$ in $RS^k$.

Now let $G:RD^k\to\R^q$ be a continuous map and define $F:RS^k\to\R^q$ by $F=\pi_R\circ G$. Suppose for a contradiction, that for every $y\in\R^d$, $G^{-1}(y)$ can be covered by fewer than $\frac1C\beta'_{k,q}R^{(k-q)}$ balls of radius $1$ in $RD^k$. Taking preimages of $\pi_R$, we cover each $F^{-1}(y)$ by fewer than $\beta'_{k,q}R^{(k-q)}$ balls of radius $1$ in $RS^k$, contradicting Corollary \ref{cor:Gromov_waist}. The result then follows with $\beta''_{k,q}=C^{-1}\beta'_{k,q}$.
\end{proof}

\begin{proof}[Proof of Theorem \ref{thm:TOEuc} - lower bound]
Calculating $\Cov^1(RD^k)$ exactly (using the Euclidean metric) is a very difficult problem. A sufficient upper bound for our purposes is $(2k)^kR^k$, since a ball of radius $R$ is contained in a cube of sidelength $2R$ and each cube of sidelength $1$ is contained in at most $k^k$ balls of radius $1$. By Corollary \ref{cor:Gromov_waist}, $\TO^q(RD^k)\geq \beta''_{k,q}R^{(k-q)}$, thus
\[
    \TO^q_{\R^k}(r) \geq \frac{\beta''_{k,q}}{(2k)^{k-q}}r^{1-\frac{q}{k}}. \qedhere
\]
\end{proof}

\subsubsection{Upper bound}
The strategy behind the upper bound is as follows. We fix $r\in\N$ and define two cubulations of $\R^k$. The first, denoted $\mathcal C$, has $0$-skeleton $(\frac12+\Z)^k$ and top dimensional cubes $(\frac12+\Z)^k+[0,1]^k$. The second, denoted $\mathcal C_r$, has $0$-skeleton $(r\Z)^k$ and top dimensional cubes $(r\Z)^k+[0,r]^k$. We refer to top-dimensional cubes in the two cubulations as $\mathcal C$-cubes and $\mathcal C_r$-cubes respectively. For each $0\leq l\leq k$ we define $X^{k-l}$ to be the $(k-l)$-skeleton of $\mathcal C_r$. We also define $Y^{k-l}$ to be the union of all the $\mathcal C$-cubes which intersect $X^{k-l}$. The strategy is as follows:
\begin{itemize}
    \item Choose a set $Z$ of at most $r^k$ $\mathcal C$-cubes. 
    \item Find $v\in \Z^k$, such that $(v+Z)\cap Y^{k-q}$ is contained in a union of $\leq \binom{k}{q} r^{k-q}$ cubes of side length $1$. (Without loss, we then translate $Z$ so that $v=0$)
    \item Build continuous functions $\R^k\to \R$ such that the preimage of every point is almost entirely contained in $Y^{k-1}$.
    \item Take the direct sum of $q$ of these functions to find a continuous function $\R^k\to \R^q$ such that the preimage of every point is almost entirely contained in $Y^{k-q}$.
\end{itemize}

Fix an orthonormal basis $\mathcal B = \{e_1,\ldots,e_k\}$ of $\R^k$. Any vector representation of an element of $\R^k$ will be with respect to this ordered basis. We denote by $\pi_i$ the projection of $\R^k$ onto $\langle e_i\rangle$ and by $\hat\pi_i$ the projection of $\R^k$ onto $e_i^\perp$, i.e.
\begin{align*}
\hat\pi_i(x_1,\ldots,x_k) & = (x_1,\ldots,x_{i-1},0,x_{i+1},\ldots,x_k) \\
\pi_i(x_1,\ldots,x_k) & = (0,\ldots,0,x_{i},0,\ldots,0)    
\end{align*}

Fix $r\in\N$ with $r\geq 2$. For this section, a $\mathcal{C}$-\textbf{cube} is a set of the form
\[
 A = \prod_{j=1}^k A_j \subset \R^k
\]
where $A_j=[a_j,a_j+1]$ with $a_j\in\Z+\frac12$. The \textbf{root} of the $\mathcal{C}$-cube $A$ is the point $(a_1,\ldots,a_k)$.  A $\mathcal{C}_r$-\textbf{cube} is a set of the form
\[
 B = \prod_{j=1}^k B_j \subset \R^k
\]
where $B_j=[b_j,b_j+r]$ with $b_j\in r\Z$. The root of the $\mathcal{C}_r$-cube is $(b_1,\ldots,b_k)$.
\medskip

It will be convenient to cover sets using $\mathcal{C}$-cubes rather than metric balls in this section, so we introduce the following

\begin{definition} Let $A\subseteq \R^k$. We denote by $\Cov^c(A)$) the minimal $n$ such that $A$ is contained in a union of $n$ $\mathcal{C}$-cubes. If no such $n$ exists we write $\Cov^c(A)=+\infty$.

A \textbf{pixellation} of $A$ is a minimal union of $\mathcal{C}$-cubes which contains $A$.

We define $\cOv$, $\cTO^q(A)$ and $\cTO^q_{\R^k}$ as in Definitions \ref{defn:cTO} and \ref{defn:cTOprof} replacing $\Cov^1$ by $\Cov^c$.
\end{definition}

Note that every ball of radius $1$ in $\R^k$ (equipped with the Euclidean metric) is contained in a union of at most $3^k$ $\mathcal{C}$-cubes, and each $\mathcal{C}$-cube is contained in a union of at most $k^k$ balls of radius $1$ (for instance, $A$ is covered by the union of balls of radius $1$ whose centres are of the form $(a_1,\ldots,a_k)+\frac1k(n_1,\ldots,n_k)$ with $n_j\in\{0,\ldots,k-1\}$). Therefore, for any $A\subseteq \R^k$, $k^{-k}\Cov^1(A)\leq\Cov^c(A) \leq 3^k\Cov^1(A)$. From this we easily deduce

\begin{lemma} For every $q$, $\cTO^q_{\R^k}(r)\simeq \TO^q_{\R^k}(r)$.
\end{lemma}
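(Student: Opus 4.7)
The proof is essentially a direct translation between the two notions of covering using the two-sided comparison already recorded in the excerpt: for every $A\subseteq\R^k$ one has $k^{-k}\Cov^1(A)\leq\Cov^c(A)\leq 3^k\Cov^1(A)$. Both inequalities $\cTO^q_{\R^k}(r)\lesssim\TO^q_{\R^k}(r)$ and the reverse are obtained by the same two-step recipe, so I will describe only one direction in detail.

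To prove $\cTO^q_{\R^k}(r)\lesssim\TO^q_{\R^k}(r)$, I would fix a subset $Z\subseteq\R^k$ with $\Cov^c(Z)\leq r$. The inequality $\Cov^1\leq k^k\Cov^c$ gives $\Cov^1(Z)\leq k^k r$, so by definition of $\TO^q_{\R^k}$ there exists a continuous map $f:Z\to\R^q$ with $\Ov(f)\leq \TO^q_{\R^k}(k^k r)$. For any $z\in\R^q$ the fibre $f^{-1}(z)$ is covered by at most $\Ov(f)$ balls of radius $1$, and each such ball is covered by at most $3^k$ $\mathcal{C}$-cubes, so $\cOv_f(z)\leq 3^k\Ov(f)$. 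Taking the maximum over $z$ yields $\cOv(f)\leq 3^k\TO^q_{\R^k}(k^k r)$, and since $Z$ was arbitrary we conclude
\[
    \cTO^q_{\R^k}(r)\leq 3^k\,\TO^q_{\R^k}(k^k r),
\]
which is exactly $\cTO^q_{\R^k}(r)\lesssim \TO^q_{\R^k}(r)$.

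For the reverse inequality I would argue symmetrically: start with $Z\subseteq\R^k$ satisfying $\Cov^1(Z)\leq r$, use $\Cov^c\leq 3^k\Cov^1$ to upgrade this to $\Cov^c(Z)\leq 3^k r$, extract a continuous map $f:Z\to\R^q$ with $\cOv(f)\leq \cTO^q_{\R^k}(3^k r)$, and then convert each cube cover of a fibre into a ball cover at the cost of a factor of $k^k$. This gives $\TO^q_{\R^k}(r)\leq k^k\,\cTO^q_{\R^k}(3^k r)$, and combining the two inequalities finishes the proof.

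There is no real obstacle here — the whole argument is bookkeeping built on top of the two-sided comparison between $\Cov^1$ and $\Cov^c$, which has already been noted. The only thing worth being slightly careful about is that covering estimates on fibres transfer directly into estimates for the respective $\Ov$-quantities because each fibre is actually covered by the covering elements (balls or cubes), not merely comparable in some metric sense; this is immediate from the definitions of $\Ov_f$ and $\cOv_f$.
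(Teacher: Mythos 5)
Your proof is correct and is exactly the argument the paper intends: the paper records the two-sided comparison $k^{-k}\Cov^1(A)\leq\Cov^c(A)\leq 3^k\Cov^1(A)$ and says the lemma is "easily deduced" from it, and your bookkeeping (transferring the comparison both to the admissible subsets $Z$ and to the fibres $f^{-1}(z)$, giving $\cTO^q_{\R^k}(r)\leq 3^k\TO^q_{\R^k}(k^k r)$ and $\TO^q_{\R^k}(r)\leq k^k\cTO^q_{\R^k}(3^k r)$) is precisely that deduction.
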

For the remainder of the section we will work with $\cOv$ and $\cTO$.
\medskip

Fix $Z'\subset \R^k$ with $\Cov^c(Z')\leq r^k$. The first step is to show that some translate of $Z'$ intersects $Y^{k-q}$ in a relatively small set.

\begin{lemma} Let $Z'\subset \R^k$ with $\Cov^c(Z')\leq r^k$, and let $Z$ be a pixellation of $Z'$. There is some $v=(v_1,\ldots,v_k)\in\Z^k$ such that
\[
 \Cov^c \left((-v+Z) \cap Y^{k-q}\right) \leq \binom{k}{q}r^{k-q}.
\]
\end{lemma}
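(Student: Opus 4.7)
The plan is a probabilistic averaging argument with $v$ sampled uniformly from $\{0,1,\ldots,r-1\}^k$. The heuristic is that $Y^{k-q}$ occupies only a $\binom{k}{q}r^{-q}$ proportion of $\R^k$ (measured in $\mathcal{C}$-cubes), so a random integer translation of the at most $r^k$ cubes of the pixellation $Z$ should land in $Y^{k-q}$ only a $\binom{k}{q}r^{-q}$ fraction of the time on average; picking a $v$ that meets this average then gives the stated bound.

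To make this precise, I would first reformulate the geometric condition in terms of congruences mod $r$. A $\mathcal{C}$-cube $A$ with root $(a_1,\ldots,a_k)\in(\Z+\tfrac12)^k$ intersects $X^{k-q}$, and hence lies in $Y^{k-q}$, iff at least $q$ of the unit intervals $[a_i,a_i+1]$ contain a multiple of $r$; the only integer in each such interval is $a_i+\tfrac12$, so the condition reduces to $a_i+\tfrac12\in r\Z$ for at least $q$ indices. Therefore $-v+A\in Y^{k-q}$ iff $v_i\equiv a_i+\tfrac12\pmod r$ holds for at least $q$ indices $i$. For uniform $v$, each such congruence has probability exactly $1/r$ and the events are independent across $i$, so a union bound over the $\binom{k}{q}$ size-$q$ subsets $I\subseteq\{1,\ldots,k\}$ yields $\Pr_v[-v+A\in Y^{k-q}]\leq \binom{k}{q}r^{-q}$. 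Summing over the at most $r^k$ cubes of $Z$, linearity of expectation gives
\[
\mathbb{E}_v\bigl|\{A\in Z:\,-v+A\in Y^{k-q}\}\bigr|\leq \binom{k}{q}r^{k-q},
\]
and the pigeonhole principle produces a $v\in\{0,\ldots,r-1\}^k\subset\Z^k$ for which the left-hand count is at most $\binom{k}{q}r^{k-q}$.

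For this $v$, the family $\{-v+A:A\in Z,\,-v+A\in Y^{k-q}\}$ is a natural $\mathcal{C}$-cube cover of $(-v+Z)\cap Y^{k-q}$, which would conclude the proof. The step I expect to be the main obstacle is verifying this cover claim completely: a priori $(-v+Z)\cap Y^{k-q}$ could also contain lower-dimensional shared faces between cubes $-v+A\notin Y^{k-q}$ and adjacent cubes of $Y^{k-q}$, which the selected family does not obviously cover. The plan is to observe that each such face is itself contained in some cube of $Y^{k-q}$ (the adjacent one), so it can be added to the cover directly; a careful book-keeping, or a slight reformulation in terms of open-cube interiors so that faces are attributed to a unique cube, will show that the stated bound $\binom{k}{q}r^{k-q}$ is preserved.
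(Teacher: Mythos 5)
The first half of your argument is sound and is essentially a discretised version of what the paper does: the paper averages over continuous translations $v'\in[0,1]^k$ using the $(k-q)$-volume of $Z\cap(rv'+X^{k-q})$ (as in Guth's width--volume argument) and then rounds to $v=([rv'_1],\ldots,[rv'_k])$, whereas you average directly over $v\in\{0,\ldots,r-1\}^k$ after the correct observation that a $\mathcal{C}$-cube with root $a$ lies in $Y^{k-q}$ exactly when $a_i+\tfrac12\in r\Z$ for at least $q$ indices. Your congruence computation, union bound, linearity of expectation and pigeonhole are all fine, and this cleanly produces a $v$ for which at most $\binom{k}{q}r^{k-q}$ cubes of the pixellation are translated into $Y^{k-q}$.

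The gap is exactly the one you flag in your last paragraph, and your proposed patch cannot close it with the stated constant, because the statement with the constant $\binom{k}{q}r^{k-q}$ is not attainable by this covering scheme: the points of $(-v+Z)\cap Y^{k-q}$ lying on faces shared between a cube $-v+A\not\subseteq Y^{k-q}$ and an adjacent cube of $Y^{k-q}$ force extra cubes that your expectation never counted. Concretely, take $k=2$, $q=1$, $r=3$ and let $Z$ be a union of $9=r^k$ pairwise far-apart $\mathcal{C}$-cubes. Since every residue mod $3$ lies within $1$ of $0$, for every $v\in\Z^2$ each translated cube either lies in $Y^{1}$ or shares a face with a $\mathcal{C}$-cube of $Y^{1}$, so $(-v+Z)\cap Y^{1}$ contains a nonempty piece inside each of the $9$ well-separated cubes and hence $\Cov^c\left((-v+Z)\cap Y^{1}\right)\geq 9 > 6=\binom{2}{1}3$; no bookkeeping or reattribution of faces can recover the displayed bound. (The paper's own proof elides the same point when it asserts $\Cov^c\left((-v+Z)\cap Y^{k-q}\right)=\Cov^c\left((-v+Z)\cap X^{k-q}\right)$, so your instinct that this is the delicate step is right.) The correct repair is to fold the boundary effect into the averaging: for each cube $A$ of $Z$, the family of all $\mathcal{C}$-cubes of $Y^{k-q}$ meeting $-v+A$ (the cube itself and its $3^k-1$ neighbours) does cover $(-v+A)\cap Y^{k-q}$, and some cube of this family lies in $Y^{k-q}$ precisely when $a_i+\tfrac12-v_i\in\{-1,0,1\}+r\Z$ for at least $q$ indices, an event of probability at most $3^q\binom{k}{q}r^{-q}$. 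Running your expectation over this larger count gives a $v$ with $\Cov^c\left((-v+Z)\cap Y^{k-q}\right)\leq 3^q\binom{k}{q}r^{k-q}$, which is weaker than the lemma as stated but of the form $C(k,q)\,r^{k-q}$, and that is all the subsequent proof of the Euclidean upper bound uses, since the theorem is only claimed up to $\simeq$.
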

\begin{proof}
We have $\vol^k(\frac1rZ)\leq 1$, so, using an averaging argument (see for instance \cite[Proof of Theorem 1]{Guth-widthvol}) there is some $v'=(v_1,\ldots,v_k)\in [0,1]^k$ such that $\vol^{k-q}(\frac1rZ\cap (v'+\frac1rX^{k-q}))\leq \binom{k}{q}$. Rescaling, we see that
 \begin{equation}\label{eq:vol}
 \vol^{k-q}(Z\cap (rv'+X^{k-q}))\leq \binom{k}{q}r^{k-q}.
 \end{equation}
Let $v=([rv'_1],\ldots,[rv'_k])\in\Z^k$. Since $Z$ is a union of cubes,
 \[
  \vol^{k-q}(Z\cap (v+X^{k-q}))\leq \vol^{k-q}(Z\cap (rv'+X^{k-q})),
 \]
with equality except possibly when $rv_i\in\Z+\frac12$ for some $i$. Now $Z\cap (v+X^{k-q})$ is a union of sets of the form
\[
 \prod_{i=1}^k F^i
\]
where exactly $q$ of the $F^i\subset\R$ are singleton sets $\{f^i_j+\frac12\}$ with $f^i_j\in\Z+\frac12$, and the remaining $k-q$ are intervals $[f^i_j,f^i_j+1]$ with $f^i_j\in\Z+\frac12$. As each of these sets contributes exactly $1$ to the volume in \eqref{eq:vol} and $Z$ is contained in the union of the $\mathcal C$-cubes with root $(f^i_j)_{j=1}^k$, we have,
\[
	\Cov^c(Z\cap (v+X^{k-q}))=\Cov^c(Z\cap (v+Y^{k-q}))\leq \binom{k}{q}r^{k-q}. 
\]
Finally, $\Cov^c$ is invariant under translation by elements of $\Z^k$, and, by definition, any $\mathcal C$-cube which intersects $(-v+Z) \cap X^{k-q}$ is contained in $(-v+Z) \cap Y^{k-q}$. Thus
\[
 \Cov^c \left((-v+Z) \cap Y^{k-q}\right)=\Cov^c \left((-v+Z) \cap X^{k-q}\right) \leq \binom{k}{q}r^{k-q}. \qedhere
\]
\end{proof}
In what follows, we will assume that $Z$ is chosen so that $v=0$, i.e.
\[
 \Cov^c \left(Z \cap Y^{k-q}\right) \leq \binom{k}{q}r^{k-q}.
\]

\subsubsection{Defining continuous functions}
Now that we have found a suitable translate of $Z$, we move to the next step of constructing a continuous function $f:\R^k\to\R^q$ such that the preimage of each point is mostly contained in $Y^{k-q}$.

Before commencing with the fine detail, we give a more structured overview of the construction. We will build $q$ functions $f_i:\R^k\to \R$ for $i=1,\ldots,q$ with the property that the preimage of each point is contained in a union of sets of the form
\[
    \lambda_je_j + e_j^{\perp}
\]
together with some ``small'' set $C$ we will describe later. We will choose the values of $\lambda_j$ carefully, so that they satisfy the two conditions:
\begin{itemize}
    \item for every $j$, $\lambda_j\in r\Z+(-\frac12,\frac12)$; and
    \item for $i\neq i'$, and any $j$, we never see the same value of $\lambda_j$ appearing in the preimages of points in $f_i$ and $f_{i'}$.
\end{itemize}
The set $C$ will be contained in the intersection of a translate of a hyperplane $H$ whose dihedral angle with $e_i^\perp$ is in $(0,\frac{\pi}{4})$ with a $(\frac12+\Z)^k$ translate of $[0,r+1]^k$ (or equivalently, the pixellation of a $\mathcal C_r$-cube).

Having done this, we define $f=\bigoplus f_i:\R^k\to\R^q$. By construction, the preimage of any point is contained in a union of intersections of the hyperplanes described above. The careful choice of the coefficients $\lambda_i$ ensures that any intersecting family of $q$ hyperplanes are in general position, and therefore, the preimage is a union of translates of subspaces of dimension $(k-q)$ which are contained in $Y^{k-q}$ together with the intersection of a single translation of a subspace of dimension $(k-q)$ with a cube of sidelength $r+1$. An image of such an $f_i$ (for $k=2$ and $r=4$) is given below:

\begin{center}
\begin{tikzpicture}[yscale=1,xscale=1, vertex/.style={draw,fill,circle,inner sep=0.3mm}]
	\clip (0,0) rectangle (12,4);
	\fill[black!10] 	(0,0) rectangle (12,0.2)
	(0,3.8) rectangle (12,4);
	
	\foreach \r in {0,...,3}{
		\fill[black!20] 	
		(\r*4+0.2,0) rectangle (\r*4+0.4,4);
		\draw[black] 		(\r*4,0) -- (\r*4,4);
	}
	
	\foreach \r in {1,...,4}{
		\draw[black!30, very thin] (0,\r-0.5) -- (12,\r-0.5);}
	\foreach \r in {1,...,20}{
		\draw[black!30, very thin] (\r-0.5,0) -- (\r-0.5,4);}
	
	\draw[red, thin] 	(0,0) -- (12,0);
	\draw[black]				(0,4) -- (12,4);
	
	\draw[red, very thin]	(0,0.03) -- (4.36,0.03) -- (4.36,1.9)--(8.2,2.1)--(8.2,0.03)--(12,0.03);
	\draw[red, very thin]	(0,0.06) -- (4.32,0.06) -- (4.32,3.83)--(8.24,3.83)--(8.24,0.06)--(12,0.06);
	\draw[red, very thin]	(0,0.09) -- (0.36,0.09) -- (0.36,1.9)--(4.28,2.1)--(4.28,3.86)--(8.28,3.86)--(8.28,0.09)--(12,0.09);
	\draw[red, very thin]	(0,0.12) -- (0.32,0.12) -- (0.32,3.89)--(8.32,3.89)--(8.32,0.12)--(12,0.12);
	\draw[red, very thin]	(0,0.15) -- (0.28,0.15) -- (0.28,3.92)--(8.36,3.92)--(8.36,1.9)--(11.68,2.1)--(11.68,0.15)--(12,0.15);
	\draw[red, very thin]	(0,0.18) -- (0.24,0.18) -- (0.18,3.95)--(11.72,3.95)--(11.72,0.18)--(12,0.18);
	
	\node[] at (6,3) {$C^0$};
	\node[] at (2,3) {$C^1$};
	\node[] at (10,3) {$C^2$};
\end{tikzpicture}
\end{center}

We now present the formal construction. We will simultaneously define the functions $f_1,\ldots,f_q$. 
Denote by $\pi_i$ the projection of $\R^k$ onto $\langle e_i\rangle$ and by $\hat\pi_i$ the projection onto $e_i^\perp$. 

Set $a=\frac{1}{2k+2}$, $b_i=\frac{i}{2k+2}$ and $c_i=\frac{i+1}{2k+2}$.

We start by defining $f_i$ on the set $\pi_i^{-1}([0,r))$ which will have image $[0,1)$ and then extend by requiring that $f_i(v+kre_i)=f_i(v)+k$ for all $v\in\pi_i^{-1}([0,r))$ and all $k\in\Z$.

Enumerate the $\mathcal C_r$-cubes whose root lies on $e_i^\perp$ as $C^0,\ldots$. For $z\in [1-2^{-j},1-2^{-(j+1)})$ we define 
\[f^{-1}_i(z)=A^+_i(z)\cup A^-_i(z) \cup B_i(z) \cup C_i(z)\]
where 
\begin{itemize}
 \item $A^+(z) = (r-a+az)e_i + \setcon{v\in e_i^\perp}{d_\infty(v,C^0\cup\ldots\cup C^{j-1})\leq b_i+\frac{z}{2k+2}}$.
 \item $A^-(z) = aze_i+\setcon{v\in e_i^\perp}{d_\infty(v,C^0\cup\ldots\cup C^{j})\geq b_i+\frac{z}{2k+2}}$.
\end{itemize}

Now set $D_i(z)$ to be the closure of 
\[
    e_i^\perp \setminus \hat{\pi}_i(A^+(z)\cup A^-(z))
\] 
which is contained in the $\ell^\infty$ $\frac12$-neighbourhood of $C^j$.

Set $z'=2^{j+1}(z-(1-2^{-j}))$. As $z$ ranges from $1-2^{-j}$ to $1-2^{-(j+1)}$, $z'$ ranges from $0$ to $1$.

We define the hyperplane
\begin{equation}\label{eq:H_i}
     H_i=\setcon{\sum_{l=1}^k \lambda_le_l}{\lambda_i-\frac{1}{2(k-1)(r+1)}\sum_{l\neq i}\lambda_l=0}    
\end{equation}
We note that when $\sum_{l=1}^k \lambda_le_l\in H_i\cap [0,r+1]^k$, we have $\lambda_i\in[0,\frac12]$.

Let $v_j$ be the root of the $\mathcal C$-cube containing the root of the $\mathcal C_r$-cube $C^j$. We define
\[
    C'_i(z)= \hat\pi_i^{-1}(D_i(z))\cap (H_i+v_j+z'(r+1)e_i).
\]
Now to get $C_i(z)$ from $C'_i(z)$ we restrict the coefficient of $e_i$ to the interval $[az,r-a+az]$, i.e. for each $\sum_{l=1}^k \lambda_le_l\in C'_i(z)$ we add the following element to $C_i(z)$
\[
    \min\{\max\{\lambda_i,az\},r-a+az\}e_i + \sum_{l\neq i} \lambda_le_l
\]
By construction,
\[
    C_i(z)\subseteq \left(aze_i+e_i^\perp\right) \cup \left((r-a+az)e_i+e_i^\perp\right) \cup (H_i+z'(r+1)e_i)
\]
We note that for $z$ sufficiently close to $1-2^{-i}$, $C_i(z)\subseteq \left(aze_i+e_i^\perp\right)$ and for $z$ sufficiently close to $1-2^{-(i+1)}$, $C_i(z)\subseteq \left((r-a+az)e_i+e_i^\perp\right)$.

Finally, we define $B_i(z)$. For each $v\in e_i^\perp$, $\hat\pi_i^{-1}(v)$ intersects $A^+(z)\cup A^-(z)\cup C_i(z)$ in either one, two or three points, and it will always be one whenever $v$ is not in $\partial D_i(z)$. When this intersection contains more than one point, we add the minimal length line containing all the points in the intersection to $B_i(z)$. By construction, every line in $B_i(z)$ is parallel to $e_i$. More specifically, $B_i(z)$ is contained in a union of hyperplanes each of which is of the form
\[
 \lambda_le_l + e_l^\perp
\]
for some $l\neq i$, where each $\lambda_l\in r\Z\pm (b_i+\frac{z}{2k+2})$. Note that $\hat\pi_i(B_i)\subseteq \partial D_i(z)$.

By construction, for each $z$, $f_i^{-1}(z)$ is a closed set and $\R^k\setminus f_i^{-1}(z)$ has exactly two connected components, both of which are open.

We now verify that $f_i$ is a continuous function.

\textbf{Step 1: $f$ is well-defined:} We will prove that whenever $z<z'$, $f_i^{-1}(z)\cap f_i^{-1}(z')=\emptyset$. Specifically, if $v=\sum_{l=1}^k \lambda_l e_l \in f_i^{-1}(z)$ and $v'=\lambda'_ie_i + \sum_{l\neq i} \lambda_l e_l \in f_i^{-1}(z')$, then $\lambda_i<\lambda'_i$.

This is clear if there is some integer $m$ such that $z<m\leq z'$, therefore we may assume that $0<z<z'<1$. If $v\in A^-_i(z)$ then $\lambda_i=az$ and $\lambda'_i\geq az'>az$. Similarly, if $v'\in A^+_i(z')$, then $\lambda'_i=r-a+az'>r-a+az\geq \lambda_i$. 

We have $\hat\pi_i(A^+_i(z))\subseteq \hat\pi_i(A^+_i(z'))^\circ$ and $\hat\pi_i(A^-_i(z'))\subseteq \hat\pi_i(A^-_i(z))^\circ$. Also, $\hat\pi_i(B_i)\subseteq \partial D_i(z)$. Therefore, if $v\in A^+(z)\cup B_i(z)$, then $v'\in A^+(z')$ and if $v'\in A_i^-(z')\cup B_i(z')$ then $v\in A^-(z)$, so, the result holds in these cases as well. The remaining cases are $v\in C_i(z)\wedge v'\not\in A_i^+(z')$ and $v'\in C_i(z')\wedge v\not\in A_i^-(z)$, we will resolve the first of these, the second follows by a similar argument. As $v\in C_i(z)$ and $D_i(z)\subset D_i(z')^\circ$, $v'\not\in A^-_i(z')\cup B_i(z')$, so $v'\in C_i(z')$. Now as $v\in C_i(z)$ and $v'\in C_i(z')$, by construction, $\lambda_i<\lambda'_i$.

\textbf{Step 2: the domain of $f_i$ is $\R^k$:}
Let $v=\sum_{l=1}^k \lambda_le_l$. We may assume that $\lambda_i\in[0,r)$, as if $v\in f^{-1}(z)$ then $v+kre_i\in f^{-1}(z+k)$ for each $k\in \Z$. Given a subinterval $I\subset [0,r)$, define $V_i(I)=\setcon{\lambda e_i + \sum_{l\neq i} \lambda_le_l}{\lambda\in I}$ and set
\begin{align*}
    z^- & =\sup\setcon{z\in[0,1)]}{f^{-1}(z)\cap V_i[0,\lambda_i)\neq\emptyset} \\
    z^+ & =\inf\setcon{z\in[0,1)]}{f^{-1}(z)\cap V_i(\lambda_i,r)\neq\emptyset}
\end{align*}
By Step 1, for each $z$, $\R^k\setminus f^{-1}(z)$ is a union of two connected components, one of which contains $f^{-1}(-\infty,z)$, and the other contains $f^{-1}(z,+\infty)$. Thus $z^-\leq z^+$. For every $z\in [0,1)$, $f^{-1}(z)\cap V_i[0,r)\neq \emptyset$, so we must have $v\in f^{-1}(z)$ for every $z\in (z^-,z^+)$. As the sets $f^{-1}(z)$ are pairwise disjoint by Step 1, it follows that $z^-\geq z^+$. Set $z=z^-=z^+$.

If neither the supremum nor the infimum are attained in the above equations, then $v\in f^{-1}(z)$, as $f^{-1}(z)\cap V_i[0,r)\neq \emptyset$. If both the supremum and the infimum are attained, then $f^{-1}(z)$ contains two points 
\[
\lambda' e_i + \sum_{l\neq i} \lambda_le_l,\quad \lambda'' e_i + \sum_{l\neq i} \lambda_le_l
\]
with $\lambda'<\lambda<\lambda''$. By construction, the intersection of each $f^{-1}(z)$ with any line parallel to $e_i$ is connected (see the construction of the sets $B_i(z)$), so $v\in f^{-1}(z)$.

In the remaining cases, exactly one of the supremum or infimum is attained. We will look at the case where the supremum is attained, the other case is dealt with similarly. This means that for some $\lambda'<\lambda_i$,
\[
v'=\lambda' e_i + \sum_{l\neq i} \lambda_le_l\in f^{-1}(z).
\]
Firstly, we show that $v'\in B_i(z)$. Suppose this is not the case, then for all sufficiently small $\varepsilon>0$, there is some $\delta>0$ such that
\[
    v'+\varepsilon e_i \in f^{-1}(z+\delta).
\]
This contradicts the fact that $z=\sup\setcon{z'\in[0,1)]}{f^{-1}(z')\cap V_i[0,\lambda_i)\neq\emptyset}$.

Specifically, if $v'\in (A^-_i(z))^\circ$, then $\lambda'=az$, and $v'+\varepsilon e_i \in A^-_i(z+\frac{\varepsilon}{a})$ for all sufficiently small $\varepsilon>0$.
Similarly, if $v'\in A^+_i(z)$, then $v'+\varepsilon e_i \in A^+_i(z+\frac{\varepsilon}{a})$ for all sufficiently small $\varepsilon>0$.
The remaining case is that $v'\in C_i(z)$, in which case for all sufficiently small $\varepsilon>0$ there is some $\delta>0$ such that $v'+\varepsilon e_i \in C_i(z+\delta)$.

Now fix $v''=\lambda'' e_i + \sum_{l\neq i} \lambda_le_l\in B_i(z)$ so that $\lambda''$ is maximal. As $B_i(z)$ necessarily intersects $\pi_i^{-1}(\sum_{l\neq i} \lambda_le_l)$ in a closed finite length interval, this is possible. Note that, by construction $v''\in C_i(z)$. We claim that $v''=v$. Otherwise, as above, for all sufficiently small $\varepsilon>0$ there is some $\delta>0$ such that $v''+\varepsilon e_i \in C_i(z+\delta)$. As before, this contradicts the assumption that $z=\sup\setcon{z'\in[0,1)]}{f^{-1}(z')\cap V_i[0,\lambda_i)\neq\emptyset}$.


\textbf{Step 3: $f$ is continuous:} Let $(a,b)$ be an open interval in $\R$. By steps 1 and 2 $\R^k\setminus \left(f^{-1}(a)\cup f^{-1}(b)\right)$ is a union of $3$ connected components, $f_i^{-1}(-\infty,a)\cup f_i^{-1}(a,b)\cup f_i^{-1}(b,+\infty)$. As $f^{-1}(a)\cup f^{-1}(b)$ is a closed set, each of these connected components is open, so $f_i^{-1}(a,b)$ is open. Thus $f_i$ is continuous.
\medskip

\textbf{Completing the proof:} Let us bound $\cOv(f)$. For any $z=(z_1,\ldots,z_q)\in \R^q$, 
\[
f^{-1}(z)= \bigcap_{i=1}^q \left( A^+_i(z_i)\cup A^-_i(z_i) \cup B_i(z_i) \cup C_i(z_i)\right)
\]
Firstly, we note that $A^+_i(z_i)\cup A^-_i(z_i) \cup C_i(z_i)$ is contained in
\[
    (az_ie_i+e_i^\perp) \cup (r-a+az_ie_i+e_i^\perp) \cup C'_i(z_i)
\]
Also, $B_i(z_i)$ is contained in $\bigcup_{j\neq i} B^j_i(z_i)$ where 
\[
  B^j_i(z_i)=\bigcup_{\lambda_j\in r\Z\pm(b_i+\frac{z_i}{2k+2}} \lambda_je_j+e_j^{\perp}.
\]
It follows that for every $i\neq j$, $B^j_i\cap (A^+_j(z_j)\cup A^-_j(z_j))=\emptyset$, since $az_j,r-a+az_j\not\in r\Z\pm(b_i+\frac{z_i}{2k+2})$. Using similar reasoning, for any $i\neq i'$, $B^j_i\cap B^j_{i'}=\emptyset$, since $|b_i-b_{i'}|\geq \frac{1}{2k+2} > \frac{z_i}{2k+2}$.

As a result, $f^{-1}(z)$ is contained in a union of countably many intersections of $q$ translates of hyperplanes in general position (i.e.\ translates of $(k-q)$-dimensional subspaces of $\R^k$). Those that do not include an intersection with any $C'_i(z_i)$ are completely contained in $Y^{k-q}$, as at least $q$ of their coordinates are in $r\Z+(-\frac12,\frac12)$. Now each $C_i(z_i)$ is contained in the pixellation of some $\mathcal{C}_r$-cube (denoted $P_i$), and there is a uniform bound on the number of $\lambda_je_j+e_j^{\perp}$ with $\lambda_j\in r\Z\pm(b_m+\frac{z_m}{2k+2})$ which intersect $P_i$. Thus $f^{-1}(z)\setminus Y^{k-l}$ is contained in the intersection of $P_i$ with some uniformly bounded number (say bounded from above by $N$) of translates of $(k-q)$-dimensional subspaces of $\R^k$. Each of these subspaces can be covered by a union of at most $N'r^{k-q}$ $\mathcal C$-cubes, so
\[
 \Cov^c(f^{-1}(z)) \leq \Cov^c(Z\cap Y^{k-q})+NN'r^{k-q}
\]
as required.

\subsection{Products and Lipschitz Euclidean fibrations}
We next present some elementary upper bounds for direct products and extensions. Our first goal is to prove Theorem \ref{thm:productub} which we recall for convenience.

\begin{theorem*}  Let $X,Y$ be metric spaces with bounded geometry. For any $q,q'\geq 1$ we have
\[
 \TO^{q+q'}_{X\times Y}(r) \lesssim \TO^q_X(r)\TO^{q'}_Y(r).
\]
\end{theorem*}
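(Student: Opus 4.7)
The plan is to push forward a unit-ball cover of $Z \subseteq X\times Y$ through the two coordinate projections, apply the definition of $\TO^q$ and $\TO^{q'}$ separately to the images, and then take the direct sum of the resulting maps.

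Fix the product metric $d((x,y),(x',y'))=\max\{d_X(x,x'),d_Y(y,y')\}$; any other standard product metric differs by a bounded factor and therefore changes $\Cov^1$ only by a multiplicative constant, which is harmless under $\lesssim$. Under this metric a unit ball in $X\times Y$ is exactly a product of unit balls, so if $Z\subseteq X\times Y$ satisfies $\Cov^1(Z)\le r$, then the projections $Z_X=\pi_X(Z)$ and $Z_Y=\pi_Y(Z)$ satisfy $\Cov^1(Z_X)\le r$ and $\Cov^1(Z_Y)\le r$. By the definition of $\TO^q_X$ and $\TO^{q'}_Y$, we can choose continuous maps
\[
 f\in C(Z_X,\R^q),\qquad g\in C(Z_Y,\R^{q'})
\]
with $\Ov(f)\le \TO^q_X(r)$ and $\Ov(g)\le \TO^{q'}_Y(r)$.

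The next step is to define $h:Z\to\R^{q+q'}$ by $h(x,y)=(f(x),g(y))$. This is continuous, and for every $(a,b)\in\R^q\times\R^{q'}$ we have
\[
 h^{-1}(a,b)\;=\;\bigl(f^{-1}(a)\times g^{-1}(b)\bigr)\cap Z.
\]
If $f^{-1}(a)$ is covered by unit balls $B_X(x_1;1),\dots,B_X(x_m;1)$ with $m\le \Ov(f)$ and $g^{-1}(b)$ by unit balls $B_Y(y_1;1),\dots,B_Y(y_{m'};1)$ with $m'\le \Ov(g)$, then $f^{-1}(a)\times g^{-1}(b)$ is covered by the $mm'$ product sets $B_X(x_i;1)\times B_Y(y_j;1)$, each of which is exactly a unit ball $B_{X\times Y}((x_i,y_j);1)$. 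Consequently
\[
 \Cov^1\bigl(h^{-1}(a,b)\bigr)\;\le\;\Ov(f)\cdot\Ov(g)\;\le\;\TO^q_X(r)\cdot\TO^{q'}_Y(r),
\]
and taking the supremum over $(a,b)$ and then over $Z$ with $\Cov^1(Z)\le r$ yields the stated inequality.

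The only genuinely delicate point is the compatibility between ``covering by unit balls'' on the factors and on the product. With the $\ell^\infty$ product metric this is tautological, as shown above; with any other bi-Lipschitz-equivalent product metric (e.g.\ $\ell^2$), a unit ball on either side is still contained in a bounded number of unit balls in $X\times Y$ and vice versa (bounded geometry is preserved, and the comparison constants depend only on the choice of product metric, not on $r$). Since the statement is up to $\lesssim$, these universal constants can be absorbed and no further work is required. I do not foresee a real obstacle beyond correctly setting up the product-metric conventions.
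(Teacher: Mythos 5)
Your argument is correct and is essentially the paper's own proof: the paper likewise fixes the $\ell^\infty$ product metric, uses the $1$-Lipschitz projections to bound $\Cov^1$ of $\pi_X(Z)$ and $\pi_Y(Z)$ by $r$, takes the direct sum $(x,y)\mapsto(f(x),g(y))$, and covers each fibre by the $\Ov(f)\cdot\Ov(g)$ product balls. Your additional remark about changing to other bi-Lipschitz product metrics is exactly the footnote the paper includes, so no further comment is needed.
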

\begin{proof}
Choose $Z\subseteq X\times Y$ with $\Cov^1(Z)\leq r$. For convenience we consider the $\ell^\infty$ product metric on $X\times Y$.\footnote{Changing between $\ell^p$ product metrics defines homeomorphisms satisfying the hypotheses of Proposition \ref{prop:cTOmonotone} so does not alter $\TO$ up to $\simeq$.}  Since projections are $1$-Lipschitz, $\Cov^1(\pi_X(Z)\leq r$ and $\Cov^1(\pi_Y(Z))\leq r$.

Choose $f\in C(\pi_X(Z),\R^q)$ and $g\in C(\pi_Y(Z),\R^{q'})$ with $\Ov(f)\leq\sTO^q_X(r)$ and $\Ov(g)\leq\sTO^{q'}_X(r)$. Now define
\[
 \phi:Z\to\R^{q+q'} \quad \textup{by}\quad \phi(x,y)=(f(x),g(y)).
\]
Let $(z,z')\in\R^q\times\R^{q'}$. We have $\phi^{-1}(z,z')\subseteq f^{-1}(z)\times g^{-1}(z')$. Since $f^{-1}(z)$ can be covered by at most $\TO^q_X(r)$ balls of radius $1$ (centred at $x_1,\ldots,x_k$) and $g^{-1}(z')$ covered by at most $\TO^{q'}_Y(r)$ balls of radius $1$ (centred at $y_1,\ldots,y_l$), we see that $\phi^{-1}(z,z')$ can be covered by at most $\TO^q_X(r)\TO^{q'}_Y(r)$ $(\ell^\infty-)$balls of radius $1$ centred at $(x_i,y_j)$.
\end{proof}

From Theorem \ref{thm:productub} we quickly deduce Corollary \ref{cor:prodH2Tub} which we again recall for convenience.

\begin{corollary*}
    Let $X=(\HH^2)^k\times (T_3)^l$, where $T_3$ is the infinite $3$-regular tree. We have
    \[
        \TO^{k+l}_X(r) \lesssim \ln(1+r)^{2k+l}
    \]
\end{corollary*}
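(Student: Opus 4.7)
The plan is to apply Theorem \ref{thm:productub} inductively across the $k+l$ factors of $X$. Writing $X = X_1 \times \cdots \times X_{k+l}$ with $X_i = \HH^2$ for $1 \leq i \leq k$ and $X_i = T_3$ for $k+1 \leq i \leq k+l$, and bracketing as $X_1 \times (X_2 \times \cdots \times X_{k+l})$, Theorem \ref{thm:productub} with $q=1$ and $q' = k+l-1$ yields
\[
    \TO^{k+l}_X(r) \lesssim \TO^1_{X_1}(r)\cdot \TO^{k+l-1}_{X_2 \times \cdots \times X_{k+l}}(r).
\]
Since each $X_i$ has bounded geometry and products of bounded geometry spaces again have bounded geometry, we may iterate this $k+l-1$ times to obtain
\[
    \TO^{k+l}_X(r) \lesssim \prod_{i=1}^{k+l} \TO^1_{X_i}(r) = \left(\TO^1_{\HH^2}(r)\right)^k \left(\TO^1_{T_3}(r)\right)^l,
\]
which gives the first inequality in the statement.

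For the second inequality, we need to insert the known bounds for the two types of factor. For the hyperbolic plane, Corollary \ref{cor:fgnilpTO1} applied to $H = \HH^2$ with trivial nilpotent factor ($d = 0$) gives $\TO^1_{\HH^2}(r) \lesssim \ln(1+r)^2$. For the $3$-regular tree, the classical fact that every subtree on $n$ vertices admits a linear ordering (by iterated centroid decomposition) realising cutwidth at most $O(\log n)$ yields $\cw_{T_3}(r) \lesssim \ln(1+r)$, and Theorem \ref{thm:compTO1cutwidth} then promotes this to $\TO^1_{T_3}(r) \lesssim \ln(1+r)$. Substituting these bounds into the product estimate gives $\TO^{k+l}_X(r) \lesssim \ln(1+r)^{2k+l}$ as required.

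The proof is entirely mechanical once Theorem \ref{thm:productub} is in hand; there is no genuine obstacle. The only points that require care are verifying that the inductive bracketing is legitimate (which reduces to noting that bounded geometry is preserved under products, so each intermediate factor $X_i \times \cdots \times X_{k+l}$ lies in the class for which Theorem \ref{thm:productub} is stated) and correctly attributing the $\TO^1$-bounds for the two building blocks to earlier parts of the paper.
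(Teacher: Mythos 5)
Your proposal is correct and follows essentially the same route as the paper: iterate Theorem \ref{thm:productub} over the $k+l$ factors, bound $\TO^1_{\HH^2}(r)\lesssim\ln(1+r)^2$ via Corollary \ref{cor:fgnilpTO1}, and bound $\TO^1_{T_3}(r)\lesssim\ln(1+r)$ via the logarithmic cutwidth of bounded-degree trees together with Theorem \ref{thm:compTO1cutwidth} (the paper cites this tree cutwidth fact rather than sketching it). No gaps.
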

\begin{proof}
    By Corollary \ref{cor:fgnilpTO1}, $\TO^1_{\HH^2}(r)\lesssim \ln(1+r)^2$. Combining Theorem \ref{thm:compTO1cutwidth} and \cite{CFST-treecutwidth}, we have $\TO^1_{T_3}(r)\simeq \ln(1+r)$. Now, by Theorem \ref{thm:productub},
    \[
        \TO^{k+l}_X(r) \lesssim \left(\TO^1_{\HH^2}(r)\right)^k\left(\TO^1_{T_3}(r)\right)^l \lesssim \ln(1+r)^{2k+l}
    \]
    as required.
\end{proof}

One other quick consequence is

\begin{corollary} $\TO^2_{\SOL}(r) \lesssim \ln(1+r)^4$.
\end{corollary}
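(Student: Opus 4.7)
The plan is to use the well-known quasi-isometric embedding of $\SOL$ into $\HH^2 \times \HH^2$ and then to stack the monotonicity of $\TO$, the product theorem, and the known bound for $\TO^1_{\HH^2}$.

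First, I would invoke the classical fact that $\SOL$ admits a quasi-isometric embedding $\iota: \SOL \to \HH^2 \times \HH^2$ (the two factors corresponding to the two horocyclic foliations of $\SOL$). Since both $\SOL$ and $\HH^2 \times \HH^2$ are Riemannian manifolds with bounded geometry, by a standard ``connect-the-dots'' argument $\iota$ can be taken to be continuous, and its quasi-isometric embedding property implies that for every closed unit ball $B$ in $\SOL$, $\iota(B)$ has uniformly bounded $1$-covering volume in $\HH^2 \times \HH^2$, and for every closed unit ball $B'$ in $\HH^2 \times \HH^2$, $\iota^{-1}(B')$ has uniformly bounded $1$-covering volume in $\SOL$. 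Thus the hypotheses of Proposition \ref{prop:cTOmonotone} are satisfied, yielding
\[
    \TO^2_{\SOL}(r) \lesssim \TO^2_{\HH^2 \times \HH^2}(r).
\]

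Next I would apply Theorem \ref{thm:productub} with $X = Y = \HH^2$ and $q = q' = 1$ to obtain
\[
    \TO^2_{\HH^2 \times \HH^2}(r) \lesssim \TO^1_{\HH^2}(r) \cdot \TO^1_{\HH^2}(r).
\]
Finally, Corollary \ref{cor:fgnilpTO1} (applied to $H = \HH^2$, $|N| = 1$, i.e.\ $d = 0$) gives $\TO^1_{\HH^2}(r) \lesssim \ln(1+r)^2$, so chaining the inequalities produces
\[
    \TO^2_{\SOL}(r) \lesssim \ln(1+r)^2 \cdot \ln(1+r)^2 = \ln(1+r)^4,
\]
as required.

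There is no real obstacle here; the only point worth being careful about is the verification that the quasi-isometric embedding of $\SOL$ into $\HH^2 \times \HH^2$ can be promoted to a continuous map whose images and preimages of unit balls have uniformly bounded covering volume, which is a routine bounded-geometry argument. Everything else is a direct application of results already established in the paper.
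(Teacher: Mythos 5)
Your proposal is correct and follows essentially the same route as the paper: embed $\SOL$ into $\HH^2\times\HH^2$ by a continuous (bi)Lipschitz/quasi-isometric embedding so that the metric monotonicity proposition applies, then combine Theorem \ref{thm:productub} with the bound $\TO^1_{\HH^2}(r)\lesssim\ln(1+r)^2$ from Corollary \ref{cor:fgnilpTO1}. The only cosmetic difference is that the paper uses a continuous biLipschitz embedding directly, so no ``connect-the-dots'' promotion to continuity is needed.
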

\begin{proof} There is a continuous biLipschitz embedding $\SOL\to\HH^2\times\HH^2$, so by Proposition \ref{prop:TOmonotone} and Theorem \ref{thm:productub}
\[
 \TO^2_{\SOL}(r) \lesssim \TO^1_{\HH^2}(r)^2 \lesssim \ln(1+r)^4.
\]
\end{proof}

We can also find bounds on a more general class of ``product-like'' metric spaces, under the additional assumption that one of the factors topologically embeds into some $\R^k$. 

\begin{proposition}\label{prop:semidirprod2} Let $(A,d_A)$ be a metric space and let $(B,d_B)$ be a metric space which admits a continuous injective map $g:B\to\R^k$ for some $k$. Let $d$ be a metric on the set $A\times B$ which satisfies
\begin{equation}\label{eq:binvariant}
 d((a,b),(a',b))=d_A(a,a')
\end{equation}
for all $a,a'\in A$ and all $b\in B$.

If there is some constant $C$ such that $\Cov^1(\pi_A(Z),d_A)\leq C\Cov^1(Z,d)$ for all $Z\subseteq A\times B$ with $\Cov^1(Z)<+\infty$, where $\pi_A$ denotes projection onto the $A$-factor,
 then for all $q\in\N$
\[
    \TO^{k+q}_{(A\times B,d)}(r) \leq \TO^q_{(A,d_A)}(Cr).
\]
\end{proposition}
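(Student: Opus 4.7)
The plan is as follows. Fix $Z \subseteq A \times B$ with $\Cov^1(Z,d) \leq r$. The hypothesis on the projection gives $\Cov^1(\pi_A(Z), d_A) \leq Cr$, so by Definition \ref{defn:cTOprof} applied to $(A,d_A)$ I may choose a continuous map $f : \pi_A(Z) \to \R^q$ with
\[
    \Ov(f) \;\leq\; \TO^q_{(A,d_A)}(Cr).
\]
I then construct the candidate map
\[
    \phi : Z \longrightarrow \R^q \times \R^k \;=\; \R^{k+q},
    \qquad \phi(a,b) \;=\; \bigl(f(a),\, g(b)\bigr),
\]
and bound its metric overlap by using the fact that $g$ is injective.

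The key observation is that for any $(y,z) \in \R^q \times \R^k$, the fibre $g^{-1}(z) \subseteq B$ consists of at most a single point $b_z$, so
\[
    \phi^{-1}(y,z) \;\subseteq\; f^{-1}(y) \times \{b_z\}.
\]
The defining invariance $d((a,b),(a',b)) = d_A(a,a')$ says precisely that the slice $A \times \{b_z\}$, viewed as a subspace of $(A \times B, d)$, is isometric to $(A, d_A)$. Hence any cover of $f^{-1}(y)$ by $m$ closed unit balls in $(A, d_A)$ lifts to a cover of $f^{-1}(y) \times \{b_z\}$ by $m$ closed unit balls in $(A \times B, d)$, so that
\[
    \Cov^1\bigl(\phi^{-1}(y,z),\, d\bigr) \;\leq\; \Cov^1\bigl(f^{-1}(y),\, d_A\bigr) \;\leq\; \Ov(f) \;\leq\; \TO^q_{(A,d_A)}(Cr).
\]
Taking the supremum first over $(y,z) \in \R^{k+q}$ and then over admissible $Z \subseteq A \times B$ yields the claimed bound.

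The only genuinely delicate point is the continuity of $\phi$ as a map on $(Z, d)$, which is where I expect the main technical obstacle to lie. Writing $\phi = (f \circ \pi_A) \oplus (g \circ \pi_B)$, continuity reduces to continuity of the two projections with respect to $d$, and this is not immediately forced by the single invariance axiom on $d$. In the applications of interest (most importantly Corollary \ref{cor:ses}, where $d$ is a proper left-invariant metric on a locally compact group $G$ fibring continuously over $H$ with kernel $N$), the projections are automatically continuous for structural reasons, so the argument goes through unchanged. In the general statement I would either fold continuity of $\pi_A$ and $\pi_B$ into the hypotheses or work with the topology rather than the metric on $Z$, since the rest of the proof is insensitive to the precise formulation.
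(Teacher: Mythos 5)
Your proposal is correct and follows essentially the same route as the paper: choose $f$ on $\pi_A(Z)$ with $\Ov(f)\leq \TO^q_{(A,d_A)}(Cr)$, set $\phi(a,b)=(f(a),g(b))$, and use injectivity of $g$ together with the invariance \eqref{eq:binvariant} to identify each fibre $\phi^{-1}(y,z)$ with a subset of $(A,d_A)$ and transfer the covering bound. Your remark that continuity of $\phi$ requires continuity of the projections, which the stated hypotheses do not formally force, is a fair observation that the paper's own proof also passes over silently and which is harmless in the intended applications such as Corollary \ref{cor:ses}.
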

\begin{proof} Let $Z\subseteq A\times B$ with $\Cov^1(Z)\leq r$. Fix a function $f:\pi_A(Z)\to \R^q$ with $\Ov(f)\leq \TO^q_{(A,d_A)}(Cr)$. We now define
\[
 \phi:Z\to \R^{q}\times\R^{k} \quad \textup{by} \quad \phi(a,b)=(f(a),g(b))
\]
Let $(x,y)\in\R^{q}\times\R^{k}$. We have
\[
\phi^{-1}(x,y)\subseteq f^{-1}(x) \times\{g^{-1}(y)\}
\]
If $y\not\in\im(g)$ there is nothing to prove. Otherwise, by \eqref{eq:binvariant}, $(f^{-1}(x) \times\{g^{-1}(y)\},d)$ is isometric to $(f^{-1}(x),d_A)$, so $\Ov(\phi)= \Ov(f)$. As this holds for all $Z\subseteq A\times B$ with $\Cov^1(Z)\leq r$, we have
\[
 \TO^{k+q}_{(A\times B,d)}(r) \lesssim \TO^q_{(A,d_A)}(C
 r).\qedhere
\]
\end{proof}

The key application of this proposition is to prove Corollary \ref{cor:ses} which we recall for convenience.

\begin{corollary*} Let $1\to N \to G \to_{\pi} H \to 1$ be a (continuous) short exact sequence of compactly generated locally compact groups, and let $d_G$ be a proper left-invariant metric on $G$. If there is a continuous injective map $N\to \R^k$ for some $k$, then for all $q\in\N$, we have
\[
\TO^{k+q}_{(G,d_G)}(r) \lesssim \TO^q_{(H,d_H)}(r)
\]
where $d_H$ is the proper left-invariant metric $d_H(h,h')=d_G(\pi^{-1}(h),\pi^{-1}(h'))$.
\end{corollary*}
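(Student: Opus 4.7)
The plan is to reduce directly to Proposition~\ref{prop:semidirprod2} by taking $A=H$ with $d_A=d_H$, $B=N$, and $g=\iota$ the given continuous injection $N\to\R^k$. The task is to identify $(G,d_G)$ with a metric space of the form $(H\times N,d)$ for some metric $d$ satisfying (up to bounded distortion) the $b$-invariance hypothesis of that proposition.

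First I would fix a Borel section $s:H\to G$ of $\pi$ which is coarsely Lipschitz in the sense that $d_G(s(h),s(h'))\leq L\,d_H(h,h')+L$ for some $L>0$; such sections are standard for compactly generated locally compact groups, obtained by choosing a basepoint in each preimage $\pi^{-1}(h)$ and using compact generation to bound the incremental cost. Using this, I would take the bijection $\Phi:H\times N\to G$, $\Phi(h,n)=n\cdot s(h)$, which is well-defined since $G=\bigsqcup_{h\in H}N\cdot s(h)$ by normality of $N$, and pull $d_G$ back via $\Phi$ to a metric $d$ on $H\times N$. Left-invariance of $d_G$ then gives
\[
d((h,n),(h',n))=d_G(n\,s(h),n\,s(h'))=d_G(s(h),s(h')),
\]
and the coarse Lipschitz property of $s$ combined with $\pi$ being $1$-Lipschitz forces $d_G(s(h),s(h'))\simeq d_H(h,h')$. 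Relabelling the first factor with the metric $d_A(h,h'):=d_G(s(h),s(h'))$ makes the $b$-invariance exact, and Proposition~\ref{prop:cTOmonotone} ensures $\TO^q_{(H,d_A)}(r)\simeq\TO^q_{(H,d_H)}(r)$ because the identity between $(H,d_A)$ and $(H,d_H)$ is a coarse Lipschitz equivalence.

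For the covering hypothesis I would observe that $\pi:(G,d_G)\to(H,d_H)$ is $1$-Lipschitz directly from the definition of $d_H$ as an infimum, so any cover of $Z\subseteq H\times N$ by $r$ unit balls in $(H\times N,d)$ descends via $\pi_H=\pi\circ\Phi$ to a cover of $\pi_H(Z)$ by $r$ unit balls in $(H,d_H)$, and similarly for $(H,d_A)$ after absorbing constants. Hence the covering hypothesis of Proposition~\ref{prop:semidirprod2} holds with $C=1$, and applying that proposition together with the equivalence $\TO^q_{(H,d_A)}\simeq\TO^q_{(H,d_H)}$ yields the claim. The principal obstacle I anticipate is the continuity of $s$: without it the test maps $Z\to\R^{k+q}$ constructed in the proof of Proposition~\ref{prop:semidirprod2} need not be continuous. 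For semi-direct products $G=N\rtimes H$ a continuous global section exists by construction; in general I would either restrict to such classes or perturb $s$ continuously on a Borel partition of $H$ by amounts bounded in $d_G$, absorbing the resulting distortion into the $\lesssim$ constants.
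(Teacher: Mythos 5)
Your overall reduction is the same as the paper's: identify $G$ with the set $H\times N$, use that $\pi$ is $1$-Lipschitz to get the covering hypothesis with $C=1$, and apply Proposition \ref{prop:semidirprod2} with $A=H$, $B=N$. The step where your argument genuinely breaks is the claimed existence of a coarsely Lipschitz Borel section $s:H\to G$, i.e.\ one with $d_G(s(h),s(h'))\leq L\,d_H(h,h')+L$. Picking basepoints in fibres and using compact generation only lets you lift \emph{paths} with bounded increments; it does not produce one section whose lifts of nearby points are uniformly close, and in general no such section exists. A concrete counterexample is the central extension $1\to\R\to H_1\to\R^2\to 1$ of the real Heisenberg group, with $d_G$ the Carnot--Carath\'eodory (or any quasi-isometric word) metric, for which $d_H$ is the Euclidean metric on $\R^2$ up to constants. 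Writing $s(p)=(p,z(p))$, the bound $d_G(s(p),s(q))\leq L$ for $|p-q|\leq 1$ forces $\left|z(q)-z(p)-\tfrac12(x_py_q-y_px_q)\right|\leq L'$ with $L'$ depending only on $L$, and summing this around the boundary of $[0,R]^2$ gives $R^2\leq 4RL'$ by the shoelace formula, a contradiction for large $R$. Hence $d_A(h,h'):=d_G(s(h),s(h'))$ is not coarsely equivalent to $d_H$ for any section, and the appeal to Proposition \ref{prop:cTOmonotone} to replace $\TO^q_{(H,d_A)}$ by $\TO^q_{(H,d_H)}$ collapses. This is not an exotic case: it is precisely the setting in which the paper applies Corollary \ref{cor:ses} (quotients of nilpotent Lie groups by central copies of $\R$, giving the Heisenberg bounds), so your fallback of restricting to split extensions would not recover the statement as it is actually used.

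Two further remarks. Your proposed fix for continuity of $s$ (perturbing on a Borel partition) is not a proof either: continuity of $(h,n)\mapsto g(n)$ on subsets of $G$ is needed for the test map in Proposition \ref{prop:semidirprod2}, and a Borel-piecewise perturbation does not yield it. For comparison, the paper's own proof is a two-line reduction that applies Proposition \ref{prop:semidirprod2} directly with $d_A=d_H$, using only that $\pi$ is $1$-Lipschitz, and never introduces a section metric; the subtlety you noticed --- that a set-theoretic identification $G=H\times N$ transports $d_G$ to a metric whose slices carry $d_G(s(\cdot),s(\cdot))$ rather than $d_H$ --- is a fair one to raise, but any repair must avoid demanding a coarsely Lipschitz section, since none exists in the intended applications.
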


\begin{proof} As a set $G=H\times N$. By construction, the map $\pi$ is $1$-Lipschitz, so $\Cov^1(\pi(Z))\leq \Cov^1(Z)$ for all $Z\subseteq G$ with $\Cov^1(Z)<+\infty$. The result then follows from Proposition \ref{prop:semidirprod}.
\end{proof}

To give one application of this, we find upper bounds for connected simply-connected nilpotent Lie groups. Let $N$ be a connected simply-connected nilpotent Lie group, let $N_1=N$ and $N_{i+1}=[N,N_i]$ be the terms of the lower central series. We have $C^i/C^{i+1}\cong\R^{k_i}$. Set $k=\sum k_i=\asdim(N)$. Let $a^N=(a^N_1,\ldots,a^N_k)$ be the tuple consisting of $k_1$ $1$s followed by $k_2$ $2$s, and so on.

\begin{corollary} Let $G$ be a simply connected nilpotent Lie group. For $2\leq q \leq k$, we have
\[
    \TO^q_N(r) \lesssim r^{1-\frac{1}{\sum_{i=1}{k-(q-1)} a^N_i}}.
\]    
\end{corollary}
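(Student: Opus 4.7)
The plan is to apply Corollary \ref{cor:ses} to a short exact sequence $1 \to N' \to N \to N/N' \to 1$ in which $N'\trianglelefteq N$ is a connected Lie subgroup of manifold dimension $q-1$, then estimate $\TO^1_{N/N'}$ via Corollary \ref{cor:fgnilpTO1}. Write $m = k - q + 1$ and let $j\geq 1$, $p\in\{1,\ldots,k_j\}$ be the unique integers with $m = k_1+\cdots+k_{j-1}+p$ (with the convention $k_1+\cdots+k_0=0$). In the Lie algebra $\mathfrak{n}$ of $N$, choose any subspace $V\subseteq \mathfrak{n}_j/\mathfrak{n}_{j+1}$ of codimension $p$ and let $\mathfrak{n}'\subseteq \mathfrak{n}_j$ be its preimage, so $\mathfrak{n}_{j+1}\subseteq \mathfrak{n}'$ and $\dim(\mathfrak{n}') = \dim(\mathfrak{n}_{j+1})+(k_j-p)=q-1$. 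Since $[\mathfrak{n},\mathfrak{n}']\subseteq [\mathfrak{n},\mathfrak{n}_j]\subseteq \mathfrak{n}_{j+1}\subseteq \mathfrak{n}'$, $\mathfrak{n}'$ is an ideal, corresponding to a closed connected normal Lie subgroup $N' \trianglelefteq N$ of manifold dimension $q-1$.

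As $N'$ is a connected, simply connected nilpotent Lie group, the exponential map provides a continuous injection $N'\hookrightarrow \R^{q-1}$. Corollary \ref{cor:ses} (with $k = q-1$ and $q'=1$) then yields
\[
    \TO^q_N(r) \;\lesssim\; \TO^1_{N/N'}(r).
\]

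Finally, $N/N'$ is a connected, simply connected nilpotent Lie group whose lower central series quotients have dimensions $k_1, k_2, \ldots, k_{j-1}, p$: all higher terms vanish because $\mathfrak{n}_{j+1}\subseteq \mathfrak{n}'$, while $\dim(\mathfrak{n}_j/\mathfrak{n}')=p$. Its polynomial growth degree is therefore
\[
    d \;=\; k_1 + 2k_2 + \cdots + (j-1)k_{j-1} + jp \;=\; \sum_{i=1}^{m} a^N_i,
\]
the last equality being exactly the definition of the tuple $a^N$. Applying Corollary \ref{cor:fgnilpTO1} (with $|H|=1$) to $N/N'$ gives $\TO^1_{N/N'}(r)\simeq r^{1-1/d}$, which yields the claimed bound. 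The only delicate step is choosing $N'$ so that it is simultaneously a normal Lie subgroup of $N$ and has the correct manifold dimension; this is easily achieved at the Lie algebra level by exploiting the fact that $\mathfrak{n}_j/\mathfrak{n}_{j+1}$ is central in $\mathfrak{n}/\mathfrak{n}_{j+1}$, so every vector subspace of $\mathfrak{n}_j$ containing $\mathfrak{n}_{j+1}$ automatically produces an ideal of $\mathfrak{n}$.
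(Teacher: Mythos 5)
Your argument is correct. It rests on the same two ingredients as the paper's own proof --- Corollary \ref{cor:ses} together with the $\TO^1$ computation for nilpotent groups of polynomial growth --- but you organise the reduction differently. The paper quotients by a central copy of $\R$ inside the last nonzero term of the lower central series and applies Corollary \ref{cor:ses} with $k=1$, iterating $q-1$ times so that the tuple $a^N$ loses its last entry at each step, and then invokes \cite[Theorem 8.1]{HumeMackTess-Pprof} for the resulting group. You instead build, in one step, a $(q-1)$-dimensional ideal $\mathfrak{n}'$ with $\mathfrak{n}_{j+1}\subseteq\mathfrak{n}'\subseteq\mathfrak{n}_j$, note that $N'=\exp(\mathfrak{n}')$ is a closed connected normal subgroup admitting a continuous injection into $\R^{q-1}$ (via $\log$ rather than $\exp$, a harmless slip), and apply Corollary \ref{cor:ses} once with kernel $N'$. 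Your single-step version makes explicit at the Lie-algebra level what the paper leaves implicit (that suitable central subgroups exist at every stage and that the successive quotients stay simply connected with the truncated tuple), and it gives direct control of the quotient's lower central series quotients $k_1,\ldots,k_{j-1},p$, hence of its Bass--Guivarc'h growth degree $\sum_{i=1}^{k-(q-1)}a^N_i$; the paper's iterated version only needs central lines, which exist trivially, and fits the inductive flavour of the surrounding results. Both proofs share the same mild implicit step of identifying the metric $d_H$ produced by Corollary \ref{cor:ses} with a standard proper left-invariant metric on the quotient before quoting the $\TO^1$ computation (your Corollary \ref{cor:fgnilpTO1}), so this is not a gap particular to your write-up.
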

\begin{proof}
    We note that $N=N_1$ contains a central subgroup $C$ isomorphic to $\R$ such that $N_2=N_1/C$ is simply connected and has the associated tuple $a^{N'}=(a^N_1,\ldots,a^N_{k-1})$. From Corollary \ref{cor:ses}, we deduce that $\TO^q_N(r) \lesssim \TO^{q-1}_{N_2}$. Repeating this, we obtain
    \[
        \TO^q_N(r)\lesssim \TO^1_{N_q}(r) 
    \]
    where $N_q$ has the associated tuple $a^{N_q}=(a^N_1,\ldots,a^N_{k-(q-1)})$. By \cite[Theorem 8.1]{HumeMackTess-Pprof},
    \[
        \TO^1_{N_q}(r) \simeq r^{1-\frac{1}{\sum_{i=1}^{k-(q-1)} a^N_i}}
    \]
    as required.
\end{proof}
This is not sharp, even for Euclidean spaces.

\subsection{Real hyperbolic spaces}\label{sec:hypTO}
We now present results for real hyperbolic spaces $\HH^d$, we begin with lower bounds.

\begin{proposition}\label{prop:hyplbpower} We have 
\[
\TO^q_{\HH^{d}}(r)\gtrsim \left\{\begin{array}{cc}
    r^{1-q/(d-1)} & q<d-1 \\
    \ln(1+r) & q=d-1
\end{array} \right.
\]
\end{proposition}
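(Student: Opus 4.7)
The plan is to handle the two cases $q<d-1$ and $q=d-1$ separately, in both instances combining Gromov's Euclidean waist inequality (Corollary \ref{cor:Gromov_waist}) with the monotonicity of $\TO^q$ under continuous maps with controlled covering (Proposition \ref{prop:cTOmonotone}).

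For $q<d-1$ I will embed $\R^{d-1}$ into $\HH^d$ as a horosphere, working in the upper half-space model and setting $\iota(x)=(x,1)$. At height $1$ the induced Riemannian metric on the horosphere is Euclidean, so an explicit computation using the hyperbolic distance formula shows two things: a Euclidean unit ball in $\R^{d-1}$ has image of bounded hyperbolic covering number, and a hyperbolic unit ball in $\HH^d$ intersects the horosphere either emptily (when the centre's $y$-coordinate lies outside $[e^{-1},e]$) or in a region of bounded Euclidean diameter. These verify the hypotheses of Proposition \ref{prop:cTOmonotone}, yielding $\TO^q_{\R^{d-1}}(r)\lesssim \TO^q_{\HH^d}(r)$, and Theorem \ref{thm:TOEuc} then supplies the lower bound $r^{1-q/(d-1)}$.

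For $q=d-1$ the horospherical strategy yields nothing because $\TO^{d-1}_{\R^{d-1}}$ is bounded, so instead I will work with a hyperbolic ball $B^\HH_R$ of radius $R$ centred at a point $p$, whose hyperbolic covering volume satisfies $\Cov^1(B^\HH_R)\simeq e^{(d-1)R}$. For a continuous $f\colon B^\HH_R\to\R^{d-1}$ I form $\tilde f = f\circ\exp_p\colon RD^d\to\R^{d-1}$ and invoke Corollary \ref{cor:Gromov_waist} with $k=d$, $q=d-1$ to produce $y\in\R^{d-1}$ such that $\tilde f^{-1}(y)$ cannot be covered by fewer than $\beta''_{d,d-1}R$ Euclidean unit balls. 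The main geometric ingredient is the standard Jacobi field computation in curvature $-1$: along a unit-speed geodesic from $p$, a Jacobi field $J$ perpendicular to the geodesic with $J(0)=0$ satisfies $|J(t)|=\sinh(t)|J'(0)|$, so $d\exp_p$ preserves radial Euclidean length and scales transverse Euclidean length by $\sinh(t)/t\geq 1$ at a tangent vector of length $t$. Hence the $\exp_p$-preimage of any hyperbolic unit ball inside $B^\HH_R$ is contained in a Euclidean unit ball of $T_p\HH^d$, so any cover of $f^{-1}(y)$ by $N$ hyperbolic unit balls pulls back to a cover of $\tilde f^{-1}(y)$ by $N$ Euclidean unit balls, forcing $\Cov^1(f^{-1}(y))\gtrsim R$. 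Setting $r\simeq e^{(d-1)R}$, so that $R\simeq\ln(1+r)$, gives $\TO^{d-1}_{\HH^d}(r)\gtrsim \ln(1+r)$.

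The main difficulty is the Jacobi-field comparison in the case $q=d-1$: one must confirm that the local expansion behaviour of $\exp_p$ is such that the preimage of a hyperbolic unit ball always fits inside a Euclidean unit ball, uniformly in where the centre sits inside $B^\HH_R$. Once this uniform comparison is in hand, both cases reduce directly to the Euclidean waist inequality recorded in Corollary \ref{cor:Gromov_waist}.
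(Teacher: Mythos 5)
Your proposal is correct, and the two cases compare differently with the paper. For $q=d-1$ your argument is essentially the paper's: the map $\Phi(r,\theta)=(2\tanh^{-1}(r),\theta)$ that the paper takes from \cite[Lemma I.1.13]{BH99} is exactly the statement that $\exp_p^{-1}$ is $1$-Lipschitz in geodesic polar coordinates, i.e.\ your Jacobi-field comparison $\sinh(t)/t\geq 1$, and both proofs then feed the Euclidean $R$-ball into Corollary \ref{cor:Gromov_waist} with $(k,q)=(d,d-1)$ and convert $R\simeq\ln(1+r)$ using $\Cov^1(B_R^{\HH})\simeq e^{(d-1)R}$; the only loose end in your write-up (hyperbolic unit balls centred outside the hyperbolic $R$-ball pull back to Euclidean unit balls centred just outside $RD^d$) costs only a dimensional constant and is glossed over in the paper as well. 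For $q<d-1$ you take a genuinely different route: the paper works with the concentric hyperbolic spheres $S_t$, which are comparable for covering numbers to Euclidean spheres of radius $\sinh t$, and applies the spherical waist inequality directly to get $\Ov(g)\gtrsim\sinh(t)^{d-1-q}\simeq\Cov^1(S_t)^{1-q/(d-1)}$, so the witnessing subsets are the spheres themselves; you instead embed a flat horosphere $\iota:\R^{d-1}\to\HH^d$, verify the two covering hypotheses of Proposition \ref{prop:cTOmonotone} (both checks are right: paths along the horosphere give $d_{\HH}(\iota(x),\iota(x'))\leq|x-x'|$, and $\cosh d_{\HH}=1+\frac{|x-x_0|^2+(1-y_0)^2}{2y_0}$ bounds the trace of a hyperbolic unit ball on $\{y=1\}$), and then quote the already-proved lower bound $\TO^q_{\R^{d-1}}(r)\gtrsim r^{1-q/(d-1)}$ from Theorem \ref{thm:TOEuc}. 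Your route is more modular, reusing the Euclidean calculation and the monotonicity machinery, with horospherical Euclidean balls as the extremal subsets; the paper's route applies the waist inequality directly to spheres and thus needs no horosphere comparison, but the exponents agree because the two covering bounds you check say precisely that horospherical subsets have comparable hyperbolic and Euclidean $\Cov^1$.
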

\begin{proof}
Suppose $q<d-1$. For each $t$ let $S_t$ be a sphere of radius $t$ in $\HH^d$. Using standard results in hyperbolic geometry, there is a homeomorphism $f$ from $S_t$ to the Euclidean sphere of dimension $d-1$ and radius $\sinh(t)$ and a constant $K=K(d)$ such that for any $A\subseteq S_t$,
\begin{equation}\label{eq:Covcomp}
K^{-1}\Cov^1(f(A))\leq \Cov^1(A) \leq K\Cov^1(f(A)). 
\end{equation}
By Corollary \ref{cor:Gromov_waist} for any $g\in C(S_t,\R^q)$ there is a point $z\in \R^q$ such that $\Cov^1(f^{-1}(g^{-1}(z)))\geq \beta_{d-1,q} \sinh(t)^{d-1-q}$. Thus, by $\eqref{eq:Covcomp}$, 
\[
\Ov(g)\geq K^{-1}\beta_{d-1,q} \sinh(t)^{d-1-q} \geq k \Cov^1(S_t)^{\frac{d-1-q}{d-1}}.
\]
As this holds for every $g$ and every $t$, we have
\[
 \TO^q_{\HH^d}(r) \gtrsim r^{1-q/(d-1)}.
\]
For $q=d-1$, by \cite[Lemma I.1.13]{BH99} the map $\Phi:B(0;R)_{\HH^{d}}\to B(0;R)_{\R^{d}}$ given by $\Phi(r,\theta)=(2\tanh^{-1}(r),\theta)$ is $1$-Lipschitz. Therefore for any set $A\subset \R^{d+1}$ with $\Cov^1(A)=\ell$ we have $\Cov^1(\Phi^{-1}(A))\geq \ell$. 

Let $f:B(R)_{\HH^{d+1}}\to\R^d$ be continuous and define $g=f\circ \Phi$. By the waist inequality above, there is some $z\in\R^d$ such that $\Cov^1(g^{-1}(z))\geq cR$, so $\Cov^1(f^{-1}(z))=\Cov^1(\Phi(g^{-1}(z)))\geq cR$.
\end{proof}

Next, we move to upper bounds.

\begin{proposition}\label{prop:hypdimdrop} For all $n\geq 2$ and $q\geq 1$, we have $\TO_{\HH^{n+1}}^{q+1}(r) \lesssim \TO_{\HH^{n}}^{q}(r)$.
\end{proposition}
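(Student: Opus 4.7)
The plan is to apply Proposition \ref{prop:semidirprod2} directly, using the upper half-space model to realise $\HH^{n+1}$ as a metric on the product set $\HH^n \times \R$ in the correct way. Specifically, write $\HH^{n+1}=\{(x_1,\ldots,x_{n+1}):x_{n+1}>0\}$ with the standard metric tensor $ds^2=(dx_1^2+\cdots+dx_{n+1}^2)/x_{n+1}^2$, and identify the $\R$-factor with the $x_1$-coordinate and the $\HH^n$-factor with $(x_2,\ldots,x_{n+1})$ (with the upper half-space metric in dimension $n$).

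The three hypotheses of Proposition \ref{prop:semidirprod2}, with $A=\HH^n$, $B=\R$ and $k=1$, will then be verified using standard properties of the upper half-space model. First, for each fixed $b\in\R$ the slice $\{x_1=b\}$ is a totally geodesic copy of $\HH^n$ in $\HH^{n+1}$, so the induced distance on the slice agrees with $d_{\HH^n}$; this gives the product-invariance condition $d((a,b),(a',b))=d_{\HH^n}(a,a')$. Second, $\R$ admits the identity as a continuous injection into $\R^1$, so the hypothesis on $B$ is satisfied with $k=1$. Third, the projection $\pi_A\colon \HH^{n+1}\to\HH^n$ forgetting the $x_1$ coordinate pulls back $(dx_2^2+\cdots+dx_{n+1}^2)/x_{n+1}^2$, which is pointwise dominated by the metric tensor of $\HH^{n+1}$; hence $\pi_A$ is $1$-Lipschitz and $\Cov^1(\pi_A(Z),d_{\HH^n})\leq\Cov^1(Z,d_{\HH^{n+1}})$, so we may take $C=1$.

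Plugging these data into Proposition \ref{prop:semidirprod2} yields
\[
    \TO^{1+q}_{\HH^{n+1}}(r)\;\leq\;\TO^q_{\HH^n}(Cr)\;=\;\TO^q_{\HH^n}(r),
\]
which is the desired conclusion (with the stronger constant $\leq$ in place of $\lesssim$). The hypothesis $n\geq 2$ plays no role in this argument itself; it is simply the range in which the statement is non-trivial and in which the result will subsequently be iterated to prove Theorem \ref{thm:TOhyp}. I do not expect any real obstacle: once the upper half-space coordinates are fixed, Proposition \ref{prop:semidirprod2} does all the work, and the only care needed is to keep track of which coordinate plays the role of the $\R$-fibre and to recall that horizontal slices (fixing one Euclidean coordinate, not the height) are totally geodesic in the upper half-space model.
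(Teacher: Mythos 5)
Your proposal is correct and is essentially the paper's own argument: the paper likewise uses the half-space model to view $\HH^{n+1}$ as the set $\HH^n\times\R$ (splitting off one horizontal coordinate, there $x_n$ rather than $x_1$) and applies Proposition \ref{prop:semidirprod2} with $A=\HH^n$, $B=\R$, $k=1$. Your verification of the hypotheses (total geodesy of the vertical slice, the identity as the continuous injection $\R\to\R$, and the $1$-Lipschitz projection giving $C=1$) just makes explicit the details the paper leaves implicit.
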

\begin{proof} We fix a homeomorphism $\phi:\HH^{n+1}\to\HH^n\times\R$ given (in the halfspace model) by
\[
 \phi(x_1,\ldots,x_n;y))=((x_1,\ldots,x_{n-1};y),x_n).
\]
We can then apply Proposition \ref{prop:semidirprod2} with $A=\HH^n$, $B=\R$ and the usual metrics $d=d_{\HH^{n+1}}$ and $d_A=d_{\HH^{n}}$.
\end{proof}

\begin{corollary} We have 
\[
\TO^q_{\HH^{d}}(r)\lesssim \left\{\begin{array}{cc}
    r^{1-1/(d-q)} & q<d-1 \\
    \ln(1+r)^2 & q=d-1
\end{array} \right.
\]
\end{corollary}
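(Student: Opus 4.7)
The proof will be a direct chaining of the two results immediately preceding the statement: iterate the dimension/codimension drop in Proposition \ref{prop:hypdimdrop} down to the $1$-dimensional topological overlap profile of a lower-dimensional hyperbolic space, then invoke the already-known bounds for $\TO^1$ of $\HH^n$ from Corollary \ref{cor:fgnilpTO1}.

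First, applying Proposition \ref{prop:hypdimdrop} exactly $q-1$ times gives
\[
\TO^q_{\HH^d}(r) \;\lesssim\; \TO^{q-1}_{\HH^{d-1}}(r) \;\lesssim\; \cdots \;\lesssim\; \TO^1_{\HH^{d-q+1}}(r),
\]
so it suffices to bound $\TO^1$ on a real hyperbolic space of dimension $d-q+1$. This is a purely formal step using the already-established monotonicity.

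Now I split into the two regimes of the statement. If $q \leq d-2$ (so that $d-q+1 \geq 3$), I view $\HH^{d-q+1}$ as a rank $1$ symmetric space whose boundary has conformal dimension $Q = d-q$, with trivial nilpotent factor ($d=0$ in the notation of Corollary \ref{cor:fgnilpTO1}). The fourth (``otherwise'') case of that corollary yields $\TO^1_{\HH^{d-q+1}}(r) \simeq r^{1-1/(d-q)}$, which is the desired bound. If instead $q = d-1$, the chain above ends at $\HH^2$, and the second case of Corollary \ref{cor:fgnilpTO1} (with $H = \HH^2$ and trivial $N$) gives $\TO^1_{\HH^2}(r) \lesssim \ln(1+r)^2$, matching the second line of the statement.

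There is no substantive obstacle here: all the work is already packaged in Proposition \ref{prop:hypdimdrop} (which exploits the product-like fibration $\HH^{n+1} \cong \HH^n \times \R$ in the half-space model via Proposition \ref{prop:semidirprod2}) and in Corollary \ref{cor:fgnilpTO1}. The only mild care needed is bookkeeping the indices so that after $q-1$ drops one lands on $\HH^{d-q+1}$, and separating the threshold case $q = d-1$ from the generic case $q \leq d-2$ to pick out the right branch of Corollary \ref{cor:fgnilpTO1}.
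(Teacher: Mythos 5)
Your proof is correct and takes essentially the same route as the paper: the paper also iterates Proposition \ref{prop:hypdimdrop} to reduce to $\TO^1_{\HH^{d-q+1}}(r)$ and then quotes known dimension-one bounds. The only cosmetic difference is that the paper finishes by passing to the cutwidth profile of a triangulation of $\HH^{d-q+1}$ via Theorems \ref{thm:defnequiv} and \ref{thm:compTO1cutwidth} together with \cite[Corollary 1.7]{HHKL23}, whereas you quote Corollary \ref{cor:fgnilpTO1}, which packages the same information.
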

\begin{proof}
By repeated use of Proposition \ref{prop:hypdimdrop}
\[
 \TO^{q}_{\HH^d}(r) \lesssim \TO^{q-1}_{\HH^{d-1}}(r) \lesssim \ldots \lesssim \TO^1_{\HH^{d-q+1}}(r).
\]
We have $\TO^1_{\HH^{d-q+1}}(r)\simeq \sTO^1_{X^{d-q+1}}(r)\simeq\cw_{(X^{d-q+1})^{\leq 1}}(r)$ by Theorems \ref{thm:defnequiv} and \ref{thm:compTO1cutwidth} for a suitable choice of simplicial complex $X^{d-q+1}$, for instance the universal cover of a finite triangulation of a closed hyperbolic $(d-q+1)$-manifold. Finally, by \cite[Corollary 1.7]{HHKL23},
\[
 \cw_{(X^{d-q+1})^{\leq 1}}(r) \lesssim \left\{\begin{array}{cc}
    r^{1-1/(d-q)} & q<d-1 \\
    \ln(1+r)^2 & q=d-1
\end{array} \right.\qedhere
\] 
\end{proof}

\subsection{Symmetric spaces}
Here we give the proof of Theorem \ref{cor:ubsymspace}, which we recall for convenience.

\begin{corollary*} Let $X$ be a Riemannian symmetric space of dimension $n$ and rank $d$. For every $q\geq 1$ we have
    \[
        \TO^{n-d+q}_X(r)\lesssim \TO^q_{\R^d}(r)\simeq r^{1-\frac{q}{d}}.
    \]
\end{corollary*}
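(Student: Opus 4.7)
The plan is to realise $X$ as a solvable Lie group via the Iwasawa decomposition and apply Corollary \ref{cor:ses} to the obvious short exact sequence.

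Let $G$ be the identity component of the isometry group of $X$, and fix an Iwasawa decomposition $G = NAK$: here $K$ is a maximal compact subgroup with $X = G/K$, $A \cong \R^d$ is a maximal $\R$-split torus (a Cartan subgroup), and $N$ is a simply-connected nilpotent Lie group of manifold dimension $n - d$. The solvable group $S = NA$ acts simply-transitively on $X$, so the orbit map $S \to X$, $s \mapsto s \cdot o$ (for a chosen basepoint $o$) is a diffeomorphism. Pulling the Riemannian metric back makes $S$, equipped with a suitable left-invariant proper metric $d_S$, isometric to $X$, and in particular $\TO^p_X(r) \simeq \TO^p_S(r)$ for every $p$ by Proposition \ref{prop:cTOmonotone}.

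We then consider the continuous short exact sequence of compactly generated locally compact groups
\[
1 \to N \to S \xrightarrow{\pi} A \to 1.
\]
Since $N$ is simply-connected nilpotent, it is diffeomorphic to $\R^{n-d}$; in particular there is a continuous injection $N \to \R^{n-d}$. Applying Corollary \ref{cor:ses} with $k = n-d$ gives
\[
 \TO^{(n-d)+q}_{(S,d_S)}(r) \lesssim \TO^q_{(A,d_A)}(r),
\]
where $d_A(a,a') = d_S(\pi^{-1}(a), \pi^{-1}(a'))$.

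It remains to identify $(A, d_A)$ with Euclidean space up to quasi-isometry. The $A$-orbit of $o$ in $X$ is a maximal flat, a totally geodesic isometric copy of $\R^d$, so the restriction of $d_S$ (equivalently $d_X$) to $A$ is bi-Lipschitz to the Euclidean metric. Quotienting by the compact fibres of $\pi$ only changes distances by a bounded additive constant, so $(A, d_A)$ is quasi-isometric to $\R^d$, and consequently $\TO^q_{(A,d_A)}(r) \simeq \TO^q_{\R^d}(r)$ by Proposition \ref{prop:cTOmonotone}. Combining these equivalences with Theorem \ref{thm:TOEuc} yields
\[
\TO^{n-d+q}_X(r) \lesssim \TO^q_{\R^d}(r) \simeq r^{1 - q/d},
\]
as claimed. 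The only non-formal step is verifying that $(A,d_A)$ is quasi-isometric to $\R^d$; this is standard from the theory of symmetric spaces of non-compact type (flats in $X$) but is the one place where the geometric content beyond Corollary \ref{cor:ses} actually enters.
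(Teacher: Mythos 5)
Your proposal follows the paper's own route essentially verbatim: the Iwasawa decomposition, the simply transitively acting solvable group $S=NA$ isometric to $X$, the short exact sequence $1\to N\to S\to A\to 1$ with $N$ simply connected nilpotent (hence admitting a continuous injection into $\R^{n-d}$), and an application of Corollary \ref{cor:ses}. The one step whose justification is wrong is the final identification of $(A,d_A)$ with $\R^d$: the fibres of $\pi:S\to A$ are the cosets of $N$, which are \emph{non-compact} whenever $X$ is not flat, so it is simply false that ``quotienting by the compact fibres of $\pi$ only changes distances by a bounded additive constant''. The conclusion you want is nevertheless true and easily repaired. The inequality $d_A(a,a')\leq \|\log a-\log a'\|$ follows, as you say, from the totally geodesic flat $A\cdot o$; for the reverse comparison one can either use that the Iwasawa projection $X\cong NA\to A\cong\R^d$ is Lipschitz (its coordinates are, up to a linear change of variables, Busemann functions associated to the flat), or, more cheaply, note that the quotient metric $d_A(a,a')=d_S(\pi^{-1}(a),\pi^{-1}(a'))$ is proper and translation-invariant, which already bounds the Euclidean diameter of unit $d_A$-balls; either way the identity map $(A,d_A)\to\R^d$ satisfies the hypotheses of Proposition \ref{prop:cTOmonotone}, giving $\TO^q_{(A,d_A)}(r)\lesssim \TO^q_{\R^d}(r)\simeq r^{1-q/d}$ by Theorem \ref{thm:TOEuc}. (Note also that the simply transitive action of $NA$ requires $X$ to have no compact factor, as in the statement in the introduction.) With that repair your argument coincides with the proof in the paper, which is equally brief at exactly this point.
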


\begin{proof}[Proof of Theorem \ref{cor:ubsymspace}]
Let $X= G/K$ be a Riemannian symmetric space. Recall that the Iwasawa decomposition states that the multiplication map $K\times A \times N \to G$ is a homeomorphism, where $A$ is isometric to $\R^d$ and $N$ is a simply connected nilpotent Lie group. In particular, this means that $N$ is diffeomorphic to Euclidean space of the same manifold dimension $\dim(N)=n-d$ \cite[Theorem 1.127]{Knapp-beyond}. Moreover, there is a proper left-invariant metric $d$ on $AN$ such that $(AN,d)$ is isometric to $X$.  Thus, we have a short exact sequence
\[
 1 \to N \to AN \to A \to 1
\]
which satisfies the hypotheses of Corollary \ref{cor:ses}. Therefore,
\[
 \TO^{n-d+q}_X(r) \simeq \TO^{\dim(N)+q}_{(AN,d)}(r) \lesssim \TO^q_A(r)\simeq r^{1-q/d}.\qedhere
\]
\end{proof}

\subsection{Assouad-Nagata dimension}
Here we prove Corollary \ref{cor:TO1finANdim}, which again we restate for convenience.

\begin{corollary*} Let $X\in\bS$ have finite Assouad-Nagata dimension. Then, for every $q\geq 1$,
\[
 \sTO^q_X(r) \lesssim \frac{r}{\ln(1+r)}.
\]
\end{corollary*}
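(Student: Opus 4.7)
The proof is a short chain of reductions, and the hint in the text preceding the corollary already identifies the two key ingredients. First, I would apply Theorem \ref{thm:TOprops}(ii) repeatedly to obtain, for every $q \geq 1$,
\[
\sTO^q_X(r) \leq \sTO^{q-1}_X(r) \leq \cdots \leq \sTO^1_X(r),
\]
so it suffices to establish the bound $\sTO^1_X(r) \lesssim r/\ln(1+r)$.

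Second, by Theorem \ref{thm:compTO1cutwidth} the $q=1$ profile is comparable to the cutwidth profile of the $1$-skeleton,
\[
\sTO^1_X(r) \simeq \cw_{X^{\leq 1}}(r).
\]
Since $X \in \bS$ is finite-dimensional and has bounded degree, the inclusion $X^{\leq 1} \hookrightarrow X$ is a quasi-isometry with respect to the natural metric \eqref{eq:scmetric}, so $X^{\leq 1}$ inherits finite Assouad-Nagata dimension from $X$ (Assouad-Nagata dimension being a quasi-isometry invariant of geodesic spaces).

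Third, I would invoke the known cutwidth-profile bound: for any bounded-degree graph $Y$ of finite Assouad-Nagata dimension, $\cw_Y(r) \lesssim r/\ln(1+r)$. This is recorded in \cite[Corollary 4.3]{HHKL23}; alternatively it can be extracted by combining the separation-profile estimate $\sep_Y(r) \lesssim r/\ln(1+r)$ for spaces of finite Assouad-Nagata dimension from \cite{HumeMackTess-Pprof} with Lemma \ref{lem:cwsep}, iterating the recursion $\cw_Y(r) \leq \cw_Y(r/2) + \Delta \sep_Y(r)$ and observing that the resulting sum $\sum_{i \geq 0} \sep_Y(r/2^i)$ remains $O(r/\ln(1+r))$, the dominant contribution coming from the terms with small $i$. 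Concatenating the three steps yields $\sTO^q_X(r) \lesssim r/\ln(1+r)$, as required. There is no genuine obstacle here: all the substantive content lies in the cited earlier results, and the argument amounts to stitching them together.
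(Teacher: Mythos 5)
Your proposal is correct and follows essentially the same route as the paper: reduce to $q=1$ by iterating Theorem \ref{thm:TOprops}(ii), then bound $\sTO^1_X$ via the cutwidth/separation machinery of \cite{HHKL23} together with the separation-profile bound for finite Assouad-Nagata dimension (the paper cites \cite[Theorem 1.5]{HumSepExp} for the latter, leaving the passage through Theorem \ref{thm:compTO1cutwidth} implicit). The only differences are cosmetic: you make the cutwidth comparison and the quasi-isometry of $X^{\leq 1}$ with $X$ explicit, which the paper compresses into the two citations.
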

\begin{proof} By \cite[Corollary 4.3]{HHKL23} and \cite[Theorem 1.5]{HumSepExp},
\[
 \sTO^1_X(r) \lesssim \frac{r}{\ln(1+r)}.
\]
Applying Theorem \ref{thm:TOprops}(ii) $(q-1)$ times, we see that
\[
 \sTO^q_X(r) \lesssim \frac{r}{\ln(1+r)}.\qedhere
\]
\end{proof}

\section{Coarse constructions}\label{sec:coarsecon}
In this section we introduce coarse constructions as a generalisation of the coarse wirings from \cite{BarrettHume}.

\begin{definition} Let $Z,Y\in\bS$ with $|Z|<+\infty$. A continuous map $f\in C(Z,Y)$ is called a \textbf{$k$-coarse construction} if
\begin{itemize}
 \item for each $d$, $f(Z^{\leq d})\subseteq Y^{\leq d}$, and
 \item $\sup_{\sigma\in \CY}\abs{\setcon{\sigma'\in\CZ}{\sigma\cap f(\sigma')\neq\emptyset}}\leq k$.
\end{itemize}
A simple example of a $1$-coarse construction is the natural homeomorphism from a simplicial complex to any geometric subdivision of it.

The \textbf{volume} of a coarse construction $f$, $\vol(f)$ is the minimal number of $0$-simplices in a subcomplex of $Y$ which contains the image of $f$. We define
$\con^k_{Z\to Y}$ to be the minimal volume of a $k$-coarse construction $f\in C(Z,Y)$ or $+\infty$ if no such map exists.

Given $X,Y\in\bS$ we define
\[
 \con^k_{X\to Y}(n)=\max\setcon{\con^k(Z\to Y)}{Z\leq X,\ |Z|\leq n}.
\]
\end{definition}

Coarse constructions satisfy a natural composition law.

\begin{lemma} Let $X,Y,Z\in\bS$. We have
\[
\con^{kl}_{X\to Z}(n) \leq \con^l_{Y\to Z}\left(\con^k_{X\to Y}(n)\right).
\]
\end{lemma}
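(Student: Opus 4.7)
The plan is a direct ``compose two coarse constructions'' argument, with the only non-trivial point being a pigeonhole at the level of closed simplices in the intermediate complex $Y$.

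Fix a subcomplex $W\leq X$ with $|W|\leq n$. First I would invoke the definition of $\con^k_{X\to Y}(n)$ to obtain a $k$-coarse construction $f:W\to Y$ whose image lies in a subcomplex $Y'\leq Y$ with $|Y'|\leq m$, where $m:=\con^k_{X\to Y}(n)$. Next, since $Y'\leq Y$ and $|Y'|\leq m$, the definition of $\con^l_{Y\to Z}(m)$ furnishes an $l$-coarse construction $g:Y'\to Z$ whose image lies in a subcomplex $Z'\leq Z$ with $|Z'|\leq \con^l_{Y\to Z}(m)$. The candidate map is then $h:=g\circ f:W\to Z$, which is continuous and whose image lies in $Z'$; this already gives $\vol(h)\leq \con^l_{Y\to Z}(\con^k_{X\to Y}(n))$.

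It remains to check that $h$ is a $kl$-coarse construction. Skeleton preservation is immediate: $f(W^{\leq d})\subseteq Y^{\leq d}$ and $g(Y^{\leq d})\subseteq Z^{\leq d}$, so $h(W^{\leq d})\subseteq Z^{\leq d}$. For the multiplicity bound, fix $\tau\in\mathcal{C}Z$ and set
\[
T=\setcon{\tau'\in\mathcal{C}Y'}{\tau\cap g(\tau')\neq\emptyset},\qquad S(\tau')=\setcon{\sigma\in\mathcal{C}W}{\tau'\cap f(\sigma)\neq\emptyset}.
\]
The coarse-construction properties of $g$ and $f$ give $|T|\leq l$ and $|S(\tau')|\leq k$ respectively. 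The key observation I would make explicit is that if $\tau\cap h(\sigma)\neq\emptyset$, then there is a point $y\in f(\sigma)$ with $g(y)\in\tau$, and choosing any closed simplex $\tau'\in\mathcal{C}Y'$ containing $y$ yields $\sigma\in S(\tau')$ and $\tau'\in T$. Consequently
\[
\setcon{\sigma\in\mathcal{C}W}{\tau\cap h(\sigma)\neq\emptyset}\subseteq \bigcup_{\tau'\in T}S(\tau'),
\]
whose cardinality is bounded by $kl$, as required.

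Taking the maximum over all $W\leq X$ with $|W|\leq n$ yields the stated inequality. I do not anticipate a serious obstacle: the only subtle point is the pigeonhole step, which requires picking a closed simplex of $Y'$ containing the intermediate point $y$, and this is guaranteed since every point of a simplicial complex lies in at least one closed simplex.
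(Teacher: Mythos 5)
Your proposal is correct and follows essentially the same route as the paper: take optimal coarse constructions $W\to Y'$ and $Y'\to Z$, compose, and bound multiplicity by the pigeonhole over intermediate simplices of $Y'$ (you spell out explicitly the choice of a closed simplex containing the intermediate point, which the paper leaves implicit). The only cosmetic difference is that the paper first disposes of the trivial case where $\con^l_{Y\to Z}\left(\con^k_{X\to Y}(n)\right)=+\infty$, which your argument implicitly assumes away.
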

\begin{proof} Fix $n$.  If $\con^l_{Y\to Z}\left(\con^k_{X\to Y}(n)\right)=+\infty$ then there is nothing to prove, so assume it is finite. Let $A\leq X$ with $\abs{A^0}\leq n$. Then there exists a
  coarse $k$-construction $\psi$ of $A$ into $Y$ contained in a subcomplex $B\leq Y$ with $|B^0|\leq \con^k_{X\to Y}(n)$,  and a coarse $l$-construction $\psi'$ of $B$ into $Z$ contained in a subcomplex $C\leq Z$
  with
  $|C^0| \leq \con^l_{Y\to Z}\left(\con^k_{X\to Y}(n)\right)$.

Now set $\psi''=\psi'\circ\psi:A\to Z$. It is clear that $\psi''\in C(A,Z)$ and $\psi''(A^{\leq d})\subseteq Z^{\leq d}$ for all $d$. Now let $\sigma\in \CZ$. There are at most $l$ simplices $\tau_1,\ldots,\tau_l$ in $\CY$ such that $\psi'(\tau_i)\cap \sigma\neq\emptyset$. For each $\tau_i$ there are at most $k$ simplices $\sigma_{i,1},\ldots,\sigma_{i,k}$ simplices in $X$ such that $\psi( \sigma_{i,j})\cap \tau_i\neq\emptyset$. It follows that
\[
\sup_{\sigma\in \CZ}\abs{\setcon{\sigma'\in\CX}{\sigma\cap \psi''(\sigma')\neq\emptyset}}\leq kl.
\]
Thus, $\psi''$ is a $kl$-coarse construction.
\end{proof}


%
%
%
%

\subsection{Lower bounds on fillings from topological overlap}
The goal of this subsection is to prove Theorem \ref{thm:TOcon}.

\begin{theorem*} Let $X,Y\in\bS$ and suppose $\con^k_{X\to Y}$ takes finite values. For each $d\geq 1$,
\[
 \sTO^q_X(n) \leq k \sTO^q_Y(\con^k_{X\to Y}(n))
\]
\end{theorem*}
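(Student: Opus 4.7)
The plan is to exploit the factorisation offered by a coarse construction: an arbitrary subcomplex $Z\leq X$ with $|Z|\leq n$ admits, by hypothesis, a $k$-coarse construction $\psi:Z\to W$ where $W\leq Y$ is a subcomplex with $|W|\leq \con^k_{X\to Y}(n)$. I then choose an essentially optimal continuous map $g:W\to\R^q$ realising $\sTO^q(W)\leq \sTO^q_Y(\con^k_{X\to Y}(n))$, and set $f=g\circ\psi\in C(Z,\R^q)$. The whole proof reduces to bounding $\sOv(f)$ in terms of $\sOv(g)$ and $k$.

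The key observation is the following. Fix $z\in\R^q$ and let
\[
\Sigma(z)=\setcon{\sigma\in\CW}{z\in g(\sigma)},
\]
so that $|\Sigma(z)|\leq \sOv(g)$. Suppose $\sigma'\in\CZ$ satisfies $z\in f(\sigma')$. Then there is a point $p\in\psi(\sigma')$ with $g(p)=z$; the point $p$ lies in some closed simplex $\sigma\in\CW$, and by construction $\sigma\in\Sigma(z)$ and $\sigma\cap\psi(\sigma')\neq\emptyset$. Hence
\[
\setcon{\sigma'\in\CZ}{z\in f(\sigma')}\;\subseteq\;\bigcup_{\sigma\in\Sigma(z)}\setcon{\sigma'\in\CZ}{\psi(\sigma')\cap\sigma\neq\emptyset}.
\]
Since $\psi$ is a $k$-coarse construction, each set in the union on the right has at most $k$ elements. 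Therefore
\[
\sOv(f)\leq k\,\sOv(g)\leq k\,\sTO^q_Y(\con^k_{X\to Y}(n)),
\]
which gives $\sTO^q(Z)\leq k\,\sTO^q_Y(\con^k_{X\to Y}(n))$. Taking the maximum over all $Z\leq X$ with $|Z|\leq n$ yields the desired inequality.

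There is no real obstacle here; the argument is essentially a bookkeeping exercise in which the two defining properties of a $k$-coarse construction (continuity, and the uniform bound on the number of preimage simplices of each target simplex) line up exactly with what is needed to push a topological overlap bound back along $\psi$. The only minor point to check carefully is that the simplex $\sigma\in\CW$ containing the witnessing point $p$ really lies in $W$ rather than in some larger subcomplex of $Y$; this is immediate because $\psi(Z)\subseteq W$ and $W$ is a subcomplex, so every closed simplex of $Y$ meeting $\psi(Z)$ is a closed simplex of $W$.
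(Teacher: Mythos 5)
Your argument is correct and follows essentially the same route as the paper's proof: compose an overlap-minimising map $g$ on the image subcomplex with the coarse construction and use the defining bound of a $k$-coarse construction to get $\sOv(g\circ\psi)\leq k\,\sOv(g)$. (Your closing remark that every closed simplex of $Y$ meeting $\psi(Z)$ lies in $W$ is not literally true, but all you need is that the witnessing point $p\in\psi(\sigma')\subseteq W$ lies in some closed simplex of $W$, which is immediate since $W$ is a subcomplex, so this does not affect the proof.)
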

\begin{proof} Choose $Z\leq X$ with $|Z|\leq n$ and $\phi:Z\to Y$ be a $k$-coarse construction whose image $Z'$ has at most $\con^k_{X\to Y}(n)$ $0$-simplices.

Let $g:Z'\to\R^q$ be a continuous map satisfying
\[
 \sOv(g) \leq \sTO^q_Y(\con^k_{X\to Y}(n)).
\]
Now define $f=g\circ\phi\in C(Z,\R^q)$. We will bound $\sOv(f)$. Fix $z\in\R^q$ and let $\mathcal C'$ be the collection of all closed simplices $\sigma'$ in $Z'$ such that $z\in g(\sigma)$. Let $\sigma\in C\Gamma$. If $z\in f(\sigma)$ then $\phi(\sigma)$ has non-trivial intersection with some $\sigma'\in \mathcal C'$. By definition, there are at most $k$ such $\sigma$ for each element of $\mathcal C'$. Thus
\[
 \sTO^q(Z)\leq \sOv(f) \leq k\TO^q_Y(\con^k_{X\to Y}(n))
\]
As $Z$ was chosen arbitrarily, we have
\[
 \sTO^q_X(n) \leq k \sTO^q_Y(\con^k_{X\to Y}(n))
\]
as required.
\end{proof}

As a consequence we deduce Theorem \ref{thm:TOlbcon}
\begin{proof}
    Choose $X\in\bS$ so that $\TO^q_X(r)\simeq r$. Applying Theorem \ref{thm:TOcon} to this choice of $X$ we see that there is a constant $C>0$ such that
    \[
        \sTO^q_Y(\con^k_{X\to Y}(n))\gtrsim n.
    \]
    In particular, if $\con^k_{X\to Y}(n)\lesssim n^a\ln(1+n)^b$, then
    \[
        \sTO^q_Y(n^a\ln(1+n)^b) \gtrsim n,
    \]
    so $\sTO^q_Y(n)\gtrsim n^{\frac1a}\ln(1+n)^{\frac{-b}{a}}$.
\end{proof}

\subsection{Coarse constructions and topological overlap of horocyclic products of trees}

We give the definition of a horocyclic product of trees.

\begin{definition}
	Let $T$ denote the infinite $3$-regular tree, fix some $v\in VT$ and $\eta\in\partial_\infty T$. For each $x$ let $\gamma_x$ denote the unique geodesic ray starting at $x$ with boundary point $\eta$. We define
	\[
	 h_{v,\eta}(x)= d(x,\gamma_v\cap\gamma_x) - d(v,\gamma_x\cap\gamma_v).
	\]
	The horocyclic product of ${d+1}$ trees $H^{d+1}(T)$ is the flag complex with vertex set
	\[
	 \setcon{(v_0,\ldots,v_d)\in (VT)^{n}}{\sum_{i=0}^d h_{v,\eta}(v_i)=0}
	\]
	and $1$-simplices $(v_0,\ldots,v_d)(v'_0,\ldots,v'_d)$ whenever there exist $0\leq i <j\leq d$ such that $v_iv'_i,v_jv_j'\in ET$ and $v_k=v'_k$ for all $k\neq i,j$.
\end{definition}
We have $\dim(H^{d+1}_T)=d$ and $\deg(H^{d+1}_T)\leq 2d(d+1)$.

\begin{theorem}
	For every $d\geq 1$ and every $\Delta\in\N$, there is some $K=K(d,\Delta)$ such that every finite $Z\in \bS$ with $\dim(Z)\leq d$ and $\deg(Z)\leq \Delta$ admits a $K$-coarse construction into $H^{d+1}_T$ with volume at most $K|Z|\log_2^d(1+|Z|)$.
\end{theorem}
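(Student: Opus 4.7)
The plan is to proceed by induction on the dimension $d$, taking the $d=1$ case as the theorem from \cite{BarrettHume} that is referenced in the excerpt and exploiting the natural fibration structure of $H^{d+1}(T)$ over $T$ for the inductive step. First I would pass to the barycentric subdivision of $Z$, which by Fact~\ref{fact:subdivprop} changes the degree only by a factor depending on $\Delta$ and multiplies the vertex count by a controlled constant, so we may assume $Z$ is a flag complex. Fixing a root $o\in VT$ and the Busemann function $h=h_{o,\eta}$, the projection $\pi_d\colon H^{d+1}(T)\to T$ onto the last factor is continuous, and its fibre over any $v\in VT$ is naturally isomorphic, via shifting one of the remaining horofunctions by $h(v)$, to $H^d(T)$.

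Next I would perform a balanced binary partition. Choose any balanced binary tree $\mathcal{T}$ of depth $\ell=\lceil\log_2(1+|Z^0|)\rceil$ with $|Z^0|$ leaves and a bijection between leaves of $\mathcal{T}$ and $Z^0$; for each internal node $\tau$ at depth $k$ let $Z_\tau\leq Z$ be the subcomplex induced by the leaves below $\tau$, so $|Z_\tau^0|\leq 2^{-k}|Z^0|$. Embed $\mathcal{T}$ into $T$ as a subtree of depth $\ell$ rooted at $o$ along a fixed geodesic ray, and define $\pi_d\circ\phi\colon Z^0\to VT$ by sending each $v\in Z^0$ to the $T$-image of the corresponding $\mathcal{T}$-leaf. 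Within each fibre $\pi_d^{-1}(\tau)$ for a leaf $\tau$, the inductive hypothesis provides a $K(d-1,\Delta')$-coarse construction of the relevant lower-dimensional ``link'' complex into that copy of $H^d(T)$ with volume at most $K(d-1,\Delta')|Z_\tau^0|\log_2^{d-1}(1+|Z_\tau^0|)$. One then stitches the fibre-constructions together along the tree direction using paths of length at most $2\ell$ in $T$, producing a continuous map $\phi\colon Z\to H^{d+1}(T)$.

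The volume bound follows by summing contributions: at level $k$ of $\mathcal{T}$ there are at most $2^k$ active fibres, each of size at most $2^{-k}|Z^0|$, contributing total volume at most $2^k\cdot K(d-1,\Delta')\cdot 2^{-k}|Z^0|\log_2^{d-1}(2^{-k}|Z^0|)\lesssim K(d-1,\Delta')|Z^0|\log_2^{d-1}(1+|Z^0|)$, and there are $\ell+1$ levels, giving the required $\log_2^d$ factor; the vertical routing in $T$ contributes $O(|Z^0|\ell)$, which is absorbed into the same bound. The $k$-coarse bound follows because each $T$-channel used for routing is used by at most a constant number (depending on $\Delta$) of $Z$-simplices at any level, provided levels are separated sufficiently in $T$.

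I expect the main obstacle to be making $\phi$ a genuine continuous $k$-coarse construction on higher-dimensional simplices, rather than merely a vertex assignment. Specifically, each $k$-simplex $\sigma$ of $Z$ must be realised continuously inside a uniformly bounded union of $k$-simplices of $H^{d+1}(T)$ lying over a short path in $T$, and the recursive fibre decomposition will in general place the vertices of $\sigma$ in many different fibres at different depths. Resolving this cleanly should require a careful ``disjoint-channel'' routing argument reminiscent of the wire-embedding arguments in \cite{BarrettHume}, together with the subdivision step to create enough room for filling higher simplices. The resulting constant $K(d,\Delta)$ is defined recursively from $K(d-1,\Delta')$, with $\Delta'$ bounded in terms of $\Delta$ via Fact~\ref{fact:subdivprop}.
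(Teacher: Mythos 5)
Your outline defers precisely the point on which the whole theorem turns, so as it stands there is a genuine gap rather than a proof. Two problems stand out. First, the inductive set-up is not coherent as described: the leaves of your balanced binary tree $\mathcal T$ biject with $Z^0$, so the fibre over a leaf carries a single vertex of $Z$, and it is never specified which lower-dimensional complex the inductive hypothesis is applied to, in which copy of $H^d(T)$, or how fibrewise constructions over different tree vertices are assembled into a \emph{continuous} map on a $d$-simplex of $Z$ whose $d+1$ vertices lie over many different leaves at different depths. A $d$-simplex of $H^{d+1}_T$ moves in all $d+1$ tree coordinates simultaneously, so ``stitching along the $T$-direction by paths of length $2\ell$'' only produces a map on the $1$-skeleton; filling in the higher simplices inside the $d$-skeleton of the target is not addressed. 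Second, and more seriously, the $k$-coarse bound is asserted, not proved: near the root of the embedded copy of $\mathcal T$, every simplex of $Z$ whose vertices lie in both halves of the partition must pass through a region of bounded size in the $T$-coordinate, and for a bounded-degree expander there are $\Theta(|Z|)$ such simplices. Bounded multiplicity can only be rescued by using the remaining $d$ tree coordinates to separate these routes, and your closing sentence (``a careful disjoint-channel routing argument'') is exactly the missing content, not a detail.

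For comparison, the paper avoids induction and divide-and-conquer entirely. It takes the barycentric subdivision $Z''$ and then a further explicit subdivision $Z'$ in which each $k$-simplex is cut into roughly $\ell^k$ pieces (with $\ell\simeq\log_2(1+|Z|)$), which gives the volume bound $|Z'|\lesssim |Z|\ell^d$ directly. It then defines a genuinely simplicial map $Z'\to H^{d+1}_T$ whose image vertices are $(d+1)$-tuples of binary strings of total length $\ell(d+1)$: the coordinate associated to a face $A_j$ of a simplex is an initial segment of (repetitions of) the binary code of $\min A_j$. The bounded-multiplicity property then falls out for free: any vertex in the image of a simplex $A$ has at least one coordinate of length $\geq\ell$, whose initial segment of length $\ell$ identifies the minimal vertex of $A$, so at most $2^{\deg}$ simplices of $Z$ can hit a given simplex of the target. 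This explicit encoding is what replaces, and resolves, the channel-congestion issue you left open; if you want to pursue your recursive scheme, you would need an analogous mechanism for tagging routes by source simplices in the unused tree coordinates, at which point the construction essentially collapses to the paper's.
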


Before beginning the proof, we quickly sketch the idea. The image of the wiring will be contained in a subset of $H^{d+1}_T$ whose $0$-simplices can be identified with the set of $(d+1)$-tuples of binary sequences whose lengths sum to $\ell(d+1)$ where $\ell=\lceil\log_2(1+|Z|)\rceil$.  This $\ell$ is chosen so that there is an injection $\iota$ of $Z^0$ into binary sequences of length $\ell$. We then build a coarse construction of $Z$ into $H^{d+1}_T$ with the following property. Whenever a $0$-simplex $v=(\omega_0,\ldots,\omega_d)$ is contained in the image of some $\sigma\in\CZ$ then for every $i$ such that $|\omega_i|\geq \ell$ (of which there must be at least one) the initial subword of $\omega_i$ of length $\ell$ is the image under $\iota$ of one of the $0$-simplices of $\sigma$.

We build the coarse construction in two stages. The first stage is a subdivision $Z'$ of $Z$ with the property that every $k$-simplex in $Z$ is subdivided into $\sim \log_2^d(1+|Z|)^k$ simplices in $Z$. In the second stage we construct a simplicial map $Z'\to H^{d+1}_T$. We then prove that the composition of these maps is a $K$-coarse construction for a suitable $K$.

\begin{proof} 

\textbf{Step 1: Subdividing $Z$}

By definition, $Z$ is the topological realisation of some abstract simplicial complex $(S,\mathcal S)$. We enumerate $S=\{1,\ldots,n\}$ and let $\mathbb{S}$ be the set of all strictly increasing chains of elements of $\mathcal S$. We have that $(\mathcal S,\mathbb{S})$ is the barycentric subdivision of $(S,\mathcal S)$. Now take such a $(\mathcal S,\mathbb{S})$ whose topological realisation is $Z''$.

Fix $\ell\in\N$ so that $2^{\ell-1}\leq n \leq 2^\ell$ and define $Z'$ to be (the topological realisation of) the abstract simplicial complex on the set
\[
 D_\ell(\mathcal S)=\setcon{f:\mathcal S\to\N\cup 0}{\supp(f)\in \mathbb{S},\ \sum_{z\in\mathcal S}f(z)=(d+1)\ell}.
\]
where a family of distinct functions $f_0,\ldots,f_k$ span a $k$-simplex if and only if 
\begin{itemize}
\item $\bigcup_{i=0}^k\supp(f_i)\in\mathbb{S}$,
\item for each $i\neq j$, there exist $z',z''\in\mathcal S$ such that
\[
    f_i(z)=\left\{\begin{array}{cc}
        f_j(z)+1 & z=z'  \\
        f_j(z)-1 & z=z''  \\
        f_j(z) & z\neq z',z''
    \end{array}\right.
\]
\end{itemize}

One can equivalently define $Z'$ as a geometric subdivision of $Z''$ where we add new $0$-simplices at every point where all coordinates are integer multiples of $1/(d+1)\ell$, and new $k$-simplices for every set of $0$-simplices which pairwise agree on all but two coordinates and differ by exactly $1/(d+1)\ell$ in the other two. The geometric interpretation is helpful for constructing a homeomorphism $Z\to Z''$, the more combinatorial interpretation introduced above is helpful for the rest of the proof.

%

\begin{center}
\begin{tikzpicture}[scale=0.65]
	\pgfmathsetmacro{\triangleSize}{1}
	\pgfmathsetmacro{\triangleHeight}{\triangleSize*sqrt(3)/2}
	
	\draw[fill=black!20] (-10,0) -- ++(6*\triangleSize,0) -- ++(-3*\triangleSize,6*\triangleHeight) -- cycle;
	
	\node[below left] at (-10,0) {$\{0\}$};
	\node[below right] at (-4,0) {$\{0,1\}$};
	\node[above] at (-7,6*\triangleHeight) {$\{0,1,2\}$};
	
	\fill[black!20] (-\triangleSize/2,0) -- ++(6*\triangleSize,0) -- ++(-3*\triangleSize,6*\triangleHeight) -- cycle;    
	
	\foreach \i in {1,2,3,4,5,6} {
		\foreach \j in {1,...,\i} {
			\pgfmathsetmacro{\x}{(\i-1) * \triangleSize - \j*\triangleSize/2}
			
			\pgfmathsetmacro{\y}{(\j-1)*\triangleHeight}
			
			\draw (\x,\y) -- ++(\triangleSize,0) -- ++(-\triangleSize/2,\triangleHeight) -- cycle;
		}
		
		\draw[fill=white] (-0.5,0) circle (0.1);
		\node[below left] at (-0.5,0) {$(6,0,0)$};
		\draw[fill=white] (1,3*\triangleHeight) circle (0.06);    		
		\node[left] at (1,3*\triangleHeight) {$(3,0,3)$};
		\draw[fill=white] (4,3*\triangleHeight) circle (0.06);    
		\node[right] at (4,3*\triangleHeight) {$(0,3,3)$};
		\draw[fill=white] (5.5,0) circle (0.06);    
		\node[below right] at (5.5,0) {$(0,6,0)$};
		\draw[fill=white] (2.5,6*\triangleHeight) circle (0.06);    
		\node[above] at (2.5,6*\triangleHeight) {$(0,0,6)$};
		\draw[fill=white] (2.5,0) circle (0.06);    
		\node[below] at (2.5,0) {$(3,3,0)$};
		\draw[fill=white] (2.5,2*\triangleHeight) circle (0.06);    
		\node[below] at (2.5,2*\triangleHeight) {$(2,2,2)$};
	}
\end{tikzpicture}
The simplices in $Z'$ whose support is contained in the $2$-simplex $(\{0\},\{0,1\},\{0,1,2\})$ in $Z''$ in the case $\ell=d=2$. \\ Each function $f$ is written out as $(f(\{0\}),f(\{0,1\}),f(\{0,1,2\}))$.
\end{center}

\begin{lemma}\label{lem:boundZ'} We have $\dim(Z')=\dim(Z)=d$, $\deg(Z')\leq\Delta2^{\Delta}$ and $|Z'|\leq C|Z|\ell^d$ for some $C$ which depends only on $\Delta$.
\end{lemma}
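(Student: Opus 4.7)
The plan is to prove the three bounds separately, treating $Z'$ as a subdivision of the barycentric subdivision $Z''$ of $Z$ and then carefully counting via the chain/support structure of $\mathbb{S}$.

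For the dimension claim, I would first observe that the simplices of $Z'$ whose vertices all have support contained in a fixed chain $C = (\sigma_0 \subsetneq \ldots \subsetneq \sigma_j) \in \mathbb{S}$ partition the geometric $j$-simplex of $Z''$ spanned by $C$ into the ``permutohedral''-style decomposition defined in the excerpt. Since $\dim(Z'') = \dim(Z) = d$ (a standard fact about barycentric subdivisions, cf.\ Fact \ref{fact:subdivprop}), every $k$-simplex of $Z'$ is contained in a $j$-simplex of $Z''$ with $k \leq j \leq d$. Thus $\dim(Z') \leq d$; and we get $\dim(Z') \geq d$ by exhibiting a maximal chain of length $d+1$ in $Z$ and taking $d+1$ pairwise-adjacent functions in $D_\ell(\mathcal{S})$ supported on it.

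For the degree bound, I would fix a vertex $f \in D_\ell(\mathcal{S})$ with $\supp(f)$ a chain of length $j+1 \leq d+1$ and count its neighbours. Any neighbour $f'$ is obtained by transferring one unit of mass from some $z' \in \supp(f)$ to some $z'' \in \mathcal{S}$ such that $\supp(f) \cup \{z''\}$ remains a chain in $\mathbb{S}$. The choice of $z'$ contributes at most $d+1 \leq \Delta + 1$ options. The admissible $z''$ must be comparable with every element of $\supp(f)$, hence either a face of the minimal element, a simplex ``inserted'' between two consecutive elements of the chain (both types are subsets of $\sigma_j$, giving at most $2^{d+1}$ choices), or a simplex containing $\sigma_j$, which by Fact \ref{fact:simplicebound} are at most $\Delta_{(S,\mathcal{S})}$-many, itself bounded in terms of $\Delta$. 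Combining gives a bound on $\deg(Z')$ depending only on $\Delta$ (and indeed one of the shape $\Delta 2^\Delta$ using $d \leq \Delta$).

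For the volume bound, I would count $|Z'| = |D_\ell(\mathcal{S})|$ by stratifying over possible supports. Each function $f \in D_\ell(\mathcal{S})$ has support a chain in $\mathbb{S}$ of length $k+1$ with $0 \leq k \leq d$. The number of chains ending at a fixed simplex $\sigma_j \in \mathcal{S}$ is bounded by the number of strictly increasing chains of subsets of a $(d+1)$-element set, a constant $c_1(d)$. The number of $\sigma_j \in \mathcal{S}$ is at most $|Z| \cdot 2^\Delta$ by Fact \ref{fact:simplicebound}, so the total number of chains in $\mathbb{S}$ is $\leq c_1(d) 2^\Delta |Z|$. For each chain of length $k+1$, the number of functions with exactly that support summing to $(d+1)\ell$ is the stars-and-bars count $\binom{(d+1)\ell-1}{k} \leq ((d+1)\ell)^d$. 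Multiplying yields $|Z'| \leq C|Z|\ell^d$ with $C = C(\Delta)$, using $d \leq \Delta$.

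The main obstacle I anticipate is bookkeeping in the degree estimate: one has to be careful that when $f(z')=1$ the support shrinks, so the constraint on $z''$ is slightly different, and one must verify that the admissible $z''$ are still controlled by a ``comparability with a chain'' condition that can be bounded via Fact \ref{fact:simplicebound}. Everything else is combinatorics on chains plus the standard degree/dimension bounds for barycentric subdivisions.
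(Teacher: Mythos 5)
Your degree and volume estimates are essentially the paper's own argument and are fine. Every neighbour of $f$ is obtained by moving one unit of mass from a donor in $\supp(f)$ to a recipient $z''$ with $\supp(f)\cup\{z''\}\in\mathbb{S}$; you bound the admissible recipients by subsets of the top element of the chain plus simplices containing it, whereas the paper counts neighbours of the donor in $Z''$, and both give a bound of the shape $\Delta 2^{\Delta}$ (your constant, like the paper's once one sums the two cases rather than taking a maximum, is slightly larger than the stated one, but nothing downstream is sensitive to this). Your count of $|Z'|$, stratifying over support chains and using stars and bars, is the same computation as the paper's bound $|\mathbb{S}|\cdot F_{d+1}((d+1)\ell)$, with a marginally different bound on the number of chains.

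The genuine gap is in the dimension claim. $Z'$ is defined purely combinatorially: a set of functions spans a simplex iff the union of their supports is a chain and they pairwise differ by an elementary transfer $\delta_{z'}-\delta_{z''}$. To get $\dim(Z')\leq d$ you must bound the size of such a pairwise-adjacent family, and ``each simplex of $Z'$ lies inside a $j$-simplex of $Z''$ with $j\leq d$'' does not do this: an abstract simplex could a priori have many vertices all lying in a $d$-dimensional geometric simplex, unless you already know its vertex set is affinely independent. You obtain this from the assertion that the simplices of $Z'$ over a fixed chain form a genuine ``permutohedral-style'' subdivision of the dilated simplex, but that is exactly the nontrivial point, and read literally as a partition statement it is false once the chain has length at least $4$: every clique of the transfer relation is of the form $\{g+\delta_{z_i}\}_i$ or $\{h-\delta_{z_i}\}_i$, and the corresponding up- and down-cells do not cover the dilated simplex (for a chain of length $4$ the point with coordinates $(\tfrac12,\tfrac12,\tfrac12,(d+1)\ell-\tfrac32)$ lies in the convex hull of no simplex of $Z'$), so no off-the-shelf decomposition can be cited here. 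What the lemma needs, and what the paper's proof spends its first paragraph establishing, is the combinatorial clique-structure statement: in a pairwise-adjacent family the values $f_i(z)$ at each $z$ take at most two consecutive values and never with two functions on each side, which (together with the fixed total mass and the fact that adjacent functions differ at exactly two points) forces the family to be of the form $g\pm\delta_{z_0},\ldots,g\pm\delta_{z_k}$ with the $z_i$ distinct elements of the support chain, hence has at most $d+1$ members; this also supplies the affine independence you implicitly use. Your plan should include that argument; your lower bound $\dim(Z')\geq d$ via a maximal chain and the functions $g+\delta_{z_i}$ is fine.
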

\begin{proof} 
Suppose that $f_0,\ldots,f_{k}\in D_\ell(\mathcal S)$ are distinct and span a $k$-simplex. We will show that there is a well-defined injective map $\bigcup_{i=0}^k\supp(f_i)\to\{0,\ldots,d\}$, which assigns to each $z$ the unique $j$ such that $i\mapsto f_i(z)$ is constant on $\{0,\ldots,d\}\setminus\{j\}$.

For each $z\in\bigcup_{i=0}^k\supp(f_i)$, $\setcon{f_i(z)}{0\leq i \leq k}$ has at most $2$ elements, otherwise condition (iii) fails. Next, suppose for a contradiction that there exist $i,i',j,j'$ all distinct such that $f_{i}(z)=f_{i'}(z)< f_{j}(z)=f_{j'}(z)$, then there exist $z',z''$ such that $f_i(z')>f_j(z')$ and $f_{i'}(z'')>f_{j'}(z'')$. If $z'\neq z''$, then $f_i$ and $f_{j'}$ differ at $z,z'$ and $z''$, contradicting $(ii)$, so by $(iii)$, $f_i(z')=f_{i'}(z')$. Similarly, if $f_i(z''')\neq f_{i'}(z''')$ for some $z'''\neq z,z'$, then one of $f_i$ or $f_{i'}$ must differ with $f_j$ at $z,z',z'''$, which is a contradiction. Thus $f_i=f_{i'}$ which contradicts the assumption that the $f_i$ were distinct.
\medskip

Now let $f\in D_{\ell}(\mathcal S)$ and set $s=|\supp(f)|$. If $f,f'$ span a $1$-simplex and $\supp(f')\subseteq \supp(f)$, by (ii) and (iii) there are at most $s(s-1)$ possibilities for $f'$ (increase the function by 1 on one element of the support and decrease by one on another). Otherwise, there is some $z\in\supp(f)$ such that $f'(z)=f(z)-1$ and some $z'\not\in\supp(f)$ such that $f'(z')=1$. There are $s$ possibilities for $z$. By (i) $\supp(f)\cup\{z'\}\in\mathbb S$, so $\{z,z'\}\in\mathbb{S}$, so once $z$ has been chosen there are at most $\deg(Z'')\leq 2^{\Delta}$ possibilities for $Z'$. Thus 
\[
 \deg(Z')\leq \max\{(d+1)d, d2^{\Delta}\}\leq \Delta2^{\Delta}
\] 
since $s\leq d+1\leq\Delta$.
\medskip

Finally, $|Z'|=|D_\ell(\mathcal S)|\leq |\mathbb{S}| F_{d+1}((d+1)\ell)$ where $F_{k}(n)$ is the number of ways of writing $n$ as the ordered sum of $k$ non-negative integers (eg.\ $(0,2)$ and $(2,0)$ are considered as different elements of $F_2(2)$). An easy upper bound on $F_{k}(n)$ which is sufficient for our purposes is $(n+1)^{k-1}$, which is obtained by noting that the $k$th non-negative integer can always be determined once the previous $k-1$ are known.

Applying Fact \ref{fact:simplicebound} twice, we see that
\begin{align*}
	|Z'| 	&	\leq |\mathbb{S}| ((d+1)\ell+1)^d	\\
			& 	\leq |\mathcal{S}| 2^{\deg(Z')} ((d+1)\ell+1)^d	\\
			&	\leq |Z|2^{\Delta} 2^{2^{\Delta}} ((d+1)\ell+1)^d \\
			&	\leq C(\Delta) |Z|\ell^d. \qedhere
\end{align*}
\end{proof}

\textbf{Step 2: defining the simplicial map $s: Z'\to H^{d+1}(T)$}

We next define a simplicial map $s: Z'\to H^{d+1}(T)$. The image of this map will be contained in a subset $H_{\ell}$ which we now define. Fix a vertex $v\in T$ with $h(v)=-\ell$. Let $V_\ell$ be the set of vertices $w$ such that $d(v,w)\leq \ell(d+1)$ and $-\ell\leq h(w)\leq \ell d$. $V_\ell$ is the vertex set of a binary tree of depth $\ell(d+1)$ contained in $T$, so we may represent its vertices by binary strings of length at most $\ell(d+1)$. Now consider the full subcomplex of $H^{d+1}_T$ whose set of $0$-simplices is $V_\ell^{d+1}\cap (H^{d+1}_T)^0$ and denote it by $H_\ell$. We represent elements of $H_\ell$ by $(d+1)$-tuples of binary strings whose lengths sum to $\ell(d+1)$. Note that two $(d+1)$-tuples $(\omega_1,\ldots,\omega_{d+1})$ and $(\omega'_1,\ldots,\omega'_{d+1})$ span a $1$-simplex if and only if there exist $j$ and $k$ such that $\omega_j\in\{\omega'_j0,\omega'_j1\}$ and $\omega'_k\in\{\omega_k0,\omega_k1\}$; and $\omega_i=\omega'_i$ for all $i\neq j,k$.

Given this description of $H_{\ell}$, we define $s$ as follows:

Let $f\in D_{\ell}(\mathcal S)$, so $\sum_{z\in\mathcal S} f(z)=\ell(d+1)$. Denote by $b(k)_i$ the initial subword of length $i$ of the concatention of $d+1$ copies of the binary representation of $k$ as a string of length $\ell$. For example, if $\ell=7$ and $d=3$, then $b(5)_{18}=0000101\ 0000101\ 0000$.

We now define $s$. Note that $\supp(f)$ contains at most one set of any given cardinality between $1$ and $d+1$. If $\supp(f)$ contains no element of cardinality $j$, we set $s(f)_j=\emptyset$. If $A_j\in\supp(f)$ has cardinality $j$ then we define 
\[
s(f)_j = b(\min A_j)_{f(A_j)}.
\]
Note that $s(f)$ is a $(d+1)$-tuple of binary strings whose lengths sum to
\[
 \sum_{j=1}^{d+1} f(A_j) = \sum_{z\in\mathcal S} f(z) = \ell(d+1).
\]
Hence $s(f)\in H_\ell$. 
\medskip

Since $H^{d+1}_T$ is a flag complex, to prove that $s$ is simplicial it suffices to prove that whenever $f_1,f_2$ span a $1$-simplex in $Z'$, $s(f_1)$ and $s(f_2)$ span a $1$-simplex in $H^{d+1}_T$.

We write $\card(f)=\setcon{|A|}{A\in\supp(f)}$.  We introduce the shorthand
\[
 f(j) = \left\{ \begin{array}{lll} f(A) & \textup{if} & j\in \card(f),\ A\in\supp(f),\ |A|=j \\ 0 & \textup{if} & j\not\in\card(f).
 \end{array}\right.
\]
This is well-defined, because $\supp(f)$ contains at most one set of any given cardinality.

By definition, $\supp(f_1)\cup\supp(f_2) \in\mathbb S$, so it contains at most one set of any given cardinality. Moreover, there exist $j_1,j_2$ such that $f_1(j_1)=f_2(j_1)+1$ and $f_2(j_2)=f_1(j_2)+1$, and for all $j\not\in\{j_1,j_2\}$, $f_1(j)=f_2(j)$.

Firstly suppose $j\not\in\{j_1,j_2\}$. If $j\not\in\card(f_1)$ then $s(f_1)_j=\emptyset=s(f_2)_j$. If $j\in\card(f_1)$, then $s(f_1)_j= b(\min A_j)_{f_1(A_j)}$ where $A_j$ is the unique set in $\supp(f_1)\cup\supp(f_2)$ of cardinality $j$. Since $f_1(j)=f_2(j)$, we deduce that $s(f_2)_j= b(\min A_j)_{f_2(A_j)}$, so $s(f_1)_j=s(f_2)_j$.

It is clear that $j_1\in\card(f_1)$, so $s(f_1)_{j_1}=b(\min A_{j_1})_{f_1(j_1)}$. 

If $j_1\in\card(f_2)$, then $s(f_2)_{j_1}=b(\min A_{j_1})_{f_2(j_1)}$, and $f_1(j_1)=f_2(j_1)+1$, thus $s(f_1)_{j_1}\in\{s(f_2)_{j_1}0,s(f_2)_{j_1}1\}$. 

If $j_1\not\in\card(f_2)$, then $s(f_2)_{j_1}=\emptyset$ and $f_1(j_1)=1$, so $s(f_1)_{j_1}\in\{0,1\}=\{s(f_2)_{j_1}0,s(f_2)_{j_1}1\}$. The same argument holds for $j_2$. Thus $s(f_1)$ and $s(f_2)$ span a $1$-simplex in $H_{\ell}$.
\medskip

\textbf{Step 3: the coarse construction and completing the proof}

The coarse construction $\phi:Z\to \overline{H_\ell}$ is given as a concatenation of three steps $Z\to Z'' \to Z' \to \overline{H_\ell}$. The first two are the natural homeomorphisms associated to geometric subdivisions. The third map is the natural continuous extension of a simplicial map to the topological realisations. 
%
%

To complete the proof, we show that the map $f:Z\to \overline{H_\ell}$ is a $2^{\deg(S)}$ coarse construction for some $k$. It is immediate that it is continuous and $f(Z^{\leq d})\subseteq \overline{H_\ell}^{\leq d}$. Let $\sigma$ be a closed simplex in $\overline{H_\ell}$, let $(\omega_0,\ldots,\omega_{d})$ be any $0$-simplex in $\sigma$ and suppose there is some $A\in\CZ$ whose image under $f$ contains $\sigma$. As the sum of the lengths of the $\omega_i$ equals $\ell(d+1)$, at least one of them has length at least $\ell$, say it is $\sigma_j$. By construction, the initial subword of length $\ell$ of $\omega_j$ is the binary sequence corresponding to the minimal element $n_j$ of $S$ contained in the simplex $A$. The number of simplices in $S$ containing $n_j$ is at most $2^{\deg{S}}$, and $\sigma'$ must be one of these.

The volume of this coarse construction is at most
\[
 |Z'|\leq C|Z|\ell^d \leq C|Z|(\log_2(|Z|)+1)^d \leq 2^dC|Z|(\log_2(1+|Z|))^d
\]
by Lemma \ref{lem:boundZ'}.
\end{proof}

Combining this result with Theorem \ref{thm:TOlbcon}

\begin{corollary} For each $q\leq n$, we have 
\[
r/\ln(1+r)^q\lesssim \sTO^q_{H^{n+1}_T}(r)\lesssim r/\ln(1+r)
\]
and for each $q<n$ we have
\[
r/\ln(1+r)^{q}\lesssim \sTO^q_{T^n}(r) \lesssim r/\ln(1+r).
\]
\end{corollary}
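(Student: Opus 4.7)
The plan is to combine the theorem just established (giving $K$-coarse constructions of bounded-degree, bounded-dimension complexes into the appropriate horocyclic product with controlled volume) with the monotonicity machinery of Sections 3 and 4, plus Corollary \ref{cor:TO1finANdim} for the upper bounds.

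For the upper bounds, I would first observe that both $H^{n+1}_T$ and $T^n$ have finite Assouad--Nagata dimension: the tree $T$ has Assouad--Nagata dimension $1$, the product $T^n$ inherits finite Assouad--Nagata dimension from the subadditivity of this invariant under direct products, and $H^{n+1}_T$ sits metrically inside $T^{n+1}$, inheriting the property from a subspace. Corollary \ref{cor:TO1finANdim} then immediately yields $\sTO^q_{H^{n+1}_T}(r), \sTO^q_{T^n}(r) \lesssim r/\ln(1+r)$ for every $q \geq 1$.

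For the lower bound on $\sTO^q_{H^{n+1}_T}$ when $q \leq n$, the strategy is to promote the $K$-coarse construction into $H^{q+1}_T$ (given by the theorem just proved) to one into $H^{n+1}_T$ via a natural simplicial inclusion. The basepoint $v$ used to define the Busemann function $h_{v,\eta}$ satisfies $h(v) = 0$, so the map $\iota \colon H^{q+1}_T \to H^{n+1}_T$ defined on $0$-simplices by $(v_0, \ldots, v_q) \mapsto (v_0, \ldots, v_q, v, \ldots, v)$ preserves the height-sum condition. It sends $1$-simplices to $1$-simplices (exactly two coordinates still differ by a tree edge, all padded coordinates remain equal), and since both complexes are flag, $\iota$ extends to an injective simplicial map; as such it is a $1$-coarse construction whose image is a subcomplex with the same number of $0$-simplices as the source. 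Composing the $K$-coarse construction $Z \to H^{q+1}_T$ of volume $K|Z|\log_2^q(1+|Z|)$ with $\iota$ via the composition lemma produces a $K$-coarse construction $Z \to H^{n+1}_T$ of the same volume bound, and applying Theorem \ref{thm:TOlbcon} with $a=1$, $b=q$ gives $\sTO^q_{H^{n+1}_T}(r) \gtrsim r/\ln(1+r)^q$.

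For the lower bound on $\sTO^q_{T^n}$ when $q < n$, I would transfer the previous bound via Theorem \ref{thm:TOmono}. The set-theoretic inclusion of the $0$-skeleton of $H^{q+1}_T$ into $(VT)^{q+1} \subseteq (VT)^n$ is a Lipschitz injection with singleton preimages, hence a regular map. Since $T^n$, realised as an iterated barycentric subdivision of the natural cube complex product, is CAT($0$) and in particular uniformly $k$-connected for every $k$, Theorem \ref{thm:TOmono} applies and yields $\sTO^q_{H^{q+1}_T}(r) \lesssim \sTO^q_{T^n}(r)$. Combined with the $n = q$ case of the preceding paragraph, this gives $\sTO^q_{T^n}(r) \gtrsim r/\ln(1+r)^q$. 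The main obstacle has already been absorbed into the theorem just proved; the remaining steps reduce to verifying that the natural simplicial inclusion and the set-theoretic inclusion satisfy the axioms of a coarse construction and a regular map respectively, and to recalling the standard uniform connectivity and Assouad--Nagata dimension properties of products of trees.
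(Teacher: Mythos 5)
Your proposal is correct and fills in the corollary exactly along the lines the paper intends (the paper states it as an immediate combination of the construction theorem with Theorem \ref{thm:TOlbcon}): lower bounds by composing the $K$-coarse construction into $H^{q+1}_T$ with the basepoint-padding simplicial inclusion into $H^{n+1}_T$ and applying Theorem \ref{thm:TOlbcon} with $a=1$, $b=q$; the $T^n$ lower bound via the regular $0$-skeleton inclusion and Theorem \ref{thm:TOmono} (correctly using monotonicity only where the target is uniformly connected, and coarse constructions where it is not); and upper bounds from Corollary \ref{cor:TO1finANdim}.

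Two small points to patch. First, under the definition in \S\ref{sec:coarsecon} the padding inclusion is not a $1$-coarse construction: for a $0$-simplex $\sigma$ in the image, every closed simplex of $H^{q+1}_T$ containing its preimage vertex has image meeting $\sigma$, so you only obtain a $k$-coarse construction with $k$ a constant controlled by $\deg(H^{q+1}_T)$ and $n$. This is harmless, since Theorem \ref{thm:TOlbcon} only requires some constant $k(d,q)$, but the composition should be quoted with that constant. Second, your justification that $H^{n+1}_T$ has finite Assouad--Nagata dimension because it ``sits metrically inside $T^{n+1}$'' is incomplete as written: AN dimension passes to subspaces with the induced metric, while $H^{n+1}_T$ carries its intrinsic path metric, so you would need to check that the inclusion of $0$-skeleta is a quasi-isometric embedding (true, but it requires a distance estimate in the horocyclic product). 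A cleaner fix within the paper's toolkit is to apply Theorem \ref{thm:TOmono} to the regular map $(H^{n+1}_T)^0\to (T^{n+1})^0$, noting that the product of trees is uniformly $k$-connected (it is CAT$(0)$), which gives $\sTO^q_{H^{n+1}_T}(r)\lesssim \sTO^q_{T^{n+1}}(r)\lesssim r/\ln(1+r)$, invoking Corollary \ref{cor:TO1finANdim} only for the product of trees, whose finite AN dimension is immediate.
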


\subsection{Obstructing regular maps}
In this final section we prove Theorem \ref{thm:regobst}. We recall the statement for convenience.

\begin{theorem*} There is no regular map from the $0$-skeleton of $H^{d+1}$ to any product $H\times (\HH^2)^{d-1}\times D$ where $H$ is a bounded degree hyperbolic graph and $D$ is a bounded degree graph satisfying the doubling property.
\end{theorem*}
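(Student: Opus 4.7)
The plan is to argue by contradiction. Assume there exists a regular map $\phi : (H^{d+1})^0 \to Y^0$ where $Y := H \times (\HH^2)^{d-1} \times D$, and derive a contradiction between the lower bound on $\sTO^d_{H^{d+1}}$ provided by Theorem \ref{thm:TOhorocycle} and a strictly smaller upper bound on $\TO^d_Y$ obtained via the product theorem.

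\textbf{Step 1 (lower bound on the source).} Theorem \ref{thm:TOhorocycle} gives
\[
\sTO^d_{H^{d+1}}(r) \gtrsim \frac{r}{\ln(1+r)^d}.
\]

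\textbf{Step 2 (upper bound on the target).} Regroup the factors as $Y = (H \times D) \times (\HH^2)^{d-1}$, viewing $Y$ as a product of $d$ metric spaces. Iterating Theorem \ref{thm:productub} with every factor allotted exponent $1$ gives
\[
\TO^d_Y(r) \lesssim \TO^1_{H \times D}(r) \cdot \left(\TO^1_{\HH^2}(r)\right)^{d-1}.
\]
Since $H$ is hyperbolic and $D$ is doubling, the intro (via Theorem \ref{thm:compTO1cutwidth} and results of \cite{HHKL23, HumeMackTess-Pprof, HumeMackTess-PprofLie}) records that $H \times D$ is $1$-topologically thin, so $\TO^1_{H \times D}(r) \lesssim r^a$ for some $a \in [0,1)$. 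Corollary \ref{cor:fgnilpTO1} gives $\TO^1_{\HH^2}(r) \lesssim \ln(1+r)^2$. Hence
\[
\TO^d_Y(r) \lesssim r^a \ln(1+r)^{2(d-1)},
\]
which is $o\!\left(r/\ln(1+r)^d\right)$.

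\textbf{Step 3 (monotonicity and contradiction).} Replace $Y$ by a bounded-degree simplicial complex $\widetilde Y$ quasi-isometric to $Y$ and uniformly $k$-connected for all $k \leq d$ (see below). The regular map $\phi$ then corresponds to a regular map $(H^{d+1})^0 \to \widetilde Y^0$, so Theorem \ref{thm:TOmono} yields $\sTO^d_{H^{d+1}}(r) \lesssim \sTO^d_{\widetilde Y}(r)$, which by Theorem \ref{thm:defnequiv} is $\simeq \TO^d_Y(r)$. Combining with Steps 1--2 forces
\[
\frac{r}{\ln(1+r)^d} \lesssim r^a \ln(1+r)^{2(d-1)},
\]
which fails for large $r$ since $a < 1$.

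\textbf{Main obstacle.} The only step that is not an immediate citation is constructing $\widetilde Y$ with the uniform $d$-connectivity demanded by Theorem \ref{thm:TOmono}. The $(\HH^2)^{d-1}$ factor is contractible, so the problem reduces to picking sufficiently connected simplicial models of $H$ and $D$. For hyperbolic $H$, the Rips complex at scale exceeding a suitable multiple of the hyperbolicity constant is contractible by classical results. For doubling $D$, bounded geometry permits constructing a Rips-type thickening at sufficiently large scale that is at least $d$-connected, via standard coarse geometry inputs on doubling spaces and finite asymptotic dimension. Once this technical check is in place, the contradiction above follows immediately from the results already established in the paper.
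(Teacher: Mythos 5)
Your Steps 1 and 2 match the paper's strategy, but Step 3 contains a genuine gap, and it is exactly at the point you flag as the ``main obstacle''. Theorem \ref{thm:TOmono} needs the target to be (quasi-isometric to) a bounded degree complex that is uniformly $k$-connected for all $k\leq d$, and your proposed fix --- that for a doubling graph $D$ a Rips-type thickening at sufficiently large scale is $d$-connected, using doubling/finite asymptotic dimension --- is false. Doubling gives no control whatsoever on coarse topology: a bounded degree graph containing coarse circles (or coarse $2$-spheres) of unbounded size, e.g.\ a chain of cylinders $\Z\times C_n$ with $n\to\infty$ or a chain of larger and larger triangulated spheres arranged in $\R^3$, is doubling and has finite asymptotic dimension, yet its Rips complex at \emph{every} fixed scale contains essential spheres of bounded dimension that admit no filling at all, so no scale yields uniform $1$- (resp.\ $2$-) connectivity. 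Since the theorem quantifies over all doubling $D$, the uniformly $d$-connected model $\widetilde Y$ you invoke need not exist, and the monotonicity step collapses. (The hyperbolic factor $H$ is genuinely fine via Rips complexes, and there is also a smaller unaddressed wrinkle in passing from $\TO^d_{\widetilde Y}$ to $\TO^d_Y$ under quasi-isometry, since Proposition \ref{prop:cTOmonotone} requires continuous maps; but the doubling factor is the fatal issue.)

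The missing idea, which is how the paper proceeds, is to change the target rather than to build a connected model of it: apply the Bonk--Schramm embedding theorem to $H$ and Assouad's embedding theorem to $D$ to obtain regular maps $H\to\HH^k$ and $D\to\R^l$ (a snowflake embedding of a bounded degree graph is coarse Lipschitz with uniformly bounded point preimages, hence regular). Composing, a regular map $(H^{d+1})^0\to H\times(\HH^2)^{d-1}\times D$ yields a regular map to $Y'=\HH^k\times(\HH^2)^{d-1}\times\R^l$, which \emph{is} uniformly $m$-connected for every $m$, so Theorem \ref{thm:TOmono} applies; then your Step 2 argument, run with $\TO^1_{\HH^k\times\R^l}(r)\lesssim r^a$ ($a<1$) in place of $\TO^1_{H\times D}$, gives the same contradiction with Theorem \ref{thm:TOhorocycle}. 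With that replacement your outline becomes the paper's proof; as written, the argument does not go through.
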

\begin{proof}
Suppose for a contradiction that there is a regular map $(H^{d+1})^0\to H\times (\HH^2)^{d-1}\times D$.

Applying the Bonk-Schramm embedding theorem to $H$ and Assouad's embedding theorem to $D$ \cite{BonkSchramm,Assouad}, we obtain a regular map
\[
 (H^{d+1})^0\to \HH^k\times (\HH^2)^{d-1}\times \R^l
\]
for some $k,l\in\N$. As $Y=\HH^k\times (\HH^2)^{d-1}\times \R^l$ is uniformly $m$-connected for every $m$, by Theorem \ref{thm:TOmono}, $\TO^d_{H^{d+1}}(r)\lesssim \TO^d_Y(r)$.

However, $\TO^d_{H^{d+1}}(r)\gtrsim r/\log(1+r)^q$ while
\begin{align*}
	\TO^d_Y(r) 	& \lesssim \TO^1_{\HH^k\times \R^l}(r) \left(\TO^1_{\HH^2}(r)\right)^{d-1} \\
				& \lesssim r^a\log(1+r)^{d-1}
\end{align*}
for some $a<1$. This is a contradiction.
\end{proof}

\newcommand{\etalchar}[1]{$^{#1}$}
\def\cprime{$'$}


\begin{thebibliography}{CMST85}
	
	\bibitem[Ass83]{Assouad}
	P.~Assouad.
	\newblock Plongements lipschitziens dans {${\bf R}\sp{n}$}.
	\newblock {\em Bull. Soc. Math. France}, 111(4):429--448, 1983.
	
	\bibitem[BE21]{Breuckmann-Eberhardt}
	Nikolas~P. Breuckmann and Jens~N. Eberhardt.
	\newblock Balanced product quantum codes.
	\newblock {\em IEEE Trans. Inf. Theor.}, 67(10):6653–6674, oct 2021.
	
	\bibitem[BH99]{BH99}
	M.~R. Bridson and A.~Haefliger.
	\newblock {\em Metric spaces of non-positive curvature}, volume 319 of {\em
		Grundlehren der Mathematischen Wissenschaften [Fundamental Principles of
		Mathematical Sciences]}.
	\newblock Springer-Verlag, Berlin, 1999.
	
	\bibitem[BH24]{BarrettHume}
	Benjamin Barrett and David Hume.
	\newblock Thick embeddings of graphs into symmetric spaces via coarse geometry.
	\newblock {\em To appear in Trans AMS}, 2024.
	\newblock With an appendix by Larry Guth and Elia Portnoy.
	
	\bibitem[BN24]{BensaidNguyen}
	Oussama Bensaid and Thang Nguyen.
	\newblock Embedding products of trees into higher rank.
	\newblock Available from arXiv:2405.02226, 2024.
	
	\bibitem[BS00]{BonkSchramm}
	M.~Bonk and O.~Schramm.
	\newblock Embeddings of {G}romov hyperbolic spaces.
	\newblock {\em Geom. Funct. Anal.}, 10(2):266--306, 2000.
	
	\bibitem[BST12]{BenSchTim-12-separation-graphs}
	Itai Benjamini, Oded Schramm, and {\'A}dam Tim{\'a}r.
	\newblock On the separation profile of infinite graphs.
	\newblock {\em Groups Geom. Dyn.}, 6(4):639--658, 2012.
	
	\bibitem[CCZ02]{CCZ}
	James~W. Cannon, Gregory~R. Conner, and Andreas Zastrow.
	\newblock One-dimensional sets and planar sets are aspherical.
	\newblock {\em Topology and its Applications}, 120:23--45, 2002.
	
	\bibitem[CG19]{LeCozGour}
	Corentin~Le Coz and Antoine Gournay.
	\newblock Separation profiles, isoperimetry, growth and compression.
	\newblock Available from arXiv:1910.11733, 2019.
	
	\bibitem[CMST85]{CFST-treecutwidth}
	Moon-Jung Chung, Fillia Makedon, Ivan~Hal Sudborough, and Jonathan Turner.
	\newblock Polynomial time algorithms for the min cut problem on degree
	restricted trees.
	\newblock {\em SIAM Journal on Computing}, 14(1):158--177, 1985.
	
	\bibitem[DEL{\etalchar{+}}22]{DELLM}
	Irit Dinur, Shai Evra, Ron Livne, Alexander Lubotzky, and Shahar Mozes.
	\newblock Locally testable codes with constant rate, distance, and locality.
	\newblock In {\em Proceedings of the 54th Annual ACM SIGACT Symposium on Theory
		of Computing}, STOC 2022, page 357–374, New York, NY, USA, 2022.
	Association for Computing Machinery.
	
	\bibitem[DK18]{DrutuKap}
	C.~Dru{\c{t}}u and M.~Kapovich.
	\newblock {\em Geometric Group Theory}.
	\newblock Colloquium Publications. American Mathematical Society, 2018.
	
	\bibitem[DV03]{Djidjev2003CrossingNA}
	Hristo~N. Djidjev and Imrich Vrto.
	\newblock Crossing numbers and cutwidths.
	\newblock {\em J. Graph Algorithms Appl.}, 7:245--251, 2003.
	
	\bibitem[EK15]{EvraKaufman}
	Shai Evra and Tali Kaufman.
	\newblock Bounded degree cosystolic expanders of every dimension.
	\newblock {\em Proceedings of the forty-eighth annual ACM symposium on Theory
		of Computing}, 2015.
	
	\bibitem[FW14]{FisherWhyte}
	David Fisher and Kevin Whyte.
	\newblock Quasi-isometric embeddings of symmetric spaces.
	\newblock {\em Geometry \& Topology}, 2014.
	
	\bibitem[Gro83]{Gromov83}
	Mikhael Gromov.
	\newblock Filling riemannian manifolds.
	\newblock {\em Journal of Differential Geometry}, 18:1--147, 1983.
	
	\bibitem[Gro93]{Gromov93}
	M.~Gromov.
	\newblock Asymptotic invariants of infinite groups.
	\newblock In {\em Geometric group theory, {V}ol.\ 2 ({S}ussex, 1991)}, volume
	182 of {\em London Math. Soc. Lecture Note Ser.}, pages 1--295. Cambridge
	Univ. Press, Cambridge, 1993.
	
	\bibitem[Gro10]{Gro-10-exp-top}
	Misha Gromov.
	\newblock Singularities, expanders and topology of maps. part 2: from
	combinatorics to topology via algebraic isoperimetry.
	\newblock {\em Geometric and Functional Analysis}, 20:416--526, 2010.
	
	\bibitem[Gut06]{Guth-widthvol}
	Larry Guth.
	\newblock The width-volume inequality.
	\newblock {\em Geometric and Functional Analysis}, 17:1139--1179, 2006.
	
	\bibitem[Gut14]{Guth-waist}
	Larry Guth.
	\newblock The waist inequality in Gromov's work.
	\newblock The Abel Prize 2008-2012. Edited by Helge
	Holden and Ragni Piene, Springer, Berlin. 181--195, 2014. 
	
	\bibitem[HHKL]{HHKL23}
	Wanjing Huang, David Hume, Samuel Kelly, and Ryan Lam.
	\newblock A coarse geometric approach to graph layout problems.
	\newblock Available from arXiv:2312.07105.
	
	\bibitem[HLW06]{HLW-exp}
	Shlomo Hoory, Nathan Linial, and Avi Wigderson.
	\newblock Expander graphs and their applications.
	\newblock {\em Bull. Amer. Math. Soc.}, 43(04):439–562, August 2006.
	
	\bibitem[HMT20]{HumeMackTess-Pprof}
	David Hume, John~M. Mackay, and Romain Tessera.
	\newblock Poincar\'{e} profiles of groups and spaces.
	\newblock {\em Rev. Mat. Iberoam.}, 36(6):1835--1886, 2020.
	
	\bibitem[HMT22]{HumeMackTess-PprofLie}
	David Hume, John~M. Mackay, and Romain Tessera.
	\newblock Poincar\'e profiles of {L}ie groups and a coarse geometric dichotomy.
	\newblock {\em Geom. Funct. Anal.}, 32:1063--1133, 2022.
	
	\bibitem[Hum17]{HumSepExp}
	David Hume.
	\newblock A continuum of expanders.
	\newblock {\em Fund. Math.}, 238:143--152, 2017.
	
	\bibitem[KKL14]{KaufKazLub}
	Tali Kaufman, David Kazhdan, and Alexander Lubotzky.
	\newblock Isoperimetric inequalities for ramanujan complexes and topological
	expanders.
	\newblock {\em Geometric and Functional Analysis}, 26:250--287, 2014.
	
	\bibitem[Kna88]{Knapp-beyond}
	Anthony~W. Knapp.
	\newblock Lie groups beyond an introduction.
	\newblock 1988.
	
	\bibitem[Lei80]{Le82}
	Charles~E. Leiserson.
	\newblock Area-efficient graph layouts (for vlsi).
	\newblock In {\em IEEE Annual Symposium on Foundations of Computer Science},
	1980.
	
	\bibitem[Lub11]{Lub-exp}
	Alexander Lubotzky.
	\newblock Expander graphs in pure and applied mathematics.
	\newblock {\em Bulletin of the American Mathematical Society}, 49:113--162,
	2011.
	
	\bibitem[Lub18]{Lub-HDE}
	Alexander Lubotzky.
	\newblock High dimensional expanders.
	\newblock {\em Proc.\ ICM 2018}, pages 705--730, 2018.
	
	\bibitem[WT07]{WoodTelle}
	David~R. Wood and Jan~Arne Telle.
	\newblock Planar decompositions and the crossing number of graphs with an
	excluded minor.
	\newblock In Michael Kaufmann and Dorothea Wagner, editors, {\em Graph
		Drawing}, pages 150--161, Berlin, Heidelberg, 2007. Springer Berlin
	Heidelberg.
	
\end{thebibliography}
\end{document}